%% file: Anderson_Thakur.tex
\documentclass[10pt,a4paper,reqno,oneside]{amsart}

\usepackage{amsrefs}

\usepackage{array}
\usepackage{url}
\usepackage{accents}
\usepackage{multicol} 
\usepackage[linktocpage = true]{hyperref}
\usepackage[pdftex]{graphicx}
\usepackage{epstopdf}

\usepackage{todonotes}

\hypersetup{
 colorlinks = true,
 citecolor = blue,
}

\usepackage{color}
\definecolor{red}{rgb}{1,0,0}
\definecolor{green}{rgb}{0,1,0}
\definecolor{blue}{rgb}{0,0,1}
\definecolor{refkey}{gray}{.625}
\definecolor{labelkey}{gray}{.625}
\usepackage[a4paper]{geometry}
\usepackage{amsmath,amssymb}

\usepackage{amssymb}
\usepackage{amsmath}
\usepackage{amsthm}
\usepackage{enumerate}
\usepackage{graphicx}

\usepackage{tikz-cd}
\allowdisplaybreaks

\def\GL{\mathrm{GL}}
\def\Gal{\mathrm{Gal}}

\def\dim{\mathrm{dim}}

\def\Cl{\mathrm{Cl}}

\def\id{\mathrm{Id}}

\makeatletter

\newtheorem{thmletter}{Theorem}

\theoremstyle{plain}
\newtheorem{thm}{\protect\theoremname}[section]
\newtheorem{prop}[thm]{\protect\propositionname}
\newtheorem{cor}
[thm]{\protect\corollaryname}
\newtheorem{lem}[thm]{\protect\lemmaname}

\theoremstyle{definition}
\newtheorem{example}[thm]{\protect\examplename} 
\newtheorem{defn}[thm]{\protect\definitionname} 
 
\newtheorem{notation}[thm]{\protect\notationname}

\AtBeginDocument{
	
}

\makeatother

  \providecommand{\corollaryname}{Corollary}
  \providecommand{\examplename}{Example}
  \providecommand{\lemmaname}{Lemma}
  \providecommand{\propositionname}{Proposition}
  \providecommand{\theoremname}{Theorem}
  \providecommand{\definitionname}{Definition}
  \providecommand{\remarkname}{Remark}
  \providecommand{\notationname}{Notation}

  \makeatletter
  \newcommand{\be}{%
  \begingroup
  \eqnarray%
   \@ifstar{\nonumber}{}%
  }

\newcommand{\A}{\mathcal{A}}
\newcommand{\K}{\mathcal{K}}
\newcommand{\C}{\mathcal{C}}

\newcommand{\ang}[1]{\langle #1 \rangle}

\newcommand{\floor}[1]{\lfloor #1 \rfloor}
\newcommand{\bi}[2]{\left[{{#1}\atop#2}\right]}

\newcommand{\Sgn}{\operatorname{Sgn}}
\newcommand{\Res}{\operatorname{Res}}

\newcommand{\Log}{\operatorname{Log}}

\newcommand{\ft}{\mathfrak{t}}

\newcommand{\bket}[1]{{\lfloor{#1}\rceil}}

\newcommand{\FS}[1]{\ensuremath{\left\{ #1 \right\}}}

\newcommand{\FT}[1]{\ensuremath{\left\langle #1 \right\rangle}}

\makeatother

\begin{document}

\title[Anderson generating function of rank-one Drinfeld Module ]{Anderson generating function of rank-one Drinfeld Module over rational function fields}
\thanks{ Research partially supported by NSFC grant 
	12071247(Hu),  NSFC grant 12501094(Huang).}
\thanks{$^*$ The corresponding author.}

\author{Chuangqiang Hu}
\address{School of Mathematics, Sun Yat-Sen University, Guangzhou, 510275,
P. R. China}

\email{\href{huchq5@mail.sysu.edu.cn}{huchq5@mail.sysu.edu.cn}}
 
\author{Xiao-Min Huang}
\address{School of Mathematics and statistics, 
Guangdong University of Technology, Guangzhou, 510006,  P. R. China}
\email{\href{mahuangxm@gdut.edu.cn}{mahuangxm@gdut.edu.cn}}

\author{Stephen S.-T. Yau$^*$}
\address{Department of Mathematical sciences, Tsinghua University, Beijing, 100084, P. R. China}
\email{\href{yau@uic.edu}{yau@uic.edu}}

\allowdisplaybreaks

\begin{abstract}
We establish a fundamental breakthrough in rank-one Drinfeld module arithmetic by deriving explicit formulas over the integral domain $\A = H^{0}(\mathbb{P}^1 - P_{\rho}, \mathcal{O}_{\mathbb{P}^1})$, which generalizes the classical polynomial ring ($N=1$) to the projective line associated with an infinite place of degree $N \geqslant 2$. This fills a longstanding gap by developing a comprehensive parallel to Carlitz module theory—foundational in positive-characteristic arithmetic—for the understudied case of infinite places of degree $>1$.
We construct Anderson generating functions for these modules and link them to the Carlitz period via Pellarin’s series, exponential torsion modules, and logarithmic deformations. These constructions provide powerful tools for studying such Drinfeld modules and their associated $L$-series, central to modern number theory.
A key result reveals a critical distinction from Carlitz theory: the standard Anderson generating function residue formula fails due to Galois group action. We resolve this obstruction by introducing an exponential action, enabling simultaneous study of all twisted exponential functions—a major methodological advance.
We further show that Anderson generating function computation involves the dual of Drinfeld modules, leading to an appropriate residue formula modification. Notably, our natural approach generalizes to arbitrary Dedekind domains, extending our results beyond $\A$ and opening new avenues in Drinfeld module theory.
\end{abstract}
\maketitle{}

\textbf{Key words:} ~Drinfeld module; Anderson Motive; Shtuka function; Anderson generating function; Carlitz period.

	{\textbf{AMS subject classification:} 11G09; 11R58; 11M38; 12H10. }

\tableofcontents

\section{Introduction}

\subsection{Drinfeld Module}
A Drinfeld module is an analogue of an elliptic curve in function field theory; it was introduced by Drinfeld \cite{DVG74} in the 1970s to prove the $\GL_2$-Langlands conjecture. Drinfeld modules encompass a rich arithmetic theory, and several important special cases, most notably the Carlitz module $\C$ had already been investigated by Carlitz (see \cite{CL35}) long before Drinfeld's general definition.

The work of many authors such as Drinfeld, Hayes, Goss, Gekeler, Anderson, Brownawell, Thakur, Papanikolas and Cornelissen has established many striking similarities and yet a few astounding differences between elliptic curves and Drinfeld modules (particularly in rank-two cases);  the reader may consult \cite{Goss96, P23, PB21, Tha04} for a more elaborate discussion.  The comparison of similarities and differences in the statements (and proofs) between corresponding results in the classical and Drinfeld theories provides valuable insights into some common themes in number theory. 

Explicit formulas for rank-one Drinfeld modules can be used to compute cyclotomic extensions of function fields. In essence, such explicit expressions are not merely auxiliary tools but lie at the foundation of the study of cyclotomic field extensions in function fields. In the same spirit as the classical Kronecker–Weber theorem, by translating abstract arithmetic properties into computable polynomial forms, they provide powerful tools for the study of Abelian extensions. For further details on their significant role in class field theory over global function fields, which is also highly relevant to the present work, we refer the reader to \cite{Wade46, CL35, Car38, Hay74}.

\subsection{Anderson Generating Function}
 Originally, Anderson generating functions were applied to prove fundamental results on the uniformization of $t$-motives in Anderson's paper \cite{AGW86}*{Chapter 3.2}.
They have also been used to establish connections between $v$-adic and de Rham realizations of $t$-modules and $t$-motives in \cite{AGW93}. 
 
    Let $\theta$ and $\ft$ be independent variables over the finite field $\mathbb{F}_q$ with $q$ elements. Based on an interpretation of the rigid analytic trivialization of the Carlitz motive, namely the Anderson motive derived from the Carlitz module $ \C $, Anderson and Thakur \cite{AT90}*{Chapter 2}
introduced the function  
\[
\omega_\C(\ft) = (-\theta)^{1/(q - 1)} \prod_{i = 0}^{\infty} (1 - \frac{\ft}{\theta ^{q^i}})^{-1}.
\]
The Anderson-Thakur function $ \omega_ \C (\ft)$ converges in the Tate algebra $\mathbb{T} \subseteq \mathbb{C}_\infty[\![\ft]\!]  $ of rigid analytic functions in $\ft$ over the closed unit disk of $\mathbb C_{\infty}$.  Furthermore, by Anderson’s general theory of scattering matrices (see \cite{AGW86}*{Proposition 3.3.2} and \cite{AT90}*{Section 2.5}, the residue of $\omega_\C(\ft) $ at $\ft = \theta$ yields a fundamental period of the Carlitz module, from which we recover the period $ \tilde{\pi}_{\C} $ via 
\begin{equation}\label{eq:residue}
\text{Res}_{\ft=\theta} (\omega_\C(\ft)) = -\tilde\pi_{\C} = (-\theta)^{q/(q - 1)} \prod_{i = 1}^{\infty} \left(1 - \theta^{1 - q^i}\right)^{-1}  .
\end{equation}

For a general Drinfeld module $ \phi $, the Anderson generating function associated with  $U \in \mathbb{C}_{\infty} $ is naturally defined as the series 
\[
g_{\phi}(U; \ft) = \sum_{i = 0}^{\infty}  \exp_{\phi} (\frac{U}{\theta^{{i+1}}}) \ft^i .
\]
In particular, the following identity holds, as established in  \cite{AGB04}*{Proposition. 5.1.3}:
\[
g_{\C}(\tilde{\pi}_{\C}; \ft) = \omega_\C(\ft). \]
Pellarin \cite{PF14} showed that  
\begin{equation}
\label{eq:Pellarin}
g_{\phi}(U; \ft) = \sum_{i = 0}^{\infty} \frac{ U^{q^i}}{D_i (\theta^{q^i} - \ft )},
\end{equation}
where the $ D_i $ are coefficients appearing in the expansion of $ \exp_\phi$.  Papanikolas \cite{P08} introduced a deformation of the Carlitz logarithm 
\[
    \Log(\xi;\ft) = \xi + \sum_{i = 1}^{\infty} \frac{\xi^{q^i}}{ \prod_{j=1}^i ( \ft - \theta^{q^j} ) } 
\]
in order to resolve the so-called ``folklore conjecture''.
El-Gundy and Papanikolas \cite{ElP14} noticed that $\Log(\xi;\ft)$ can be expressed in terms of the Anderson generating function: 
\begin{equation}\label{eq:Logxi}
    \Log(\xi;\ft) = (\theta - \ft ) g_{\C}(U; \ft), 
\end{equation}
where $U = \exp_{\C}(\xi)$.

 Anderson generating functions also play a central role in transcendence theory, since they satisfy the Frobenius differential equation:
\begin{equation}
\tau \diamond \omega_\C  - (\ft - \theta)\omega_\C = 0. \label{eq:diff_eq}
\end{equation}
Here the twisted action $\tau \diamond \omega_\C$ is defined by raising each coefficient of $\omega_\C\in\mathbb{C}_\infty[\![\ft]\!]$ to the $q$-th power.
Consequently, applying the twisted polynomial  $\C_a$ to $\omega_\C$, we obtain 
\begin{equation}\label{eq:Cat}
\C_{a(t)} \diamond \omega_\C  = a(\ft)\cdot \omega_\C 
\end{equation}
for any $a(t) \in \mathbb{F}_{q}[t] $.
This implies that $\omega_\C $ serves as a representation for the Carlitz module so that many arithmetic properties can be derived through $\omega_\C$.   

Sinha \cite{Sin97}*{Chapter 4} demonstrated that the theory
of Anderson functions can be extended to certain $t$-modules with complex multiplication and that, more
generally, logarithms on these $t$-modules can be constructed systematically via the Anderson
generating functions (see \cite{GP18}*{Chapter 5}, as well as \cite{ElP14} and \cite{PF08}*{Chapter 4.2}).
 These functions serve as powerful tools for describing periods, quasi-periods, and module logarithms in positive characteristic.

Pellarin and Perkins \cite{PP16} developed new techniques to study a family of generating functions that share formal similarities with the classical Hurwitz zeta function.
They proved a collection of functional identities that establish explicit connection with certain deformations of the Carlitz logarithm introduced by Papanikolas. These identities naturally involve the Anderson–Thakur function and the Carlitz exponential function.

The Anderson generating functions have played a foundational role in recent work on special values of positive characteristic $L$-series, as well as in transcendence and algebraic independence problems.

\subsection{Pellarin $L$-series}
In 2012, Pellarin \cite{P12} introduced a class of deformations of the values of the Goss zeta
function for the polynomial ring $\mathbb{F}_q[\theta]$. Let $ \chi : \mathbb{F}_q[\theta] \to \mathbb{F}_q[\ft]$ by setting $ \chi(a) = a(\ft)$, the Pellarin $L$-series is given by 
\[
    L(\mathbb{F}_q[\theta]; s ) = \sum_{a \in \mathbb{F}_q[\theta]_+} \frac{\chi (a)} { a(\theta)^s } =  \sum_{a \in \mathbb{F}_q[\theta]_+} \frac{a (\ft)} { a (\theta)^s}. 
\]
He proved, using the theory of deformations of vectorial modular forms and other techniques, a formula for Pellarin $L$-series values at 
$1$, as well as some arithmetic properties of Pellarin $L$-series values at other positive integers.
Precisely, there exists an equality 
\[
    L(\mathbb{F}_q[\theta]; 1)=-\frac{ \tilde{\pi}_{\C}}{(\ft - \theta) \omega_\C(\ft)}. 
\]
This formula involves Anderson-Thakur function $\omega_{\C}$,  shtuka function $ f_{\C} (\ft)= \ft - \theta$ and the Carlitz period $\tilde{\pi}_{\C}$. He discussed that this formula serves to interpolate values of Goss $L$-series of Dirichlet type as well as Carlitz zeta values at some negative integers.

 There has been substantial research on Pellarin $L$-series in recent years, covering special value formulas as well as multivariable generalizations \cite{ANT16,AP14,AP15,APT18,APTR16,G13,PP16,P14MZ,P14P}. 
In \cite{HP22}, Huang and Papanikolas recently generalized the special value formula that takes values in Tate algebras, for the Pellarin $L$-function attached to
the Carlitz module and the Anderson-Thakur function to arbitrary-rank Drinfeld modules and their rigid analytic trivializations.

\subsection{Our Main Results}
 Green and Papanikolas \cite{GP18} gave a detailed account of sign-normalized rank-one Drinfeld $\A$-modules, where $\A$ stands for the
coordinate ring of an elliptic curve over a finite field. This work provided a parallel theory to the Carlitz module $\C$, i.e., $\A = \mathbb{F}_q[t]$. Using the precise formulas for the shtuka function for elliptic curves, they obtained a product formula for the Carlitz period and found a special value of Pellarin $L$-series in terms of the Anderson-Thakur function. Motivated by their idea, this paper undertakes a comprehensive study of Drinfeld  $ \A $-modules, where $\A $ is derived from the rational function field $ \mathbb{F}_q(t)$ associated with the infinite place $ P_{\rho}$ of degree $N \geqslant 2$. Such kinds of Drinfeld modules can be regarded as a natural extension of the Carlitz module, so that they share significant similarities. We develop techniques to express Anderson generating functions in terms of the defining polynomials of the Drinfeld module and its associated Anderson motive. Our main results are listed as follows.

\subsubsection{Anderson Motive from Shtuka Function}
Let $\mathbb{P}^1$ denote the projective line over the finite field $\mathbb{F}_q$, and $\mathcal{O}_{\mathbb{P}^1}$ the structure sheaf of $\mathbb{P}^1$.
We consider the Dedekind domain $ \A = H^0(\mathbb{P}^1 - P_{\rho}, \mathcal{O}_{\mathbb{P}^1})$, where $\rho$ is an irreducible monic polynomial over $\mathbb{F}_q$. Let $\eta$ be a root of $\rho $. The infinite place $ P_{\rho}$ splits into $N$ distinct places, written as $ [\eta^{q^i}] $ for $i = 0, \cdots, N-1$ in the field extension over $ \mathbb{C}_{\infty}$.  We replace the shtuka function $ f_{\C}(t) = t - \theta $ in the Carlitz case with the canonical choice of the shtuka function:
\[
    f(t) = \frac{1}{t - \eta} - \frac{1}{\theta - \eta } .
\] 
Using this shtuka function $f$, we equip the global section 
\[
\A_{\mathbb{C_\infty} } :=  H^{0}\left(\mathbb{P}^{1}_{\mathbb{C}_{\infty}}-\{[\eta], \cdots , [\eta^{q^{N-1}}]   \}, \mathcal{O}_{\mathbb{P}^{1}_{\mathbb{C}_{\infty}}}\right)\]
with the Anderson $\A$-motive structure $ M_{\Psi}$. In this way, we construct the Drinfeld $\A$-module $ \Psi $ associated with $ M_{\Psi}$.

\subsubsection{Exponential and Logarithm Functions}

As established by Anderson and Thakur \cite{Tha93}, the exponential $\exp_{(0)}$ and logarithm functions $\log_{(0)}$  associated with a Drinfeld $\A$-module $ \Psi $ are uniquely determined by the underlying shtuka function $f$, and their construction relies on residue theory.   El-Guindy and Papanikolas \cite{ElP13} generalized this classical framework by deriving explicit series expansions for the exponential and logarithm functions attached to higher-rank Drinfeld modules. These higher-rank formulations naturally specialize to the original Anderson–Thakur formulas in the rank-one case.

Building directly on the residue-theoretic approach of Anderson and Thakur, we obtain the following explicit expressions for the exponential and logarithm of a rank-one Drinfeld $\A$-module $\Psi$:
\begin{thmletter}
    \begin{enumerate}
        \item The exponential function $\exp_{(0)}$ of Drinfeld $\A$-module $ \Psi $ is an $\mathbb{F}_q$-linear entire power series given by  
    \[
        \exp_{(0)}(\xi) = \sum_{i =0 }^{\infty} \frac{\xi^{q^i}}{D_i},
    \]
    where the coefficients $ D_i $ satisfy the recursive formula:
    \begin{equation*} 
    D_0= 1, \qquad D_i = D_{i-1}^q \frac{\theta-\theta^{q^i}}{(\theta^{q^i}-\eta)(\theta-\eta)}.
	\end{equation*}
    \item 
    The logarithm function $\log_{(0)}$ of Drinfeld $\A$-module $ \Psi $  admits a power series representation:
\begin{equation*} 
\log_{(0)}(\xi) =\sum_{j= 0}^{\infty} \frac{(-1)^j\xi^{q^j}}{L_j},
\end{equation*}
where $ L_0 =1 $ and the coefficients $ L_j $ satisfy the recursion:
\[
		L_j = 	L_{j-1} \frac{\theta-\theta^{q^j}}{ (\theta - \eta^{q^{j-2}})(\theta-\eta)^{q^{j-1}} (\theta - \eta^{q^{-1}})^{{q^{j}}-q^{j-1}}}.  
\]
    \end{enumerate}
  
\end{thmletter}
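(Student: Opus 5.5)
The plan is to derive both formulas from the Anderson–Thakur description of $\exp_{(0)}$ and $\log_{(0)}$ through the shtuka function, specialized to
\[
f(t)=\frac{1}{t-\eta}-\frac{1}{\theta-\eta}.
\]
Throughout, $f^{(j)}$ denotes the $j$-th Frobenius twist of $f$. It is obtained by raising the $\mathbb{C}_\infty$-coefficients of $f$ to the $q^j$-th power, so that $\theta\mapsto\theta^{q^j}$ and, when it is carried on the coefficient side, $\eta\mapsto\eta^{q^j}$. The first thing I would check is the divisor of $f$ on $\mathbb{P}^1_{\mathbb{C}_\infty}$. It has a simple zero at $\Xi=[\theta]$ and a simple pole at one of the points $[\eta^{q^i}]$ over $P_\rho$. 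Its remaining zero/pole data give a divisor $V$ with $V^{(1)}-V+\Xi-[\eta]$ principal, which is exactly the shtuka condition used to build $M_\Psi$ and $\Psi$. All subsequent residue computations rest on this divisor bookkeeping.

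\textbf{Exponential.} Let $\Psi_a=\sum_k a_k\tau^k$ be the Drinfeld module attached to $M_\Psi$. I would use the Anderson–Thakur characterization: the $\tau^i$-coefficient of $\exp_{(0)}$ equals
\[
\frac{1}{D_i}=\Bigl(\frac{1}{f\,f^{(1)}\cdots f^{(i-1)}}\Bigr)\Big|_{\Xi^{(i)}},
\qquad \Xi^{(i)}=[\theta^{q^i}].
\]
This follows from the functional equation $\exp_{(0)}(a\xi)=\Psi_a(\exp_{(0)}(\xi))$: both sides are determined by the motive, on which $\tau$ acts as multiplication by $f$. Equivalently, it follows from the fact that $\sum_i\exp_{(0)}(\theta^{-i-1}U)\ft^i$ satisfies $\tau\diamond g=f\,g$ up to a polar correction supported at $\Xi$. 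Granting this description, the recursion follows by separating off the last factor. The quotient $D_i/D_{i-1}^q$ is a single factor of $f$ evaluated at $t=\theta^{q^i}$. A direct computation gives
\[
f(\theta^{q^i})=\frac{1}{\theta^{q^i}-\eta}-\frac{1}{\theta-\eta}=\frac{\theta-\theta^{q^i}}{(\theta^{q^i}-\eta)(\theta-\eta)},
\]
which is precisely the stated recursion. Entireness then follows because $|D_i|$ grows like $|\theta|^{q^i\cdot i}$-type products, so the radius of convergence is infinite.

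\textbf{Logarithm.} Here I would use Anderson–Thakur's residue formula for the logarithm coefficients:
\[
\frac{(-1)^j}{L_j}=\Res_{\Xi}\Bigl(\frac{\lambda}{f\,f^{(1)}\cdots f^{(j)}}\Bigr)
\quad\text{or its variant with } f^{(-1)}.
\]
In this formula $\lambda$ is the invariant differential normalized by $\Res_\Xi(\lambda/f)=1$, so that $L_0=1$; it comes from the dual motive, which is where the twist $f^{(-1)}$ and hence $\eta^{q^{-1}}$ can enter. I would compute the residue at $t=\theta$ explicitly. Only $f$ vanishes there, and the other factors are evaluated at $t=\theta$, giving $f^{(k)}(\theta)=\frac{1}{\theta-\eta^{q^k}}-\frac{1}{\theta^{q^k}-\eta^{q^k}}$. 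Forming the ratio $L_j/L_{j-1}$ leaves a single new factor $f^{(j)}(\theta)$, together with the change in the normalizing differential. Simplifying this should produce
\[
\frac{\theta-\theta^{q^j}}{(\theta-\eta^{q^{j-2}})(\theta-\eta)^{q^{j-1}}(\theta-\eta^{q^{-1}})^{q^j-q^{j-1}}}.
\]
As a consistency check I would verify that $\log_{(0)}\circ\exp_{(0)}=\xi$ to the first two orders.

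The main obstacle is the logarithm step. The exponential recursion drops out almost immediately, whereas the log recursion has unusual exponents ($\eta^{q^{j-2}}$, and a power $q^j-q^{j-1}$ of $(\theta-\eta^{q^{-1}})$). These indicate that the differential $\lambda$ and the dual shtuka carry a nontrivial $\eta$-twist, and getting the exact normalization of $\lambda$ and the exact twist indices right is where errors are likely. My first approach would be to compute $L_1$ and $L_2$ directly from the inversion $\log_{(0)}=\exp_{(0)}^{-1}$ using the $D_i$ already obtained. I would then use these values to pin down the correct residue expression, and finally prove the general recursion by induction on $j$.
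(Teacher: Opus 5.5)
Your part (1) is correct, and shorter than the paper's route. Since $\bigl(f^{(k)}(\theta^{q^{i-1}})\bigr)^q=f^{(k+1)}(\theta^{q^i})$, one has $D_{i-1}^q=f^{(1)}(\theta^{q^i})\cdots f^{(i-1)}(\theta^{q^i})$, and so $D_i/D_{i-1}^q=f(\theta^{q^i})$ directly. The paper instead first derives a closed formula for $D_i$ with floor functions and then takes quotients. Your convergence remark needs adjusting: here $|\theta|_\eta=1$, and entireness comes from $v_\eta(D_i)\leqslant -q^i\lfloor i/N\rfloor$, contributed by the factors $\theta^{q^k}-\eta$ with $N\mid k$.

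The gap is in part (2). The Anderson--Thakur logarithm formula assumes $\deg\infty=1$, where the differential is characterized by a double pole at the single rational point at infinity. That recipe does not carry over when $P_\rho$ splits into $[\eta],\dots,[\eta^{q^{N-1}}]$. Moreover, the formula you actually write, with untwisted $\lambda$ and $\Res_\Xi(\lambda/f)=1$, gives $(-1)^j/L_j=1/\bigl(f^{(1)}(\theta)\cdots f^{(j)}(\theta)\bigr)$, i.e.\ $L_j=\prod_{k=1}^{j}\bigl(-f^{(k)}(\theta)\bigr)$. By the paper's computation these are the logarithm coefficients of the dual module $\Phi$, not of $\Psi$. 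It already fails for $j=1$: inversion forces $L_1=D_1=\frac{\theta-\theta^q}{(\theta-\eta)(\theta^q-\eta)}$, while $-f^{(1)}(\theta)=\frac{\theta-\theta^q}{(\theta-\eta^q)(\theta-\eta)^q}$, and their $v_\eta$-valuations are $-1$ and $-q$.

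Your fallback (compute $L_1,L_2$, then ``pin down'' a residue expression and induct) names no mechanism for the inductive step. The missing idea is the specific differential
\[
\omega^{(j+1)}=-\frac{\theta^{q^j}-\eta^{q^{j-1}}}{\theta^{q^j}-\eta^{q^j}}\cdot\frac{dt}{(t-\eta^{q^{j-1}})(t-\eta^{q^j})}.
\]
It is normalized by $\Res_{t=\theta}\omega^{(1)}/f=1$ and is regular at $t=\infty$. Its two simple poles lie over $P_\rho$, at points where $1/(f^{(0)}\cdots f^{(j)})$ vanishes as soon as $j\geqslant 1$. Consequently the only poles of $\omega^{(j+1)}/(f^{(0)}\cdots f^{(j)})$ are the simple poles $t=\theta^{q^k}$, $0\leqslant k\leqslant j$. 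The residue theorem then gives $0=\sum_{k=0}^{j}\frac{1}{D_k}\bigl(\Res_{t=\theta}\frac{\omega^{(j-k+1)}}{f^{(0)}\cdots f^{(j-k)}}\bigr)^{q^k}$. Comparing this with the inversion identity $\sum_{k=0}^{j}\frac{(-1)^{j-k}}{D_kL_{j-k}^{q^k}}=0$ proves $\frac{(-1)^j}{L_j}=\Res_{t=\theta}\frac{\omega^{(j+1)}}{f^{(0)}\cdots f^{(j)}}$ by induction on $j$.

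Only with this twisted $\omega^{(j+1)}$ does your description of $L_j/L_{j-1}$ as ``one new factor $f^{(j)}(\theta)$ times the change of differential'' become an actual computation. Carried out, it produces exactly the stated factors $\theta-\eta^{q^{j-2}}$ and $(\theta-\eta^{q^{-1}})^{q^j-q^{j-1}}$.
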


Applying the explicit series expansions of  $\exp_{(0)}$ and $\log_{(0)}$ above, we further establish a residue formula that characterizes the rank-one Drinfeld 
 $\A$-module $\Psi$ as an $\eta^{\frac{1}{q}}$-type module as follows.
 
\begin{thmletter}\label{InTheThe}
 Let $ \omega^{(i)}$ (resp. $f^{(i)}$) denote the $i$-th twist of $\omega $ (resp. $f $). 
    The rank-one Drinfeld $\A$-module $\Psi$ associated with the shtuka function $ f $ is of $\eta^{\frac{1}{q}}$-type, represented as 
    \[
        \Psi_{a } = - \sum_{k=0}^{\deg (a )}\Res_{P_{\rho}} \left( \frac{a  \omega^{(k+1)}}{f^{(0)} \cdots f^{(k)}} \right)\tau^{k},     \]
    for each $ a \in \A$, where  $\omega$ is a specific differential of $\A$; see Equations \eqref{Eq:omegaj} and \eqref{Eq:h^j} for precise definitions. 
\end{thmletter}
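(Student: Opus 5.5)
We would follow Thakur's residue approach to Drinfeld modules coming from shtuka functions. We realize the Drinfeld module through its action on a space of differentials. Let $\omega$ be the differential of Equations \eqref{Eq:omegaj} and \eqref{Eq:h^j}. It is built so that the twisted differential satisfies $\omega^{(1)} = f\,\omega$, up to the normalisation fixed there. Concretely, $\omega$ should be a product $\prod_i (\cdots)$ of twists of the shtuka function times $dt$, converging in the appropriate Tate-type algebra. Iterating gives
\[
\omega^{(k+1)} = f^{(0)}f^{(1)}\cdots f^{(k)}\,\omega^{(0)}\cdot(\text{twist correction}).
\]
Hence the differentials $a\,\omega^{(k+1)}/(f^{(0)}\cdots f^{(k)})$ in the statement are twists of $a\,\omega$ rewritten over the common denominator. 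This is the first step: establish the functional equation $\omega^{(1)}=f\omega$ from the explicit definition of $\omega$, and record the divisor of $f$. The divisor of $f$ has a simple zero at $\Xi=[\theta]$, poles at $[\eta]$, and the shtuka condition $\mathrm{div}(f)=V^{(1)}-V+\Xi-\infty_0$ for a divisor $V$.

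Second, we would define the candidate coefficients. The Drinfeld module $\Psi$ is characterised by the Anderson motive $M_\Psi$: the action of $\tau$ on $\A_{\mathbb{C}_\infty}$ is $g\mapsto f\,g^{(1)}$, and for $a\in\A$ one has $\Psi_a=\sum_k c_k(a)\tau^k$. Here $c_k(a)$ is read off from the expansion of $a$ in the basis $1, f, ff^{(1)},\dots$ of $H^0$ of the twisted line bundle $\mathcal{O}(V)$. The key identity to prove is
\[
a=\sum_{k=0}^{\deg a} c_k(a)\, f^{(0)}\cdots f^{(k-1)}\pmod{\text{functions vanishing appropriately}},
\]
or equivalently $a\cdot g = \Psi_a\diamond g$ on the relevant sections. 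We would then extract $c_k(a)$ by a residue pairing. Multiply by the dual differential $\omega^{(k+1)}/(f^{(0)}\cdots f^{(k)})$ and take the sum of residues over all points. By the divisor computation, the only poles lie at the places above $P_\rho$ and along the diagonal zeros $[\theta^{q^j}]$. The residue theorem then converts the sum over $P_\rho$ into the coefficient, with the minus sign coming from moving residues to the other side. The triangularity $\Res(f^{(0)}\cdots f^{(j-1)}\,\omega^{(k+1)}/(f^{(0)}\cdots f^{(k)}))=\delta_{jk}$, up to the leading term, is what isolates each $c_k$.

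Third, we would identify the type. The leading coefficient of $\Psi_a$ is the residue at $k=\deg a$. Its behaviour under the Galois action $\eta\mapsto\eta^q$ permuting the $N$ places above $P_\rho$ is what produces the $\eta^{1/q}$ normalisation. We would compute this residue at the place $[\eta]$ using the explicit local expansion of $f$ near $t=\eta$ and compare it with the sign function. As a consistency check, the constant term ($k=0$) must give $a(\theta)$, which follows since $\omega^{(1)}/f=\omega$ has residue behaviour forcing $-\Res_{P_\rho}(a\,\omega)=a(\theta)$. The main obstacle is the second step: verifying the duality and orthogonality of the residue pairing with the correct normalisation of $\omega$. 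This requires carefully tracking the poles at the $N$ conjugate points $[\eta^{q^i}]$ rather than at a single rational point at infinity, as in the Carlitz case. We would handle it by first treating $a=1/(t-\eta)$-type generators of $\A$ and then extending by linearity and multiplicativity, $\Psi_{ab}=\Psi_a\Psi_b$.
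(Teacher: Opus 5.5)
\textbf{There is a genuine gap: your Step~1 misidentifies the differential, and the rest of the plan inherits the error.}

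In the statement, $\omega=\omega^{(0)}=h^{(0)}(t)\,dt$ is the explicit \emph{rational} differential of \eqref{Eq:h^j}. It has simple poles only at $[\eta^{(-2)}]$ and $[\eta^{(-1)}]$, is regular at $P_\infty$, and is normalized by $\Res_{t=\theta}\omega^{(1)}/f^{(0)}=1$. It is not an infinite product, and it does not satisfy $\omega^{(1)}=f\omega$; that is the equation of the Anderson--Thakur function $\omega_f(\ft)$ of Section~\ref{Sec:AGF}, a different object. If $\omega^{(1)}=f\omega$ held, iterating would give $\omega^{(k+1)}/(f^{(0)}\cdots f^{(k)})=\omega$ for every $k$. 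Your formula would then assign the same value $-\Res_{P_\rho}(a\omega)$ to every coefficient of $\Psi_a$, and the triangularity you need in Step~2 would be false.

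The same confusion undermines your constant-term check and your Step~3. Take $k=\deg a=Nm$ with $m=-v_{P_\rho}(a)$. The form $a\,\omega^{(k+1)}/(f^{(0)}\cdots f^{(k)})$ is regular at $[\eta]$: the pole of $\omega^{(Nm+1)}$ there and the $m$-fold pole of $a$ are cancelled by the $(m+1)$-fold zero of $1/(f^{(0)}\cdots f^{(Nm)})$. So computing "at $[\eta]$" gives $0$. Its only pole above $P_\rho$ is a simple pole at $[\eta^{(-1)}]$, and that is where the type comes from: $\bi{T_1}{N}/\bi{T_0}{N}$ is the value of $t=T_1/T_0$ at $[\eta^{(-1)}]$, i.e.\ $\eta^{1/q}$. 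Finally, reducing to generators via $\Psi_{ab}=\Psi_a\Psi_b$ is unjustified, since the residue expression is not visibly multiplicative in $a$. As shown below, it is also unnecessary.

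\textbf{What survives.} Your Step~2 skeleton does give a valid proof once the correct $\omega$ is used, and it is shorter than the paper's. The paper's route is:
\begin{enumerate}
\item use the Anderson--Thakur exponential, $D_i=\mathbf{s}_i(\theta^{(i)})$;
\item prove $(-1)^j/L_j=\Res_{t=\theta}\omega^{(j+1)}/\mathbf{s}_{j+1}$ by induction, using the residue theorem and $\exp_{(0)}\circ\log_{(0)}=\mathrm{id}$;
\item write $\bi{z}{k}=\sum_j(-1)^{k-j}z^{q^j}/(D_jL_{k-j}^{q^j})$;
\item apply the residue theorem to trade the residues at $[\theta^{(i)}]$, $0\le i\le k$, for $-\Res_{P_\rho}$.
\end{enumerate}

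In your dual route, the motive gives $a=a*\mathbf{s}_0=\sum_{j\le\deg a}\bi{a}{j}\mathbf{s}_j$ exactly in $\A_L$, with no ``modulo''. So it suffices to prove $-\Res_{P_\rho}\bigl(\mathbf{s}_j\,\omega^{(k+1)}/\mathbf{s}_{k+1}\bigr)=\delta_{jk}$ exactly, in three cases:
\begin{itemize}
\item $j>k$: the form is $f^{(k+1)}\cdots f^{(j-1)}\omega^{(k+1)}$. All its poles lie above $P_\rho$, so the residue theorem gives $0$.
\item $j=k$: the form is $\omega^{(k+1)}/f^{(k)}$, with poles only at $[\theta^{(k)}]$ and $[\eta^{(k-1)}]$. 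The residue at $[\theta^{(k)}]$ is $1$ (the $q^k$-twist of the normalization), so $-\Res_{P_\rho}$ gives $1$.
\item $j<k$: both $f^{(k)}$ and $f^{(k-1)}$ occur in the denominator and cancel the two poles of $\omega^{(k+1)}$ at $[\eta^{(k)}]$ and $[\eta^{(k-1)}]$. Hence there is no pole above $P_\rho$, and the residue is $0$.
\end{itemize}

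This avoids exp and log altogether. The paper's route, by contrast, also produces the logarithm coefficients $L_j$ that it needs later for the period. But these three checks are exactly what you call ``the main obstacle'' and leave undone. They depend on the pole divisor and normalization of $\omega$, which your Step~1 gets wrong.
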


\subsubsection{Carlitz Period}
 For $0 \leqslant k \leqslant N-1$, let $\Psi^{(k)}$ denote the $\sigma^k$-twist of Drinfeld module $\Psi$, and write $\exp_{(k)}$ for the corresponding twisted exponential function.  Let $ |\cdot |_{\eta} $ be the norm on $\mathbb{C}_{\infty} $ normalized such that 
    $ |\theta - \eta|_{\eta} = \frac{1}{q}$. Denote by $\sigma $ the Galois action such that $\theta \mapsto \theta, \eta \mapsto \eta^q $.
    Define the period constant $\tilde{\pi}$ via the infinite product
\[ 
\tilde{\pi}= (-\Theta)^{\frac{\sigma^2}{q-1}}\Theta^{\sigma} \prod_{k=1}^{\infty} (\frac{\Theta^{q^k}}{\Theta^{q^k} -\Theta^{\sigma^k}}),
\] 
where  $ (-\Theta)^{\frac{\sigma^2}{q-1}}$ denotes a $(q-1)$-th root of $  -\Theta^{\sigma^2} $, and  $\Theta = \frac{1}{\theta - \eta } $.
 The constant $ \tilde{\pi} $ is called the Carlitz period of $ \Psi^{(2)} $ due to the following result.
 \begin{thmletter} 
    \begin{enumerate}
        \item With respect to the norm $ |\cdot |_{\eta}$, the kernel of $\exp_{(2)}$ is identical to the free $\A$-module generated by $ \tilde{\pi} $.
        \item Let $ I_{\infty} $ denote the ideal of $\A$ corresponding to the zero place of the infinite place $ P_{\infty}$. The kernel of the twisted exponential satisfies
 \[
    \ker \exp_{(j+2)} = \tilde \pi \frac{\Theta^{1+ \sigma + \cdots +\sigma^{j+1} }}{  { \sqrt[q-1]{\Theta^{\sigma^2- \sigma^{j+2}}  } } \cdot \Theta^{1+ \sigma}} I_{\infty}^{-j}.
  \]
    \end{enumerate} 
 \end{thmletter}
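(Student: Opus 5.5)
Part (1) follows the Anderson--Thakur strategy. I would build a candidate Anderson generating function $\omega$ for the twisted module $\Psi^{(2)}$ as a product, then read off the period as a residue, corrected for the Galois twist. Concretely, put $\Theta=1/(\theta-\eta)$ and note that the shtuka function can be written $f(t)=\frac{1}{t-\eta}-\Theta$. I would look for $\omega\in\mathbb{T}$ (Tate algebra in $t$, taken with respect to $|\cdot|_\eta$) satisfying the Frobenius equation $\omega^{(1)}=f\,\omega$, suitably twisted by $\sigma$. The natural ansatz is
\[
\omega = (-\Theta)^{\frac{\sigma^2}{q-1}}\prod_{k\ge 0}\Bigl(1-\frac{T}{\Theta^{q^k}}\Bigr)^{-1},
\]
where $T$ plays the role of the local coordinate $1/(t-\eta^{\sigma^k})$, with the appropriate twist at each step.

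The argument then runs in three steps.

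\emph{Step 1: convergence and the functional equation.} Since $|\Theta|_\eta=q>1$, the ratios $\Theta^{\sigma^k}/\Theta^{q^k}$ tend to zero, so the product converges. The functional equation is verified by telescoping.

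\emph{Step 2: the residue.} I would compute the residue of $\omega\,dt$ at the point where $f$ has its pole or zero, i.e.\ $t=\theta$. Only the $k=0$ factor is singular there, which produces the factor $\Theta^\sigma$. The remaining factors evaluate to $\prod_{k\ge1}\Theta^{q^k}/(\Theta^{q^k}-\Theta^{\sigma^k})$, which gives exactly $\tilde\pi$.

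\emph{Step 3: identification and generation.} Using Pellarin's identity \eqref{eq:Pellarin}, adapted via Theorem~A, I would identify $\omega$ with $g_{(2)}(\tilde\pi;t)$. The residue identity then shows $\exp_{(2)}(\tilde\pi)=0$. Since $\Psi^{(2)}$ has rank one, $\ker\exp_{(2)}$ is a projective $\A$-module of rank one. To show it is free and generated by $\tilde\pi$, I would compare $|\tilde\pi|_\eta$ with the radius of convergence of $\log_{(2)}$, computed from the $L_j$ of Theorem~A. No nonzero element of smaller absolute value lies in the kernel, because $\log_{(2)}$ is injective on its disc. Then, for any $\lambda$ in the kernel, subtracting a suitable $a\tilde\pi$ should bring $\lambda$ into that disc. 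This step also uses that the invertible ideal classes relevant to $\Psi^{(2)}$ are trivial.

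Part (2) comes from transporting the lattice along twists. The exponentials of $\Psi^{(j+2)}$ and $\Psi^{(2)}$ are related by an isogeny $\Psi^{(2)}\to\Psi^{(j+2)}$. Its constant term is built from the products $f^{(0)}\cdots f^{(j)}$ evaluated at $\theta$, which are powers of $\Theta^{\sigma^i}$. Its kernel is the $I_\infty^{j}$-torsion, since each twist moves the place $[\eta^{q^i}]$, which corresponds to the zero of $P_\infty$. From the isogeny, $\ker\exp_{(j+2)}=c_j\,\ker\exp_{(2)}\,I_\infty^{-j}$, where $c_j$ is the ratio of leading coefficients. I would then compute $c_j$: the factor $\Theta^{1+\sigma+\cdots+\sigma^{j+1}}/\Theta^{1+\sigma}$ comes from the shtuka values, and the root $\sqrt[q-1]{\Theta^{\sigma^2-\sigma^{j+2}}}$ comes from changing the sign normalization.

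\textbf{Main obstacle.} The hardest point is Step 3, showing that $\tilde\pi$ generates the kernel rather than just lying in it. Since the Galois action breaks the usual residue formula, I would first verify $\exp_{(2)}(\tilde\pi)=0$ directly, using the exponential action across all twists simultaneously. I would then settle minimality by the Newton-polygon argument on $\log_{(2)}$.
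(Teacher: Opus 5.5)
\textbf{Part (2).} This follows the paper. The isogeny $\Psi^{(2)}\to\Psi^{(j+2)}$ of Proposition \ref{Pro:isoPsiij} kills $I_\infty^{j}$, and Theorem \ref{Thm:isgdm} transports the lattice. The scaling factor, however, is the constant term $\partial\lambda=\sqrt[q-1]{\Theta^{\sigma^{j+2}-\sigma^2}}\,\Theta^{\sigma^2+\cdots+\sigma^{j+1}}$ of that explicit isogeny, which the paper builds from the Hayes-module factorizations. It is not a ratio of leading coefficients, and it cannot come from $f^{(0)}\cdots f^{(j)}$ evaluated at $\theta$, since that product is $0$.

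\textbf{Part (1), membership.} Step 3 gives no mechanism for $\exp_{(2)}(\tilde\pi)=0$.
\begin{itemize}
\item Since $\theta\notin A$, there is no $\Psi^{(2)}_\theta$. So $\sum_i\exp_{(2)}(U/\theta^{i+1})\ft^i$ has no Frobenius equation of the classical kind, and \eqref{eq:Pellarin} does not transfer.
\item The paper's substitute $G(U;\ft)$ satisfies $G^{(1)}=fG+\exp_\Phi(U)$. Identifying a homogeneous solution with $G(U;\ft)$ therefore already presupposes $\exp_\Phi(U)=0$.
\item This is why the paper argues the other way round: Theorem \ref{thm:resomw0} and $\omega_f=G(\tilde\pi_\Phi;\ft)$ are deduced from Theorem \ref{Th:kenelpitil}, not used to prove it. Reversing the direction would need a rigidity statement, e.g.\ that $\omega-G(U_0;\ft)$, once regular at $\ft=\theta$ (hence at all $\theta^{q^k}$), must vanish. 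You supply nothing of the kind.
\item Your residue bookkeeping is also off. With prefactor $(-\Theta)^{\sigma^2/(q-1)}$ one gets $\omega^{(1)}=\Theta^{\sigma^2-1}f\,\omega$.
\item The $k=0$ factor is $(\ft-\eta)/(\ft-\theta)$. It contributes $\Theta^{-1}$ (for $d\ft$) or $-\Theta$ (for $d\frac{1}{\ft-\eta}$), never $\Theta^{\sigma}$. Hence $\Res_{\ft=\theta}\omega\,d\frac{1}{\ft-\eta}=-\Theta^{1-\sigma}\tilde\pi$, not $-\tilde\pi$.
\end{itemize}

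\textbf{Part (1), generation.} The reduction argument fails as stated.
\begin{itemize}
\item Since $K_\rho=A+\mathcal{O}_\rho$ and $A\cap\mathcal{O}_\rho=\mathbb{F}_q$, subtracting $a\tilde\pi$ only lands $\lambda$ in the closed disc $|z|_\eta\le|\tilde\pi|_\eta$.
\item The disc on which $\exp_{(2)}$ is injective has radius at most $|\tilde\pi|_\eta$, because $\tilde\pi$ is itself a nonzero period.
\item So whether the kernel is $\tilde\pi A$ or $\tilde\pi J$ with $J\supsetneq A$ is decided exactly on the circle $|z|_\eta=|\tilde\pi|_\eta$. For example $J=I_\infty^{-1}$ contains $t$, with $|\theta|_\eta=1$. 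The radius of convergence of $\log_{(2)}$ cannot see this circle. You would need a count of zeros of $\exp_{(2)}$ on it, plus Riemann--Roch.
\item Invoking triviality of ``the relevant ideal classes'' assumes the conclusion. By part (2), the lattices of $\Psi^{(j+2)}$ lie in the nontrivial classes of $I_\infty^{-j}$, so principality is special to $\Psi^{(2)}$ and must be proved.
\end{itemize}

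\textbf{What the paper does instead.} It avoids both issues with a direct computation.
\begin{itemize}
\item It identifies $E_k(z)=z\prod_{0\ne a\in A,\ \deg a\le Nk}(1-z/a)=-L_{Nk+1}\bi{z}{Nk+1}$, whose zero set is exactly $\{a\in A:\deg a\le Nk\}$.
\item Because $E_k$ has coefficients in $K$, it may pass to $\sigma^2$-twisted coefficients.
\item It computes $\lim_d L^{\sigma^2}_{Nd+1}/\bigl((-1)^jL^{q^j\sigma^2}_{Nd+1-j}\bigr)=\tilde\pi^{q^j-1}$ through the products $\Gamma_{d,k}$.
\end{itemize}
This yields $\tilde\pi^{-1}\exp_{(2)}(\tilde\pi z)=z\prod_{0\ne a\in A}(1-z/a)$, which gives membership and generation at once.
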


\subsubsection{Explicit Connection to Class Field Theory}
The set of rank-one Drinfeld $\A$-modules equipped with isogenies forms a well-behaved category  that is related to the homogeneous space of the class group of $\A$. 
In our setting, where $\A = H^{0}(\mathbb{P}^1- P_{\rho}, \mathcal{O}_{\mathbb{P}^1}) $, the class group $ \Cl(\A) $ is isomorphic to the cyclic group $ \mathbb{Z}_N $, generated by the ideal 
\[
    I_{\infty} = (T_0, \cdots, T_{N-1}),
\]
with $ T_i = \frac{t^i}{\rho(t)} $. The narrow class group $\Cl^{+}(\A) $ is isomorphic to  $ \mathbb{Z}_N \times  \mathbb{Z}_{W_N} $, where $W_N = \frac{q^N-1}{q-1} $. 
In what follows, we explicitly describe the rank-one Drinfeld $\A$-modules corresponding to $\Cl(\A)$ and $\Cl^+(\A)$, respectively.
\begin{thmletter}\label{Thm:expressions}
(1) Let $ \FT{\Theta}_{l}^i$ be the quantities contained in the Hilbert class field $ H $ of $\A$ defined by  
\[
  \FT{\Theta}_{l}^{i}=  
\begin{cases}
 \Theta^{ (1-q)\gamma_i(\sigma)} \Theta^{q^i},  \quad& \text{for } 0\leqslant i<l;\\
  \theta\eta^{ q^{i-1}(1-q)(i-l) - q^i} \Theta^{ (1-q)\gamma_i(\sigma)}\Theta^{q^i}, \quad & \text{for } l\leqslant i,
    \end{cases}
\] 
where
\[
    \gamma_k(\sigma):=\sum_{i=0}^{k-1} q^i \sigma^{k-1-i} \in \mathbb{Z}[\sigma].
\]
Then the standard Drinfeld $\A$-modules are precisely $\Psi^{(k)}$ for $k = 0, \cdots, N-1$, where $ \Psi = \Psi^{(0)}$ is defined by
\[
 \Psi_{T_n}  =\eta^{n q^{-1} } \Theta^{\gamma_N(\sigma) - W_N}\left(\tau+\FT{\Theta}^{N-1}_{N-n}\right)\circ\cdots\circ\left(\tau+\FT{\Theta}^{1}_{N-n} \right)\circ \left(\tau+\FT{\Theta}^0_{N-n}\right).
 \]

(2) Let $ U_{\rho}= \{ u_{k,\mu} \}_{k \in \mathbb{Z}_N, \ \mu^{W_N} =1 } $ be the set of roots of the equation 
 \begin{equation}\label{eq:xwN}
    x^{W_N} = \left(\frac{1}{\eta^{(k)}- \theta} \right)^{\gamma_N(\sigma) }.
\end{equation}
  There are exactly $N \cdot W_N $ distinct Hayes modules, parameterized by  $ u_{k,\mu} \in U_{\rho}  $. Specifically, the Hayes module $\psi^{u_{k,\mu}} $  corresponding to the root $ u_{k,\mu}$ is given by 
 \begin{equation*} 
      \psi^{u_{k,\mu}}_{T_n} = \left(\eta^{(k-1)} \right)^n \psi ^{\prime}_{n} \psi ^{\prime\prime}_{N-n},
\end{equation*}
where
\[
        \psi ^{\prime}_{n} = \left(\tau - \frac{\theta u_{k+N - 1 ,\mu \eta_*^{n}}}{\eta^{(k+N - 1)}}  \right)\circ   \left(\tau - \frac{\theta u_{k+N - 2 ,\mu \eta_*^{n-1}} }{\eta^{(k+N - 2)}} \right)\circ\cdots \circ  \left(\tau - \frac{\theta u_{k+N - n ,\mu \eta_*}}{\eta^{(k+N -n)}}  \right);
\]
and
\[
        \psi ^{\prime \prime}_{N-n} = (\tau -   u_{k+N - n -1,\mu} )\circ(\tau -   u_{k+N - n -2, \mu})\circ\cdots \circ (\tau -   u_{k+1,\mu} )\circ(\tau -   u_{k,\mu} );
\]
with $ \eta_* = \eta^{\frac{1-q}{q}}$ and $\eta^{(i)}=\eta^{q^i}$.
\end{thmletter}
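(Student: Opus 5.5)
Theorem~\ref{Thm:expressions} has two parts, and I would prove them by two parallel computations. For part (1), I would start from the Anderson motive $M_\Psi$ built from the shtuka function $f=\frac{1}{t-\eta}-\frac{1}{\theta-\eta}$. I would compute $\Psi_{T_n}$ by factoring the divisor of $T_n=t^n/\rho(t)$ into twists of the divisor of $f$. The basic relation is $\mathrm{div}(f)=(V^{(1)})-(V)+(\Xi)-([\eta])$, where $\Xi=(\theta,\theta)$ is the point above $\theta$. Following the Drinfeld--Mumford/Thakur dictionary, $\Psi_a$ has the form $\lambda\,\prod(\tau-\cdot)$ with one linear factor per twist $f^{(i)}$, $i=0,\dots,N-1$. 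These factors come from writing
\[
T_n=c\,f^{(N-1)}\cdots f^{(1)}f^{(0)}\cdot(\text{unit ratio}),
\]
which holds because $\deg T_n$ in $\A$ equals $N$ (its only pole is at $P_\rho$, of multiplicity $1$).

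Concretely, each factor $\tau+\FT{\Theta}^i_{N-n}$ should be the evaluation at $\Xi$ of $f^{(i)}$ divided by the appropriate twist. The switch at $i=l=N-n$ reflects that the zero of $t^n$ at $t=0$ is absorbed in the first $N-n$ factors rather than in the last $n$. I would verify this by checking that both sides of the claimed formula for $\Psi_{T_n}$ have the same constant term, namely $T_n(\theta)$, and the same leading coefficient. The leading coefficient is read off from the sign/normalization via $\Theta^{\gamma_N(\sigma)-W_N}$ and the factor $\eta^{nq^{-1}}$. A cleaner alternative is to apply Theorem~\ref{InTheThe} and evaluate the residues $\Res_{P_\rho}\!\big(T_n\omega^{(k+1)}/(f^{(0)}\cdots f^{(k)})\big)$ explicitly: $P_\rho$ splits into the $[\eta^{q^i}]$ and all functions involved are rational, so the residues are finite sums. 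The twists $\Psi^{(k)}$ give the other $N$ classes. The standard modules correspond to $\Cl(\A)\cong\mathbb{Z}_N$ via the Galois action $\sigma$, which acts on $H$ through the Artin symbol of $I_\infty$. This is consistent with part (2) of the kernel theorem, where the lattice of $\exp_{(j+2)}$ is $I_\infty^{-j}$ times a scalar. Hence the $\Psi^{(k)}$ are pairwise non-isomorphic and exhaust the class group.

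For part (2), every Hayes (sign-normalized) module is isomorphic to some $\Psi^{(k)}$ by a scalar $u$, i.e.\ $\psi=u^{-1}\Psi^{(k)}u$. Sign-normalization forces the leading coefficient of $\psi_{T_n}$ to be the prescribed $(\eta^{(k-1)})^n$. Conjugating a product $\prod(\tau+c_i)$ by $u$ gives factors $u^{q-1}$ distributed across the product, and requiring the leading coefficient $\eta^{nq^{-1}}\Theta^{\gamma_N(\sigma)-W_N}u^{q^N-1}$ to match yields exactly the equation $x^{W_N}=(\eta^{(k)}-\theta)^{-\gamma_N(\sigma)}$ for $x=u^{q-1}$ (up to bookkeeping). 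This gives $N\cdot W_N$ solutions, matching $|\Cl^+(\A)|$. Rewriting each conjugated factor $u_i^{-q}(\tau+c_i)u_i$, with intermediate scalars chosen so that each factor becomes monic, produces the roots $u_{k+j,\mu}$ of the twisted equations. The shift $\mu\mapsto\mu\eta_*^{j}$ in $\psi'_n$ arises from the extra $\theta/\eta^{(\cdot)}$ factor in the last $n$ factors.

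The main obstacle I expect is the exponent bookkeeping in $\mathbb{Z}[\sigma]$: verifying that the products of $\FT{\Theta}^i_l$ telescope correctly, including the case split at $i=l$, and that the chosen $(q-1)$- and $W_N$-th roots are compatible across factors. The latter is what pins down the precise $\mu\eta_*^{j}$ indexing. I would handle this by induction on $n$, using $\Psi_{T_{n+1}}=\Psi_{T_n}\Psi_{t}$-type relations, specifically $T_{n+1}=T_n\cdot t$ inside $\A$ after suitable rewriting. I would check the $n=0$ case directly.
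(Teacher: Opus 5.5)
Your proposal has a genuine gap at its core: nothing in it actually produces the factorization of $\Psi_{T_n}$ into $N$ linear twisted factors.

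\textbf{The proposed mechanism does not work.} The identity $T_n=c\,f^{(N-1)}\cdots f^{(0)}\cdot(\text{unit})$ is false. Indeed $\operatorname{div}(f^{(0)}\cdots f^{(N-1)})=\sum_i[\theta^{q^i}]-\sum_i[\eta^{(i)}]$, while $\operatorname{div}(T_n)=n[0]+(N-n)[\infty]-\sum_i[\eta^{(i)}]$. More fundamentally, a multiplicative decomposition of the function $T_n$ would not factor the operator anyway. $\Psi_a$ is read off by expanding $a=\Psi_a\mathbf{s}_0$ \emph{linearly} in the basis $\mathbf{s}_k=f^{(0)}\cdots f^{(k-1)}$, and composition is not mirrored by multiplication: e.g.\ $(\tau+c_1)(\tau+c_0)\mathbf{s}_0=f^{(1)}f+(c_0^q+c_1)f+c_1c_0$.

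\textbf{The missing idea.} The linear factors come from $(T_n)=I_0^{n}I_\infty^{N-n}$. Each factor is the annihilator of $I_\infty$ or $I_0$ for a \emph{different} intermediate module, so you must identify that chain of modules and their annihilators explicitly. The paper does this with Hayes modules:
\begin{itemize}
\item $\psi^x_{I_\infty}=\tau-x$ and $\psi^x_{I_0}=\tau-\delta(x)$;
\item the Artin symbols $\sigma_\infty,\sigma_0$ of Proposition~\ref{Pro:AS}, so these isogenies land on $\psi^{\sigma_\infty x}$ and $\psi^{\sigma_0 x}$;
\item the swap identity of Lemma~\ref{Lem:sig0desiginf};
\item the motive step $\frac{1}{t-\eta^{(k)}}*\mathbf{s}(u_{k,\mu})=-\frac{\Theta^{\sigma^k}}{u_{k,\mu}}\mathbf{s}(u_{k+1,\mu})$ of Lemma~\ref{Lem:psis}, iterated along $T_n=t^n/\prod_i(t-\eta^{(i)})$.
\end{itemize}
Part (1) is then obtained from part (2) by conjugating with $\ell$, where $\ell^{q-1}=-u/\Theta$. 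This is the opposite direction to yours. A Hayes-free repair of your idea exists: the submotive generated by $\frac{1}{t-\eta}\mathbf{s}_0$ has shtuka function $\frac{t-\eta}{t-\eta^{(1)}}f=\Theta^{1-\sigma}f^{\sigma}$, which identifies it with a conjugate of $\Psi^{(1)}$, and one iterates. But some such change-of-module step is indispensable, and your proposal has none.

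\textbf{Your verification and induction cannot fill this gap.}
\begin{itemize}
\item Matching the constant and leading coefficients leaves the $N-1$ middle coefficients of a degree-$N$ twisted polynomial unchecked.
\item The residue formula (Theorem~\ref{thm:bi}) does give every coefficient. But showing that these are the expanded coefficients of the claimed product \emph{is} the theorem, and you give no way to do it.
\item The induction through $\Psi_t$ is unavailable, because $t\notin\A$ (it has a pole at $P_\infty$). The actual relation $T_{n+1}T_0=T_nT_1$ needs both $\Psi_{T_0}$ and $\Psi_{T_1}$ as input, plus a factor-swapping identity.
\item Your base case $n=0$, a product of $N$ factors, is already the hard case.
\end{itemize}

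\textbf{Smaller points.} For the classification, the type argument ($\Psi^{(k)}$ has type $\eta^{(k-1)}$) suffices. Invoking the kernel theorem would be circular, since it rests on isogenies built from this very factorization.

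For part (2), conjugating part (1) is legitimate, with two corrections. First, the parameter in the statement is the $x$ with $\psi_{I_\infty}=\tau-x$. In your convention $\psi=u^{-1}\Psi^{(k)}u$ this is $x=-\Theta^{\sigma^k}u^{1-q}$, not $u^{q-1}$. Second, matching the constants with the labels $u_{k+i,\nu}=\sigma_\infty^{k+i}M_\nu(u)$ requires $\sigma_\infty^i(u)=\Theta^{(1-q)\gamma_i(\sigma)}u^{q^i}$.

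Carried out, this gives $\nu=\mu\eta_*^{\,i-(N-n)}$ at position $i$. So $\psi'_n$ should run from $u_{k+N-n,\mu}$ (rightmost) to $u_{k+N-1,\mu\eta_*^{n-1}}$ (leftmost), and the displayed $\eta_*$-exponents are off by one. For $N=2$, $n=1$, $k=0$, $\mu=1$ they give constant term $\eta^{1-q}T_1(\theta)$; compare Example~\ref{Exam:psi2}. Hence your claim that the computation produces exactly the displayed formula would not survive the check.
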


The ideals of $\A$ act on Drinfeld modules via isogenies. For an ideal $I$ of $\A$, let $\psi_I $ denote the annihilator of $I$. Then $ \psi_I $ defines an isogeny from $\psi$ to another Drinfeld module $\psi^I$, i.e., 
\[
    \psi_I \psi_a = \psi^I_a  \psi_I ,
\]
for all $a\in \A$. In this notation,  $ \Psi^{I_\infty^k} $ is isomorphic (though not equal) to the $\sigma^k$-twist 
$ \Psi^{(k)}$  of $ \Psi $.  

As a fundamental result of Hayes' theory \cite{Hay74,Ha92,Goss96, V-S06}, the minimal definition field of rank-one normalized Drinfeld modules is identical to the narrow class field.
Theorem \ref{Thm:expressions} shows that the narrow class field of $K$ is $ H^+ = K(u_{k,\mu}, \eta)  $, for any $ u_{k,\mu} $ defined in Equation \eqref{eq:xwN}. For $\sigma \in \Gal(H^+/K)$, let $\sigma\psi$ denote the $\sigma$-twist of $\psi$, obtained by letting $\sigma$ act on the coefficients of $\psi$.  For any ideal $I$ of $\A$, let $(H^+/K; I) $ denote its Artin symbol. Then Hayes' theory implies that 
\begin{equation}\label{eq:psiI}
    \psi^{I} = (H^+/K; I) \psi  . 
\end{equation}
We compute directly in this paper that 
$ (H^+/K; I_\infty^l) = \sigma_{\infty}^l $, where 
  \[ \sigma_{\infty} : u \mapsto \Theta^{q-1} u^q, \qquad \eta \mapsto \eta^{q }. 
    \]
The compatible equality (specializing Equation \eqref{eq:psiI})
\[
    (\psi^{u_{k,\mu}})^{I_\infty^l} = \sigma_{\infty}^l \psi^{  u_{k,\mu}} 
\]
constitutes one of the key technical ingredients in this paper.
\subsubsection{Generalization of the Anderson-Thakur Function}
We first slightly modify the definition of the Tate algebra to obtain a new algebra $\mathbb{T}_{\eta}$, defined as the set of all analytic functions $f \in \mathbb{C}_{\infty}[\![T_0, \dots, T_{N-1}]\!]$ that converge on the domain
 \[  \mathcal{D} = \{ z \in \mathbb{C}_{\infty} |~  |T_i(z)|_{\eta} \leqslant 1,~\text{for 
 all } i=0,1,\cdots,N-1  \} . 
 \]
With this extension, we consider the series
\[\omega_{f} (\ft)= (-\Theta)^{\frac{1}{q-1}} \prod_{k=0}^{\infty} \frac{\Theta^{q^k}}{\Theta^{q^k} - \frac{1}{\ft - \eta^{(k)}}}.
\]
This series is well-defined and belongs to $ \mathbb{T}_{\eta} $. We refer to it as the Anderson-Thakur function for $\A$ due to the following theorem.
\begin{thmletter}
The solution space of the functional equation
\[
    \tau \diamond \omega =f(\ft)\cdot\omega
\]
is a one-dimensional $ \K $-linear space, and it admits $ \omega_f (\ft)$ as a basis.
\end{thmletter}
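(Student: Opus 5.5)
We split the proof into three parts: first check that $\omega_f$ lies in $\mathbb{T}_\eta$ and solves the equation, then show that any solution is a $\K$-multiple of it. Throughout, $\tau\diamond$ acts on coefficients only. Since $\ft$ is untouched, $\tau\diamond$ sends $\eta^{(k)}$ to $\eta^{(k+1)}$ and $\Theta$ to $\Theta^q$, because $\theta,\eta$ are raised to the $q$-th power. Here $\K$ should be the field of constants of $\tau\diamond$ in the fraction field of $\mathbb{T}_\eta$. We expect this to be $\mathbb{F}_q(\ft)$, or the fraction field of $\A$ evaluated at $\ft$.

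\emph{Convergence.} Write the $k$-th factor as
\[
\frac{\Theta^{q^k}}{\Theta^{q^k}-\frac{1}{\ft-\eta^{(k)}}} = \Bigl(1-\Theta^{-q^k}\tfrac{1}{\ft-\eta^{(k)}}\Bigr)^{-1}.
\]
We have $|\Theta^{-q^k}|_\eta=q^{-q^k}$. On $\mathcal{D}$ the functions $T_i$ are bounded by $1$, so $\frac{1}{\ft-\eta^{(k)}}$ is bounded there; indeed it is a combination of the $T_i$ with coefficients algebraic over $\mathbb{F}_q(\eta)$, via partial fractions of $t^i/\rho(t)$. Hence each factor has the form $1+\epsilon_k$ with $\|\epsilon_k\|\to 0$ doubly exponentially. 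The product therefore converges in the Banach algebra $\mathbb{T}_\eta$, and it is invertible.

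\emph{Functional equation.} Apply $\tau\diamond$ to $\omega_f$. The prefactor becomes $(-\Theta)^{q/(q-1)}=(-\Theta)^{1/(q-1)}\cdot(-\Theta)$. The $k$-th factor becomes the $(k+1)$-st factor. Hence
\[
\tau\diamond\omega_f = -\Theta\,\Bigl(\frac{\Theta}{\Theta-\frac{1}{\ft-\eta}}\Bigr)^{-1}\omega_f = -\bigl(\Theta-\tfrac{1}{\ft-\eta}\bigr)\omega_f = \Bigl(\frac{1}{\ft-\eta}-\frac{1}{\theta-\eta}\Bigr)\omega_f = f(\ft)\,\omega_f,
\]
using $\Theta=\frac{1}{\theta-\eta}$. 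This settles existence, up to checking the branch convention for the $(q-1)$-th root, which is harmless since it is only determined up to $\mathbb{F}_q^\times$.

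\emph{One-dimensionality.} Let $\omega$ be any solution in $\mathbb{T}_\eta$. Since $\omega_f$ is a unit of $\mathbb{T}_\eta$, set $h=\omega/\omega_f$. Then $\tau\diamond h=h$, because $f$ cancels. It remains to show that the $\tau\diamond$-invariants of $\mathbb{T}_\eta$ are exactly the constants $\K$. This is the main obstacle. For the usual Tate algebra it is the standard fact $\mathbb{T}^{\tau}=\mathbb{F}_q[\ft]$, proved by writing $h=\sum c_n\ft^n$, noting $c_n^q=c_n$, and using convergence.

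Here $\mathbb{T}_\eta$ is a power series ring in $T_0,\dots,T_{N-1}$, and the $T_i$ are related through $\ft$. We would expand $h$ as a convergent series $\sum c_\alpha T^\alpha$ in a chosen basis of monomials, for instance a normal-form basis of $\A$ over $\mathbb{F}_q$ built from powers of the $T_i$. The coefficients in such an $\mathbb{F}_q$-basis are unique. Invariance then forces $c_\alpha^q=c_\alpha$, so $c_\alpha\in\mathbb{F}_q$. Convergence with $|c_\alpha|=1$ or $0$ forces finitely many nonzero terms, so $h\in\A$ evaluated at $\ft$, which lies in $\K$.

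If the invariants are only required to be meromorphic, as when solutions are taken in the fraction field, we would first clear denominators by multiplying by a suitable element of $\A$. This reduces to the integral case and gives the $\K$-linear span.
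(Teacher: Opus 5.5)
Your verification of the functional equation is exactly the paper's computation: the prefactor picks up $-\Theta$, the $k$-th factor becomes the $(k+1)$-st, and $-\bigl(\Theta-\frac{1}{\ft-\eta}\bigr)=f(\ft)$. Your doubly exponential estimate (in fact $\|\epsilon_k\|\le q^{-q^k}$) is the bound the paper uses in its separate proposition that $\omega_f\in\mathbb{T}_\eta$. The paper offers nothing more than this: it simply declares one-dimensionality over $\K$ to be obvious. Your attempt to actually prove that part is where the gap lies.

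\textbf{The integral case.} You need that an $\mathbb{F}_q$-basis $\{e_\alpha\}$ of $\A$ is orthonormal for the sup norm on $\mathcal{D}$. This requires an argument: normalize, reduce modulo the maximal ideal of $\mathcal{O}_{\mathbb{C}_\infty}$, and evaluate at a point of $\bar{\mathbb{F}}_q$ that is not a root of $\rho$. Without it, neither uniqueness of infinite expansions nor $c_\alpha\to 0$ follows. Granting it, your argument proves that the $\tau\diamond$-invariants of $\mathbb{T}_\eta$ are $\A$. So the solutions lying in $\mathbb{T}_\eta$ form $\A\,\omega_f$, which is not a $\K$-vector space. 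For example, $\omega_f/\ft\notin\mathbb{T}_\eta$, because $0\in\mathcal{D}$ and $\omega_f$ is a unit.

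\textbf{The gap.} The statement therefore needs the invariants of $\mathrm{Frac}(\mathbb{T}_\eta)$. Your sentence about clearing denominators by a suitable element of $\A$ assumes precisely what must be proved. For $h=a/b$ with $b\in\mathbb{T}_\eta$ arbitrary, nothing you say ensures that some element of $\A$ cancels the zeros of $b$. The missing idea is a Frobenius-stability argument for the poles, as in Papanikolas' proof that the fraction field of the usual Tate algebra has invariants $\mathbb{F}_q(\ft)$:
\begin{enumerate}
\item $h$ has finitely many poles on the affinoid $\mathcal{D}\cup\{\infty\}$.
\item Since $h^{(1)}(z)=h(z^{1/q})^q$ and $z\mapsto z^q$ preserves $\mathcal{D}\cup\{\infty\}$, the relation $h^{(1)}=h$ makes this finite pole set, with multiplicities, stable under $z\mapsto z^q$.
\item Hence every pole lies in $\mathbb{P}^1(\bar{\mathbb{F}}_q)$.
\item Take a product of minimal polynomials over $\mathbb{F}_q$ divided by a suitable power of $\rho(\ft)$, times a power of $T_{N-1}$ if $\infty$ is a pole. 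This is an element $c\in\A$ with $ch\in\mathbb{T}_\eta$, and only then does your integral argument apply.
\end{enumerate}

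Your choice of an analytic ambient space is essential rather than cosmetic. In the formal field $\mathbb{C}_\infty(\!(I_\infty)\!)$ where the paper poses the equation, the $\tau\diamond$-invariants are all Laurent series over $\mathbb{F}_q$ in the uniformizer $T_{N-1}$. So the solution space there is not one-dimensional over $\K$.
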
  

Analogously to \eqref{eq:Cat} in the Carlitz module case, the function  $\omega_{f}(\ft)$ serves to trivialize the Drinfeld modules $\Psi$. More precisely, it satisfies the following relation 
\[
    \Psi_{a} \diamond \omega_f(\ft) = a(\ft) \cdot \omega_f(\ft),
\]
for all $a\in \A$.

Unlike the case of the Carlitz module, the residue of $\omega_f(\ft)$ no longer recovers the Carlitz period of $\Psi$. In fact, the lattice of $\exp_{(0)}$ fails to be cyclic in our setting, so the Carlitz period cannot be defined in the same way as for the Carlitz module. 
To remedy this, we define the dual of $\Psi$ by  
\[ \Phi = C_{0,2}^{-1} \circ \Psi^{(2)} \circ C_{0,2} ,\]
where $C_{0,2}$ denotes a $(q-1)$-th root of $\Theta^{\sigma^2-1} (\Theta^{\sigma-1})^{q-1}$. It can be checked that the lattice of $\exp_{\Phi}$ is generated by a constant $\tilde{\pi}_{\Phi}$, which we call the Carlitz period of $\Phi$.
The following result is analogous to Equation \eqref{eq:residue}.
\begin{thmletter}
    Let $ \tilde{\pi}_{\Phi}$ denote the Carlitz period of $ \Phi $. Then $\tilde{\pi}_{\Phi}$ can be recovered via the residue
\[
    \Res_{\ft=\theta} \omega_{f}(\ft) d\frac{1}{\ft - \eta} = - \tilde\pi_{\Phi}. 
\]

\end{thmletter}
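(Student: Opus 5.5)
The plan is to compute the residue directly from the infinite product defining $\omega_f$ and then match the outcome with the product formula for $\tilde\pi$ (the Carlitz period of $\Psi^{(2)}$) from the earlier theorem, transported to $\Phi$ by the conjugation $C_{0,2}$.

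\emph{Step 1: rewrite the product.} Put $s=\frac{1}{\ft-\eta}$, so that $d\frac{1}{\ft-\eta}=ds$ and $\ft=\theta$ corresponds to $s=\Theta$. For $k\geqslant 1$ we have $\frac{1}{\ft-\eta^{(k)}}\neq s$. Still, at $\ft=\theta$ the value $\frac{1}{\theta-\eta^{(k)}}=\Theta^{\sigma^k}$ is different from $\Theta^{q^k}$, so those factors are regular and nonzero there. Only the $k=0$ factor
\[
\frac{\Theta}{\Theta-s}
\]
has a pole, and it is simple. I will also check that no other factor vanishes or blows up at $\ft=\theta$. This uses $|\theta-\eta|_\eta=1/q$, which gives $|\Theta|_\eta=q$; the factors then tend to $1$, so the product converges at $\ft=\theta$.

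\emph{Step 2: compute the residue.} Since $\Res_{s=\Theta}\frac{\Theta}{\Theta-s}\,ds=-\Theta$, we get
\[
\Res_{\ft=\theta}\omega_f\, d\tfrac{1}{\ft-\eta}
=-(-\Theta)^{\frac{1}{q-1}}\,\Theta\prod_{k=1}^{\infty}\frac{\Theta^{q^k}}{\Theta^{q^k}-\Theta^{\sigma^k}}.
\]

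\emph{Step 3: compare with $\tilde\pi$.} From the earlier theorem, $\ker\exp_{(2)}=\A\tilde\pi$, where
\[
\tilde\pi=(-\Theta)^{\sigma^2/(q-1)}\Theta^{\sigma}\prod_{k\geqslant1}\frac{\Theta^{q^k}}{\Theta^{q^k}-\Theta^{\sigma^k}}.
\]
Since $\Phi=C_{0,2}^{-1}\Psi^{(2)}C_{0,2}$, we have $\exp_\Phi=C_{0,2}^{-1}\exp_{(2)}(C_{0,2}\,\cdot)$. Hence $\ker\exp_\Phi=C_{0,2}^{-1}\A\tilde\pi$, and we may take $\tilde\pi_\Phi=C_{0,2}^{-1}\tilde\pi$.

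It remains to verify that
\[
C_{0,2}^{-1}(-\Theta)^{\sigma^2/(q-1)}\Theta^{\sigma}=(-\Theta)^{1/(q-1)}\Theta .
\]
Raising both sides to the power $q-1$, the identity becomes
\[
C_{0,2}^{q-1}=\frac{\Theta^{\sigma^2}\Theta^{\sigma(q-1)}}{\Theta\cdot\Theta^{q-1}}=\Theta^{\sigma^2-1}(\Theta^{\sigma-1})^{q-1},
\]
which is exactly the defining relation of $C_{0,2}$.

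\emph{Main obstacle: the choice of roots.} The hardest point is fixing the $(q-1)$-th roots consistently: the choices in $(-\Theta)^{1/(q-1)}$, $(-\Theta)^{\sigma^2/(q-1)}$ and $C_{0,2}$ must be compatible. I will handle this by defining $C_{0,2}$ as the ratio $\frac{(-\Theta)^{\sigma^2/(q-1)}\Theta^\sigma}{(-\Theta)^{1/(q-1)}\Theta}$. Changing the choice only multiplies by an element of $\mathbb{F}_q^\times$, which preserves the lattice $\A\tilde\pi_\Phi$ and therefore the statement up to the normalization of $\tilde\pi_\Phi$.

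A secondary point is justifying the residue computation in $\mathbb{T}_\eta$. Here $\omega_f$ is a rigid function whose meromorphic continuation has poles at $\ft=\theta^{q^{-k}}$-type points; only the pole at $\theta$ matters, and it comes from the single $k=0$ factor. Alternatively, one can use $\tau\diamond\omega_f=f\omega_f$: this gives $\omega_f=f^{-1}\,\tau\diamond\omega_f$ with $f^{-1}=\frac{(\ft-\eta)(\theta-\eta)}{\theta-\ft}$. Since $\tau\diamond\omega_f$ is regular at $\theta$, this again exhibits the simple pole and yields the same value.
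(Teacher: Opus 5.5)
Your proposal is correct and is essentially the paper's proof. You isolate the simple pole from the $k=0$ factor, read off $-\Theta(-\Theta)^{1/(q-1)}\prod_{k\geqslant 1}\frac{\Theta^{q^k}}{\Theta^{q^k}-\Theta^{\sigma^k}}$, and match it with $\tilde\pi/C_{0,2}$ via $C_{0,2}^{q-1}=\Theta^{\sigma^2-1}(\Theta^{\sigma-1})^{q-1}$; your explicit handling of the $\mathbb{F}_q^\times$ ambiguity in the $(q-1)$-th roots is a useful addition the paper leaves implicit. Two harmless slips: when $N\mid k$ one has $\eta^{(k)}=\eta$, so $\frac{1}{\ft-\eta^{(k)}}=s$ identically (the factor is still regular and nonzero at $\ft=\theta$ because $\Theta^{q^k}\neq\Theta$), and the poles of $\omega_f$ lie at $\ft=\theta^{q^k}$, not at $\theta^{q^{-k}}$.
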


 \subsubsection{Generalization of Generating Functions over $\A$}
We mainly introduce the following three types of generating functions associated with the shtuka function $ f $.
\begin{defn}  The Pellarin-type generating function associated with $f$ is defined as 
\begin{equation*} 
  G(U; \ft )= \sum_{n =0 }^{\infty } \frac{U^{q^n}} { D_n Q_n(\ft) },  
\end{equation*}
where $Q_n(\ft)$ (a rational function in $\ft$) is defined by:
\begin{equation*}
    Q_n(\ft) =\left(\frac{\theta^{q^n} - \eta}{\theta^{q^n} - \eta^{(n)}} \right)^2  \left(\frac{1}{\theta^{q^n} - \eta} - \frac{1}{\ft - \eta } \right).
\end{equation*}
\end{defn}

\begin{defn}
    For any $ U \in \mathbb{C}_{\infty}$,  the Anderson-type generating function is defined as 
\[
H(U;\ft ) := \left( \frac{t- \eta^{(1)} }{t - \eta  }(\frac{1}{t - \eta} - \frac{1}{\ft - \eta })^{-1} \triangleright_{\Phi} U \right) \frac{\theta - \eta  }{\theta - \eta^{(1)}  }, 
\]
where $\triangleright_{\Phi}$ denotes an exponential action (see Definitions \ref{Def:expgU} and \ref{Def:expgtU}) such that 
\[
    a(t) \triangleright_{\Phi} U = \exp_{\Phi} (a(\theta) U), 
\]
for all $a(t) \in \K$.
\end{defn}
\begin{defn} 
  For $ \xi \in \mathbb{C}_{\infty} $, define the logarithmic generating function associated with $f$ as 
\begin{equation*}
      \Log (\xi; \ft ) = \xi + \sum_{n=1}^{\infty} \frac{\xi^{q^{n}}}{f^{(1)}(\ft) \cdots f^{(n)}(\ft)}.
\end{equation*}
\end{defn}

The following theorem establishes fundamental connections among the three families of generating functions defined above. These results extend and parallel classical relations in the Carlitz setting, including Equations \eqref{eq:Pellarin} and \eqref{eq:Logxi}, as well as the characteristic functional equation for the canonical generating function $g_{\C}(U;\ft)$:
\[
\tau \diamond g_{\C}(U; \ft)   = (\ft - \theta)  g_{\C}(U; \ft) + \exp_{\C}(U) .
\]
\begin{thmletter}
With the notions introduced above, the following key equalities hold:
    \begin{enumerate}
    \item  The Anderson–Thakur function $\omega_{f}(\ft)$ coincides with a special evaluation of the Pellarin-type generating function:
\[
    \omega_{f}(\ft)= G (\tilde{\pi}_{\Phi} ; \ft ).
\]
\item  The generating function $G(U;\ft)$ satisfies the Frobenius differential equation:
\[
        \tau \diamond G(U;\ft) = f(\ft) \cdot G(U;\ft) + \exp_{\Phi} (U). 
\]
\item Let $\xi = \exp_{\Phi}(U)$. The three
types of generating functions are related by
\[
    G(U;\ft) = H(U;\ft)= -\frac{1}{f(\ft)} \cdot \Log(\xi; \ft).
\]
    \end{enumerate}
\end{thmletter}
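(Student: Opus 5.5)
The plan is to prove the three parts in the order (2), (3), (1). The only inputs from the introduction are the explicit exponential $\exp_{\Phi}(U)=\sum_n U^{q^n}/D_n^{\Phi}$, the Frobenius equation $\tau\diamond\omega_f=f\,\omega_f$, and the lattice $\ker\exp_\Phi=\A\tilde\pi_\Phi$. Here $D_n^{\Phi}$ are the coefficients of $\exp_\Phi$, obtained from those of $\exp_{(2)}$ in Theorem A by twisting by $\sigma^2$ and conjugating by $C_{0,2}$. Throughout, the key elementary identity concerns $x_n:=1/(\theta^{q^n}-\eta)$. We have $f(\ft)=\frac{1}{\ft-\eta}-\frac{1}{\theta-\eta}$ and $Q_n(\ft)=c_n\,(x_n-\frac1{\ft-\eta})$, with $c_n=\bigl(\frac{\theta^{q^n}-\eta}{\theta^{q^n}-\eta^{(n)}}\bigr)^2$. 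Hence
\[
\frac{1}{Q_n(\ft)}\Bigl(f(\ft)\Bigr)=\frac{1}{c_n}\Bigl(\frac{x_n-x_0}{x_n-\frac{1}{\ft-\eta}}-1\Bigr),
\]
so multiplying a term of $G$ by $f$ produces a ``shifted'' term plus a term independent of $\ft$.

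\emph{Step 1 (part (2)).} I will apply $\tau\diamond$ termwise to $G(U;\ft)$. Since $\tau\diamond$ raises coefficients but not $\ft$ to the $q$-th power, and $\eta\mapsto\eta^q$ inside $\frac{1}{\ft-\eta}$, twisting $\frac{U^{q^n}}{D_nQ_n(\ft)}$ gives $\frac{U^{q^{n+1}}}{D_n^q Q_n^{(1)}(\ft)}$. Here $Q_n^{(1)}$ involves $\frac{1}{\theta^{q^{n+1}}-\eta^q}-\frac1{\ft-\eta^q}$. I then need to match this against the $(n+1)$-st term of $fG$. Using the recursion for $D_{n+1}/D_n^q$ from Theorem A (in its $\Phi$-normalized form), the claim reduces to a rational-function identity in $\ft$:
\[
\frac{D_{n+1}}{D_n^q}\cdot\frac{1}{Q_n^{(1)}(\ft)} = \frac{f(\ft)}{Q_{n+1}(\ft)}+\text{(constant in }\ft).
\]
The constant terms collected over all $n$ should sum exactly to $\sum U^{q^n}/D_n=\exp_\Phi(U)$. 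I expect this bookkeeping to be the main obstacle. The issue is that $f$ has its pole at $\eta$ while the twisted terms have poles at $\eta^{(1)}$, so the identity cannot hold termwise in naive form. My first attempt will be to verify it by comparing partial fractions in $\frac1{\ft-\eta}$ after clearing the factor $c_n$. This is exactly where the squared factors $c_n$ and the conjugation by $C_{0,2}$ must be designed to cancel. Convergence in $\mathbb{T}_\eta$ follows from the growth of $|D_n|_\eta$ given by the recursion.

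\emph{Step 2 (part (3)).} For the identity $G=-\Log(\xi;\ft)/f$, I will show that $F:=-\Log(\xi;\ft)/f(\ft)$ satisfies the same equation $\tau\diamond F=fF+\xi$. Twisting $\Log$ gives $\Log^{(1)}=\xi^q+\sum_{n\ge1}\xi^{q^{n+1}}/(f^{(2)}\cdots f^{(n+1)})$. Hence $f^{(1)}\Log(\xi;\ft)-f^{(1)}\xi=\Log^{(1)}$, and dividing by $f\,f^{(1)}$ gives the equation. The solutions of $\tau\diamond X=fX+\xi$ differ by solutions of the homogeneous equation, which by Theorem E are $\K\omega_f$. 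I will then pin down $G-F=c\,\omega_f$ and show $c=0$. For this I compare poles: $F$ has no pole at $\ft=\theta$ when $\xi$ is small, while $\omega_f$ does have a residue, by Theorem F. For general $U$ I extend by $\mathbb F_q$-linearity and the functional equation. The equality $G=H$ follows by unwinding the exponential action $\triangleright_\Phi$ on the expansion of $\bigl(\frac1{t-\eta}-\frac1{\ft-\eta}\bigr)^{-1}$ as a geometric series in $\frac{1}{\ft-\eta}$, which termwise reproduces $\sum U^{q^n}/(D_nQ_n)$.

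\emph{Step 3 (part (1)).} With $U=\tilde\pi_\Phi$ we have $\exp_\Phi(U)=0$, so part (2) makes $G(\tilde\pi_\Phi;\ft)$ a solution of $\tau\diamond\omega=f\omega$. By Theorem E it equals $c\,\omega_f$ for some $c$. I will then compute $\Res_{\ft=\theta}G(\tilde\pi_\Phi;\ft)\,d\frac{1}{\ft-\eta}$. Only the $n=0$ term has a pole at $\theta$, since $Q_0=f$ and $c_0=1$, and its residue is $-\tilde\pi_\Phi/D_0=-\tilde\pi_\Phi$. Comparing with Theorem F gives $c=1$.
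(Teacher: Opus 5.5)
The genuine gap is in Step 2, where you prove $G=-\Log(\xi;\ft)/f(\ft)$. Steps 1 and 3 are the paper's argument: the termwise computation behind Proposition \ref{prop:G(1)fG}, then Theorem \ref{Thm:omeg0} with the residue formula of Theorem \ref{thm:resomw0}.

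Your claim that $F=-\Log(\xi;\ft)/f(\ft)$ has no pole at $\ft=\theta$ is false. We have $f(\theta)=0$, while by \eqref{Eq:logphi} $\Log(\xi;\theta)=\log_\Phi(\xi)$, which is nonzero for small $\xi\neq 0$. $G$ also has a pole there, from its $n=0$ term $U/Q_0=-U/f$. What you need, and what the paper means by ``regular at $\ft=\theta$'', is that the two polar parts cancel. With respect to $d\frac{1}{\ft-\eta}$, $G$ has residue $-U$ and $F$ has residue $-\log_\Phi(\xi)$. So if $G-F=c\,\omega_f$ with $c\in\K$, Theorem \ref{thm:resomw0} gives
\[
c(\theta)\,\tilde\pi_\Phi=U-\log_\Phi\bigl(\exp_\Phi(U)\bigr).
\]
Thus $c(\theta)=0$, hence $c=0$ (as $c\in\mathbb{F}_q(\ft)$ and $\theta$ is transcendental), exactly when $\log_\Phi(\exp_\Phi(U))=U$. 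If $F$ really had no pole, the same comparison would give $c(\theta)=U/\tilde\pi_\Phi\neq0$, not $c=0$.

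For the same reason, no appeal to $\mathbb{F}_q$-linearity or the functional equation can extend the identity to general $U$, because it is false there. For $U=\tilde\pi_\Phi$ one has $\xi=0$, so the right-hand side vanishes, while $G(\tilde\pi_\Phi;\ft)=\omega_f\neq0$ by part (1). More generally, for small $\xi$ and any $U$ with $\exp_\Phi(U)=\xi$, the relation above gives $G(U;\ft)=-\Log(\xi;\ft)/f(\ft)+a(\ft)\,\omega_f(\ft)$, where $a\in\A$ is defined by $U-\log_\Phi(\xi)=a(\theta)\tilde\pi_\Phi$. So the last equality in (3) must be stated and proved for $U=\log_\Phi(\xi)$ with $|\xi|_\eta$ small. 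The paper's proof also uses this restriction tacitly.

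Smaller points on Step 1:
\begin{enumerate}
\item The obstruction you anticipate does not exist: your reduced identity holds termwise, and this is exactly the paper's computation of $\lambda_n$. The function $1/Q_{n+1}$ vanishes at $\eta$, cancelling the pole of $f$, and $1/Q_n^{(1)}$ vanishes at $\eta^{(1)}$. So both sides are regular away from $\ft=\theta^{q^{n+1}}$, including at $\infty$.
\item By \eqref{Eq:Dn}, $D_{n+1}/D_n^q=f(\theta^{q^{n+1}})$, and $1/Q_n^{(1)}$ and $1/Q_{n+1}$ both have residue $(\theta^{q^{n+1}}-\eta^{(n+1)})^2$ at that point. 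Hence the two sides differ by a constant.
\item The $D_n$ in $G$ are the unnormalized coefficients of $\exp_{(0)}$. The constants then sum to $\sum_n U^{q^n}/D_n^{\Phi}$ with $D_n^\Phi$ as in Lemma \ref{lem:expPhi}, not to $\sum_n U^{q^n}/D_n$.
\item $Q_0=-f$, not $f$, though your residue in Step 3 is correct.
\end{enumerate}

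For $G=H$ you take a genuinely different route from the paper: the geometric series in $s=\frac{1}{\ft-\eta}$ instead of the interpolation series of Lemma \ref{Lem:1-theeaexp}. Coefficientwise it works and is simpler: by Corollary \ref{cor:PhiAction}, both sides have $s^k$-coefficient $\sum_n U^{q^n}\Theta^{-2q^n}(\theta^{q^n}-\eta)^{k-1}/D_n$. However, $|\theta^{q^n}-\eta|_\eta=1$ for $N\nmid n$, so these coefficients do not tend to $0$. Your expansion therefore does not converge in $\mathbb{T}_\eta$ and is not admissible in Definition \ref{Def:expgtU}. It gives $G=H$ in $\mathbb{C}_\infty[\![I_\infty]\!]$ only once you check that $\triangleright_\Phi$ is compatible with $s$-adic re-expansion. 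The paper's series converges in $\mathbb{T}_\eta$, since $\Upsilon_{k+N}=\rho\,\Upsilon_k$ and $|\rho(\theta)|_\eta<1$, and it also yields the twisted-exponential formula of Lemma \ref{lem:Upsilon}.
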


\subsection{Further Research}
 One of our primary goals is to develop a fully explicit theory for the Drinfeld modules $\Psi$ and $\psi^u$ over the coordinate ring $\A$, analogous to the classical Carlitz module framework, and to investigate their analogous arithmetic properties. It is natural to ask whether the current methods can be extended to general domains with higher genus and associated with infinite places of degree $\geqslant 2$.  Our main results provide important insights for researchers who would like to derive general generating functions for general Drinfeld modules and Anderson motives. In particular, for generalizing the Anderson-type generating function for a Drinfeld module $ \Psi $,  one shall determine its dual $ \Psi^{\vee}$, take the sum of all twisted exponential functions and study the isogeny relations. 
Roughly speaking, the Anderson-type generating function for an arbitrary Drinfeld module is  naturally defined by
\[
    H(U;\ft) = (\lambda(t;\ft) \triangleright_{\Psi^{\vee}} U) \cdot \text{Constant},
\]
where the kernel function $\lambda(t;\ft)$ admits a simple pole at $\ft = t$ and is regular at all other finite points. Furthermore, this type of generating function  has potential applications to evaluating special values of $L$-series over $\A$, in the spirit of Pellarin’s foundational work. We will address this problem in our forthcoming paper.

\subsection{Outline}
The outline of the paper is as follows.
First, in Section \ref{Sec:NandP}, we introduce the necessary notions related to Drinfeld modules.
Next, in Section \ref{Sec:PL}, we compute the exponential and logarithmic functions for the standard Drinfeld module $\Psi$, and use them to determine the period lattice of $\Psi^{(2)}$.
The main goal of Section \ref{Sec:Factor} is to establish factorization formulas for both the Hayes module and the standard Drinfeld module. We also study the ideal action and Galois action on these modules. In Section \ref{Sec:IPL}, we 
discuss the isogeny relations between $\Psi^{(i)}$ and $\Psi^{(j)}$ and then compute the period lattices of $\Psi^{(j)}$. We also develop the exponential action $ \triangleright_{\phi} $ to describe the isogeny relation. 
Finally, in Section \ref{Sec:AGF}, we show that the solution space of the corresponding differential operator $ \nabla^f $ forms a one-dimensional vector space spanned by the Anderson-Thakur function.
In our main result, we generalize this classical function into three types: Pellarin type, Anderson type and a deformed logarithm function.

\section{Notations and Preliminaries}\label{Sec:NandP}

\paragraph{\textbf{Fields and rings}}
\begin{itemize}
\item $ \mathbb{F}_q $ : finite field with $q$ elements
\item $ \K = \mathbb{F}_q(t) $ : rational function field
\item $ K = \mathbb{F}_q(\theta) $ : copy of $\K$ via $t \mapsto \theta$
\item $\rho(t) $, $N$ : monic irreducible polynomial over $\mathbb{F}_q$ of degree $N$ 
\item $ v_\eta $ : valuation on $\mathbb{C}_\infty$ normalized so that $v_\eta(\theta-\eta)=1$
\item $ |\cdot|_\eta $ : associated absolute value: $|x|_\eta = q^{-v_\eta(x)}$
\item $ P_\rho $ : place of $\K$ corresponding to irreducible monic polynomial $\rho$ of degree $N \geqslant 2$
\item $\mathbb{P}^1 $ : the projective line over finite field $ \mathbb{F}_q$
\item $ \A $ : Dedekind domain $H^0(\mathbb{P}^1 - P_\rho, \mathcal{O}_{\mathbb{P}^1})$
\item $ A  $ : copy of $\A$ via $t \mapsto \theta$
\item $ L $ : an $\A$-field with injective homomorphism $\iota: \A \to L$
\item $ \A_L = \A \otimes_{\mathbb{F}_q} L $ : base change to $L$, usually $L = \mathbb{C}_\infty$.

\item $ \sigma $ : the Frobenius twist in $\Gal(\mathbb{F}_{q^N}/\mathbb{F}_q)$ (resp. $\Gal(H/K)$), i.e., $\sigma(\eta)=\eta^q $ and $ \sigma(\theta)= \theta $
\item $ (-)^{(k)} $ :  the $q^k$-th power map on $ L $, i.e., $x \mapsto x^{q^k}$
\item $ \mathbb{Z}\{\sigma\} $ : group algebra generated by  $\sigma$

\item $\mathbb{P}^1_{L} $ : the projective line over $ L $, usually $L = \mathbb{C}_\infty$

\item $ \mathbb{C}_\infty $ : completion of an algebraic closure of $K_\rho$

\item $ K_\rho $ : $P_\rho$-adic completion of $K$
\item  $\mathbb{F}_{\rho}$ :  the residue field of $\A$ at $ P_{\rho } $
\item $ \tau $ : $q$-th power Frobenius endomorphism, acting on twisted polynomials
\item $ L \{\tau\} $ : twisted polynomial ring

\end{itemize}

\paragraph{\textbf{Drinfeld modules and related objects}}
\begin{itemize}
\item $ \phi, \psi $ : Drinfeld $\A$-modules
\item $ \Psi $ : standard rank-one Drinfeld $\A$-module (of type $\eta^{(-1)}$)
\item $ \Psi^{(i)} $ : the $\sigma^i$-twist of $\Psi$ (associated with $f^{\sigma^i}$)
\item $ \Phi $ : dual Drinfeld module: $\Phi_a = C_{0,2}^{-1} \Psi^{(2)}_a C_{0,2}$
\item $ \psi^x $ : Hayes $\A$-module with $\psi^x_{I_\infty} = \tau - x$ for $x \in U_\rho$
\item $ \psi^u $ : Hayes module for the fixed root $u$ (with $u^{W_N}=(-\Theta)^{\gamma_N(\sigma)}$)
\item $ \operatorname{LT}_\phi(a) $ : leading term of $\phi_a$ (as a twisted polynomial)
\item $ \phi_I $ : annihilator of an ideal $I$ (monic maximal right-divisor of $\{\phi_a : a\in I\}$)
\item $ \exp_\phi,~ \log_\phi $ : exponential and logarithm functions of $\phi$
\item $ \exp_{(j)}, ~\log_{(j)} $ : exponential and logarithm of $\Psi^{(j)}$
\item $ \tilde\pi $ : Carlitz period of $\Psi^{(2)}$ 
\item $ \tilde\pi_\C $ : Carlitz period of $ \C $
\item $ \tilde\pi_\Phi $ : Carlitz period of $\Phi$, $\tilde\pi_\Phi = \tilde\pi / C_{0,2}$

\item $ D_n $ : coefficients of $\exp_{(0)}$: $\exp_{(0)}(\xi)=\sum_{n\geqslant 0} \xi^{q^n}/D_n$
\item $ L_n $ : coefficients of $\log_{(0)}$: $\log_{(0)}(\xi)=\sum_{n\geqslant 0} (-1)^n \xi^{q^n}/L_n$
\item $ D_n^{\sigma^j} $ : the $\sigma^j$-twist of $D_n$ (coefficients of $\exp_{(j)}$)
\item  $D_n^{\Phi}$, $D_n^{\phi}$ : coefficients of the exponential function of $ \Phi $ and $\phi$, respectively

\item $ C_{i,j} $ : isogeny constant: $\sqrt[q-1]{\Theta^{\sigma^j-\sigma^i}}\;\Theta^{\sigma^i(\gamma_{j-i}(\sigma)-W_{j-i})}$
\item $ \lambda $ : isogeny between $\Psi^{(i)}$ and $\Psi^{(j)}$
\end{itemize}

\paragraph{\textbf{Shtuka function and motives}}
\begin{itemize}
\item $ \eta $ : a root of $\rho$; $\mathbb{F}_\rho = \mathbb{F}_q(\eta)$
\item $ \Theta $ : $1/(\theta-\eta)$
\item $ f(t) $ : shtuka function: $f(t)=\frac1{t-\eta}-\Theta =\frac{1}{t-\eta}-\frac1{\theta-\eta}$

\item $ f^{(i)}(t) $ : the twist of $f $ by raising its coefficients to the $q^k$-th power, i.e.,  $f^{(i)}(t)=\frac{ 1}{t-\eta^{(i)}}- \Theta^{q^i}$
\item $ f^{\sigma^i}(t) $ : the $\sigma^i$-twist of $f $, i.e.,  $f^{\sigma^i}(t)= \frac{1}{t-\eta^{(i)}}-\frac{1}{\theta-\eta^{(i)}}$
\item $ \omega^{(j)} $ : differential form: $h^{(j)}(t)dt$ with $h^{(j)}(t)=-\frac{\theta^{(j-1)}-\eta^{(j-2)}}{\theta^{(j-1)}-\eta^{(j-1)}}\frac1{(t-\eta^{(j-2)})(t-\eta^{(j-1)})}$

\item $  [\alpha] = P_{t - \alpha} $ : zero of $t-\alpha$ in $\K\otimes_{\mathbb{F}_q}L$
\item  $ M_{\Psi} $ :  the Anderson motive of $ \Psi $, associated with the shtuka function $f $
\item $*$ : action of $\A_{L }\{\tau\}$ on  $ M_{\Psi} $
\item $ \mathbf{s}_0 $ : $ \mathbf{s}_0 =1 $, serves as the formal generator of the Anderson motive $M_\Psi$
 \item $\mathbf{s}_i $  : $\mathbf{s}_i = f^{(0)} \cdots f^{(i-1)} $ 
\item $ \mathbf{s}(x) $ : formal generator of $M_{\psi^x}$
\item   $\mathbf{s}(u_{k,\mu})$ :  generator of the associated Anderson motive of $ \psi^{u_{k,\mu}} $

\end{itemize}

\paragraph{\textbf{Galois theory}}
\begin{itemize}
\item $ T_i $ : $t^i/\rho(t)$ for $i=0,\dots,N-1$ (generators of $\A$)
\item $ I_\infty $ : ideal of $\A$ generated by $T_0,\dots,T_{N-1}$
\item $ I_0 $ : ideal generated by $T_0-1/\rho(0), T_1,\dots,T_{N-1}$
\item $ \Cl(\A) $ : ideal class group of $\A$ (isomorphic to $\mathbb{Z}_N$)
\item $ \Cl^+(\A) $ : narrow class group (isomorphic to $\mathbb{Z}_{W_N}\times\mathbb{Z}_N$)
\item $ W_i $ : $(q^i-1)/(q-1)$
\item $ \gamma_k(\sigma) $ : $\sum_{i=0}^{k-1} q^i \sigma^{k-1-i}\in\mathbb{Z}[\sigma]$
\item $ H = \mathbb{F}_q(\theta,\eta) $ : Hilbert class field of $ K$
\item $ H^+ = K(u,\eta) $ : narrow class field of $ K$ with respect to $\operatorname{Sgn}_\eta$
\item $ \operatorname{Sgn}_\eta $ : sign function on $K_\rho$ with $\operatorname{Sgn}_\eta \left( T_i(\theta)\right)=\eta^i$
\item $ u $ : a root of $x^{W_N}=(-\Theta)^{\gamma_N(\sigma)}$ (parameter for Hayes modules)
\item $ u_{k,\mu} $ : $\sigma_\infty^k M_\mu(u)$ with $\mu^{W_N}=1$
\item $ U_\rho $ : set of all conjugates of $u$ over $K$, i.e., $U_\rho = \{ u_{k,\mu} 
 \}$
\item $ \sigma_\infty $ : automorphism of $H^+$: $\sigma_\infty(u)=(\theta-\eta)^{q-1}u^q$, $\sigma_\infty|_H=\sigma$
\item $ M_\mu $ : automorphism: $M_\mu(u)=\mu u$, $M_\mu|_H=\operatorname{id}$
\item $ \eta_* $ : $\eta^{(1-q)/q}$ (satisfies $\eta_*^{W_N}=1$)
\item $ \sigma_0 $ : $\sigma_\infty M_{\eta_*}$ with $\eta_*=\eta^{(1-q)/q}$
\item $\delta$ : map from $ U_{\rho}$ to $ H^+ $,  
 $\delta(x)=\frac{\theta}{\eta_x}x$
\item $ \left(\frac{H^+/K}{-}\right) $ : Artin symbol; $\left(\frac{H^+/K}{I_\infty}\right)=\sigma_\infty$, $\left(\frac{H^+/K}{I_0}\right)=\sigma_0$
\end{itemize}

\paragraph{\textbf{Anderson Generating functions}}
\begin{itemize}

\item $ \mathbb{C}_{\infty}[\![I_{\infty} ]\!] $ :  the formal completion algebra of $  \A_{\mathbb{C}_\infty } $ at the ideal $I_{\infty}$

\item $\mathbb{C}_{\infty}(\!(I_{\infty})\!)$ :  the quotient field of $\mathbb{C}[\![I_\infty]\!]$
\item $\mathbb{T}$ :  Tate Algebra contained in $\mathbb{C}_{\infty} [\![\ft]\!]$
\item $\mathbb{T}_{\eta}$ : Tate Algebra contained in $\mathbb{C}_{\infty} [\![I_\infty]\!]$
\item $ \diamond $ : action of $  \A_{\mathbb{C}_\infty }\{\tau\}  $ on $\mathbb{C}_{\infty}[\![I_{\infty} ]\!] $
\item $\nabla_a $ : the differential operator, $\nabla_a(\omega) = a\diamond \omega - a(\ft) \omega $, for $a\in \A$
\item $\nabla^f $ : the differential operator, $\nabla^f(\omega) = \omega^{(1)} - f \omega $
\item $ \omega_f(\ft) $ : Anderson-Thakur function 
\item $ \triangleright_\phi $ : exponential action: $g(t)\triangleright_\phi U = \exp_\phi(g(\theta)U)$ (extended to $\K \otimes_{\mathbb{F}_q} \mathbb{C}_{\infty} $)
\item $ G(U;\ft) $ : Pellarin-type generating function 
\item $ H(U;\ft) $ : Anderson-type generating function 
\item $ \Log(\xi;\ft) $ : deformed logarithm

\end{itemize}

\paragraph{\textbf{Auxiliary notations}}
\begin{itemize}
\item $ \bket{k} $ : $\Theta^{q^k}-\Theta^{\sigma^k} = -f^{(k)}(\theta)$ 
\item $ \FS{x}_{j}^{i} $ : $\begin{cases} \sigma_\infty^i(x), &0\leqslant i<j \\ \delta\sigma_0^{i-j}\sigma_\infty^j(x), & i\geqslant j \end{cases}$ 
\item $ \FT{\Theta}_{l}^{i} $ : quantities in $H$: $\begin{cases} \Theta^{(1-q)\gamma_i(\sigma)}\Theta^{q^i}, & 0\leqslant i<l \\ \theta\eta^{q^{i-1}(1-q)(i-l)-q^i}\Theta^{(1-q)\gamma_i(\sigma)}\Theta^{q^i}, & i\geqslant l \end{cases}$
\item $ \Upsilon_k(t) $ : rational function:  
$\begin{cases}
    (t-\eta^{(1)})\prod_{i=k}^{-1} \frac{1}{t - \eta^{(-i)}} & \text{for $k\leqslant-1$; } \\
    \prod_{i=-1}^{k-1} (t - \eta^{(-i)}) & \text{for $k\geqslant 0$}.
\end{cases}$
\end{itemize}



%
	

\subsection{Drinfeld Module}

We introduce the necessary notions concerning Drinfeld modules in this section. Let $ X $  be a smooth algebraic curve over the finite field $ \mathbb{F}_q$ associated with an infinity $ \infty $. Denote by $ \mathcal{O}_X$ the structure sheaf of $X$. 
Let $ \A = H^0( X - \infty, \mathcal{O}_{X}) $ be a Dedekind domain associated with the open curve 
 $X-\{\infty\} $. The field $L$ equipped with an $\mathbb{F}_q$-algebra homomorphism $\iota : \mathcal{A} \to L$ is called an $\mathcal{A}$-field. Throughout this paper, we additionally require $\iota$ to be injective; this convention corresponds to working in generic characteristic in the standard literature. Let $L\{\tau \}$ denote the twisted polynomial ring, which is a non-commutative $ L$-algebra generated by
the $q$-th Frobenius endomorphism $\tau $ such that
$\tau a = a^{q} \tau $. An element contained in $L\{\tau \}$ is called a twisted polynomial. The ring $L\{\tau\}$ acts naturally on the algebraic closure $\bar{L}$ via
\begin{equation*}
\sum _{i=0}^{n} a_{i} \tau ^{i} : \ell \to \sum _{i=0}^{n} a_{i}
\ell ^{q^{i}}.
\end{equation*}
For a twisted polynomial $\lambda  $, we denote by $\partial \lambda $ the constant term of $ \lambda $.
\begin{defn}[Drinfeld Module]
    A Drinfeld $ \A$-module is a non-trivial algebra homomorphism 
    \begin{equation*}
\phi : \A \to L\{\tau \}, \quad a \mapsto \phi _{a},
\end{equation*}
    such that $ \partial \phi _{a} = \iota (a)$, where 
the non-trivial condition requires that $\phi _{a} \not \equiv \iota (a)$. 
\end{defn}

\begin{defn}[Isogeny and Isomorphism]
\label{Defn:isogeny}
 Suppose that $ \phi $ and $ \phi' $ are Drinfeld $\A$-modules over $L$. Let $L'$ be a field extension of $L$ and $ \lambda $  a twisted polynomial over $L'$.  We say that $ \lambda $ is an isogeny from
$ \phi $ to $ \phi ' $  over $L'$, if
\begin{equation*}
\lambda \phi _{a} = \phi '_{a} \lambda
\end{equation*}
holds for all $ a \in \A $. If $ \lambda $ is a nonzero constant in $ L' $,
then $ \lambda $ is called an isomorphism over $ L' $. In other words, the Drinfeld module
$ \phi $ is isomorphic to its conjugate
$\lambda  \phi \lambda^{-1} $.
\end{defn}

In the rest of this paper, we always choose the curve $X$ to be the projective space $ \mathbb{P}^1$ over $ \mathbb{F}_q $ associated with
 the function field of $ \K = \mathbb{F}_{q}(t) $.  
 Using the function field language, it is well known that the places of $ \K  $ are represented as 
\[
    \{ P_{\rho(t)} | \text{where $ \rho(t) $ is an irreducible monic polynomial} \}  \cup \{ P_{\infty} \}. 
\]
 Let $ \mathcal{O}_{\mathbb{P}^1}$ be the structure sheaf of $ \mathbb{P}^1 $. For a place $P$, we denote by $ v_P $ the valuation at $ P $. Then the global section  $ H^{0}(\mathbb{P}^1 -  \infty , \mathcal{O}_{\mathbb{P}^1})$ coincides with 
 \[
    \{ f \in \K | v_P (f) \geqslant 0 \text{ for $P \not = \infty$} \}.
 \] 
 For instance, the polynomial ring $ \mathbb{F}_q [t] $ is identical to $ H^{0}(\mathbb{P}^1 - P_{\infty}, \mathcal{O}_{\mathbb{P}^1})$ 
by choosing $ \infty = P_{\infty} $.

\begin{example}
    For the case of the polynomial ring, the Drinfeld $\mathbb{F}_q[t]$-module over the $\mathbb{F}_q[t]$-field 
    \[ \iota: \mathbb{F}_q[t] \to L , t \mapsto \theta ,\] 
    can be written explicitly as 
    \[
        \phi_t = \theta +  g_1 \tau + g_2 \tau^2 + \cdots + g_r \tau^r  
    \]
    for coefficients $ g_i \in L $. The integer $ r \geqslant 1 $ is called the rank of $ \phi $.  The canonical rank-one example is the Carlitz module $\C$, given by $\C_t = \theta + \tau$. 
\end{example}

\subsection{Dedekind Domain}
The natural generalization of the Carlitz module is derived from setting $ \infty $ to be some place $ P_\rho $ represented by the monic irreducible polynomial $ \rho $ of degree $N \geqslant 2$.
Our main purpose is to investigate the rank-one Drinfeld module in this setting. Now we denote the corresponding Dedekind domain by  
 \[ \A:=H^{0}( \mathbb{P}^1 -P_{\rho}, \mathcal{O}_{\mathbb{P}^1}).
\] 
 Fix a root $ \eta $ of $ \rho $. Then the residue field $\mathbb{F}_{\rho}$ of $\K$ at $ P_{\rho}$ is isomorphic to $  \mathbb{F}_{q}(\eta)$. 
 For $i = 0,\cdots, N-1$, set $T_i = \frac{t^i}{\rho(t)}$.
The ring $\A$ is generated as an $\mathbb{F}_q$-algebra by $T_0,\cdots, T_{N-1}$.
The divisor of $ T_i $ in the function field $ \K $ is given by 
\[
    (T_i) = i P_{t} +  (N - i ) P_{\infty} - P_{\rho} .
\]
In terms of ideals, the principal ideal generated by $ T_i $ admits the decomposition 
\begin{equation}\label{Eq:Ideals}
    T_i\cdot\A = I_0^{i} I_\infty^{N-i},
\end{equation}
where $ I_\infty $ is generated by $ T_0, \cdots, T_{N-1} $, and $ I_0 $ is generated by $ T_0-\frac{1}{\rho(0)},  T_1, \cdots, T_{N-1} $. 
\subsection{Rank One Drinfeld $\A$-Module} 
One of the most striking results in the theory of rank-one Drinfeld modules is their deep connection with class field theory for function fields. For a general Dedekind domain $\A$ over a finite field $
\mathbb{F}_q$, the following results are well-established;   we refer the
reader to \cite{Goss96}*{Proposition 7.2.20 and Chapter 7.4},
\cite{P23}*{Theorem 7.5.21} or \cite{PB21} for more details.
\begin{thm}\label{thm:homogeneous}
 The set of isomorphism classes of Drinfeld $\A$-modules forms a principal homogeneous space for the class group  $ \Cl(\A) $ of $\A$. 
\end{thm}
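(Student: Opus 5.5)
The plan is the classical analytic route through lattices and ideal multiplication. I would first reduce to the analytic classification over $\mathbb{C}_\infty$. A rank-one Drinfeld $\A$-module $\phi$ over $\mathbb{C}_\infty$ has an exponential $\exp_\phi$ whose kernel $\Lambda_\phi$ is a rank-one discrete $A$-lattice. Here discreteness is with respect to $|\cdot|_\eta$ on $\mathbb{C}_\infty$, and $A$ is the copy of $\A$ via $t\mapsto\theta$. Conversely, every such lattice $\Lambda$ gives a Drinfeld module $\phi^\Lambda$ through $\phi^\Lambda_a\, e_\Lambda(z)=e_\Lambda(az)$, where $e_\Lambda(z)=z\prod_{0\neq\lambda\in\Lambda}(1-z/\lambda)$. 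Moreover $\phi\cong\phi'$ exactly when $\Lambda_{\phi'}=c\Lambda_\phi$ for some $c\in\mathbb{C}_\infty^\times$. I take this uniformization theorem as standard (Drinfeld's analytic theory, as in \cite{Goss96}). It identifies isomorphism classes of rank-one modules with homothety classes of rank-one $A$-lattices.

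Next I would show that homothety classes of rank-one lattices are a principal homogeneous space for $\Cl(\A)$. Any rank-one lattice has the form $\Lambda=\omega J$, with $\omega\in\mathbb{C}_\infty^\times$ and $J\subset K$ a fractional ideal of $A$. Indeed, pick $0\ne\omega\in\Lambda$; then $\Lambda\otimes_A K=K\omega$, so $\omega^{-1}\Lambda$ is a finitely generated torsion-free $A$-submodule of $K$. The class group acts by $[I]\cdot[\omega J]=[\omega I^{-1}J]$, and this is well defined because principal ideals only change $\omega$. The action is transitive, since any two classes $[\omega J]$ and $[\omega' J']$ are related by $I=J(J')^{-1}$. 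It is free: if $\omega I^{-1}J=c\,\omega J$, then $I^{-1}=cA$, so $c\in K$ and $I$ is principal.

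To make the action intrinsic to Drinfeld modules, I would also describe it algebraically via Hayes' ideal action $\phi\mapsto\phi^I$, where $\phi_I$ is the monic generator of the left ideal generated by $\{\phi_a: a\in I\}$ and $\phi_I\phi_a=\phi^I_a\phi_I$. I would check two things: that $\Lambda_{\phi^I}$ is homothetic to $I^{-1}\Lambda_\phi$, and that $\phi_{IJ}=\phi^J_I\phi_J$. These give compatibility with the lattice picture. A principal ideal $(a)$ yields $\phi_{(a)}=\operatorname{LT}_\phi(a)^{-1}\phi_a$, so $\phi^{(a)}$ is isomorphic to $\phi$.

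I expect the main obstacle to be the analytic step: producing the exponential and proving that its kernel is a discrete rank-one lattice for an infinite place of degree $N\ge 2$. The danger is that $\mathbb{C}_\infty$ is built from $K_\rho$, where $\A$ does not embed as a polynomial ring. I would handle it by working with the generators $T_i$: construct $\exp_\phi$ from the functional equation for one nonconstant $T_0$, then use commutativity to extend it to all $T_i$. Since the paper explicitly allows assuming cited standard results, the cleanest course is to invoke \cite{Goss96}*{Proposition 7.2.20} for this uniformization and concentrate the written argument on the class-group bookkeeping above.
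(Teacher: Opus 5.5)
Your proposal is correct and takes essentially the same approach as the paper: the paper gives no argument of its own, only citing Goss and Papikian, and those sources prove the statement just as you do, via Drinfeld's analytic uniformization (isomorphism classes of rank-one modules over $\mathbb{C}_\infty$ correspond to homothety classes of rank-one lattices $\omega J$), followed by the observation that $\Lambda\mapsto I^{-1}\Lambda$ makes these homothety classes a principal homogeneous space for $\Cl(\A)$. Your compatibility check $\Lambda_{\phi^I}\sim I^{-1}\Lambda_\phi$ with the Hayes ideal action is the lattice relation the paper later quotes as Theorem~\ref{Thm:isgdm}.
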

Recall the definition of the ideal class group of $ \A $:
\begin{equation*}
\Cl(\A) :=
\frac{\text{Ideal group of $\A$}}{\text{principal ideal group of $\A$}}.
\end{equation*}
In our setting, the ideal class group $ \Cl(\A) $ is generated by $ I_\infty$ (or $ I_0 $) with the relation 
\[ I_\infty^N = T_0 \A  .\]
 It follows that $ \Cl(\A) $ is isomorphic to $ \mathbb{Z}_N $.

\begin{defn}[Type]
    For a Drinfeld $\A$-module $ \phi $, we denote by
$ \operatorname{LT}_{\phi} (a) $ the leading term of $ \phi _{a} $.
    It is obvious that the quotient $ \operatorname{LT}_{\phi}(T_1)/\operatorname{LT}_{\phi}(T_0) $ is a root of $\rho$, say $ \eta^{(k)}$. We refer to the type of such a Drinfeld $\A$-module $ \phi $ as $\eta^{(k)}$.
\end{defn}
Due to Theorem \ref{thm:homogeneous}, we know that the map 
\[
    \phi \mapsto  \text{the type of $\phi$}
\]
gives an isomorphism from the class of rank-one Drinfeld $\A$-modules to the collection of roots of $ \rho $.  Moreover, the Galois group $ \mathbb{Z}_{N} $ acts transitively and equivalently on the both sides of this map.

\subsection{Hayes $\A$-Modules}
Analogous to Theorem \ref{thm:homogeneous},  the narrow class group of $\A$ is naturally related to the notion of Hayes $\A$-modules.
\begin{thm}\label{thm:homogeneous2}
 The set of Hayes $\A$-modules
 forms a principal homogeneous space for the narrow class group $ \Cl^+(\A) $ of $\A$.
\end{thm}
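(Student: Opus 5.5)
The plan is to follow Hayes' classical argument: first reduce everything to the class group action of Theorem \ref{thm:homogeneous}, then count the remaining freedom within an isomorphism class. Fix the sign function $\Sgn_\eta$ on $K_\rho$ with $\Sgn_\eta(T_i(\theta))=\eta^i$. Recall that a Hayes module is a rank-one Drinfeld $\A$-module $\psi$ that is sign-normalized, meaning $\operatorname{LT}_\psi(a)=\tau_{\text{sign}}(\Sgn_\eta(a))$ for a fixed twisting $\mathbb{F}_q$-embedding $\tau_{\text{sign}}$ of $\mathbb{F}_\rho$ into $\mathbb{C}_\infty$. The narrow class group $\Cl^+(\A)$ is the group of fractional ideals modulo principal ideals generated by positive elements, i.e.\ those with $\Sgn_\eta=1$.

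\textbf{Step 1: the action.} For a nonzero ideal $I$, define $\psi_I$ as the monic generator of the left ideal of $\mathbb{C}_\infty\{\tau\}$ generated by $\{\psi_a: a\in I\}$. Then define $I*\psi$ by the relation $\psi_I\psi_a=(I*\psi)_a\psi_I$. To check this is well defined, note that $\psi_I\psi_a$ is killed on the right by the kernel of $\psi_I$. That kernel is $\psi[I]$, which is stable under $\psi_a$, so $\psi_I$ right-divides $\psi_I\psi_a$. Because $\psi_I$ is monic, comparing leading coefficients shows that $I*\psi$ is again sign-normalized. Multiplicativity, $(IJ)*\psi=I*(J*\psi)$ with $\psi_{IJ}=(J*\psi)_I\psi_J$, follows from $\psi[IJ]$ having the expected order $\#(\A/IJ)$. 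For a principal ideal $I=(a)$ we get $\psi_{(a)}=\operatorname{LT}_\psi(a)^{-1}\psi_a$. Hence $(a)*\psi$ is conjugation of $\psi$ by the constant $\operatorname{LT}_\psi(a)^{-1}$, which is trivial exactly when $\Sgn_\eta(a)=1$ up to the appropriate identification. So the action of ideals descends to an action of $\Cl^+(\A)$.

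\textbf{Step 2: transitivity and freeness.} By Theorem \ref{thm:homogeneous}, any two Hayes modules become, after applying some ideal, isomorphic. It therefore suffices to analyse sign-normalized modules within one isomorphism class. If $c\psi c^{-1}$ is also sign-normalized, then $c^{q^{\deg a}-1}\in\mathbb{F}_\rho^\times$ must be compatible for all $a$. The gcd of the exponents $q^{\deg a}-1$, with $\deg a$ ranging over multiples of $N$ coming from the positive elements, forces $c^{q^N-1}=1$ and then $c^{q-1}$ to lie in a specific group. The upshot is that the sign-normalized modules isomorphic to $\psi$ form a torsor under $\mathbb{F}_\rho^\times/\mathbb{F}_q^\times$, which has order $W_N$. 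This matches the kernel of $\Cl^+(\A)\to\Cl(\A)$, namely $K_\rho$-signs modulo $\mathbb{F}_q^\times$. Finally, one checks that the principal ideal $(a)$, with $a$ of arbitrary sign, acts exactly through conjugation by $\Sgn_\eta(a)$ twisted. This shows the action on each fibre is free and transitive, and combining with the $\Cl(\A)$-torsor structure gives the result.

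\textbf{Main obstacle.} I expect the hardest part to be existence of a sign-normalized module in every isomorphism class, together with the precise bookkeeping of the twisting embedding. This is the leading-coefficient computation: one must show that the map $a\mapsto\operatorname{LT}_\psi(a)$ agrees, up to a constant $c^{q^{\deg a}-1}$, with a twist of $\Sgn_\eta$. The approach I would try is to use that $a\mapsto\operatorname{LT}(a)$ is multiplicative and agrees with a Frobenius twist of the sign on $\mathbb{F}_q^\times$. That the correction is a coboundary then follows from Hilbert 90 on $\mathbb{F}_\rho$, as in \cite{Goss96}*{Chapter 7}.
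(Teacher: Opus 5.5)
Your Steps 1 and 2 are the standard Hayes argument; the paper quotes this theorem from Hayes' theory without proof. They do show that $\Cl^+(\A)$ acts freely and transitively on the set of Hayes modules, once you add two small points. First, you need $\operatorname{Aut}(\psi)=\A^\times=\mathbb{F}_q^\times$ for rank-one $\psi$ (because $\operatorname{End}(\psi)=\A$). This is used both for the ``only if'' direction in Step 1 and to know that $c\mapsto c\psi c^{-1}$ is injective on $\mathbb{F}_\rho^\times/\mathbb{F}_q^\times$. Second, the clause about $c^{q-1}$ is superfluous, since $c^{q^N-1}=1$ already says $c\in\mathbb{F}_\rho^\times$.

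The genuine gap is non-emptiness. A principal homogeneous space must be nonempty, and you never produce a Hayes module. Your own reduction shows that a single one suffices, since transitivity of $\Cl(\A)$ on isomorphism classes then puts one in every class. But the route you sketch for existence would not work as stated:
\begin{itemize}
\item The map $a\mapsto\operatorname{LT}_\psi(a)$ is not multiplicative; it only satisfies $\operatorname{LT}_\psi(ab)=\operatorname{LT}_\psi(a)\operatorname{LT}_\psi(b)^{q^{\deg a}}$.
\item Its agreement with $\Sgn_\eta$ on $\mathbb{F}_q^\times$ is automatic and says nothing about the rest of $\A$.
\item The correcting constant lives in $\mathbb{C}_\infty^\times$ and is obtained by extracting a root there, so Hilbert 90 for $\mathbb{F}_\rho/\mathbb{F}_q$ is not the relevant tool.
\end{itemize}
The missing idea is that $\operatorname{LT}_\psi(a)$ depends only on $\deg a$ and the leading coefficient of $a$ at $P_\rho$. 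If $\deg(a-b)<\deg a=\deg b$, then $\psi_a-\psi_b=\psi_{a-b}$ has smaller $\tau$-degree, so $\operatorname{LT}_\psi(a)=\operatorname{LT}_\psi(b)$. Likewise $\operatorname{LT}_\psi$ is additive on elements of a fixed degree whose leading terms do not cancel.

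In the present setting this closes the gap directly.
\begin{enumerate}
\item Put $\psi=\ell\Psi\ell^{-1}$ with $\ell^{q^N-1}=\operatorname{LT}_\Psi(T_0)=\Theta^{\gamma_N(\sigma)-W_N}$ (Equation \eqref{Eq:TjN}). Then $\operatorname{LT}_\psi(T_i)=(\eta^{(-1)})^i$.
\item These values lie in $\mathbb{F}_\rho$ and every degree is a multiple of $N$, so twisted multiplicativity becomes ordinary multiplicativity. Hence $\operatorname{LT}_\psi(T_iT_0^{m-1})=\sigma^{-1}(\eta^i)=\sigma^{-1}\bigl(\Sgn_\eta(T_iT_0^{m-1})\bigr)$.
\item The set $\{1\}\cup\{T_iT_0^{m-1}: m\geqslant 1,\ 0\leqslant i\leqslant N-1\}$ is an $\mathbb{F}_q$-basis of $\A$, with $\deg T_iT_0^{m-1}=Nm$. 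Write $a=\sum_i c_iT_iT_0^{m-1}+(\text{lower degree})$ with not all $c_i$ zero.
\item The leading-term principle then gives $\operatorname{LT}_\psi(a)=\sigma^{-1}\bigl(\sum_i c_i\eta^i\bigr)=\sigma^{-1}(\Sgn_\eta(a))$.
\end{enumerate}
So $\psi$ is a Hayes module; it is the paper's $\psi^u$ from Proposition \ref{Pro:Hu} and Theorem \ref{Thm:Hmopsi}.

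For a general Dedekind domain, the same leading-term principle combined with additivity and twisted multiplicativity gives $\operatorname{LT}_\psi(a)=c^{q^{\deg a}-1}\sigma_\psi(\Sgn(a))$, for some field embedding $\sigma_\psi$ and some $c\in\mathbb{C}_\infty^\times$. Conjugating by $c$ then yields a sign-normalized module; this is Hayes' normalization theorem.
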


 We introduce the necessary notations concerning this topic below. 
We adopt the notions $ A = \iota(\A)$ (resp. $ K  = \iota(\K)$), which denote the copy of $ \A $ (resp. $ \K $) obtained by replacing $t $ with $ \theta$.  Suppose that $ K_\rho $ is a  $P_{\rho}$-adic completion of $ K $  containing $ \mathbb{F}_{\rho} $ as a subfield. 
Notice that $K_{\rho}$ is unique up to isomorphism.  We may assume that $\theta-\eta $ is a uniformizer of $ K_{\rho}$, so the valuation $v_{\eta} $ on $ K_{\rho} $ satisfies
\[
    v_{\eta}(\theta- \eta) =1 . 
\]
Define the $\eta$-norm $|\cdot|_{\eta} : = q^{-v_{\eta} (\cdot)}$ to be the absolute value associated with $v_\eta$. 
In this notation, we have 
 \[
     |\theta - \eta |_{\eta} = q^{-1}. 
 \]   
\begin{defn}\label{defn:sign}
       A sign function on $ K_\rho^* $ (resp. $K_{\rho}$) is a homomorphism 
	$ K_\rho^* \to \mathbb{F}_{\rho}^*$ that restricts to the identity map on $ \mathbb{F}_{\rho}^* $.  
\end{defn} 
For $ i \geqslant 1$, denote by $W_i$ the constant 
\begin{equation}\label{Eq:Wn}
W_i = \frac{q^i-1}{q-1}  .
\end{equation}
It follows from \cite{Goss96} that there are exactly $W_N $ non-equivalent sign functions on $ K_{\rho} $. 
\begin{example}
   In this example, we construct the typical sign functions on $ K_{\rho}$, denoted by $ \Sgn_{\eta} $. 
 Observe that $\rho\in K$ serves as a standard uniformizer of the place $P_{\rho}$ corresponding to the $\eta$-norm $|\cdot|_{\eta}$ on $K$. For an element $ a \in K_{\rho}^* $, we have the unique decomposition
\[
	a = \Sgn_{\eta}(a) \rho^{v_{P_\rho}(a)} \ang{a}_1 ,
\]
where $ \ang{a}_1 \in K_{\rho} $ is a $1$-unit (i.e., $ |1-\ang{a}_1|_{\eta} < 1$), and $\Sgn_{\eta}(a) \in \mathbb{F}_{\rho} ^* $. The function 
\begin{equation}\label{Eq:Sgn_eta}
    \Sgn_{\eta}:  K^*_{\rho} \to \mathbb{F}_{\rho}^* ,  \qquad a  \mapsto  \Sgn_{\eta}(a)  
\end{equation}
is a sign function as defined in Definition \ref{defn:sign}. 
In particular, we have 
\[
    \Sgn_{\eta} (  \iota (T_i)  )  =  \eta^{i}. 
\]
\end{example}

\begin{defn}[Hayes Module]
 Given a sign function such as $\operatorname{Sgn}_{\eta}$ on $K_{\rho}$, a Hayes module $\psi$ (with
respect to $ \operatorname{Sgn}_\eta$) over an $ \A$-field is a rank-one
Drinfeld module,  such that $ \operatorname{LT}_{\psi} $ is
a twisted sign function. In other words, there exists some
$\sigma_{\psi} \in \mathrm{Gal}(\mathbb{F}_{\rho}/ \mathbb{F}_{q})$ such that
\begin{equation*}
\operatorname{LT}_{\psi}(a) = \sigma_{\psi} \operatorname{Sgn}_{\eta}( \iota (a) )
\end{equation*}
for each $a \in \A$.
\end{defn}
When $\sigma_{\psi} $ is the $q^k$-th power map $  \sigma^k $ of $ \mathbb{F}_{\rho} $, the corresponding Hayes $\A$-module is of $\eta^{(k)}$-type due to the simple computation:
\[
   \operatorname{LT}_{\psi}(T_1) /   \operatorname{LT}_{\psi}(T_0) = \sigma^{k} \left(\Sgn_{\eta}\left( \iota T_1 \right) /\Sgn_{\eta}(\iota T_0 ) \right) = \eta^{q^k}. 
\]
The narrow class group of $ \A$  is defined as 
\begin{equation*}
\Cl^{+}(\A) :=
\frac{\text{Ideal group of $\A$}}{\text{Subgroup of principal $\Sgn$-positive ideals of $\A$}}.
\end{equation*}
It is straightforward to prove that 
\begin{equation}\label{Eq:cl+}
\Cl^+(\A)\cong \mathbb{Z}_{W_N}\times \mathbb{Z}_{N}.
\end{equation}
So there exist exactly $W_N \times N$ Hayes $\A$-modules according to Theorem \ref{thm:homogeneous2}.
\subsection{The Smallest Field of Definition}
The coefficients of Drinfeld modules are closely linked to the class field and narrow class field of $ \A $ due to Hayes' theory.
\begin{thm} 
\label{thm:Hilbert}
\begin{enumerate}
    \item The smallest field of definition for a rank-one $\A$-Drinfeld module is the Hilbert
    class field $ H$ of $ K $, which is the maximal unramified Abelian extension of $ K$ in which $\infty $ splits completely.
    \item For a fixed sign
    function $\operatorname{Sgn}$, the field of definition for a Hayes $\A$-module equals the narrow class field $H^{+}$ of $ K $ with respect to
    $\operatorname{Sgn}$.  The field $ H^{+} $ is totally and tamely ramified
    over all infinities of $ H $, and the Galois group
    $\mathrm{Gal}(H^{+} / H) $ is isomorphic to
    $\mathbb F_{\infty}^{*}/ \mathbb F_{q}^{*} $, where
    $ \mathbb F_{\infty}^{*} $ denotes the multiplicative group of the residue
    field at $\infty $.
    \item Under the Artin maps from the ideal group to the Galois group, the ideal action on Drinfeld modules  corresponds naturally to the Galois action on their coefficients.
\end{enumerate}

\end{thm}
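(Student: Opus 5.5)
Theorem \ref{thm:Hilbert} is the standard theory of Hayes, so the natural plan is to deduce it from Theorems \ref{thm:homogeneous} and \ref{thm:homogeneous2} together with class field theory. We work throughout over $\mathbb{C}_\infty$ and follow \cite{Hay74,Goss96}.

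\textbf{Step 1: Galois stability of the set of modules.} For any $\sigma\in\mathrm{Aut}(\mathbb{C}_\infty/K)$ (continuous, or first restricted to $\bar K$ after descending the coefficients), the twist $\sigma\phi$ is again a rank-one Drinfeld $\A$-module. Its constant terms are unchanged because $\iota(\A)\subset K$. If $\phi$ is a Hayes module for $\operatorname{Sgn}$, then $\sigma\phi$ is also one, since $\operatorname{LT}_{\sigma\phi}=\sigma\circ\operatorname{LT}_\phi$ is still a twisted sign function. The two sets are finite, of sizes $h(\A)$ and $h^+(\A)$ up to isomorphism (resp. equality). Hence every coefficient of such a module is algebraic over $K$, and every such module is defined over a finite extension.

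\textbf{Step 2: identify the fields of definition.}
\begin{itemize}
\item \emph{Rank-one modules.} Normalize so that the leading coefficients lie in $\mathbb{F}_\infty$. The field generated by the coefficients of a normalized module is then Galois over $K$. Its Galois group embeds in the permutation group of the finite set, and commutes with the ideal action $\phi\mapsto\phi^I$ defined by $\phi_I\phi_a=\phi^I_a\phi_I$, because $(\sigma\phi)_I=\sigma(\phi_I)$. Since $\Cl^+(\A)$ acts simply transitively (Theorem \ref{thm:homogeneous2}), this forces the Galois group to be abelian and to inject into $\Cl^+(\A)$.
\item \emph{Unramified primes.} For a prime $\mathfrak p$ of $A$, reduce $\phi$ modulo a prime above $\mathfrak p$. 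The annihilator $\phi_{\mathfrak p}$ reduces to $\tau^{\deg\mathfrak p}$ (times a unit), because the characteristic is $\mathfrak p$. Comparing with $\phi_{\mathfrak p}\phi_a=\phi^{\mathfrak p}_a\phi_{\mathfrak p}$ gives $\phi^{\mathfrak p}\equiv\mathrm{Frob}_{\mathfrak p}\phi$ modulo the prime. Finiteness and the good reduction of Hayes modules (their coefficients are integral over $A$) then lift this congruence to an equality, $\phi^{\mathfrak p}=(\mathfrak p, H^+/K)\phi$. This shows that the extension is unramified at every finite prime and that its Artin map is the ideal action, which is part (3).
\item \emph{The place $\infty$.} Analyzing the lattice $\Lambda\subset\mathbb{C}_\infty$ of $\phi$ (a rank-one projective $A$-module inside $K_\infty$ times a period) shows that the field generated by the coefficients is totally and tamely ramified over $\infty$ only through the leading coefficient $(q^{\deg\infty}-1)/(q-1)$-th roots. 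Passing to isomorphism classes by the invariants $j$, i.e.\ quotienting by $\mathbb{F}_\infty^*/\mathbb{F}_q^*$, removes this ramification and yields the splitting of $\infty$ in $H$.
\end{itemize}

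\textbf{Step 3: degree count.} The injection into $\Cl^+(\A)$ (resp. $\Cl(\A)$) is surjective because the action is transitive. So $[H^+:K]=h^+$ and $[H:K]=h$. By the characterization of $H$ and $H^+$ through class field theory, these fields equal the Hilbert and narrow class fields, and $\mathrm{Gal}(H^+/H)\cong\ker(\Cl^+\to\Cl)\cong\mathbb{F}_\infty^*/\mathbb{F}_q^*$. Minimality holds because every Galois conjugate of $\phi$ must be realized by a field automorphism fixing the field of definition. For part (1), "smallest field of definition" is understood up to isomorphism, i.e.\ the field of moduli, which equals its field of definition by a Hilbert-90-type descent for rank one.

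\textbf{Main obstacle.} I expect the hardest step to be the reduction argument $\phi^{\mathfrak p}=\mathrm{Frob}_{\mathfrak p}\phi$. It needs good reduction of normalized modules, i.e.\ integrality of the coefficients, which I would prove through the lattice description and the sign-normalization. Since the paper cites this theorem as standard, in practice I would rely on \cite{Goss96}*{Ch.~7} and \cite{Hay74} for this step.
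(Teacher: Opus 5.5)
The paper gives no proof of this theorem. It quotes it from Hayes' theory (\cite{Hay74}, \cite{Goss96}), and your outline follows Hayes' strategy. Two steps, however, do not go through as written.

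\textbf{Surjectivity in Step 3.} You say the injection $\Gal(H^+/K)\hookrightarrow\Cl^+(\A)$ is surjective ``because the action is transitive''. The only transitivity you have is that of the ideal-class action, and you already used it to get injectivity: the Galois image lies in the centralizer of a regular abelian action. It gives no information about the size of the image. A priori that image could be any subgroup of $\Cl^+(\A)$, so the Hayes modules could fall into several Galois orbits and $[H^+:K]$ could be smaller than $h^+$. Surjectivity means that Galois acts transitively on the Hayes modules, and this has to come from the reciprocity law of Step 2. Once Frobenius at $\mathfrak p$ acts as $\psi\mapsto\psi^{\mathfrak p}$ for all (or almost all) primes, composing the Artin map with your injection gives the natural map from ideals (prime to a finite set) to $\Cl^+(\A)$, and that map is onto. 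So the lower bound $[H^+:K]\geqslant h^+$ rests entirely on Step 2.

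\textbf{Lifting the congruence in Step 2.} This is where the missing idea lies. ``Finiteness and good reduction'' cannot turn $\sigma\psi\equiv\psi^{\mathfrak p}\pmod{\mathfrak P}$ into an equality, because two distinct integral members of a finite set can be congruent modulo $\mathfrak P$. Also, you cannot write $(\mathfrak p, H^+/K)$ before knowing $\mathfrak p$ is unramified; instead take $\sigma$ in the decomposition group inducing $x\mapsto x^{q^{\deg\mathfrak p}}$ on the residue field.

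What you need is an injectivity lemma. Let $\psi'$ be a Hayes module and $J$ an ideal prime to $\mathfrak p$ with $\psi'^{J}\equiv\psi'\pmod{\mathfrak P}$; then $J=bA$ with $\operatorname{Sgn}(b)=1$. To see this, note that $\bar\psi'_J$ is then an endomorphism of the rank-one reduction $\bar\psi'$. That endomorphism ring is $A$, so $\bar\psi'_J=\bar\psi'_b$. Comparing the separable kernels gives $J=bA$. Comparing leading coefficients (monic versus a twisted sign, with $\mathbb F_\infty^*$ injecting into the residue field) gives $\operatorname{Sgn}(b)=1$.

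Applied with $\sigma$ in the inertia group (writing $\sigma\psi=\psi^J$ with $J$ prime to $\mathfrak p$), the lemma gives $\sigma\psi=\psi$, hence unramifiedness. Applied to $\psi'=\psi^{\mathfrak p}$, with $J$ prime to $\mathfrak p$ chosen so that $\sigma\psi=(\psi^{\mathfrak p})^{J}$, it gives $\sigma\psi=\psi^{\mathfrak p}$.

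A similar gap appears at $\infty$. Quotienting by $\mathbb F_\infty^*/\mathbb F_q^*$ only deals with ramification. Complete splitting of $\infty$ in $H$ comes from the fact that every isomorphism class has a lattice $I\subset K\subset K_\infty$, and hence a model with coefficients in $K_\infty$.
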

The corresponding Hilbert field of $K$ is $ H= K(\eta)$, with Galois group
\[ \Gal( H / K) = \{ \sigma^i |  i \in \mathbb{Z}_{N}  \},
\] 
where $\sigma$ denotes the $q$-th power Frobenius map, i.e., $\eta \mapsto \eta^{(1)}$ and $ \theta \mapsto \theta $. As in Part (1) of Theorem \ref{thm:Hilbert}, we know that the smallest field of definition for rank-one Drinfeld $ \A $-modules equals $ H $.
Similarly, Part (2) of Theorem \ref{thm:Hilbert} implies that all Hayes $\A$-modules are defined over the narrow class field $H^+$ of $ K $. An explicit construction of $H^+$ is not immediate, so we defer its detailed description to Section \ref{Sec:Factor}. 
\section{Construction of Standard Drinfeld Modules}\label{Sec:PL}
In this section, we construct the standard model $ \Psi $ of rank-one Drinfeld $\A$-modules associated with the shtuka functions $f $ and study the corresponding Anderson $\A$-motive $M_\Psi$. We explicitly reformulate the residue expression of $\Psi$ using the exponential function $ \exp_{(0)}$ and the logarithm function $ \log_{(0)}$. After that, we compute the period lattice of $ \Psi^{(2)} $, i.e., the $\sigma^2$-twist of $ \Psi $. 
It is interesting to see that the period lattice of $ \Psi^{(2)} $ is generated by the constant $ \tilde{\pi} $, which is analogous to $\tilde{\pi}_\C$ arising from the Carlitz module $ \C $.
\subsection{Anderson $\A$-Motive}
We start with the definitions of $\tau$-module and Anderson motive.
\begin{defn}[$ \tau $-module]\label{def:taumodule}
     Let $R $ be a ring over $ \mathbb{F}_q $. Let $ \tau_R  : R \to R $ be an $ \mathbb{F}_q $-linear endomorphism of $ R $. 
 A (left) module $ M $ over $ R$ is called a $ \tau $-module over $ R $ if $M$ is equipped with a left $\tau_R$-linear endomorphism $ \tau : M \to M $; that is  
 \[
     \tau (r \cdot x ) =  \tau_R(r) \cdot \tau (x)
 \]
 for all $r\in R$, $x\in M$.
 \end{defn}
 
To give the precise definition of Anderson motive, we let $\iota:  \A \to L $ be the $ \A $-field with generic characteristic. 
 For a left $ \A \otimes_{\mathbb{F}_q} L\{\tau \}$-module $ M $, we denote by
\[
    \bar{M} =  M \otimes_{L} \bar{L}
\]
the left $ \A \otimes_{\mathbb{F}_q} \bar{L}\{\tau \}$-module associated with $ M $.
\begin{defn}\label{Def:motive}
    A (one-dimensional) Anderson $\A$-motive $M $ of rank $ r $ over $L $ is an $ \A \otimes_{\mathbb{F}_q} L\{\tau \}$-module such that 
    \begin{enumerate}
        \item $M$ is a free module of rank-one as an $ L\{\tau\}$-module.
        \item $M$ is a projective module of rank $r $ as an $ \A \otimes_{\mathbb{F}_q} L $-module.
        \item For each $a \in \A$, there exists some integer $ n > 0 $ such that 
        \[
            ( a - \iota(a) )^n \bar{M} \subseteq \tau \bar{M}. 
        \]
    \end{enumerate}
\end{defn}

View $M$ in Definition \ref{Def:motive} as a finitely generated module over $ \A \otimes_{\mathbb{F}_q} L $. Define the map $ \tau_R =(-)^{(1)} $ on $ \A \otimes_{\mathbb{F}_q} L $ as 
\[
      \tau_R (a \otimes l)= (a \otimes l)   ^{(1)} = a \otimes l^q . 
\]
The action of $\tau $ on $M$ is clearly $\tau_R$-linear by definition. Hence the Anderson motive $M$ is indeed a special kind of $\tau$-module over $ \A \otimes_{\mathbb F_q} L $ in the sense of Definition \ref{def:taumodule}.

It is well-known \cite{P23, vdHGJ04} that there exists an anti-equivalence $\mathfrak{F} $ between the category of Drinfeld $\A$-modules over $L$ and the category of Anderson $\A$-motives over $L$.
For a Drinfeld module $ \phi $, the associated Anderson motive $ M_{\phi} = \mathfrak{F}(\phi)$ is defined as follows. 
\begin{defn}[Anderson $\A$-motive associated with $\phi$] \label{Def:t-m}
    For a Drinfeld $\A$-module $ \phi $ over $L$, 
    the Anderson $\A$-motive associated with $\phi$ over $L$ is the twisted polynomial ring $ M_\phi \cong L \{\tau\} $ over $ L $ equipped with the left $ \A \otimes_{\mathbb{F}_q} L\{ \tau\} $-module structure:
    \[
    \big(a \otimes f(\tau)\big)*_\phi g(\tau)= f(\tau) \circ g(\tau) \circ \phi_a, \quad 
    \]
    for $ a\otimes f(\tau) \in \A\otimes_{\mathbb F_q}  L \{ \tau \}$ and $ g(\tau) \in {M}_\phi $. 
\end{defn}
Note that $ M_\phi$ is then an $ \A \otimes_{\mathbb{F}_q} L  $-module of rank $r$ with the generators $1$, $\tau$, $\tau^2$,  $\cdots$, $\tau^{r-1}$, where $ r $ is the rank of $ \phi $. We formally write $M_{\phi} = L \{\tau\} \cdot \mathbf{s}_{\phi}$ and call $ \mathbf{s}_{\phi} $ the formal generator of $ M_{\phi} $.

\begin{prop}\label{Pro:isomoti}
Suppose that $L$ is an $\A$-field with generic characteristic.  Let $\lambda : \Psi \to \Phi $ be an isogeny of Drinfeld $\A$-modules over $L$. Denote by $ M_{\Psi} $ and $M_\Phi$ the associated Anderson $\A$-motives, respectively. Let $ \mathbf{s}_{\Psi} $ be the formal generator of $ M_{\Psi} $ as $ L\{ \tau \}$-module. Then  $ M_{\Phi} $ is isomorphic to $ L \{ \tau \} \cdot \lambda \mathbf{s}_{\Psi} $ as an $ \A \otimes_{\mathbb F_q} L \{\tau\}$ submodule of $M_{\Psi}$. 

In particular, we can formally represent the generator $\mathbf{s}_{\Phi}$ of $M_\Phi$ in terms of  $\lambda \mathbf{s}_{\Psi}$.
  \end{prop}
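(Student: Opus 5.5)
We will identify $M_\Phi$ with a submodule of $M_\Psi$ by right composition with $\lambda$. Write $M_\Psi = L\{\tau\}\cdot \mathbf{s}_\Psi$, where $(a\otimes f)*_\Psi g = f\circ g\circ \Psi_a$, and $M_\Phi = L\{\tau\}\cdot\mathbf{s}_\Phi$ with the analogous action through $\Phi_a$. Define
\[
\Lambda : M_\Phi \to M_\Psi,\qquad g(\tau)\mathbf{s}_\Phi \mapsto g(\tau)\circ\lambda\,\mathbf{s}_\Psi,
\]
that is, $g\mapsto g\circ\lambda$ inside $L\{\tau\}$. This is the map $\mathfrak{F}(\lambda)$ given by the anti-equivalence, and its image is exactly $L\{\tau\}\cdot\lambda\mathbf{s}_\Psi$.

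First we check that $\Lambda$ is $\A\otimes_{\mathbb F_q}L\{\tau\}$-linear. Linearity for left multiplication by $L\{\tau\}$ is immediate from associativity. For $a\in\A$, use the isogeny relation $\lambda\Psi_a=\Phi_a\lambda$ from Definition \ref{Defn:isogeny}:
\[
\Lambda(a*_\Phi g)=g\circ\Phi_a\circ\lambda = g\circ\lambda\circ\Psi_a = a*_\Psi \Lambda(g).
\]
Hence $\Lambda$ respects the action of $a\otimes f(\tau)$, and its image $L\{\tau\}\lambda\mathbf{s}_\Psi$ is an $\A\otimes L\{\tau\}$-submodule of $M_\Psi$.

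Next we check injectivity. Since $L\{\tau\}$ has no zero divisors (degrees in $\tau$ add and $L$ is a field), $g\circ\lambda=0$ with $\lambda\neq0$ forces $g=0$. An isogeny is nonzero by convention; otherwise the statement is vacuous. So $\Lambda$ is an isomorphism onto its image, and under it $\mathbf{s}_\Phi$ corresponds to $\lambda\mathbf{s}_\Psi$, which is the ``in particular'' claim.

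The only point needing care is that $\lambda$ must be nonzero. In generic characteristic one can also note that $\partial\lambda\neq0$: comparing constant terms in $\lambda\Psi_a=\Phi_a\lambda$ gives nothing new, but nonvanishing of $\lambda$ as a twisted polynomial already suffices for the argument. Everything else is formal, so we expect no real obstacle.
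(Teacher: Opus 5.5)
Your proposal is correct and follows the same route as the paper: the map $g\,\mathbf{s}_\Phi\mapsto (g\circ\lambda)\,\mathbf{s}_\Psi$ coming from the anti-equivalence is $\A\otimes_{\mathbb F_q}L\{\tau\}$-linear because $\lambda\Psi_a=\Phi_a\lambda$, and it is injective, so $\mathbf{s}_\Phi$ corresponds to $\lambda\mathbf{s}_\Psi$. You are in fact more explicit than the paper: it cites a reference and simply asserts injectivity, whereas you verify the linearity computation and justify injectivity by noting that $L\{\tau\}$ has no zero divisors.
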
 
\begin{proof}
This statement is another way to restate Proposition 3.4.5 in \cite{P23}.    
Through the anti-equivalence $\mathfrak{F} $, the isogeny $\lambda: \, \Psi \to \Phi$ 
is equivalent to the 
$\A\otimes_{\mathbb F_q} L\{\tau\}$-homomorphism of motives 
\begin{align*}
\tilde \lambda:\quad M_\Phi &\to M_{\Psi},\\
   m \mathbf{s}_{\Phi}&\mapsto  (m \circ \lambda) \mathbf{s}_{\Psi}.
\end{align*}
Since $\tilde \lambda$ is injective, 
 $ M_{\Phi} $ is isomorphic to $ (L \{ \tau \} \circ \lambda) \mathbf{s}_{\Psi} $ as an $L \{ \tau \}$-submodule of $M_{\Psi}$, i.e., generated by $ \lambda \mathbf{s}_{\Psi}$. The  $\A\otimes_{\mathbb F_q} L\{\tau\}$-linear property of $ \tilde{\lambda} $ means that 
\[
  \left(a \otimes g(\tau) \right) *_{\Phi} \mathbf{s}_{\Phi} \mapsto a \otimes g(\tau) *_{\Psi} \lambda\mathbf{s}_{\Psi}.
\]
In conclusion, $ M_{\Phi} $ is isomorphic to $ L \{ \tau \} \cdot \lambda \mathbf{s}_{\Psi} $ as an $ \A \otimes_{\mathbb F_q} L \{\tau\}$ submodule of $M_{\Psi}$.
\end{proof}

\subsection{Shtuka Function}
\begin{notation}
     For $\alpha \in L$, we let $[\alpha]$  denote the zero of $ t - \alpha$ in the function field $ \K \otimes_{\mathbb{F}_q} L $. The notations $ [\theta] $, $[\eta]$, $ [\theta^{q^i}] $, $ [\eta^{q^i}] $ and similar expressions will be used subsequently.
\end{notation}
The shtuka function has been described in many references (for example \cite{Tha93}).
\begin{defn}[Shtuka Function]
   Let $X$ be a smooth curve with the infinite place $\infty $. Denote by $g$ the genus of $X$. A shtuka function $ F $ on $ X \otimes_{\mathbb{F}_q} L  $ is a meromorphic function with principal divisor 
    \[
        (F) = [\theta] -  \infty'  + V^{(1)} - V, 
    \]
    where $ \infty' $ is a place over $ \infty $ and $ V $ is an effective divisor of $X \otimes_{\mathbb{F}_q} L  $ with 
    \[
        \deg V = g ,\qquad \dim H^{0} (V) = 1 . 
    \]
\end{defn}
 
Now we return to the situation $ X = \mathbb{P}^1  $. The divisor $ V $ shall be the zero divisor, since the genus of $\mathbb{P}^1 $ is zero.
We choose $ L $ to be a field extension of $ K = \mathbb{F}_{q}(\theta)$ containing the residue field $ \mathbb{F}_{\rho} = \mathbb{F}_{q}(\eta) $.  As in the introduction,  we set 
\[
\A = H^{0}(\mathbb{P}^1 - P_{\rho}, \mathcal{O}_{\mathbb{P}^1}). 
\]
 By choosing $ \infty' =[\eta]$, the unique shtuka function $ f \in  \K \otimes_{\mathbb{F}_q} L  = L(t ) $ (up to scaling) is given explicitly by  
\begin{align}\label{Eq:ft}
    f(t) =\frac{1}{ t-\eta} -\frac{1}{\theta-\eta }. 
\end{align}
Some authors impose a sign-normalization condition on $f(t)$ and claim that the associated Drinfeld module is a Hayes module. This is a common misconception; we do not impose such a restriction here.
Applying the map $(-)^{(i)}$, we obtain the twist of $f$:  
\begin{equation}\label{Eq:f^i}
    f^{(i)}(t) =  \frac{1}{ t-\eta^{(i)}} -\frac{1}{\theta^{(i)}-\eta^{(i)} }.
\end{equation}

\subsection{From Shtuka Function to Drinfeld Module}
 Since the infinite place $P_{\rho}$ splits completely into the places $[\eta^{(0)}],\dots,[\eta^{(N-1)}]$ over $L$, we define the ring 
 \[ \A_L := H^{0} ( \mathbb{P}_L^1 - \{ [\eta^{(0)}],\cdots,[\eta^{(N-1)}]\} , \mathcal{O}_{X_L}), \]
 by base change. It is straightforward to see that  
\[
    \A_L \cong \A \otimes_{\mathbb{F}_q} L  \cong L \Big[ \frac{1}{ t-\eta^{(i)}}, ~\text{for}~ i=0,\cdots, N-1\Big]. 
\]
 We would like to define a $\tau$-module structure on $ \A_L $ via the shtuka function $f$.  Fix a countable topological basis $\{\mathbf{s}_i(t)\}_{i\ge0}$ of $\A_L$, where  
\[ \mathbf{s}_0(t) = 1 
\]
 and 
\begin{equation}\label{Eq:f_i}
\mathbf{s}_i(t)=f^{(0)}(t)f^{(1)}(t)\cdots f^{(i-1)}(t) .
\end{equation}
We now define the map $ \tau : \A_L \to \A_L $ by 
\[
    \tau (g) = g^{(1)} \cdot f(t)
\]
for $g \in \A_L
$. Therefore, $ \tau\mathbf{s}_i =\mathbf{s}_{i+1}$  and $\mathbf{s}_i = \tau^i\mathbf{s}_0 $. Furthermore, $  \A_L $ is a rank-one $L\{\tau\}$-module with the generator  $\mathbf{s}_0$. In this way, $\A_L$ can be rewritten as  
\begin{equation}\label{Eq:HLiso}
    \A_L = L\{ \tau \}\mathbf{s}_0 .
\end{equation}
Together with the canonical left $\A_L$-module structure on $\A_L$ (given by multiplication), this yields a $\tau$-module structure on $\A_L$. Concretely, we define an action $*$ of the ring $\A_L\{ \tau \} := \A \otimes_{\mathbb{F}_q} L\{\tau\}$ on $\A_L$ by  
\[
(a \otimes \lambda(\tau)) * m = \lambda(\tau)(a \cdot m),
\]  
for $a \in \A$ and $\lambda(\tau) \in L\{\tau\}$, where $\cdot$ denotes the product in $\A_L$.
Moreover,  $ \A_L $ becomes an Anderson $\A$-motive over $ L $. 
For $ m = g(\tau )\mathbf{s}_0 $, we set 
\[
     a * \mathbf{s}_0 = \sum_{i=0 }^{N(a)} \bi{a}{i}\mathbf{s}_i  = \sum_{i=0}^{N(a)} \bi{a}{i}\tau^i\mathbf{s}_0  ,
\]
for some coefficients $\bi{a}{i} \in L $ and integer $ N(a) \geqslant 1 $. 
\begin{defn}\label{Def:PL}
From the Anderson motive structure on $ \A_L $, we obtain the Drinfeld module 
\[
 \Psi: \A \to L\{ \tau \}\quad a \mapsto \Psi_{a},  
\]
where  $ \Psi_a $ denotes the twisted polynomial such that 
\[  
    a * \mathbf{s}_0 = \Psi_a \mathbf{s}_0; 
\]
namely,
\[
\Psi_a= \sum_{i=0}^{N(a)} \bi{a}{i}\tau^i  . 
\]
\end{defn}
We remark that by using the basic property of Drinfeld modules, it follows readily that  
 $N(a) = \deg(a) $. In addition, each coefficient $\bi{a}{i} $ is contained in $ K(\eta) $  by Theorem \ref{thm:Hilbert} or the explicit expression of $ \Psi $ later.

\subsection{The Twist of $ \Psi $}
\begin{notation}
   Let $\mathbb{Z}\{\sigma\}$ denote the group ring over $\mathbb{Z}$ consisting of elements
    \[
        \sum_{i = 0 }^{N-1}  a_i \sigma^i .
    \]
\end{notation}
We extend the $\mathbb{Z}$-power action to  
a $ \mathbb{Z} \{ \sigma \}  $-action on the field $ \mathbb{F}_q(\eta, t, \theta) $ by defining
\[
    \eta^{\sigma} = \eta^{(1)} ,~ t^\sigma =  t , ~\theta^\sigma= \theta. 
\]
\begin{notation}\label{No:sigmaDrinfeld}
Analogous to $f$, the function 
\[ f^{\sigma^i} = \frac{1}{ t-\eta^{(i)}} -\frac{1}{\theta-\eta^{(i)} }
\] 
is another shtuka function of $ \mathbb{P}^1 \otimes_{\mathbb{F}_q} L$, relative to the infinity $ [\eta^{(i)}] $. We denote by $\Psi^{(i)}$ the Drinfeld $\A$-module associated with $f^{\sigma^{i}}$.
\end{notation}
Following the construction of $ \Psi $ above, the resulting Drinfeld $\A$-module $\Psi^{(i)}$ associated with $f ^{\sigma^i}$ coincides with the $\sigma^i$-twist of $ \Psi $.

\subsection{Algebraic Structure of Anderson $\A$-Motive}
\begin{notation}\label{Not:bi}
Observe that $\frac{1}{t - \eta } \in \A \otimes_{\mathbb{F}_{q}} \mathbb{F}_{q^N}$ and $ \A $ is generated by $ T_i $. We define  $ b_i \in \mathbb{F}_{q^N} = \mathbb{F}_{q}(\eta) $ as the coefficients such that 
\begin{equation}\label{Eq:nosumbiTi}
 \sum_{i=0}^{N-1} b_i T_i = \frac{1}{t - \eta }.   
\end{equation}
\end{notation}
\begin{defn}[Annihilator]%
    \label{No:phiItphiI}
    Let $I $ be an ideal of $\A $. For a Drinfeld module $\psi $ over $L$, the set
    \begin{equation*}
    \{\psi _{a}~| ~a\in I\}
    \end{equation*}
    forms a left ideal of $L\{\tau \}$.  Since this ideal is principal, let $\psi_I$ denote its unique monic generator (the maximal right divisor). In this notation, we have 
  \begin{equation}\label{Eq:Annihilator}
\ker \psi_I = \bigcap_{a \in I} \ker \psi_a.
\end{equation} 
    The twisted polynomial $ \psi _{I}$ is called the annihilator of $ I $.
    \end{defn}
    
With these notations, we obtain the following useful result.
\begin{prop}\label{prop:ann}
Let $ I_\infty $ be the ideal of $\A$ generated by $ T_0, \cdots , T_{N-1} $, and $ I_0 $ the ideal generated by $ T_0-\frac{1}{ \rho(0)},~ T_1,~ T_2, \cdots , ~T_{N-1} $. Suppose that $ \mathbf{s}_0 $ is the formal generator of $ \Psi $. 
Then 
\begin{equation}\label{Eq:prs0ex}
    \frac{1}{t - \eta }*\mathbf{s}_0  = (\tau+\Theta)\mathbf{s}_0 \qquad \text{and } \qquad  \frac{t}{\eta(t-\eta)}*\mathbf{s}_0  = \left(\tau+\frac{\theta}{\eta}\Theta\right)\mathbf{s}_0.
\end{equation}
Moreover, the annihilators $\Psi_{I_\infty}$ and $\Psi_{I_0}$ are written as 
\begin{equation}\label{Eq:PsiIinfI0}
 \Psi_{I_\infty} = \tau + \Theta 
\qquad
\text{and } \qquad
\Psi_{I_0} = \tau +  \frac{\theta}{\eta} \Theta .  
\end{equation}
\end{prop}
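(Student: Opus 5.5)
The plan is to compute directly in the motive $\A_L=L\{\tau\}\mathbf{s}_0$. First I extend the $*$-action $\F_q$-linearly from $\A$ to $\A\otimes_{\mathbb F_q}\mathbb F_q(\eta)\subset \A_L$. This is possible because $L\supseteq\mathbb F_q(\eta)$: an element $c\in \mathbb F_q(\eta)$ acts on $\A_L$ as the $L$-scalar $c$. For $g$ in this larger ring I write $\Psi_g$ for the twisted polynomial with $g*\mathbf{s}_0=\Psi_g\mathbf{s}_0$. Three formal properties follow from $(a\otimes\lambda)*m=\lambda(a\cdot m)$ and commutativity of $\A_L$:
\[
\Psi_{cg}=c\,\Psi_g \quad (c\in\mathbb F_q(\eta)),\qquad \Psi_{gh}=\Psi_g\Psi_h,\qquad \Psi_{\sum b_iT_i}=\sum b_i\Psi_{T_i}.
\]
The multiplicativity comes from $gh*\mathbf{s}_0=g*(\Psi_h\mathbf{s}_0)=\Psi_h(g\cdot\mathbf{s}_0)=\Psi_h\Psi_g\mathbf{s}_0$, together with $gh=hg$.

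For \eqref{Eq:prs0ex} the computation is immediate. Since $\mathbf{s}_0=1$, we have $\frac{1}{t-\eta}*\mathbf{s}_0=\frac1{t-\eta}$. On the other hand $\tau\mathbf{s}_0=\mathbf{s}_1=f=\frac1{t-\eta}-\Theta$, so $\frac1{t-\eta}=(\tau+\Theta)\mathbf{s}_0$. For the second identity I write
\[
\frac{t}{\eta(t-\eta)}=\frac1\eta+\frac1{t-\eta}.
\]
Its action on $\mathbf{s}_0$ is therefore $(\tau+\Theta+\frac1\eta)\mathbf{s}_0$. It remains to check the scalar identity $\Theta+\frac1\eta=\frac{\theta}{\eta(\theta-\eta)}=\frac\theta\eta\Theta$.

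For the annihilators I treat $I_\infty$ first and argue by divisibility in both directions.

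\emph{Every $\Psi_{T_i}$ is right-divisible by $\tau+\Theta$.} Write
\[
T_i=\frac1{t-\eta}\cdot g_i,\qquad g_i=\frac{t^i}{\prod_{j=1}^{N-1}(t-\eta^{(j)})}.
\]
Since $i\le N-1$, the function $g_i$ has poles only at the $[\eta^{(j)}]$ and is regular at $t=\infty$. Hence $g_i\in\A\otimes\mathbb F_q(\eta)$, and multiplicativity gives $\Psi_{T_i}=\Psi_{g_i}(\tau+\Theta)$. Every $\Psi_a$ with $a\in I_\infty$ is an $\F_q$-combination of $\A$-multiples of the $T_i$, so $\tau+\Theta$ right-divides all of them.

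\emph{Conversely, $\tau+\Theta$ lies in the left ideal.} Notation \ref{Not:bi} gives $\frac1{t-\eta}=\sum b_iT_i$, so $\tau+\Theta=\sum b_i\Psi_{T_i}$. This is an $L$-linear combination, hence lies in the left $L\{\tau\}$-ideal generated by the $\Psi_{T_i}$.

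Being monic, $\tau+\Theta$ is therefore the generator $\Psi_{I_\infty}$.

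The $I_0$ case runs the same way, with $\frac{t}{t-\eta}$ in place of $\frac1{t-\eta}$.

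\emph{Divisibility.} For $i\ge1$ write $T_i=\frac{t}{t-\eta}\cdot\frac{t^{i-1}}{\prod_{j\ge1}(t-\eta^{(j)})}$. For the remaining generator, $T_0-\frac1{\rho(0)}=\frac{\rho(0)-\rho(t)}{\rho(0)\rho(t)}=\frac{t\,p(t)}{\rho(0)\rho(t)}$ with $\deg p\le N-1$. This equals $\frac{t}{t-\eta}$ times a function regular away from the $[\eta^{(j)}]$ and at $\infty$. So every generator of $I_0$ is a multiple of $\frac t{t-\eta}$ in $\A\otimes\mathbb F_q(\eta)$.

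\emph{Membership.} Conversely, $\frac{t}{t-\eta}$ lies in $\A\otimes\mathbb F_q(\eta)$ and vanishes at $[0]$. So it lies in $I_0\otimes\mathbb F_q(\eta)$, and hence it is an $\mathbb F_q(\eta)$-combination of $\A$-multiples of the generators of $I_0$.

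Dividing by $\eta$ and using \eqref{Eq:prs0ex} then gives $\Psi_{I_0}=\tau+\frac\theta\eta\Theta$.

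The main obstacle is this last membership claim. It needs $(I_0\otimes\mathbb F_q(\eta))$ to be exactly the functions in $\A\otimes\mathbb F_q(\eta)$ vanishing at $[0]$, and the analogous description at infinity for $I_\infty$. I would justify both using the divisor formula \eqref{Eq:Ideals} and flatness of the extension $\A\to\A\otimes\mathbb F_q(\eta)$.
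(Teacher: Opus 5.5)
Your verification of \eqref{Eq:prs0ex} is correct and matches the paper. So is your membership step: $\tau+\Theta=\sum_i b_i\Psi_{T_i}$ lies in the left ideal. The gap is the divisibility direction. It rests on the rule $\Psi_{gh}=\Psi_g\Psi_h$ for $g,h\in\A\otimes_{\mathbb F_q}\mathbb F_q(\eta)$, and this rule is false. Your step $g*(\Psi_h\mathbf{s}_0)=\Psi_h(g\cdot\mathbf{s}_0)$ needs multiplication by $g$ to commute with $\tau$. But $\tau(g\cdot m)=g^{(1)}\cdot\tau(m)$, and $g^{(1)}=g$ only when $g\in\A$. That is exactly why only the factor $\A$ of $\A\otimes L\{\tau\}$ commutes with $\tau$. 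For example, $(\tau+\Theta)^2\mathbf{s}_0=\frac{1+\Theta(\eta-\eta^{(1)})}{(t-\eta)(t-\eta^{(1)})}$, not $\frac{1}{(t-\eta)^2}$. Your factorization $\Psi_{T_i}=\Psi_{g_i}(\tau+\Theta)$ is in fact impossible for degree reasons. The function $g_i$ has a pole at $[\eta^{(N-1)}]$, while every $L$-combination of $\mathbf{s}_0,\dots,\mathbf{s}_{N-1}$ is regular there. Hence $\deg_\tau\Psi_{g_i}\geqslant N$, and the right-hand side has $\tau$-degree at least $N+1$, whereas $\Psi_{T_i}$ has $\tau$-degree $N$ (cf.\ \eqref{Eq:TjN}). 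The same objection defeats your $I_0$ divisibility step. Only the one-sided rule $\Psi_{gh}=\Psi_g\Psi_h$ with $h\in\A$ is valid.

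You do not need the divisibility direction. Set $J=\sum_{a\in I_\infty}L\{\tau\}\Psi_a=L\{\tau\}\Psi_{I_\infty}$. Your membership step shows that $\Psi_{I_\infty}$ right-divides $\tau+\Theta$, so you only have to exclude $\Psi_{I_\infty}=1$. Each $a\in\A$ does commute with $\tau$, so $L\{\tau\}\Psi_a\mathbf{s}_0=a\cdot\A_L$, and hence $J\mathbf{s}_0=I_\infty\A_L$. Every element of $I_\infty\A_L$ vanishes at $t=\infty$, so $\mathbf{s}_0=1\notin J\mathbf{s}_0$ and therefore $1\notin J$; for $I_0$, use vanishing at $t=0$. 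Alternatively, cite $\deg_\tau\Psi_I=\deg I=1$. The paper's proof is your membership computation followed by a one-line conclusion that tacitly uses this nontriviality. Finally, the $I_0$ membership you flagged as the main obstacle needs no flatness argument. Evaluating $\sum_i b_it^i=\rho(t)/(t-\eta)$ at $t=0$ gives $b_0=-\rho(0)/\eta$. Therefore $b_0\bigl(T_0-\frac{1}{\rho(0)}\bigr)+\sum_{i\geqslant1}b_iT_i=\frac{1}{t-\eta}+\frac1\eta=\frac{t}{\eta(t-\eta)}$. This is how the paper puts $\tau+\frac\theta\eta\Theta$ into the left ideal for $I_0$.
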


\begin{proof}
Let $b_i$ be as in Notation \ref{Not:bi}.
By the definition of the Anderson $\A$-motive in Definition \ref{Def:t-m}, we have 
\begin{align*}
 (\sum_{i=0}^{N-1} b_i \Psi_{T_i})\mathbf{s}_0  
 &= ( \sum_{i=0}^{N-1} b_i T_i) *\mathbf{s}_0 \\
 &= \frac{1}{t - \eta }*\mathbf{s}_0 \qquad \text{by Equation \eqref{Eq:nosumbiTi}} \\
 &= f*\mathbf{s}_0 + \Theta*\mathbf{s}_0 \\
 &= \tau(\mathbf{s}_0)  + \Theta\mathbf{s}_0.
\end{align*}
Thus we obtain
\[
\frac{1}{t - \eta }*\mathbf{s}_0 = (\tau+\Theta)\mathbf{s}_0 
\] 
and 
\begin{equation}\label{Eq:sumbiPTi}
\sum_{i=0}^{N-1} b_i \Psi_{T_i} =\tau + \Theta. 
\end{equation}
Since $ I_\infty $ is the ideal generated by $ T_0, \cdots , T_{N-1} $, 
the common right-divisor of $ \{ \Psi_{T_i} \}$ is 
\[ \Psi_{I_\infty} =\tau + \Theta =  \tau+ \frac{1}{\theta - \eta }. \]
Together with $ \frac{-\rho(0)}{b_0} = \eta $, we have  
\[
 b_0 \Psi_{T_0-\frac{1}{\rho(0)}} + b_1 \Psi_{T_1} + \cdots + b_{N-1} \Psi_{T_{N-1}}  = \tau + \Theta - \frac{b_0}{\rho(0)} = \tau + \Theta + \frac{1}{\eta}.
\]
Given that $ I_0 $ is the ideal generated by $ T_0-\frac{1}{\rho(0)}, T_1,T_2, \cdots , T_{N-1} $, this implies 
\[
   \Psi_{I_0} =  \tau+ \frac{1}{\theta - \eta }+\frac{1}{\eta} = \tau + \frac{\theta}{\eta} \Theta. 
\]
Similarly, we obtain from Equation \eqref{Eq:ft} that 
\begin{align*}
  \frac{t}{\eta(t-\eta)} *\mathbf{s}_0  & = \left(  
    f  +  \frac{\theta}{\eta}\Theta  
  \right) *\mathbf{s}_0 \\
 &= f*\mathbf{s}_0 +  \frac{\theta}{\eta}\Theta *\mathbf{s}_0 \\
 &= \tau(\mathbf{s}_0)  + \frac{\theta}{\eta}\Theta\mathbf{s}_0.
\end{align*}
This completes the proof.
\end{proof}
We remark that the proof above presents explicit computations for Anderson motives and annihilators, which constitute the main techniques developed in this paper.

\subsection{Exponential Functions}
 We investigate the exponential functions associated with $ \Psi^{(i)} $ in this section. 
\begin{defn}[Exponential Function] 
\label{Defn:exponential}
Recall that $ K_{\rho} $ is the completion of $ K = \mathbb{F}_{q}(\theta)$ with respect to the $\eta$-norm $ |\cdot|_{\eta} $.  
Let $\mathbb{C}_{\infty} $ denote the completion of an algebraic closure of $ K_{\rho} $.
The exponential function
\[\exp_{(0)} : \mathbb{C}_{\infty} \to \mathbb{C}_{\infty}\]
is an $\mathbb F_{q}$-linear function satisfying
 
\begin{equation}\label{Eq:exponential}
\Psi _{a}(\exp _{(0)}(\xi) )= \exp _{(0)} (\iota(a)\xi) \quad \text{for all $a\in \A$},
\end{equation}
 with derivative $1$ at $ \xi = 0$.
\end{defn}

Anderson and Thakur \cite{Tha93} established the explicit formula for such exponential functions.
\begin{thm}\label{Thm:An-Th-exp}
  Let $\Psi$ be the rank-one Drinfeld module derived from the shtuka function $f$. Then the exponential function of $\Psi$ is given by
\[
\exp(\xi)=\sum_{i=0}^{\infty}\frac{\xi^{q^i}}{\left(f^{(0)}(t)f^{(1)}(t)\cdots f^{(i-1)}(t)\right)|_{t = \theta^{(i)}}}.
\]
\end{thm}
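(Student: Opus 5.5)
We prove the formula by the standard Anderson--Thakur argument: write $\exp(\xi)=\sum_{i\geqslant 0} e_i\xi^{q^i}$ with $e_0=1$, and show that $e_i=1/\mathbf{s}_i(\theta^{(i)})$, where $\mathbf{s}_i=f^{(0)}\cdots f^{(i-1)}$ as in Equation \eqref{Eq:f_i}. The functional equation \eqref{Eq:exponential} determines the $e_i$ uniquely: comparing coefficients of $\xi^{q^n}$ in $\Psi_a\exp=\exp\circ\iota(a)$ gives
\[
e_n\bigl(\iota(a)^{q^n}-\iota(a)\bigr)=\sum_{k=1}^{n}\bi{a}{k}\,e_{n-k}^{q^k}
\]
for every $a\in\A$. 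Since $\iota$ is injective and $\A\not\subseteq\mathbb{F}_q$, some $a$ has $\iota(a)^{q^n}\neq\iota(a)$, so the $e_n$ are determined recursively. It therefore suffices to show that $e_i:=1/\mathbf{s}_i(\theta^{(i)})$ satisfies these recursions, or equivalently that the series $\sum_i \xi^{q^i}/\mathbf{s}_i(\theta^{(i)})$ intertwines with $\Psi_a$ for all $a\in\A$.

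We verify the intertwining by a residue/duality computation on the motive $\A_L=L\{\tau\}\mathbf{s}_0$. For $a\in\A$ we have $a\cdot\mathbf{s}_0=\sum_k \bi{a}{k}\mathbf{s}_k$ in $\A_L$, since $a*\mathbf{s}_0=\Psi_a\mathbf{s}_0$ and $\tau^k\mathbf{s}_0=\mathbf{s}_k$. Twisting gives $a\cdot\mathbf{s}_m^{(j)}=\sum_k(\bi{a}{k})^{q^j}\mathbf{s}_{k+m}^{(j)}/\dots$; more cleanly, $\tau^m(a\mathbf{s}_0)=a\,\mathbf{s}_m$ because $a^{(1)}=a$ for $a\in\A$. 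This yields the identity $a\,\mathbf{s}_m=\sum_k \bi{a}{k}^{q^m}\mathbf{s}_{m+k}$ in $L(t)$.

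Next we introduce the linear functional $\lambda_n(g)=\Res$ of $g\,\omega^{(n)}/\mathbf{s}_n$, or, more simply, the evaluation pairing. The key point is that $\mathbf{s}_j(t)$ vanishes at $t=\theta^{(i)}$ for $0\leqslant i<j$, because $f^{(i)}(\theta^{(i)})=0$ by Equation \eqref{Eq:f^i}. Evaluating $a\,\mathbf{s}_m=\sum_k\bi{a}{k}^{q^m}\mathbf{s}_{m+k}$ at $t=\theta^{(m)}$ therefore gives $a(\theta^{q^m})\mathbf{s}_m(\theta^{(m)})=\bi{a}{0}^{q^m}\mathbf{s}_m(\theta^{(m)})$. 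This is consistent but only recovers the constant term, so a triangular system is needed. Fix $n$ and evaluate the identity for $m=n-k'$ at $t=\theta^{(n)}$; this pairs the coefficients $\bi{a}{k}$ against the values $\mathbf{s}_{j}(\theta^{(n)})$ for $j\leqslant n$. I expect this to be the main obstacle. The plan is to use the meromorphic function $\sum_{j\leqslant n}\mathbf{s}_j(t)/\bigl(\mathbf{s}_j(\theta^{(j)})\cdots\bigr)$ together with the sum of residues on $\mathbb{P}^1_L$, in the manner of Anderson and Thakur \cite{Tha93}: the poles are only at $[\theta^{(i)}]$ and at the infinities $[\eta^{(k)}]$, and the divisor $(f)=[\theta]-[\eta]$ (since $V=0$) controls the pole orders. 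This should produce exactly
\[
\frac{a(\theta)^{q^n}}{\mathbf{s}_n(\theta^{(n)})}=\sum_k \bi{a}{k}\frac{1}{\mathbf{s}_{n-k}(\theta^{(n-k)})^{q^k}},
\]
using $\mathbf{s}_{n-k}^{(k)}\,\mathbf{s}_k=\mathbf{s}_n$, which follows from $\mathbf{s}_{i}^{(1)}f=\mathbf{s}_{i+1}$. With this identity the recursions above are satisfied.

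Finally, entireness follows by checking that $|\mathbf{s}_i(\theta^{(i)})|_\eta$ grows like $q^{cq^i}$ with $c>0$: each factor $f^{(k)}(\theta^{(i)})$ has valuation at most $-q^{k}$, which comes from the term $\Theta^{q^k}$. Uniqueness of the exponential then identifies the series with $\exp$.
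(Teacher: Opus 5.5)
The paper does not prove this statement; it quotes it from Anderson--Thakur \cite{Tha93}. Several parts of your reduction are correct: the uniqueness of the formal solution of $\Psi_a\exp=\exp\circ\iota(a)$ with $e_0=1$, the identity $a=\sum_k\bi{a}{k}\mathbf{s}_k$ in $\A_L$, the vanishing $\mathbf{s}_j(\theta^{(i)})=0$ for $i<j$, and the factorization $\mathbf{s}_k\,\mathbf{s}_{n-k}^{(k)}=\mathbf{s}_n$. However, the one identity everything hinges on,
\[
\frac{a(\theta)^{q^n}}{\mathbf{s}_n(\theta^{(n)})}=\sum_{k=0}^{n}\bi{a}{k}\frac{1}{\mathbf{s}_{n-k}(\theta^{(n-k)})^{q^k}},
\]
is never proved. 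You evaluate the twisted identities $a\,\mathbf{s}_m=\sum_k\bi{a}{k}^{q^m}\mathbf{s}_{m+k}$ at $t=\theta^{(m)}$, which only returns the constant term. You then defer to an unspecified residue computation that ``should produce'' the result. As written, the central step is a gap.

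The missing step needs no residues. Evaluate the untwisted identity $a(t)=\sum_k\bi{a}{k}\mathbf{s}_k(t)$ at $t=\theta^{(n)}$. For $k>n$, the factor $f^{(n)}$ divides $\mathbf{s}_k$ and $f^{(n)}(\theta^{(n)})=0$, so $a(\theta)^{q^n}=\sum_{k=0}^{n}\bi{a}{k}\mathbf{s}_k(\theta^{(n)})$. For $k\leqslant n$ one has $\mathbf{s}_{n-k}(\theta^{(n-k)})^{q^k}=\mathbf{s}_{n-k}^{(k)}(\theta^{(n)})$. Since $\mathbf{s}_k(\theta^{(n)})\,\mathbf{s}_{n-k}^{(k)}(\theta^{(n)})=\mathbf{s}_n(\theta^{(n)})$ and $\mathbf{s}_n(\theta^{(n)})\neq 0$ (because $\theta^{(n)}\neq\theta^{(i)}$ for $i<n$), dividing by $\mathbf{s}_n(\theta^{(n)})$ gives exactly the displayed identity.

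Separately, your entireness argument is too weak. The bound $v_\eta\bigl(f^{(k)}(\theta^{(i)})\bigr)\leqslant -q^k$ only gives $|\mathbf{s}_i(\theta^{(i)})|_\eta\geqslant q^{W_i}$. That is growth $q^{cq^i}$ with fixed $c\approx 1/(q-1)$, which yields a finite radius of convergence, not an entire function. Entireness comes from the factors with $k<i$ and $k\equiv i \pmod N$: these have valuation $-q^i$ and contribute the term $\floor{\frac{i}{N}}q^i$ visible in Equation \eqref{Eq:D_i}. Alternatively, you can drop this step, since uniqueness of the formal solution already identifies your series with the exponential presupposed in Definition \ref{Defn:exponential}.
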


Applying Theorem \ref{Thm:An-Th-exp}, the exponential function $\exp_{(0)}$ takes the form
\begin{align}\label{Eq:Dexp}
      \exp_{(0)}(\xi) = \sum_{i = 0}^{\infty} \frac{\xi^{q^i}}{D_i},
\end{align}
where $ D_0 = 1 $ and $ D_i =\mathbf{s}_i(\theta^{(i)}) $. 
From Equation \eqref{Eq:f_i}, a direct computation yields 
\begin{align}\label{Eq:f_iTh}
  \mathbf{s}_i(t)=\frac{(\theta^{(0)}-t)(\theta^{(1)}-t)\cdots(\theta^{(i-1)}-t)}{\prod_{k=0}^{i-1}(t-\eta^{(k)})}\cdot\Theta^{W_i},
\end{align} 
where $W_i$ is as defined in \eqref{Eq:Wn}.
Let $ \floor{\frac{i}{N}} $ denote the floor of $\frac{i}{N}$.
For $i\geqslant 0$,
we obtain  
\begin{align}\label{Eq:D_i}
\nonumber D_i={}&\mathbf{s}_i(\theta^{(i)})=\frac{(\theta-\theta^{(1)})^{(i-1)}(\theta-\theta^{(2)})^{(i-2)}\cdots(\theta-\theta^{(i)})}{(\theta-\eta)^{\floor{\frac{i}{N}}  q^i+W_i}\prod_{k=1}^{N-1}(\theta-\eta^{(k)})^{\floor{\frac{i+k}{N}} q^i}}\\
={}&\Theta^{W_i + q^i \sum_{k=0}^{N-1} \floor{\frac{i+k}{N}}  \sigma^k }{(\theta^{(i-1)}-\theta^{(i)}) (\theta^{(i-2)}-\theta^{(i)}) \cdots(\theta-\theta^{(i)})}. 
\end{align}


\begin{cor}\label{Cor:Dn}
   Let $D_0=1$. For $ i \geqslant 1 $, the coefficients $ D_i \in K(\eta)$ satisfy the recursive formula:
    \begin{equation}\label{Eq:Dn}
    D_i = D_{i-1}^q \frac{\theta-\theta^{(i)}}{(\theta^{(i)}-\eta)(\theta-\eta)}.
	\end{equation}
\end{cor}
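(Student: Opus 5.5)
We compute $D_i/D_{i-1}^q$ directly from the definition $D_i=\mathbf{s}_i(\theta^{(i)})$. We do not use the closed form \eqref{Eq:D_i}. The key observation is that twisting commutes with evaluation: since $\mathbf{s}_i=f^{(0)}f^{(1)}\cdots f^{(i-1)}$ and $f^{(k)}$ is obtained from $f$ by raising the coefficients to the $q^k$-th power, we have $\mathbf{s}_{i-1}^{(1)}=f^{(1)}\cdots f^{(i-1)}$. Evaluating at $t=\theta^{(i)}$ gives
\[
D_{i-1}^q=\bigl(\mathbf{s}_{i-1}(\theta^{(i-1)})\bigr)^q=\mathbf{s}_{i-1}^{(1)}(\theta^{(i)})=f^{(1)}(\theta^{(i)})\cdots f^{(i-1)}(\theta^{(i)}),
\]
because for a rational function $g$ with coefficients in $L$ and an element $x\in L$ one has $g(x)^q=g^{(1)}(x^q)$.

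It follows that
\[
D_i=f^{(0)}(\theta^{(i)})\cdot f^{(1)}(\theta^{(i)})\cdots f^{(i-1)}(\theta^{(i)})=f(\theta^{(i)})\,D_{i-1}^q .
\]
The remaining step is to evaluate $f$ from \eqref{Eq:ft}:
\[
f(\theta^{(i)})=\frac{1}{\theta^{(i)}-\eta}-\frac{1}{\theta-\eta}=\frac{(\theta-\eta)-(\theta^{(i)}-\eta)}{(\theta^{(i)}-\eta)(\theta-\eta)}=\frac{\theta-\theta^{(i)}}{(\theta^{(i)}-\eta)(\theta-\eta)}.
\]
This is exactly the factor in \eqref{Eq:Dn}. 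The base case $D_0=1$ holds because $\mathbf{s}_0=1$.

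The only delicate point is the identity $g(x)^q=g^{(1)}(x^q)$. It holds because Frobenius is a ring endomorphism of $L$ and the variable $t$ is untouched by the twist. This also explains why the factor $f^{(0)}$ is the one that survives, while the twisted factors recombine into $D_{i-1}^q$. As a consistency check, the recursion should agree with \eqref{Eq:D_i}, but it is not needed for the proof.
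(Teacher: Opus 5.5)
Your proof is correct, but it takes a different route from the paper's.

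\textbf{Your route.} You work directly from the definition $D_i=\mathbf{s}_i(\theta^{(i)})$ and use that evaluation is compatible with Frobenius:
\[
\mathbf{s}_{i-1}(\theta^{(i-1)})^q=\mathbf{s}_{i-1}^{(1)}(\theta^{(i)})=f^{(1)}(\theta^{(i)})\cdots f^{(i-1)}(\theta^{(i)}).
\]
So every factor of $D_i$ except $f(\theta^{(i)})$ is absorbed into $D_{i-1}^q$.

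\textbf{The paper's route.} The paper instead differentiates the closed form \eqref{Eq:D_i} exponent by exponent. The numerator products telescope to $\theta-\theta^{(i)}$. On the $\Theta$-side it uses $W_i-qW_{i-1}=1$, together with the fact that $\lfloor (i+k)/N\rfloor-\lfloor (i+k-1)/N\rfloor$ equals $1$ exactly when $N\mid i+k$. This gives the ratio $\Theta^{1+q^i\sigma^{-i}}(\theta-\theta^{(i)})$, which is the same factor as yours because $\Theta^{q^i\sigma^{-i}}=(\theta^{q^i}-\eta)^{-1}$.

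\textbf{Comparison.} Your argument is shorter and avoids the floor-function bookkeeping. It also works verbatim for any shtuka function once the exponential coefficients are given as $\mathbf{s}_i(\theta^{(i)})$ by Theorem~\ref{Thm:An-Th-exp}, yielding $D_i=f(\theta^{(i)})D_{i-1}^q$ in general. The paper's route is heavier, but it rests on the closed form with $\mathbb{Z}\{\sigma\}$-exponents, which the paper needs anyway for the twisted coefficients \eqref{Eq:D_nsigj} and the period-lattice computations. With your approach that closed form would have to be recovered by unrolling the recursion. Finally, the membership $D_i\in K(\eta)$ in the statement follows from your recursion by induction on $i$.
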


\begin{proof}
 A direct verification shows
\begin{align}
 \floor{\frac{i+k}{N}} -\floor{\frac{i+k-1}{N}}  =
    \begin{cases}
       1, \qquad & N | (k+i),\\
      0, &{\rm otherwise},
    \end{cases}
\end{align}
and 
 \[ W_i-qW_{i-1}=1. \]
  Together with Equation \eqref{Eq:D_i}, we find that 
\begin{align*}
  \frac{D_i}{D_{i-1}^q}=\Theta^{1+q^i\sigma^{-i}} (\theta-\theta^{(i)}).
\end{align*}

\end{proof}

\begin{notation}
We denote by  $\exp_{(j)}$  the exponential function associated with the Drinfeld module $\Psi^{(j)}$ (see Notation \ref{No:sigmaDrinfeld}).  
\end{notation}
As a consequence, the exponential function $ \exp_{(j)} $ for $ \Psi^{(j)} $ is written as
\begin{equation}\label{Equ:expj}
    \exp_{(j)}(\xi) = \sum_{i=0}^{\infty} \frac{\xi^{q^i}}{D_i^{\sigma^j}}, 
\end{equation}
where $ D_i^{\sigma^j} $ is the $\sigma^j$-twist of $D_i$.
According to the expression \eqref{Eq:D_i}, one can derive that
  \begin{align}\label{Eq:D_nsigj}
          D_n^{\sigma^j} = D_n  \Big( (\theta - \eta^{(0)})(\theta - \eta^{(1)}) \cdots (\theta - \eta^{(j-1)}) \Big) ^{q^n\sigma^{-n} - q^n}\left(\Theta^{\sigma^j-1}\right)^{W_n}.
    \end{align}

\subsection{Logarithm Functions}
We recall the logarithm
function associated with $ \Psi $  below.
\begin{defn}[Logarithm Function]
\label{Def:Logarithm}
The logarithm function $\log _{(0)}(\xi)$ of $ \Psi $ is the local inverse of the
exponential function $ \exp_{(0)}(\xi)$. This means that
\[
\exp_{(0)} \log_{(0)} (\xi) = \xi = \log_{(0)} \exp_{(0)} (\xi) .
\]
By Equation \eqref{Eq:exponential}, the logarithm function $\log _{(0)}(\xi)$  satisfies
%
\begin{equation}\label{Eq:delog}
\log _{(0)} (\Psi _{a} (\xi) ) = \iota(a)\log _{(0)}(\xi),
\end{equation}
for all $a \in \A$. 
\end{defn}
We may assume that  
\begin{equation}\label{Eq:asslog}
\log_{(0)}(\xi) =\sum_{j= 0}^{\infty} \frac{(-1)^j\xi^{q^j}}{L_j},
\end{equation}
 where $L_0 = 1$, since the derivative of $\log_{(0)}(\xi)$ at the origin is $1$. 
The relation $  \exp_{(0)} \log_{(0)} (\xi) = \xi $ implies that 
\[
\sum_{i=0}^{\infty}\frac{1}{D_i}\Big(\sum_{j=0}^{\infty}\frac{(-1)^j\xi^{q^j}}{L_j}\Big)^{q^i}=\sum_{n=0}^{\infty}\left(\sum_{j=0}^{n}\frac{(-1)^{j}}{D_{n-j} L_{j}^{q^{n-j}}}\right)\xi^{q^n}=\xi.
\]
This is equivalent to the identity 
\begin{align}\label{Eq:DjLn-j}
    \sum_{j=0}^n  \frac{(-1)^{n-j}}{D_{j} L_{n-j}^{q^{j}}}=0,
\end{align}
for each $ n \geqslant 1 $.
It follows that the coefficient $L_j$
  can indeed be derived directly from $D_j$, though the computation is nontrivial. A more practical expression for $L_j$
  uses the residue formula, as presented in the subsequent theorem.
\begin{thm}\label{Thm:An-Th-log}
 Let $ X / \mathbb{F}_q $ be a smooth curve with an infinity $\infty$ of degree $1$.
 Let $\Psi$ be a Drinfeld  $\A$-module associated with a shtuka function $f(t)$ on $X \otimes_{\mathbb{F}_q} L $.  Denote by $\Omega_K$ the set of differential forms over $K$. Assume that $\omega\in \Omega_K $ is the unique differential
 form such that the valuation of $  \omega $ at $ \infty $ equals $ -2 $ and  $\Res _{t=\theta}\frac{\omega^{(1)}}{f^{(0)}(t)} = 1$. Then
\[
\log_{\Psi}(\xi)=\sum_{i=0}^{\infty}\Res_{t=\theta}\frac{\omega^{(i+1)}}{f^{(0)}(t)f^{(1)}(t)\cdots f^{(i)}(t)}\xi^{q^i}.
\]
\end{thm}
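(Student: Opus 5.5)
The plan is to characterize the coefficients of $\log_\Psi$ by the functional equation \eqref{Eq:delog} and then check that the proposed residues satisfy it. Write $\log_\Psi(\xi)=\sum_{i\geqslant 0}c_i\xi^{q^i}$ with $c_0=1$, and for $a\in\A$ write $\Psi_a=\sum_j a_j\tau^j$ with $a_0=\iota(a)$.

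Comparing the coefficients of $\xi^{q^n}$ in $\log_\Psi(\Psi_a\xi)=\iota(a)\log_\Psi(\xi)$ gives
\[
c_n\bigl(\iota(a)-\iota(a)^{q^n}\bigr)=\sum_{i=0}^{n-1}c_i\,a_{n-i}^{q^i}.
\]
For nonconstant $a$ and $n\geqslant 1$ we have $\iota(a)\neq\iota(a)^{q^n}$, since $\iota$ is injective and $\iota(a)$ is transcendental over $\mathbb{F}_q$. So this recursion determines the $c_n$ uniquely. It therefore suffices to show that
\[
r_i:=\Res_{t=\theta}\frac{\omega^{(i+1)}}{\mathbf{s}_{i+1}(t)},\qquad \mathbf{s}_{i+1}=f^{(0)}\cdots f^{(i)},
\]
satisfies $r_0=1$, which is the normalisation of $\omega$, and the same recursion for one fixed nonconstant $a$.

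The motive supplies the needed identity. By Definition \ref{Def:PL}, $a\ast\mathbf{s}_0=\Psi_a\mathbf{s}_0$ reads $a=\sum_j a_j\mathbf{s}_j$ in $\A_L$. Applying the twist $(-)^{(k)}$, which fixes $a$ because its coefficients lie in $\mathbb{F}_q$, gives $a=\sum_j a_j^{q^k}\mathbf{s}_j^{(k)}$. We also have the cocycle relation $\mathbf{s}_{k+m}=\mathbf{s}_k\cdot\mathbf{s}_m^{(k)}$.

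Fix $n\geqslant 1$ and apply the residue theorem on $X\otimes L$ to the differential $a\,\omega^{(n+1)}/\mathbf{s}_{n+1}$. Its poles away from $\infty$ lie at the points $[\theta^{(k)}]$, $0\leqslant k\leqslant n$, which are the zeros of $f^{(k)}$. The divisor $V$ is trivial in general only when $g=0$, so in the general case the zeros and poles coming from $V^{(k+1)}-V^{(k)}$ telescope in $\mathbf{s}_{n+1}$, and $\omega$ should be chosen to absorb what is left.

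At $[\theta^{(k)}]$ with $k\geqslant 1$, I would substitute $a=\sum_j a_j^{q^k}\mathbf{s}_j^{(k)}$ and use $\mathbf{s}_{n+1}=\mathbf{s}_k\,\mathbf{s}_{n+1-k}^{(k)}$. The aim is to recognise the residue as the $q^k$-th power of a residue at $[\theta]$ of a form built from $\omega^{(n+1-k)}$ and $\mathbf{s}_{n+1-k}$, using the fact that taking residues commutes with the twist. This should produce the terms $c_{n-k}$-type contributions weighted by $a_{k}$-coefficients, exactly as in the recursion. The residue at $[\theta]$ itself, after writing $a=\iota(a)+(a-\iota(a))$ and expanding, should give $\iota(a)r_n$, while the contribution at $[\theta^{(n)}]$ should produce $-\iota(a)^{q^n}r_n$.

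The remaining point is the residue at $\infty$, and this is where the hypothesis $\deg\infty=1$ enters. All twists $f^{(k)}$ then have their pole at the same point $\infty$, so $1/\mathbf{s}_{n+1}$ vanishes there to order $n+1$. Since $\omega$ has order $-2$ and $a$ has a pole of order $\deg a$, the residue at $\infty$ vanishes once $n\geqslant\deg a+1$. For the finitely many small $n$ I would instead use $a_j=0$ for $j>\deg a$ and argue directly, or vary the choice of $a$.

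I expect the main obstacle to be the bookkeeping at the points $[\theta^{(k)}]$: showing that each residue there is precisely $\bigl(\sum_{j} a_j\,\cdot\,\text{residue}\bigr)^{q^k}$ in the form needed, without leftover cross terms. The first thing I would try is to reorganise $a/\mathbf{s}_{n+1}$ as $\sum_j a_j^{q^k}\mathbf{s}_j^{(k)}/(\mathbf{s}_k\,\mathbf{s}_{n+1-k}^{(k)})$ and observe that only the terms with $j\leqslant n+1-k$ can have a pole at $[\theta^{(k)}]$. An alternative, if the bookkeeping resists, is to verify the identity $\exp\circ\log=\mathrm{id}$ directly using Theorem \ref{Thm:An-Th-exp} and the same residue sum.
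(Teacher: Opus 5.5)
Your main route has a genuine gap. The residue theorem applied to $a\,\omega^{(n+1)}/\mathbf{s}_{n+1}$ does not produce the recursion coming from $\log_\Psi\circ\Psi_a=\iota(a)\log_\Psi$, so the bookkeeping you anticipate cannot close. At $t=\theta^{(k)}$ the pole is simple, and $\mathbf{s}_{n+1}=\mathbf{s}_k\,\mathbf{s}_{n+1-k}^{(k)}$ with $\mathbf{s}_k(\theta^{(k)})=D_k\neq 0$. Every $\mathbf{s}_j^{(k)}$ with $j\geqslant 1$ contains the factor $f^{(k)}$, so it vanishes at $\theta^{(k)}$. Hence substituting $a=\sum_j a_j^{q^k}\mathbf{s}_j^{(k)}$ only returns $a(\theta^{(k)})=\iota(a)^{q^k}$, and
\[
\Res_{t=\theta^{(k)}}\frac{a\,\omega^{(n+1)}}{\mathbf{s}_{n+1}}=\frac{\iota(a)^{q^k}}{D_k}\,r_{n-k}^{q^k},\qquad 0\leqslant k\leqslant n .
\]
No coefficient $a_j$ with $j\geqslant 1$ appears at any finite pole. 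What appears instead are the exponential coefficients $D_k$ of Theorem \ref{Thm:An-Th-exp}, and the term at $\theta^{(n)}$ is $\iota(a)^{q^n}/D_n$, not $-\iota(a)^{q^n}r_n$. The sum of the finite residues is the coefficient of $\xi^{q^n}$ in $\exp_\Psi(\iota(a)R(\xi))$, where $R(\xi)=\sum_i r_i\xi^{q^i}$. It must therefore be compared with $\Psi_a$, not with $\log_\Psi\circ\Psi_a$. The $a_j$ enter only through the residue at $\infty$, and for $1\leqslant n\leqslant\deg a$ that residue is not zero. Writing $a=\sum_j a_j\mathbf{s}_j$ untwisted, each summand $a_j\mathbf{s}_j\omega^{(n+1)}/\mathbf{s}_{n+1}$ is a twist of $\omega^{(n+1-j)}/\mathbf{s}_{n+1-j}$ or of $\mathbf{s}_{j-n-1}\omega$, and one finds $\Res_\infty=-a_n$ (compare Theorem \ref{thm:bi}). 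So ``arguing directly for small $n$'' is not a routine patch: the identity you actually obtain is of exponential type, and your log recursion never emerges termwise.

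What works is the alternative you mention at the end, and it is how the paper argues. The paper cites Theorem \ref{Thm:An-Th-log} from Anderson--Thakur without proof, and proves its analogue Theorem \ref{Thm:L} in exactly this way. Take $a=1$, which your uniqueness criterion had to exclude. For $n\geqslant 1$ the form $\omega^{(n+1)}/\mathbf{s}_{n+1}$ has order at least $n-1\geqslant 0$ at $\infty$; this is where $\deg\infty=1$ is used. Its only poles are then the simple ones at $\theta^{(k)}$, $0\leqslant k\leqslant n$, and the computation above gives
\[
\sum_{k=0}^{n}\frac{r_{n-k}^{q^k}}{D_k}=0\qquad(n\geqslant 1).
\]
Together with $r_0=1$, this is the coefficient form of $\exp_\Psi\circ R=\mathrm{id}$. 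The coefficient of $r_n$ is $1/D_0=1$, so the relation determines $r_n$ recursively, and $R=\log_\Psi$. (Alternatively, keep a general $a\neq 0$ and use $\Res_\infty=-a_n$; this gives $\exp_\Psi\circ\iota(a)\circ R=\Psi_a=\exp_\Psi\circ\iota(a)\circ\log_\Psi$, which also works but is longer.) Your remark about $V$ is right too. In positive genus the hypothesis on $\omega$ must include $(\omega)\geqslant V-2\infty$. That condition makes $\omega$ unique up to a scalar and cancels the poles of $1/\mathbf{s}_{n+1}$ along $V^{(n+1)}$.
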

However, the differential $\omega$ from Theorem \ref{Thm:An-Th-log} is not valid when $ \deg(\infty) \geqslant 2 $. We therefore modify the choice of $\omega$. For this purpose, we define the differentials: $ \omega^{(0)} = h^{(0)}(t) dt $, with 
\[
     h^{(0)}(t) = -  \frac{(\theta^{(-1)} - \eta^{(-2)} )} { (\theta^{(-1)} - \eta^{(-1)})} \frac{1}{(t - \eta^{(-2)})(t - \eta^{(-1)})} .
\]
Comparing with the conditions in Theorem \ref{Thm:An-Th-log}, the differential $ \omega $ is now defined over $ \mathbb{F}_{q}(\eta) $ (not $\mathbb{F}_{q} $) associated with two poles lying over $ P_{\rho}$. 
Applying the map $(\cdot)^{(j)}$, we have 
\begin{align}\label{Eq:omegaj}
     \omega^{(j)} =h^{(j)}(t)dt ,
\end{align}
and
\begin{align}\label{Eq:h^j}
h^{(j)}(t)=-  \frac{\theta^{(j-1)} - \eta^{(j-2)} } { \theta^{(j-1)} - \eta^{(j-1)}}  \frac{1}{(t-\eta^{(j-2)})(t - \eta^{(j-1)} )}.     
\end{align}
Observe that the simple poles of $ \omega^{(j)} $ are $[\eta^{(j-1)}]$ and $ [\eta^{(j-2)}] $. 

We now state a result analogous to Theorem \ref{Thm:An-Th-log}.
\begin{thm}\label{Thm:L}
      For $j\geqslant 0$, the coefficients $ L_j $ in the logarithm function $\log_{(0)}(\xi)$ satisfy
    \begin{equation}\label{equ:ResLj}
          \frac{(-1)^j}{L_j} = \Res_{t = \theta } \frac{\omega^{(j+1)}}{\mathbf{s}_{j+1}(t)} = \Res_{t = \theta } \frac{\omega^{(j+1)}}{f^{(0)}(t) \cdots f^{(j)}(t) }.
    \end{equation}
\end{thm}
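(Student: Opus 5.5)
The plan is to verify that the proposed coefficients $\ell_j := \Res_{t=\theta} \omega^{(j+1)}/\mathbf{s}_{j+1}(t)$ satisfy the inversion identity \eqref{Eq:DjLn-j} with $\ell_0 = 1$. Since that identity determines the $L_j$ uniquely and recursively from the $D_i$, this forces $\ell_j = (-1)^j/L_j$.

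\textbf{Step 1: the normalization $\ell_0 = 1$.} We have $\mathbf{s}_1 = f^{(0)} = \frac{\theta - t}{(t-\eta)(\theta-\eta)}$, which has a simple zero at $t=\theta$. Hence
\[
\Res_{t=\theta} \frac{h^{(1)}(t)}{f(t)} = -(\theta-\eta)^2\, h^{(1)}(\theta).
\]
With $h^{(1)}(\theta) = -\frac{\theta-\eta^{(-1)}}{\theta-\eta}\cdot\frac{1}{(\theta-\eta^{(-1)})(\theta-\eta)}$, this gives $\ell_0 = 1$ directly.

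\textbf{Step 2: reformulate the identity with $j$-independent residue points.} Write $\mathbf{s}_j(\theta^{(j)}) = D_j$ and raise each residue to the $q^j$-th power. Since twisting commutes with residues, $\ell_{n-j}^{q^j} = \Res_{t=\theta^{(j)}} \omega^{(n+1)}/(f^{(j)}\cdots f^{(n)})$. Because $\mathbf{s}_{n+1} = \mathbf{s}_j \cdot f^{(j)}\cdots f^{(n)}$, and $\mathbf{s}_j$ is regular and nonvanishing at $\theta^{(j)}$, we get
\[
\frac{\ell_{n-j}^{q^j}}{D_j} = \Res_{t=\theta^{(j)}} \frac{\omega^{(n+1)}}{\mathbf{s}_{n+1}(t)}.
\]
The required identity $\sum_{j=0}^n \ell_{n-j}^{q^j}/D_j = 0$ for $n \ge 1$ therefore becomes the statement that the residues of the single differential $\Omega_n := \omega^{(n+1)}/\mathbf{s}_{n+1}$ at $\theta, \theta^{(1)}, \dots, \theta^{(n)}$ sum to zero.

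\textbf{Step 3: the residue theorem on $\mathbb{P}^1_L$.} By \eqref{Eq:f_iTh}, $1/\mathbf{s}_{n+1}$ equals $\prod_{k=0}^{n}(t-\eta^{(k)})$ divided by $\prod_{k=0}^n(\theta^{(k)}-t)$, up to a constant. Thus $\Omega_n$ has simple poles only at the $\theta^{(k)}$, possibly at $\eta^{(-1)}$, and possibly at $\infty$. The factor $h^{(n+1)}$ has poles $[\eta^{(n-1)}]$ and $[\eta^{(n)}]$, and for $n \ge 1$ both are cancelled by the numerator factors $(t-\eta^{(n-1)})(t-\eta^{(n)})$. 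At $\infty$ the order is $-2 + (n+1) - (n+1) = -2$, so there is no residue there. So the residue theorem gives the vanishing sum, and we must note that $\eta^{(k)} \neq \theta^{(j)}$ so that all the poles are genuinely distinct.

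\textbf{Main obstacle.} Step 3 is where the specific shape of $\omega$ matters: it must have its two poles exactly at $\eta^{(j-2)}$ and $\eta^{(j-1)}$ and a double zero at $\infty$. The bookkeeping of which $\eta$-factors cancel needs care for small $n$, since for $n=1$ the pole at $\eta^{(0)}$ must cancel. I would check the $\infty$-order and the cancellation explicitly there.

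Finally, induction on $n$ using the uniqueness in \eqref{Eq:DjLn-j} yields $\ell_j = (-1)^j/L_j$ for all $j$.
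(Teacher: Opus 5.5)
Your proposal is correct and is essentially the paper's own argument. The paper also applies the residue theorem to $\omega^{(n+1)}/\mathbf{s}_{n+1}$, whose only poles are simple poles at $\theta,\theta^{(1)},\dots,\theta^{(n)}$. It twists each residue into $\frac{1}{D_k}\bigl(\Res_{t=\theta}\omega^{(n-k+1)}/\mathbf{s}_{n-k+1}\bigr)^{q^k}$ and matches the result with \eqref{Eq:DjLn-j}; it phrases this as an induction rather than as uniqueness of the recursion, which is the same thing.

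There is one slip in your Step 3. Since $h^{(n+1)}$ has a double zero at $\infty$, after cancelling $(t-\eta^{(n-1)})(t-\eta^{(n)})$ the differential $\Omega_n$ is a degree-$(n-1)$ polynomial over a degree-$(n+1)$ polynomial times $dt$. So its order at $\infty$ is $0$, not $-2$, and that is what excludes a residue there (order $-2$ alone would not). For the same reason no pole at $\eta^{(-1)}$, or at any other $\eta^{(k)}$, survives when $n\geqslant 1$.
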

\begin{proof}
    Notice that 
    \[
    \omega^{(1)} = -  \frac{\theta - \eta^{(-1)} } { \theta - \eta} \frac{1}{(t - \eta^{(-1)})(t - \eta^{(0)})} dt .  
    \]
    This implies
    \[
\Res_{t = \theta }  \frac{\omega^{(1)}}{f^{(0)}(t)} = 1  = \frac{1}{L_0}. 
\]
This is consistent with the condition $ L_0 = 1 $.  For the induction step, assume that Equation \eqref{equ:ResLj} holds for $ L_0, L_1, \cdots, L_{j-1}$. We consider the differential 
\[
\frac{\omega^{(j+1)}}{ \mathbf{s}_{j+1}(t)} =\frac{\omega^{(j+1)}}{f^{(0)}(t) f^{(1)}(t)\cdots f^{(j)}(t) }  .\]
The pole of $ \frac{\omega^{(j+1)}}{ \mathbf{s}_{j+1}(t)}  $ is located at $ \theta^{(k)} $ for $ k = 0 ,\cdots, j $.
From the residue theorem in function fields, it follows that
\begin{align*}
    0& = \sum_{k=0}^{j}\Res_{t = \theta^{(k)}} \frac{\omega^{(j+1)}}{f^{(0)}(t) f^{(1)}(t)\cdots f^{(j)}(t) } \\
    & = \sum_{k=0}^{j} \frac{1}{D_k}\Res_{t = \theta^{(k)} } \frac{\omega^{(j+1)}}{ f^{(k)}(t)\cdots f^{(j)}(t) }\\
     & = \sum_{k=0}^{j} \frac{1}{D_k} \left( \Res_{t = \theta } \frac{\omega^{(j-k+1)}}{ f^{(0)}(t)\cdots f^{(j-k)}(t) } \right)^{q^k} \\
      & =     \Res_{t = \theta } \frac{\omega^{(j +1)}}{ f^{(0)}(t)\cdots f^{(j )}(t) }   +   \sum_{k=1}^{j} \frac{(-1)^{j-k}}{D_k L_{j-k}^{q^k}} .
\end{align*}
From \eqref{Eq:DjLn-j}, we have 
 \[
 \Res_{t = \theta } \frac{\omega^{(j +1)}}{ f^{(0)}(t)\cdots f^{(j )}(t) } = \frac{(-1)^j}{L_j}.
\]
\end{proof}

\begin{cor}\label{Cor:Ljn}
\begin{enumerate}
    \item  Using the notation above, the coefficient $L_j$ can be expressed as  
\begin{align}\label{Eq:L-expn}
	L_j =(\theta-\theta^{(1)}) (\theta-\theta^{(2)})  \cdots (\theta -\theta^{(j)})\Theta^{W_j+q^j\sigma^{N-1}+\sum_{k=0}^{N-1} (\floor{\frac{j+N-k-2}{N}}\sigma^{k})}.
\end{align}	
 \item For $ j \geqslant 1 $, the coefficient $ L_j $ satisfies the recurrence relation:
	\[
		L_j = 	L_{j-1} \frac{\theta-\theta^{(j)}}{ (\theta - \eta^{(j-2)})(\theta-\eta)^{q^{j-1}} (\theta - \eta^{(-1)})^{{q^{j}}-q^{j-1}}}. 
		\]
\end{enumerate} 
\end{cor}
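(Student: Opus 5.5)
The plan is to prove (1) by evaluating the residue formula of Theorem~\ref{Thm:L} in closed form, and then to obtain (2) by taking the quotient $L_j/L_{j-1}$ of the closed forms. No induction beyond Theorem~\ref{Thm:L} itself should be needed.

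For (1), I start from
\[
\frac{(-1)^j}{L_j}=\Res_{t=\theta}\frac{\omega^{(j+1)}}{\mathbf{s}_{j+1}(t)},
\]
and substitute Equation~\eqref{Eq:f_iTh} with $i=j+1$, namely
\[
\mathbf{s}_{j+1}(t)=\Theta^{W_{j+1}}\prod_{k=0}^{j}(\theta^{(k)}-t)\Big/\prod_{k=0}^{j}(t-\eta^{(k)}).
\]
The key steps are as follows.
\begin{enumerate}
\item At $t=\theta$ only the factor $\theta^{(0)}-t$ vanishes, and it vanishes to first order. Hence the residue equals $-1$ times the remaining function evaluated at $t=\theta$:
\[
-\,h^{(j+1)}(\theta)\,\prod_{k=0}^{j}(\theta-\eta^{(k)})\Big/\Big(\Theta^{W_{j+1}}\prod_{k=1}^{j}(\theta^{(k)}-\theta)\Big).
\]
\item From Equation~\eqref{Eq:h^j},
\[
h^{(j+1)}(\theta)=-\frac{\theta^{(j)}-\eta^{(j-1)}}{\theta^{(j)}-\eta^{(j)}}\cdot\frac{1}{(\theta-\eta^{(j-1)})(\theta-\eta^{(j)})}.
\]
The two denominator factors $(\theta-\eta^{(j-1)})(\theta-\eta^{(j)})$ cancel against the corresponding factors of $\prod_{k=0}^{j}(\theta-\eta^{(k)})$, leaving $\prod_{k=0}^{j-2}(\theta-\eta^{(k)})$.
\item I rewrite $\theta^{(j)}-\eta^{(j)}=(\theta-\eta)^{q^j}=\Theta^{-q^j}$ and $\theta^{(j)}-\eta^{(j-1)}=(\theta-\eta^{(-1)})^{q^j}=\Theta^{-q^j\sigma^{N-1}}$, using $\eta^{(-1)}=\eta^{(N-1)}$.
\item Writing $\theta^{(k)}-\theta=-(\theta-\theta^{(k)})$ produces a factor $(-1)^j$. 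Together with the two explicit minus signs from steps 1 and 2, this should exactly match the $(-1)^j$ on the left-hand side.
\end{enumerate}

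What remains is to express $\prod_{k=0}^{j-2}(\theta-\eta^{(k)})$ in the $\Theta^{\mathbb{Z}\{\sigma\}}$ notation. Since $\theta-\eta^{(k)}=\Theta^{-\sigma^k}$ and $\eta^{(k)}$ depends only on $k\bmod N$, the number of indices $0\le m\le j-2$ with $m\equiv k\pmod N$ is $\floor{\frac{j+N-k-2}{N}}$. Hence
\[
\prod_{k=0}^{j-2}(\theta-\eta^{(k)})=\Theta^{-\sum_{k=0}^{N-1}\floor{\frac{j+N-k-2}{N}}\sigma^k}.
\]
Collecting the $\Theta$-exponents should give
\[
L_j=(\theta-\theta^{(1)})\cdots(\theta-\theta^{(j)})\;\Theta^{\,W_{j+1}-q^j+q^j\sigma^{N-1}+\sum_k\floor{\frac{j+N-k-2}{N}}\sigma^k}.
\]
Finally $W_{j+1}-q^j=W_j$, which is exactly Equation~\eqref{Eq:L-expn}.

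For (2), I divide Equation~\eqref{Eq:L-expn} for $j$ by the same formula for $j-1$. The product of $\theta$-differences contributes $\theta-\theta^{(j)}$. In the $\Theta$-exponent, $W_j-W_{j-1}=q^{j-1}$, and $q^j\sigma^{N-1}-q^{j-1}\sigma^{N-1}$ gives $(\theta-\eta^{(-1)})^{-(q^j-q^{j-1})}$ in the denominator. The floor sums differ by $\sigma^{k}$ exactly for the single $k$ with $k\equiv j-2\pmod N$, which contributes $(\theta-\eta^{(j-2)})^{-1}$. Also $\Theta^{q^{j-1}}=(\theta-\eta)^{-q^{j-1}}$. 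Together these reproduce the stated recurrence.

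The main obstacle is bookkeeping rather than any conceptual step. I need to get the overall sign right, handle the floor-count identity cleanly modulo $N$, and treat the small cases $j=0,1$ separately, since there the range $0\le k\le j-2$ is empty and the cancellation in step 2 has to be checked by hand. For $j=0$, $h^{(1)}$ has poles at $\eta^{(-1)}$ and $\eta^{(0)}$, so I verify directly that the formula returns $L_0=1$, as already noted in the proof of Theorem~\ref{Thm:L}.
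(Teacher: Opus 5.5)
Your proposal is correct and follows essentially the paper's route. You evaluate the simple-pole residue of Theorem~\ref{Thm:L} at $t=\theta$ using \eqref{Eq:f_iTh} and \eqref{Eq:h^j}, count the factors $\theta-\eta^{(k)}$ by residue class modulo $N$ with floor functions, and obtain (2) by dividing the closed forms for $j$ and $j-1$. The differences are cosmetic: you cancel $(\theta-\eta^{(j-1)})(\theta-\eta^{(j)})$ before counting, whereas the paper counts over $0\le m\le j$ using $\floor{\frac{j+N-k}{N}}$; and your sign bookkeeping is more careful than the paper's intermediate display, which omits a factor $(-1)^{j+1}$ but reaches the same final formula.
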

\begin{proof}
(1) It follows from Theorem \ref{Thm:L} that
\begin{equation}\label{Equ:Lj}
\frac{(-1)^{j}}{L_j } =\Res_{t = \theta  } \frac{ \omega^{(j+1)} }{ f^{(0)}(t) f^{(1)}(t) \cdots f^{(j)}(t)}.
\end{equation}	
Note that the pole of the term
 \[ \frac{ \omega^{(j+1)} }{ f^{(0)}(t) f^{(1)}(t) \cdots f^{(j)}(t)}  
 \]
 is located at $ \theta^{(k)} $ for $ k = 0 ,\cdots, j $, and all the poles are simple. 
 So Equation \eqref{Equ:Lj} reduces to 
 \begin{align*}
\frac{(-1)^{j}}{L_j }
 &=\frac{h^{(j+1)}(\theta)\prod_{k=0}^{N-1}(\theta-\eta^{(k)})^{ \floor{\frac{j+N-k}{N}} }}{(\theta-\theta^{(1)}) (\theta-\theta^{(2)})  \cdots (\theta -\theta^{(j)})}\cdot\frac{1}{\Theta^{W_{j+1}}}.
\end{align*}	
 Combining Equation \eqref{Eq:h^j}, we find that
\begin{align*}
L_j =(\theta-\theta^{(1)}) (\theta-\theta^{(2)})  \cdots (\theta -\theta^{(j)})\Theta^{W_j+q^j\sigma^{N-1}+\sum_{k=0}^{N-1} (\floor{\frac{j+N-k-2}{N}}\sigma^{k})}.
\end{align*}	

    (2) Applying the equalities 
\begin{align}
 \floor{\frac{j+N-k-2}{N}}-\floor{\frac{j+N-k-3}{N}}=
    \begin{cases}
       1,  & k=\ell N+j-2, \ell~ \text{is an integer},\\
      0, & \text{otherwise}
    \end{cases}
\end{align}
and 
\[ W_j-W_{j-1}=q^{j-1}, \]
Equation \eqref{Eq:L-expn} yields that
  \begin{align}
      \frac{L_j}{L_{j-1}}=(\theta-\theta^{(j)})\Theta^{q^{j-1}+(q^j-q^{j-1})\sigma^{-1}+\sigma^{j-2}}.
  \end{align}      
Therefore, the lemma follows.
\end{proof}

\begin{notation}\label{No:Ljn}
	For integer $ k \geqslant 0 $, we adopt the symbols
	\[
		\bket{k}	:=   \Theta^{q^{k}}  - \Theta^{\sigma^k}=-f^{(k)}(\theta).
	\] 
\end{notation} 
A direct computation shows that
\begin{align}\label{Eq:ttq2k}
	\theta - \theta^{q^{k}}
 = \frac{ \Theta^{q^{k}} -\Theta^{\sigma^k} }{\Theta^{q^{k}+\sigma^k }}=\frac{\bket{k}} {\Theta^{q^{k}+\sigma^k  }}.
\end{align}
Using this notation, we obtain the following lemma.
\begin{lem}\label{Lem:leLj}
	For $j \geqslant 1$, $ L_j $  can be rewritten as
 \[
	L_j = \bket{1}\bket{2}\cdots \bket{j} \Theta^{q^j(\sigma^{N-1}-1)-\sigma^{j}-\sigma^{j-1}+2}.
\] 
An analogous computation gives
\[ L_j^{\sigma^2} = \bket{1} \bket{2}\cdots \bket{j}\Theta^{(q^j-1)(\sigma-1)+W_j(\sigma^2-1)}.\]

\end{lem}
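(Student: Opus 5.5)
The plan is to derive both identities from Corollary \ref{Cor:Ljn} by rewriting each factor $\theta-\theta^{(k)}$ through Equation \eqref{Eq:ttq2k},
\[
\theta-\theta^{(k)}=\bket{k}\,\Theta^{-(q^k+\sigma^k)}.
\]
All exponents will be handled in $\mathbb{Z}\{\sigma\}$, using $\sigma^{N}=1$ and hence $\sigma^{N-1}=\sigma^{-1}$.

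\emph{First formula.} I will argue by induction on $j$ using the recursion in Corollary \ref{Cor:Ljn}(2), written in the form obtained in its proof:
\[
\frac{L_j}{L_{j-1}}=(\theta-\theta^{(j)})\,\Theta^{q^{j-1}+(q^j-q^{j-1})\sigma^{-1}+\sigma^{j-2}} .
\]
Substituting $\theta-\theta^{(j)}=\bket{j}\,\Theta^{-q^j-\sigma^j}$ turns this into
\[
\frac{L_j}{L_{j-1}}=\bket{j}\,\Theta^{(q^j-q^{j-1})(\sigma^{-1}-1)+\sigma^{j-2}-\sigma^j}.
\]
Set $E_j:=q^j(\sigma^{-1}-1)-\sigma^j-\sigma^{j-1}+2$. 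A one-line check gives
\[
E_j-E_{j-1}=(q^j-q^{j-1})(\sigma^{-1}-1)-\sigma^j+\sigma^{j-2},
\]
which is exactly the exponent in the ratio. For the base case $j=1$, $L_0=1$ and the ratio exponent is $1+(q-1)\sigma^{-1}+\sigma^{-1}-q-\sigma=q\sigma^{-1}-q-\sigma+1=E_1$. Multiplying the ratios then yields $L_j=\bket{1}\cdots\bket{j}\,\Theta^{E_j}$.

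\emph{Second formula.} Here one must not apply $\sigma^2$ to the first formula directly, because $\bket{k}^{\sigma^2}\neq\bket{k}$. Instead I will apply $\sigma^2$ to the closed form \eqref{Eq:L-expn}. Since $\sigma$ fixes $\theta$, the product $\prod_{k=1}^{j}(\theta-\theta^{(k)})$ is unchanged. Only then do I substitute $\theta-\theta^{(k)}=\bket{k}\,\Theta^{-q^k-\sigma^k}$. This gives
\[
L_j^{\sigma^2}=\bket{1}\cdots\bket{j}\,\Theta^{\sigma^2 X_j-\sum_{k=1}^{j}(q^k+\sigma^k)},
\qquad X_j=W_j+q^j\sigma^{-1}+\sum_{k=0}^{N-1}\Big\lfloor\tfrac{j+N-k-2}{N}\Big\rfloor\sigma^k .
\]

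The step I expect to need the most care is identifying the floor sum. Iterating the increment computed in the proof of Corollary \ref{Cor:Ljn}(2) from $j=0$ shows that $\sum_k\lfloor\frac{j+N-k-2}{N}\rfloor\sigma^k=\sum_{m=-1}^{j-2}\sigma^m$, read in $\mathbb{Z}\{\sigma\}$ with the $j=0$ value $0$. With this identification,
\[
\sigma^2X_j=W_j\sigma^2+q^j\sigma+\sum_{m=1}^{j}\sigma^m .
\]
The sum $\sum_{m=1}^{j}\sigma^m$ cancels exactly against $-\sum_{k=1}^{j}\sigma^k$. Using $\sum_{k=1}^{j}q^k=qW_j=W_j+q^j-1$, the remaining exponent is
\[
W_j\sigma^2+q^j\sigma-W_j-q^j+1=(q^j-1)(\sigma-1)+W_j(\sigma^2-1),
\]
as claimed. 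The delicate point is the bookkeeping of the wrap-around index $m=-1$ (that is, $\sigma^{N-1}$) and of the small cases $j=1,2$, where the floor sum is short. I will verify these cases by direct substitution before trusting the general telescoping.
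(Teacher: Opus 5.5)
Your proof of the first formula is correct. Telescoping the ratio $L_j/L_{j-1}$ from Corollary \ref{Cor:Ljn}(2) is an equivalent alternative to the paper's direct substitution of \eqref{Eq:ttq2k} into \eqref{Eq:L-expn}. Your observation that, for the second formula, one must twist \eqref{Eq:L-expn} by $\sigma^2$ before substituting the untwisted $\bket{k}$ is also exactly right. The execution of the second formula, however, contains two errors that cancel each other.

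\textbf{The floor-sum identification.} It is off by $\sigma^{N-1}$. Its value at $j=0$ is not $0$: the $k=N-1$ term of $\sum_{k=0}^{N-1}\lfloor\frac{N-k-2}{N}\rfloor\sigma^k$ is $\lfloor -\frac{1}{N}\rfloor\sigma^{N-1}=-\sigma^{N-1}$. Telescoping the increments $\sigma^{j-2}$ from this correct starting value gives
\[
\sum_{k=0}^{N-1}\Big\lfloor\frac{j+N-k-2}{N}\Big\rfloor\sigma^k=\sum_{m=0}^{j-2}\sigma^m\qquad (j\geqslant 1).
\]
For example, at $j=1$ every floor $\lfloor\frac{N-1-k}{N}\rfloor$ vanishes, while your formula gives $\sigma^{N-1}$. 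Hence $\sigma^2X_j=W_j\sigma^2+q^j\sigma+\sum_{m=2}^{j}\sigma^m$, and after subtracting $\sum_{k=1}^{j}(q^k+\sigma^k)$ a term $-\sigma$ survives.

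\textbf{The final displayed equality.} As written it is false: $W_j\sigma^2+q^j\sigma-W_j-q^j+1$ exceeds $(q^j-1)(\sigma-1)+W_j(\sigma^2-1)$ by exactly $\sigma$. With the corrected identification, the remaining exponent is
\[
W_j\sigma^2+q^j\sigma-W_j-q^j+1-\sigma=(q^j-1)(\sigma-1)+W_j(\sigma^2-1).
\]
So the lemma holds, but your derivation reaches it only through two compensating mistakes. The $j=1$ check you promised would have exposed both.

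\textbf{A way to avoid the floor sum.} You can bypass it altogether by reusing your first formula. Comparing it with \eqref{Eq:L-expn} via \eqref{Eq:ttq2k} shows that the $\Theta$-exponent there may be replaced by
\[
B_j:=E_j+\sum_{k=1}^{j}(q^k+\sigma^k)=W_j+q^j\sigma^{-1}+\sum_{m=0}^{j-2}\sigma^m .
\]
Since $\prod_{k=1}^{j}(\theta-\theta^{(k)})$ is fixed by $\sigma$, this gives
\[
L_j^{\sigma^2}=\bket{1}\cdots\bket{j}\,\Theta^{\sigma^2B_j-\sum_{k=1}^{j}(q^k+\sigma^k)},
\]
and the exponent expands directly to $(q^j-1)(\sigma-1)+W_j(\sigma^2-1)$.
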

 \begin{proof}
 Substituting \eqref{Eq:ttq2k}  into the expression \eqref{Eq:L-expn} of $L_{j}$, we have
 \begin{equation*}
 \begin{split}
     L_{j}&=
         \frac{\bket{1}}{\Theta^{q+\sigma}} \frac{\bket{2}}{\Theta^{q^2+\sigma^2}} \cdots \frac{\bket{j}}{\Theta^{q^{j}+\sigma^j}}\Theta^{W_j+q^j\sigma^{N-1}+\sum_{k=0}^{N-1} \floor{\frac{j+N-k-2}{N}}\sigma^{k}}\\
     &= \bket{1} \bket{2} \cdots \bket{j} \Theta^{J},
 \end{split}
 \end{equation*}
 where $J$ is given by  
 \begin{equation*}
 \begin{split}
    J={}&W_j+q^j\sigma^{N-1}-(q+\sigma)-\cdots-(q^j+\sigma^j)+\sum_{k=0}^{N-1} \floor{\frac{j+N-k-2}{N}}\sigma^{k}\\
        ={}& W_j+q^j\sigma^{N-1}-W_{j+1}+2-\sum_{k=0}^{N-1} \floor{\frac{j+N-k}{N}} \sigma^{k}+\sum_{k=0}^{N-1} \floor{\frac{j+N-k-2}{N}} \sigma^{k}\\
        ={}& q^j(\sigma^{N-1}-1)-\sigma^{j}-\sigma^{j-1}+2.
 \end{split}
 \end{equation*}
 This establishes the first identity. The second identity follows similarly.
\end{proof}

\subsection{Residue Representation}
In this section, we present a residue representation for the Drinfeld module $\Psi$.  
For $ z(t) \in \A $, using the equality 
\begin{equation}\label{Eq:expression}
\exp_{(0)} \left(z(\theta) \log_{(0)}(\xi)\right) = \Psi_{z(t)} (\xi) =  \sum_{j=0}^{\deg z(t)} \bi{z(t)}{j} \xi^{q^j},
\end{equation}
we obtain  
\begin{equation}\label{Eq:bizr}
\bi{z}{r} = \sum_{j=0}^r (-1)^{r-j} \frac{z^{q^j}}{D_j L_{r-j}^{q^{j}}}.
\end{equation}
It remains to give the coefficients $ \bi{z }{r } $ via a residue formula. 
\begin{notation}
    For a differential  $\omega_* \in \Omega_K \otimes_{\mathbb{F}_q} L $, we define the residue of $\omega_*$ at the place $ P_{\rho } $ as
\[
\Res_{P_\rho} \omega_*  = \sum_{i = 0 }^{N-1} \Res_{t = \eta^{(i)}} \omega_*  .
\] 
 Note that this formula coincides with the canonical residue notion when $ \omega_* \in \Omega_K $.
\end{notation}
\begin{notation}\label{No:gammak}
    For $k \geqslant 1$, we let 
    \[
    \gamma_k(\sigma):=\sum_{i=0}^{k-1} q^i \sigma^{k-1-i} \in \mathbb{Z}[\sigma].
    \]
For convenience, we set $ \gamma_0(\sigma ) = 0 $. 
\end{notation}
\begin{thm}\label{thm:bi}
With the notations above,  the coefficients of $ \Psi_{z(t)} $ are given by  
    \[
 \bi{z(t)}{k} = - \Res_{P_\rho} \frac{z(t)\cdot \omega^{(k+1)}}{f^{(0)}(t) \cdots f^{(k)}(t) },
 \]
 for $k=0,1, \cdots, \deg (z(t) )$. 
\end{thm}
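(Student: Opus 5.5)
The plan is to start from the coefficient formula \eqref{Eq:bizr},
\[
\bi{z}{k} = \sum_{j=0}^{k} (-1)^{k-j}\frac{z(\theta)^{q^j}}{D_j L_{k-j}^{q^j}},
\]
and to recognize each summand as a residue of the differential
\[
\mu_k := \frac{z(t)\,\omega^{(k+1)}}{f^{(0)}(t)\cdots f^{(k)}(t)}
\]
at one of its finite poles $t=\theta^{(j)}$. The residue theorem on $\mathbb{P}^1_L$ then converts the sum over the finite poles into minus the residue at the poles lying over $P_\rho$.

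\textbf{Identifying the residue at $\theta^{(j)}$.} This repeats the computation in the proof of Theorem~\ref{Thm:L}. By \eqref{Eq:f_iTh}, near $t=\theta^{(j)}$ with $0\le j\le k$ we may split
\[
\mu_k=\frac{1}{\mathbf{s}_j(t)}\cdot\frac{z(t)\,\omega^{(k+1)}}{f^{(j)}(t)\cdots f^{(k)}(t)}.
\]
The factor $\mathbf{s}_j(t)$ is regular and nonzero at $\theta^{(j)}$ with value $D_j$. The second factor is the $q^j$-twist of $\frac{z(t)\,\omega^{(k-j+1)}}{f^{(0)}\cdots f^{(k-j)}}$, since $z(t)$ has $\mathbb{F}_q$-coefficients. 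Because the twist commutes with taking the residue at the twisted point, and all poles at $\theta^{(j)}$ are simple, we get
\[
\Res_{t=\theta^{(j)}}\mu_k=\frac{z(\theta)^{q^j}}{D_j}\Bigl(\Res_{t=\theta}\frac{\omega^{(k-j+1)}}{f^{(0)}\cdots f^{(k-j)}}\Bigr)^{q^j}=\frac{(-1)^{k-j}z(\theta)^{q^j}}{D_jL_{k-j}^{q^j}},
\]
where the last step is Theorem~\ref{Thm:L}. Summing over $j$ gives
\[
\bi{z}{k}=\sum_{j=0}^k\Res_{t=\theta^{(j)}}\mu_k .
\]

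\textbf{Locating all poles of $\mu_k$.} Next I check that $\mu_k$ has no other finite poles, so that the residue theorem gives
\[
\sum_j \Res_{\theta^{(j)}}\mu_k+\Res_{P_\rho}\mu_k+\Res_{t=\infty}\mu_k=0 .
\]
Write $1/\mathbf{s}_{k+1}$ via \eqref{Eq:f_iTh}: its zeros are at $\eta^{(0)},\dots,\eta^{(k)}$ and its poles are the simple poles at $\theta^{(i)}$. The function $z(t)\in\A$ has poles only at the $\eta^{(i)}$ (poles over $P_\rho$), and $\omega^{(k+1)}$ has simple poles at $\eta^{(k)}$ and $\eta^{(k-1)}$. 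So every remaining pole lies over $P_\rho$ or at $t=\infty$.

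\textbf{Vanishing at $t=\infty$ (the main obstacle).} The key step is to show that $\Res_{t=\infty}\mu_k=0$. At infinity:
\begin{itemize}
\item $1/f^{(i)}(t)\to -1/\Theta^{q^i}$ is regular;
\item $z(t)\in\A$ is regular at $\infty$, being a polynomial in the $T_i$, each of which vanishes or is bounded there;
\item $h^{(k+1)}(t)\,dt$ has a zero of order $0$ at $\infty$, since $h^{(k+1)}\sim c/t^2$ and $dt$ has a double pole.
\end{itemize}
Hence $\mu_k$ is regular at $\infty$ and its residue there is $0$. This yields $\bi{z}{k}=-\Res_{P_\rho}\mu_k$.

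\textbf{Range of $k$.} The derivation is uniform for all $k\ge0$, so it covers $0\le k\le\deg z$. I expect the main care to be needed in two places: the twisting bookkeeping in the first step, namely checking that $\omega^{(k+1)}$ is exactly the $q^j$-twist of $\omega^{(k-j+1)}$ and that the residue commutes with twisting; and verifying that the residue at $\infty$ indeed vanishes for every $z\in\A$.
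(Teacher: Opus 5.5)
Your proposal is correct and is essentially the paper's proof: you apply the residue theorem to $z(t)\,\omega^{(k+1)}/(f^{(0)}\cdots f^{(k)})$, evaluate the residues at $t=\theta^{(j)}$ by splitting off $1/\mathbf{s}_j$, twisting, and invoking Theorem~\ref{Thm:L}, and then recognize the resulting sum as \eqref{Eq:bizr}. Your explicit check that this differential is regular at $t=\infty$, so that point contributes no residue, supplies a step the paper's proof uses tacitly.
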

\begin{proof}
 For $ z(t) \in \A $ and $ k \leqslant \deg(z(t) )$, the residue theorem yields
\[
    \sum_{i=0}^k\Res_{t = \theta ^{(i)} } \frac{z(t) \omega^{(k+1)} }{ f^{(0)}(t) f^{(1)} (t)\cdots f^{(k)}(t)} + \Res_{ P_{\rho}} \frac{z(t) \omega^{(k+1)} }{ f^{(0)}(t) f^{(1)}(t) \cdots f^{(k)}(t)}  = 0 .
\]
The first term simplifies as follows:
\begin{align*}
     & \sum_{i=0}^k\Res_{t = \theta ^{(i)} } \frac{z(t) \omega^{(k+1)} }{ f^{(0)}(t) f^{(1)} (t)\cdots f^{(k)}(t)} \\
     = &\sum_{i=0}^k\frac{1}{f^{(0)}(t) \cdots f^{(i-1)}(t)} |_{t = \theta^{(i)} } \Res_{t = \theta ^{(i)} } \frac{z(t) \omega^{(k+1)} }{ f^{(i)}(t) f^{(i+1)}(t) \cdots f^{(k)}(t)} \\
     ={}&  \sum_{i=0}^k\frac{1}{f^{(0)}(t) \cdots f^{(i-1)}(t)} |_{t = \theta^{(i)} } \left( \Res_{t = \theta } \frac{z(t) \omega^{(k-i+1)} }{ f^{(0)} (t)f^{(1)} (t)\cdots f^{(k-i)}(t)} \right)^{(i)} \\
     ={}& \sum_{i=0}^k \frac{1}{D_i} \frac{(-1)^{k-i}z(\theta)^{q^i}}{L_{k-i}^{q^i}} \qquad \text{by Theorem \ref{Thm:L}} \\
     ={}& \bi{z(\theta)}{k}\qquad \text{by Equation \eqref{Eq:bizr}} .
\end{align*} 
Thus, the equality in the theorem holds.
\end{proof}
For example, when $k = N $, we have 
\begin{equation}
    \bi{T_j}{N}  
    =-\sum_{k=0}^{N-1}\Res_{t = \eta ^{(k)} } \frac{  T_j(t) \omega^{(N+1)} }{ f^{(0)}(t) f^{(1)}(t) \cdots f^{(N)}(t)}. 
\end{equation}
Note that $[\eta], [\eta^{(2)}], \cdots, [\eta^{(N-2)}]$ are not poles of  $\frac{  T_j(t) \omega^{(N+1)} }{ f^{(0)}(t) f^{(1)}(t) \cdots f^{(N)}(t)}$, and $[\eta^{(N-1)}]$ is a simple pole of $\frac{  T_j(t) \omega^{(N+1)} }{ f^{(0)}(t) f^{(1)}(t) \cdots f^{(N)}(t)}$. It follows that 
\begin{align}\label{Eq:TjN}
 \nonumber   \bi{T_j}{N}  
   &=-\Res_{t = \eta ^{(N-1)} } \frac{  T_j(t) \omega^{(N+1)} }{ f^{(0)}(t) f^{(1)}(t) \cdots f^{(N)}(t)} \\ \nonumber
   & = (\eta^{(-1)})^j \frac {(\theta- \eta)^{W_N}} { (\theta - \eta^{(N-1)})  \cdots  (\theta - \eta^{(1)})^{ q^{N-2}} (\theta - \eta)^{q^{N-1} }}\\
     & = (\eta^{(-1)})^j  \frac { (\theta - \eta)^{W_{N-1} }} { (\theta - \eta^{(N-1)})  \cdots  (\theta - \eta^{(1)})^{ q^{N-2}}} \\\nonumber
     & =(\eta^{(-1)})^j  \Theta^{\gamma_N(\sigma) -W_N  }.
\end{align}

 Applying Theorem \ref{thm:bi} to Equation \eqref{Eq:expression}, we obtain the main result of this section: 
\begin{thm}
    The rank-one Drinfeld module $\Psi$ associated with the shtuka function $ f $ is represented as 
    \[
        \Psi_{z(t)} = - \sum_{k=0}^{\deg (z(t) )}\Res_{P_{\rho}} \frac{z(t) \omega^{(k+1)}}{f^{(0)} \cdots f^{(k)}} \tau^{k}     \]
    for $ z(t) \in \A$.
\end{thm}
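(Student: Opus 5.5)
This is essentially an assembly of Theorem \ref{thm:bi} with the defining expansion of $\Psi_{z(t)}$. The plan has three steps.

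\emph{Step 1: expand $\Psi_{z(t)}$ as a twisted polynomial.} By Definition \ref{Def:PL}, for $z(t)\in\A$ we have
\[
\Psi_{z(t)}=\sum_{k=0}^{N(z)}\bi{z(t)}{k}\tau^k,
\]
where the coefficients are read off from $z*\mathbf{s}_0$ in the motive $\A_L=L\{\tau\}\mathbf{s}_0$. I will use the remark that $N(z)=\deg(z(t))$. This holds because the $\tau$-degree of $\Psi_a$ equals $\deg a$ for a rank-one Drinfeld module. It can also be seen directly: $z\cdot 1$ has pole order $\deg z$ at $P_\rho$ split among the $[\eta^{(i)}]$, while $\mathbf{s}_k$ has simple poles at $[\eta^{(0)}],\dots,[\eta^{(k-1)}]$.

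\emph{Step 2: identify the coefficients with the exponential/logarithm expression.} The functional equation \eqref{Eq:exponential}, together with the fact that $\log_{(0)}$ is the local inverse of $\exp_{(0)}$, gives \eqref{Eq:expression}. This is an identity of $\mathbb{F}_q$-linear power series in $\xi$, valid near $0$. Comparing the coefficients of $\xi^{q^k}$ recovers \eqref{Eq:bizr}. Since a twisted polynomial is determined by its action as a power series, the coefficients from Definition \ref{Def:PL} coincide with those given by \eqref{Eq:bizr}.

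\emph{Step 3: substitute the residue formula.} By Theorem \ref{thm:bi}, for each $0\le k\le\deg z$,
\[
\bi{z(t)}{k}=-\Res_{P_\rho}\frac{z(t)\,\omega^{(k+1)}}{f^{(0)}\cdots f^{(k)}},
\]
and substituting this into the expansion from Step 1 yields the claimed formula.

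The only point needing care is the range of summation. Theorem \ref{thm:bi} is stated for $k\le\deg z$, and the sum stops there because $\bi{z}{k}=0$ for $k>\deg z$. This also explains why the residue expression is not used beyond that range: the residue theorem argument relies on $z\,\omega^{(k+1)}/\mathbf{s}_{k+1}$ having poles only at the $\theta^{(i)}$ and over $P_\rho$.

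There is no real obstacle here; the substantive work was already done in Theorems \ref{Thm:L} and \ref{thm:bi}. The step that most needs to be stated explicitly is the matching of indices in Step 1, namely $N(z)=\deg z$.
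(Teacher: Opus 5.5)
Your proposal is correct and matches the paper's argument: the paper proves this theorem in one line by applying Theorem \ref{thm:bi} to Equation \eqref{Eq:expression}. That is, it reads off the coefficients $\bi{z}{k}$ of $\Psi_{z(t)}$ (with $N(z)=\deg z$) and replaces each by its residue expression, just as you do. One minor correction to your side remark: the residue-theorem argument behind Theorem \ref{thm:bi} works for every $k\geqslant 0$, since $z\,\omega^{(k+1)}/\mathbf{s}_{k+1}$ has no poles other than the $\theta^{(i)}$ and those over $P_\rho$ (in particular none at $t=\infty$), and for $k>\deg z$ it simply returns $0=\bi{z}{k}$; so the sum stops at $\deg z$ only because that is the $\tau$-degree of $\Psi_{z(t)}$, not because the argument fails beyond it.
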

From Equation \eqref{Eq:TjN}, we have 
\[
    \bi{T_1}{N} / \bi{T_0}{N}  = \eta^{(-1)}. 
\]
Consequently, $ \Psi $ is of $\eta^{(-1)}$-type and the coefficients of $ \Psi_{z(t)}$ are defined over $ H = \mathbb{F}_{q}(\theta, \eta) $.
\subsection{Carlitz Period}
Let $E_{k} (z)$ be the polynomial 
\[
E_k (z) =  z \prod_{ 0 \not = a \in A, \,\deg(a) \leqslant Nk} (1 - a^{-1} z ) 
\]
with derivative $1$ at the origin. Since all the elements in $A$ of degree $\leqslant  Nk$ are precisely the zeros of $\bi{z}{ N k +1} $, we have
\[
E_k (z) = - L_{Nk + 1 } \bi{z}{ N k +1} = \sum_{j =0 }^{ N k +1 } (-1)^ j \frac{L_{Nk +1} }{ D_j L_{Nk +1 - j }^{q^j}} z^{q^j}.
\]
\begin{notation}
    We define the series 
\[
E_\infty (z) = \lim_{k \to \infty} E_k(z) = \lim_{k \to \infty}\sum_{j =0 }^{ k N  +1 } (-1)^ j  \frac{L_{k N +1} }{ D_j L_{k N +1 - j }^{q^j}} z^{q^j}.
\]
\end{notation}
Since $E_{\infty}$ is invariant under $\sigma$, this implies 
\begin{equation}\label{Eq:Einf}
 E_\infty (z) = \lim_{k \to \infty}\sum_{j =0 }^{ k N  +1 } (-1)^ j  \frac{ L_{k N +1}^{\sigma^2} }{  D_j^{\sigma^2}  L_{k N +1 - j }^{q^j\sigma^2}} z^{q^j}.   
\end{equation}
It is straightforward to see that $ E_{\infty} $ converges in $\mathbb{C}_{\infty}$ and the kernel of $ E_{\infty} $ is  $ \A $. 
We next show that $E_{\infty}$ is equivalent to $\exp_{(2)}$, i.e. 
\[
E_{\infty}(z)=\tilde{\pi}^{-1} \exp_{(2)}( \tilde{\pi} z), 
\]
where $\tilde{\pi}$ is called the Carlitz period of $\Psi^{(2)}$.

\begin{notation}
	Using the notation $ \bket{k} $ defined in Notation \ref{No:Ljn}. Define the partial products as 
	\[
	\Gamma_{d,k}:= \prod_{j=1}^{d} \left(1 - \frac{ \bket{ N(j-1)+k} }{\bket{ Nj+k} } \right), 
	\]
  for $k=0,\cdots, N-1$,  and $d\geqslant 1$.
\end{notation}

\begin{lem}\label{Lem:ald}
	For each $k=0,\cdots, N-1$,  $ \Gamma_{d,k} $ can be expressed as 
	\[
	\Gamma_{d,k} = \frac{\bket{N}^{\frac{q^k(q^{ N d} - 1)}{q^N-1}}} {\prod_{j=1}^d \bket{N j+k}}. 
	\] 
 Moreover,  $ \Gamma_{d,k} $ converges when $d$ tends to infinity. 
\end{lem}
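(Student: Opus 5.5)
The formula follows from a telescoping-type simplification of each factor, using only $\sigma^N=\mathrm{id}$ on $\mathbb{F}_\rho$; convergence follows from a valuation estimate for $\bket{m}$.

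\textbf{Step 1: simplify one factor.} Write each factor of $\Gamma_{d,k}$ as
\[
1-\frac{\bket{N(j-1)+k}}{\bket{Nj+k}}=\frac{\bket{Nj+k}-\bket{N(j-1)+k}}{\bket{Nj+k}},
\]
and compute the numerator from Notation \ref{No:Ljn}. Since $\sigma$ has order $N$ on $\mathbb{F}_\rho=\mathbb{F}_q(\eta)$, we have $\Theta^{\sigma^{Nj+k}}=\Theta^{\sigma^{k}}=\Theta^{\sigma^{N(j-1)+k}}$. Hence the two $\sigma$-twisted terms cancel, and the numerator becomes
\[
\Theta^{q^{Nj+k}}-\Theta^{q^{N(j-1)+k}}=\bigl(\Theta^{q^N}-\Theta\bigr)^{q^{N(j-1)+k}},
\]
using additivity of the $q$-power map. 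The same observation $\sigma^N=\mathrm{id}$ gives $\bket{N}=\Theta^{q^N}-\Theta^{\sigma^N}=\Theta^{q^N}-\Theta$. So each numerator equals $\bket{N}^{q^{N(j-1)+k}}$.

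\textbf{Step 2: multiply over $j$.} Taking the product for $j=1,\dots,d$, the exponent of $\bket{N}$ is
\[
q^k\sum_{j=1}^{d}q^{N(j-1)}=\frac{q^k(q^{Nd}-1)}{q^N-1},
\]
and the denominators give $\prod_{j=1}^d\bket{Nj+k}$. This is exactly the claimed closed form.

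\textbf{Step 3: convergence.} In the non-archimedean field $\mathbb{C}_\infty$, the infinite product $\prod_j(1-x_j)$ converges as soon as $x_j\to 0$. So it suffices to show $v_\eta\bigl(\bket{N(j-1)+k}/\bket{Nj+k}\bigr)\to\infty$. I would compute $v_\eta(\bket{m})$ for $m\geqslant 1$:
\begin{itemize}
\item $v_\eta(\Theta^{q^m})=-q^m$, since $v_\eta(\Theta)=-1$.
\item If $N\nmid m$, then $\theta-\eta^{(m)}=(\theta-\eta)+(\eta-\eta^{(m)})$ with $\eta-\eta^{(m)}$ a nonzero constant, so $v_\eta(\theta-\eta^{(m)})=0$ and $v_\eta(\Theta^{\sigma^m})=0$.
\item If $N\mid m$, then $v_\eta(\Theta^{\sigma^m})=v_\eta(\Theta)=-1>-q^m$.
\end{itemize}
In either case the ultrametric inequality gives $v_\eta(\bket{m})=-q^m$. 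Therefore, for $N(j-1)+k\geqslant1$,
\[
v_\eta(x_j)=q^{Nj+k}-q^{N(j-1)+k}\longrightarrow\infty .
\]
The one exceptional case is $k=0$, $j=1$: there $\bket{0}=0$, so $x_1=0$ and this factor is simply $1$. Hence $\Gamma_{d,k}$ converges as $d\to\infty$.

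The main obstacle is the case split in the valuation of $\Theta^{\sigma^m}$ (whether $N\mid m$). In both cases the term $\Theta^{q^m}$ strictly dominates, so the estimate is uniform in $m$.
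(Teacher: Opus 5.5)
Your proof is correct and follows the same route as the paper. It uses the cancellation $\bket{Nj+k}-\bket{N(j-1)+k}=\bket{N}^{q^{N(j-1)+k}}$ (via $\sigma^N=\mathrm{id}$) for the closed form, and the estimate $v_\eta(\bket{m})=-q^m$ for convergence. Your case split on whether $N\mid m$, and your observation that $\bket{0}=0$ makes the $k=0$, $j=1$ factor equal to $1$, usefully fill in details the paper leaves implicit.
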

\begin{proof}
 Recall that $ v_{\eta} $ denotes the valuation of $\mathbb C_{\infty}$ with respect to the place at $ [\eta]$.
From Notation \ref{No:Ljn}, we directly obtain  
\[
v_{\eta}(\bket{k})=-q^k. 
\]
Thus, we have
\[
v_{\eta}\Big(\frac{ \bket{N(j-1)+k} }{ \bket{Nj+k} }\Big)=q^{Nj+k}-q^{N(j-1)+k}.
\] 
This ensures convergence of $ \Gamma_{d,k} $ when $ d \to \infty $.

Notice that 
	\[ \bket{Nj+k} - \bket{N(j-1)+k} = (\Theta^{q^N} - \Theta)^{q^{N(j-1)+k}} = \bket{N}^{q^{N(j-1)+k}}.
	\]
Thus, $\Gamma_{d,k}$ simplifies to  
	\[
	\Gamma_{d,k}=\frac{\bket{N}^{q^k}}{\bket{N+k}}\frac{\bket{N}^{q^{N+k}}}{\bket{2N+k}} \cdots \frac{\bket{N}^{q^{N(d-1)+k}}}{\bket{Nd+k}} = \frac{\bket{N}^{\frac{q^k(q^{ N d} - 1)}{q^N-1}}} {\prod_{j=1}^d \bket{N j+k}}.
	\] 
\end{proof}
Due to Lemma \ref{Lem:ald}, we denote by $ \Gamma_{\infty,k} $ the limit of $ \Gamma_{d,k} $ when $d $ tends to infinity.
The following lemma estimates the product $ \prod_{k=0}^{N-1} \Gamma_{\infty,k} $.
\begin{lem} \label{lem:forpp}
With the above notations, for  $ d \geqslant 0$,  the following equality holds: 
\begin{align}\label{Eq:leGap}
   \prod_{k=0}^{N-1}\Gamma_{\infty,k}=\bket{1}\cdots \bket{N-1}\Theta^{\frac{q-q^N}{q-1}}(1-\Theta^{1-q^N})^{\frac{-1}{q-1}}\prod_{k=1}^{\infty}\frac{\Theta^{q^k}}{\Theta^{q^k} -\Theta^{\sigma^k}}.
\end{align} 
\end{lem}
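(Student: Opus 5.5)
The plan is to take the closed form for $\Gamma_{d,k}$ from Lemma \ref{Lem:ald}, multiply it over $k=0,\dots,N-1$ at finite level $d$, rewrite everything in terms of $\Theta$ and the ratios $\Theta^{q^m}/(\Theta^{q^m}-\Theta^{\sigma^m})$, and then let $d\to\infty$.

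\textbf{Step 1: the numerators.} By Lemma \ref{Lem:ald}, the numerators contribute $\bket{N}$ raised to the total exponent
\[
\sum_{k=0}^{N-1}\frac{q^k(q^{Nd}-1)}{q^N-1}=\frac{W_N(q^{Nd}-1)}{q^N-1}=\frac{q^{Nd}-1}{q-1}.
\]

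\textbf{Step 2: the denominators.} As $j$ runs over $1,\dots,d$ and $k$ over $0,\dots,N-1$, the index $Nj+k$ runs exactly once over $N,N+1,\dots,Nd+N-1$. Hence
\[
\prod_{k=0}^{N-1}\Gamma_{d,k}=\bket{1}\cdots\bket{N-1}\,\frac{\bket{N}^{\frac{q^{Nd}-1}{q-1}}}{\prod_{m=1}^{Nd+N-1}\bket{m}} .
\]

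\textbf{Step 3: normalising the factors.} For each $m\geqslant 1$ write
\[
\frac{1}{\bket{m}}=\Theta^{-q^m}\cdot\frac{\Theta^{q^m}}{\Theta^{q^m}-\Theta^{\sigma^m}} .
\]
The powers of $\Theta$ from the denominator then total $\Theta^{-(q^{N(d+1)}-q)/(q-1)}$. For $\bket{N}$ use $\sigma^N=1$, so that $\bket{N}=\Theta^{q^N}-\Theta=\Theta^{q^N}(1-\Theta^{1-q^N})$. Its power contributes
\[
\Theta^{(q^{N(d+1)}-q^N)/(q-1)}\,(1-\Theta^{1-q^N})^{(q^{Nd}-1)/(q-1)} .
\]
The net power of $\Theta$ is therefore $\Theta^{(q-q^N)/(q-1)}$, which is independent of $d$ and matches the claimed formula.

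\textbf{Step 4: the limit $d\to\infty$ (main obstacle).} Since $v_\eta(\Theta)=-1$, the element $x:=\Theta^{1-q^N}$ has $v_\eta(x)=q^N-1>0$. Thus $1-x$ is a $1$-unit, and $(1-x)^{-1/(q-1)}$ is defined by the binomial series; this is the branch intended in \eqref{Eq:leGap}. Using $(1-x)^{q^{Nd}}=1-x^{q^{Nd}}$, write
\[
(1-x)^{\frac{q^{Nd}-1}{q-1}}=(1-x^{q^{Nd}})^{\frac{1}{q-1}}(1-x)^{-\frac{1}{q-1}} ,
\]
which tends to $(1-x)^{-1/(q-1)}$ because $x^{q^{Nd}}\to 0$. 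For the remaining partial product, each factor satisfies
\[
\frac{\Theta^{q^m}}{\Theta^{q^m}-\Theta^{\sigma^m}}=\bigl(1-\Theta^{\sigma^m-q^m}\bigr)^{-1}, \qquad v_\eta\bigl(\Theta^{\sigma^m-q^m}\bigr)=q^m-1\to\infty .
\]
So $\prod_{m=1}^{Nd+N-1}$ of these factors converges to the infinite product $\prod_{k=1}^{\infty}\Theta^{q^k}/(\Theta^{q^k}-\Theta^{\sigma^k})$.

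Combining these limits with the existence of $\Gamma_{\infty,k}$ from Lemma \ref{Lem:ald} gives \eqref{Eq:leGap}. The only delicate point is Step 4: fixing the branch of the $(q-1)$-th root consistently, which the $1$-unit binomial interpretation handles. Steps 1–3 are bookkeeping of exponents.
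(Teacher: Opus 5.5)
Your proposal is correct and follows the paper's route: multiply the closed forms of Lemma \ref{Lem:ald} to get the finite-$d$ identity (with $(1-\Theta^{1-q^N})^{(q^{Nd}-1)/(q-1)}$ and the product up to $N(d+1)-1$), normalise each $\bket{m}$ by $\Theta^{q^m}$ using $\bket{N}=\Theta^{q^N}(1-\Theta^{1-q^N})$, and let $d\to\infty$. The paper leaves that limit implicit, and you justify it more carefully via the $1$-unit branch of the $(q-1)$-th root. One harmless slip: $v_\eta(\Theta^{\sigma^m-q^m})$ equals $q^m$ when $N\nmid m$ (since $\theta-\eta^{(m)}$ is then a unit) and $q^m-1$ only when $N\mid m$, but you only need it to tend to infinity.
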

\begin{proof}
It suffices to show that 
 \begin{align}\label{Eq:leGad}
   \prod_{k=0}^{N-1}\Gamma_{d,k}=\bket{1}\cdots \bket{N-1}\Theta^{\frac{q-q^N}{q-1}}(1-\Theta^{1-q^N})^{\frac{q^{Nd}-1}{q-1}}\prod_{k=1}^{N(d+1)-1}\frac{\Theta^{q^k}}{\Theta^{q^k} -\Theta^{\sigma^k}}.
\end{align} 
Using the notation $\Gamma_{d,k}$ and Lemma \ref{Lem:ald}, we can rewrite 
     \begin{align}\label{Eq:Gap}
\prod_{k=0}^{N-1}\Gamma_{d,k}= \bket{1}\cdots\bket{N-1}\cdot\frac{\bket{N}^{\frac{q^{Nd}-1}{q-1}}}{\prod_{k=1}^{N(d+1)-1} \bket{k} }.
    \end{align}
Notice that 
$ \bket{k}=\Theta^{q^k}-\Theta^{\sigma^k}$, and in particular, 
\begin{align}\label{Eq:[N]}
    \bket{N}=\Theta^{q^N}(1-\Theta^{1-q^N}).
\end{align}
 Thus, Equation \eqref{Eq:Gap} implies Equation \eqref{Eq:leGad}.
\end{proof}

\begin{lem}\label{Lem:LdLdj}
      For positive integers $ j $ and $d$ satisfying $Nd\geqslant j-1$,  we choose some non-negative integers $i \leqslant N-1$ and $ l \leqslant d $ such that 
        \[ Nd+1- j=N(d-l)+i . \]
Let $\zeta_i(\sigma)=(q^i-1)(\sigma-1)+W_i(\sigma^2-1)$ for $i\geqslant 0$. We then obtain
\begin{align}\label{Eq:period}
  \frac{L_{Nd + 1}^{\sigma^2}}{ L_{Nd+1- j} ^{q^j \sigma^2}}={}&\Big(\bket{1}\cdots \bket{N-1}\Big)^{1-q^j} \bket{N}^{W_j} \cdot \Theta^{\zeta_j(\sigma)}\cdot\frac{\Big(\prod_{k=0}^{i}\Gamma_{d-l,k}\prod_{k=i+1}^{N-1}\Gamma_{d-l-1,k}\Big)^{q^j}}{\Gamma_{d,0}\Gamma_{d,1}\prod_{k=2}^{N-1}\Gamma_{d-1,k}}. 
\end{align}
In particular, the following identity holds: 
\begin{align}\label{Eq:periodlim}
 \lim_{d\to\infty} \frac{L_{Nd + 1}^{\sigma^2 }}{ L_{Nd+1- j} ^{q^j\sigma^2 }}=\bket{N}^{W_j} \cdot \Theta^{\zeta_j(\sigma)}\cdot\left(\frac{\prod_{k=0}^{N-1}\Gamma_{\infty,k}}{\bket{1}\cdots\bket{N-1}}\right)^{q^j-1}.
\end{align}
\end{lem}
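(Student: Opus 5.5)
The plan is to compute both sides directly from the closed form of $L_j^{\sigma^2}$ in Lemma \ref{Lem:leLj},
\[
L_j^{\sigma^2}=\bket{1}\cdots\bket{j}\,\Theta^{(q^j-1)(\sigma-1)+W_j(\sigma^2-1)}=\bket{1}\cdots\bket{j}\,\Theta^{\zeta_j(\sigma)} ,
\]
valid for $j\geqslant 1$ and trivially for $j=0$ (empty product, $\zeta_0=0$). Write $m=Nd+1-j=N(d-l)+i$.

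\emph{The $\Theta$-exponents.} Since the $\bket{k}$ are $\sigma$-twists of each other only in a controlled way, I will first treat the $\Theta$-exponents separately. The $\Theta$-part of the quotient is
\[
\Theta^{\zeta_{Nd+1}(\sigma)-q^j\zeta_{m}(\sigma)} ,
\]
and I need this to equal $\Theta^{\zeta_j(\sigma)}$. This follows from the linear identities $q^{Nd+1}-1=(q^j-1)+q^j(q^m-1)$ and $W_{Nd+1}=W_j+q^jW_m$. So the $\Theta$-factor is exactly $\Theta^{\zeta_j(\sigma)}$.

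\emph{The bracket products.} It remains to match
\[
\frac{\prod_{k=1}^{Nd+1}\bket{k}}{\bigl(\prod_{k=1}^{m}\bket{k}\bigr)^{q^j}}
\]
with the $\Gamma$-expression. By Lemma \ref{Lem:ald}, $\prod_{j'=1}^{e}\bket{Nj'+k}=\bket{N}^{q^kW_{Ne}/W_N}\,\Gamma_{e,k}^{-1}$. Group the indices $1,\dots,Nd+1$ by residue mod $N$: write each index as $k$ (with $1\leqslant k\leqslant N-1$) or as $Nj'+k$. The residues $k=0,1$ run up to $j'=d$, the residues $k\geqslant2$ run up to $d-1$, and the leftover $\bket{1}\cdots\bket{N-1}$ appears in front. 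This gives
\[
\prod_{k=1}^{Nd+1}\bket{k}=\bket{1}\cdots\bket{N-1}\,\bket{N}^{E}\,\bigl(\Gamma_{d,0}\Gamma_{d,1}\textstyle\prod_{k\ge2}\Gamma_{d-1,k}\bigr)^{-1}.
\]
In the same way, $\prod_{k=1}^{m}\bket{k}$ yields $\Gamma_{d-l,k}$ for $k\leqslant i$ and $\Gamma_{d-l-1,k}$ for $k>i$, together with $\bket{1}\cdots\bket{N-1}$ and a power $\bket{N}^{E'}$. (If $i=0$ one of these conventions degenerates and should be checked separately.) Raising the second expression to the $q^j$-th power and dividing gives the front factor $(\bket{1}\cdots\bket{N-1})^{1-q^j}$ and the stated ratio of $\Gamma$'s.

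\emph{The main obstacle} is the bookkeeping of the $\bket{N}$-exponent. I must show $E-q^jE'=W_j$, where $E=\sum_k q^kW_{N e_k}/W_N$ over the relevant ranges $e_k$. The cleanest route, which I would try first, is to observe that the exponent of $\bket{N}$ in $\prod_{k=1}^{n}\bket{k}$ (relative to the $\Gamma$'s) equals $\sum_{r=N}^{n}q^{r-N}$ up to the leading block. This is because each full chain $\bket{k},\bket{N+k},\dots$ contributes the geometric sum of $q^{N(j'-1)+k}$. That sum is $W_{n-N+1}$, up to the $N-1$ initial terms. Then $W_{Nd+2-N}-q^jW_{m+1-N}=W_j$ follows by the same telescoping identity as in the $\Theta$-step.

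\emph{The limit.} For \eqref{Eq:periodlim}, fix $j$ and let $d\to\infty$. Then $d-l\to\infty$, and every $\Gamma_{\cdot,k}\to\Gamma_{\infty,k}$ by Lemma \ref{Lem:ald}. The numerator tends to $(\prod_k\Gamma_{\infty,k})^{q^j}$ and the denominator to $\prod_k\Gamma_{\infty,k}$. Combining this with $(\bket{1}\cdots\bket{N-1})^{1-q^j}$ gives the stated formula.
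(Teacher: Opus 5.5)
Your proposal is correct and follows essentially the paper's route. Like the paper, you write both $L$'s via Lemma \ref{Lem:leLj}, regroup the $\bket{k}$ by residue mod $N$ with Lemma \ref{Lem:ald}, check that the $\bket{N}$- and $\Theta$-exponents differ by exactly $W_j$ and $\zeta_j(\sigma)$, and then let $d\to\infty$. Your observation that the $\bket{N}$-exponent of $\prod_{k=1}^{n}\bket{k}$ is exactly $W_{n-N+1}$ is a tidier way to reach the paper's $J_1-q^jJ_2=W_j$. The case $i=0$ is actually harmless; the degenerate case is $l=d$, where $\Gamma_{-1,k}$ would appear, and like the paper you may simply assume $d>l$, which holds automatically in the limit.
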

\begin{proof}

      From Lemmas \ref{Lem:leLj} and \ref{Lem:ald}, we have 
    \begin{equation}\label{Eq:Lnd1}
      L_{Nd+1}^{\sigma^2 }=\frac{\bket{1} \cdots\bket{N-1} \cdot \bket{N} ^{J_1}\Theta^{\zeta_{Nd+1}(\sigma)}}{\Gamma_{d,0}\Gamma_{d,1}\prod_{k=2}^{N-1}\Gamma_{d-1,k}},    
    \end{equation}
where
    \begin{equation*}
   J_1 =\frac{1}{q^N-1}\Big(q^{Nd+1}+q^{Nd}-q - 1+\sum_{k=2}^{N-1}q^k(q^{(d-1)N}-1)\Big).
   \end{equation*}
Similarly, for an integer $d>l$, Lemmas \ref{Lem:leLj} and \ref{Lem:ald} yield 
       \begin{equation}\label{Eq:Lnd1j}
 L_{Nd+1-j}^{\sigma^2}=L_{N(d-l)+i}^{\sigma^2}= \frac{\bket{1} \cdots\bket{N-1}\cdot \bket{N}^{J_2}\Theta^{\zeta_{N(d-l)+i}(\sigma)}}{\prod_{k=0}^{i}\Gamma_{d-l,k}\prod_{k=i+1}^{N-1}\Gamma_{d-l-1,k}},
    \end{equation}
where 
\[ J_2=\frac{1}{q^N-1}\Big(\sum_{k=0}^{i}q^k(q^{(d-l)N}-1)+\sum_{k=i+1}^{N-1}q^k(q^{(d-l-1)N}-1)\Big).\]
    Combining Equations \eqref{Eq:Lnd1} and \eqref{Eq:Lnd1j}, we find
    \begin{align}\label{Eq:periodpf}
 \nonumber \frac{L_{Nd + 1}^{\sigma^2}}{ L_{Nd+1- j} ^{q^j \sigma^2}}={}&\Big(\bket{1}\cdots \bket{N-1} \Big)^{1-q^j} \bket{N}^{ J_1-q^{j}J_2} \cdot \Theta^{ \zeta_{Nd+1}(\sigma)-q^{j} \zeta_{N(d-l)+i}(\sigma)}\\
  &\cdot\frac{\Big(\prod_{k=0}^{i}\Gamma_{d-l,k}\prod_{k=i+1}^{N-1}\Gamma_{d-l-1,k}\Big)^{q^j}}{\Gamma_{d,0}\Gamma_{d,1}\prod_{k=2}^{N-1}\Gamma_{d-1,k}}. 
\end{align}
By the assumption $j=Nl-i+1$, we have 
    \begin{align}\label{Eq:Nc}
      J_1-q^{j}J_2=\frac{1}{q^N-1}\Big(\sum_{k=-N+2}^{1}q^{Nd+k}-\sum_{k=0}^{N-1}q^{k}-\sum_{k=-N+2}^{1}q^{Nd+k}+q^j\sum_{k=0}^{N-1}q^{k}\Big)&=W_j
    \end{align}
    and
    \begin{equation}\label{Eq:Thec}
     \zeta_{Nd+1}(\sigma)-q^{j} \zeta_{N(d-l)+i}(\sigma) = \zeta_j(\sigma).
    \end{equation}
    Substituting the equalities \eqref{Eq:Nc} and \eqref{Eq:Thec} into Equation \eqref{Eq:periodpf} yields
    Equation \eqref{Eq:period}.
   
   Finally, we obtain Equation \eqref{Eq:periodlim} by taking the limit $d\to \infty $ in Equation \eqref{Eq:period}.
\end{proof}
 
\begin{thm}\label{Th:kenelpitil}
   Given the valuation $ v_{\eta} $ on $ \mathbb{C}_{\infty}$, the Drinfeld module associated with the exponential function $ E_\infty (z) $ is isomorphic to 
    $ \Psi^{(2)} $. Moreover, the kernel of $ \exp_{(2)} $ is exactly the lattice $ \tilde{\pi }  A $; i.e., 
\begin{equation}\label{Eq:Einfz}
    E_{\infty}(z)=\tilde{\pi}^{-1} \exp_{(2)}(\tilde{\pi}z), 
\end{equation} 
where
\begin{equation}\label{eq:pi}
\tilde{\pi}= (-\Theta)^{\frac{\sigma^2}{q-1}}\Theta^{\sigma} \prod_{k=1}^{\infty} (\frac{\Theta^{q^k}}{\Theta^{q^k} -\Theta^{\sigma^k}}).
\end{equation}
\end{thm}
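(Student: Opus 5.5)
Write $E_\infty(z)=\sum_{j\ge0}e_j z^{q^j}$ with $e_j=(-1)^j\lim_{d\to\infty} L^{\sigma^2}_{Nd+1}/(D^{\sigma^2}_jL^{q^j\sigma^2}_{Nd+1-j})$, using the $\sigma^2$-invariant form \eqref{Eq:Einf}. The plan is to show the coefficient identity
\[
e_j=\frac{\tilde\pi^{q^j-1}}{D_j^{\sigma^2}}\qquad (j\ge 0),
\]
which is exactly $E_\infty(z)=\tilde\pi^{-1}\exp_{(2)}(\tilde\pi z)$ by \eqref{Equ:expj}.

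By \eqref{Eq:periodlim} of Lemma \ref{Lem:LdLdj}, this reduces to proving
\[
(-1)^j\,\bket{N}^{W_j}\,\Theta^{\zeta_j(\sigma)}\Bigl(\frac{\prod_{k=0}^{N-1}\Gamma_{\infty,k}}{\bket{1}\cdots\bket{N-1}}\Bigr)^{q^j-1}=\tilde\pi^{\,q^j-1}.
\]
Since $q^j-1=(q-1)W_j$, it suffices to check the single identity
\[
-\,\bket{N}\,\Theta^{(q-1)(\sigma-1)+\sigma^2-1}\Bigl(\frac{\prod_k\Gamma_{\infty,k}}{\bket{1}\cdots\bket{N-1}}\Bigr)^{q-1}=\tilde\pi^{\,q-1},
\]
and then raise it to the $W_j$-th power. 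This uses $\zeta_j(\sigma)=W_j\bigl((q-1)(\sigma-1)+\sigma^2-1\bigr)$ and $(-1)^{W_j}=(-1)^j$, which holds for $q$ odd since $W_j\equiv j \pmod 2$, and trivially in characteristic $2$.

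To check the single identity, insert Lemma \ref{lem:forpp}:
\[
\frac{\prod_k\Gamma_{\infty,k}}{\bket{1}\cdots\bket{N-1}}=\Theta^{\frac{q-q^N}{q-1}}(1-\Theta^{1-q^N})^{-\frac1{q-1}}\,P,\qquad P=\prod_{k\ge1}\frac{\Theta^{q^k}}{\Theta^{q^k}-\Theta^{\sigma^k}}.
\]
Raising to the power $q-1$ gives $\Theta^{q-q^N}(1-\Theta^{1-q^N})^{-1}P^{q-1}$. Multiplying by $\bket{N}=\Theta^{q^N}(1-\Theta^{1-q^N})$ from \eqref{Eq:[N]} leaves $\Theta^{q}P^{q-1}$. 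The left side therefore becomes $-\Theta^{q+(q-1)(\sigma-1)+\sigma^2-1}P^{q-1}=-\Theta^{\sigma^2}\Theta^{(q-1)\sigma}P^{q-1}$. This equals $\tilde\pi^{q-1}$ by \eqref{eq:pi}, since $((-\Theta)^{\sigma^2/(q-1)})^{q-1}=-\Theta^{\sigma^2}$. The choice of $(q-1)$-th root in $\tilde\pi$ only changes $\tilde\pi$ by an element of $\mathbb F_q^*$, which is harmless. The exponent bookkeeping in this step is the main place where care is needed, and I expect it to close as shown.

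With the identity in hand, $\tilde\pi^{-1}\exp_{(2)}(\tilde\pi z)=E_\infty(z)$, so $\Psi^{(2)}$ is isomorphic, via the constant $\tilde\pi$, to the Drinfeld module with exponential $E_\infty$. For the kernel, note that $E_\infty$ is the limit of $E_k$, whose zeros are the $a\in A$ with $\deg a\le Nk$. A Weierstrass/Newton polygon argument, or simply the convergence statement already noted, shows $\ker E_\infty=A$. Hence $\ker\exp_{(2)}=\tilde\pi A$. One remaining subtlety is justifying the interchange of the limit in $d$ with the coefficientwise convergence. This should follow from the uniform valuation estimates $v_\eta(\bket{k})=-q^k$ used in Lemma \ref{Lem:ald}.
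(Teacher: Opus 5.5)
Your proposal is correct and is essentially the paper's proof: both reduce the coefficient comparison to the limit \eqref{Eq:periodlim} of Lemma \ref{Lem:LdLdj}, evaluate it with Lemma \ref{lem:forpp} and \eqref{Eq:[N]}, and deduce the kernel statement from the same briefly justified fact $\ker E_\infty = A$. The only difference is cosmetic: you verify $e_j D_j^{\sigma^2}=\tilde\pi^{q^j-1}$ by raising the $j=1$ identity to the $W_j$-th power, whereas the paper defines $\tilde\pi$ as a $(q^j-1)$-th root and asserts it is independent of $j$, so your route handles the root ambiguity more cleanly.
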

\begin{proof}
The limit 
\begin{equation*}
   \tilde \pi = \left(  \lim_{d\to \infty} \frac{L_{Nd + 1}^{\sigma^2}}{(-1)^jL_{Nd+1- j} ^{q^j\sigma^2}}\right)^{\frac{1}{q^j-1}}
\end{equation*}
is independent of the choice of the positive integer $ j $.
 To verify this, we apply Equation \eqref{Eq:periodlim} in Lemma \ref{Lem:LdLdj} to obtain
 \begin{equation*}
   \tilde \pi = \left((-1)^{j}\bket{N}^{W_j} \cdot \Theta^{\zeta_j(\sigma)}\right)^{\frac{1}{q^j-1}}\cdot\frac{\prod_{k=0}^{N-1}\Gamma_{\infty,k}}{{\bket{1}\cdots\bket{N-1}}}.
\end{equation*}
Combining this with Equations \eqref{Eq:leGap} and  \eqref{Eq:[N]} implies Equation \eqref{eq:pi}.

From Equation \eqref{Eq:Einf}, it follows that 
    \begin{equation}\label{Eq:Einf2}
 E_\infty (z) = \lim_{k \to \infty}\sum_{j =0 }^{ k N  +1 } (-1)^ j  \frac{ L_{k N +1}^{\sigma^2} }{  D_j^{\sigma^2}  L_{k N +1 - j }^{q^j\sigma^2}} z^{q^j}= \frac{1}{\tilde{\pi}}\sum_{j =0 }^{ \infty }  \frac{ (\tilde \pi z)^{q^j} }{  D_j^{\sigma^2} } .  
\end{equation}
Combining the expression for $\exp_{(i)}$ in \eqref{Equ:expj} that 
\begin{equation}\label{Eq:piEinf}
  \exp_{(2)}(\tilde \pi z) = \sum_{j =0 }^{ \infty }  \frac{ (\tilde \pi z)^{q^j} }{  D_j^{\sigma^2} } .
\end{equation}
Equation \eqref{Eq:Einfz} now follows from Equations \eqref{Eq:Einf2} and \eqref{Eq:piEinf}.
\end{proof}
Therefore, we call $ \tilde{\pi} $ the Carlitz period of $ \Psi^{(2)} $.

\section{Factor Decomposition Formula for Drinfeld Modules}\label{Sec:Factor}
The main purpose of this section is to establish factorization formulas for rank-one Drinfeld $\A$-modules, including both the standard modules $ \Psi^{(i)}$ and the sign-normalized Hayes modules $ \psi^{u_{k,\mu}}$.  We further study the ideal action and Galois action on these modules.

\subsection{Narrow Class Field of $ K $}
Recall that the class field $ H$ of
$ K $ is a constant field extension of $ K $, namely
$ H = \mathbb{F}_{q^{N}}(\theta)= \mathbb{F}_q(\theta, \eta)$. 
We now use Hayes $\A$-module to determine the narrow class field of $ K $ associated with the sign function $ \Sgn_{\eta}$ (see Equation \eqref{Eq:Sgn_eta}).
Remember that $I_0$ and $I_{\infty}$ are the two ideals defined in Equation \eqref{Eq:Ideals} of Section \ref{Sec:NandP}. We denote by $ \psi _{I_{0}}$ and $ \psi _{I_{\infty}}$ the respective annihilators of $I_{0}$ and $I_{\infty}$. 
\begin{prop}\label{Pro:Hu}
    Let $ \psi^u $ be a Hayes $\A$-module such that the annihilator of the ideal $I_{\infty}$ is given by
     $ \psi_{I_\infty}^u = \tau - u      
     $  for some indeterminate $ u $.
    Suppose that $ \ell \in \bar L^*$ is an isogeny from $ \Psi $ to $ \psi^u $, i.e.,
    \begin{align}\label{Eq:proPpsi}
\Psi_a=\ell^{-1}\psi_a^u \ell
    \end{align}
    for all $a\in \A$. 
    \begin{enumerate}
        \item Then $\ell$ and $u$ satisfy the relation
    \begin{equation}\label{Eq:proell}
        \ell^{q-1}=-\frac{u}{\Theta}.
    \end{equation}
    \item Furthermore, the relation between $ \Theta $ and $  u $ satisfies 
    \begin{align}\label{Eq:PuTh}
          u^{W_N}=(- \Theta)^{\gamma_N(\sigma)} ,
    \end{align}
    where $ \gamma_N (\sigma)$ is defined in Notation \ref{No:gammak}. 
    \item Additionally,  the annihilator of the ideal 
 $I_{0}$ is 
  \begin{equation}\label{Eq:propsiI0}
        \psi_{I_0}^u = \tau - \frac{\theta }{\eta} u.  
    \end{equation}
    \end{enumerate} 
\end{prop}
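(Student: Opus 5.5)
The plan is to transport everything through the isomorphism $\ell$, using the explicit annihilators of Proposition \ref{prop:ann} and the leading coefficients computed in Equation \eqref{Eq:TjN}.

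\textbf{Annihilators under conjugation.} First I will note how annihilators behave under an isomorphism. By \eqref{Eq:proPpsi}, $\psi^u_a=\ell\,\Psi_a\,\ell^{-1}$ for all $a\in\A$. Hence for an ideal $I$, the left ideal $\{\psi^u_a : a\in I\}$ of $\bar L\{\tau\}$ equals $\ell\,\{\Psi_a : a\in I\}\,\ell^{-1}$. Since $\ell\cdot(\,\cdot\,)\cdot\ell^{-1}$ is a ring automorphism of $\bar L\{\tau\}$, this left ideal is generated by $\ell\,\Psi_I\,\ell^{-1}$. Therefore $\psi^u_I$ is the monic normalization of $\ell\,\Psi_I\,\ell^{-1}$.

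\textbf{Parts (1) and (3).} From \eqref{Eq:PsiIinfI0} we have $\Psi_{I_\infty}=\tau+\Theta$. Then
\[
\ell(\tau+\Theta)\ell^{-1}=\ell^{1-q}\tau+\Theta ,
\]
whose monic normalization is $\tau+\Theta\ell^{q-1}$. Comparing with $\psi^u_{I_\infty}=\tau-u$ gives $u=-\Theta\ell^{q-1}$, which is \eqref{Eq:proell}. Applying the same conjugation to $\Psi_{I_0}=\tau+\frac{\theta}{\eta}\Theta$ gives the monic annihilator $\tau+\frac{\theta}{\eta}\Theta\ell^{q-1}$. Substituting $\Theta\ell^{q-1}=-u$ yields \eqref{Eq:propsiI0}.

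\textbf{Part (2).} I will use the Hayes condition on leading terms. Since $\Sgn_\eta(\iota T_0)=\eta^0=1$, we get $\operatorname{LT}_{\psi^u}(T_0)=\sigma_{\psi}(1)=1$ whatever $\sigma_\psi$ is. By \eqref{Eq:TjN} with $j=0$, the leading coefficient of $\Psi_{T_0}$, which has degree $N$ in $\tau$, is $\Theta^{\gamma_N(\sigma)-W_N}$. Conjugating by $\ell$ multiplies this leading coefficient by $\ell^{1-q^N}$, so
\[
\ell^{q^N-1}=\Theta^{\gamma_N(\sigma)-W_N}.
\]
Raising \eqref{Eq:proell} to the power $W_N$ gives $\ell^{q^N-1}=(-u/\Theta)^{W_N}$. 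Equating the two expressions, $u^{W_N}=(-1)^{W_N}\Theta^{\gamma_N(\sigma)}$. Finally, $(-1)^{\gamma_N(\sigma)}=(-1)^{\sum_i q^i}=(-1)^{W_N}$, since $\sigma$ fixes $-1$. This gives \eqref{Eq:PuTh}.

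\textbf{Consistency check and main obstacle.} As a sanity check, the ratio $\operatorname{LT}(T_1)/\operatorname{LT}(T_0)$ is unchanged by conjugation, so $\psi^u$ has type $\eta^{(-1)}$ and $\sigma_\psi=\sigma^{-1}$, which is compatible with the Hayes condition. I expect no real obstacle. The only delicate points are keeping the Frobenius twists straight when conjugating leading coefficients (to get the exponent $1-q^N$ rather than $q^N-1$), and making sure the sign $(-1)^{W_N}$ is matched correctly by the $\mathbb{Z}\{\sigma\}$-power notation $(-\Theta)^{\gamma_N(\sigma)}$.
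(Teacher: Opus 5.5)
Your proposal is correct and matches the paper's proof. You transport $\Psi_{I_\infty}=\tau+\Theta$ and $\Psi_{I_0}$ through $\ell$ to get $\psi^u_I=\ell^q\Psi_I\ell^{-1}$, then compare the leading coefficient $\Theta^{\gamma_N(\sigma)-W_N}$ of $\Psi_{T_0}$ with the sign-normalized coefficient $1$ of $\psi^u_{T_0}$ and use \eqref{Eq:proell} raised to the power $W_N$. The only cosmetic difference is how the annihilator transport is justified: the paper uses kernels, $\ker\psi^u_{I_\infty}=\ell\ker\Psi_{I_\infty}$, while you use that conjugation by $\ell$ is an inner automorphism of $\bar L\{\tau\}$ preserving left ideals, and both arguments are valid.
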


\begin{proof}
\begin{enumerate}
   
    \item 
Suppose that $ \ell \in \bar L^*$ is an isogeny from $ \Psi $ to $ \psi^u $, i.e., 
\[ \ell^{-1} \psi_{T_i}^u  \ell =\Psi_{T_i} 
\]
for $i = 0,1,\cdots, N-1$.
Then $ \ell $ induces an isomorphism from $ \ker \Psi_{I_{\infty}}$ to $ \ker \psi_{I_{\infty}}^u$.
That is, 
\[
\ker \psi_{I_{\infty}}^u = \{ \ell \cdot x |  \Psi_{I_\infty}(x) = 0 \}. 
\]
Therefore, we have 
\begin{equation}\label{Eq:kerpP}
    \psi_{I_{\infty}}^u (z) = \prod_{y \in \ker \psi_{I_{\infty}}^u } (z- y) = \prod_{x \in \ker \Psi_{I_{\infty}} } (z- \ell x ) .
\end{equation}
Recall $\Psi_{I_{\infty}} = \tau + \Theta$  in Proposition \ref{prop:ann}. It follows that
\begin{equation}\label{Eq:kerP}
\prod_{x \in \ker \Psi_{I_{\infty}} } (\ell^{-1}z- x )=\Psi_{I_\infty}(\ell^{-1}z)=(\tau+\Theta)(\ell^{-1}z)=\ell^{-q}z^q+\Theta \ell^{-1}z.
\end{equation}
Combining Equations \eqref{Eq:kerpP} and \eqref{Eq:kerP}, this yields 
\[
  \psi_{I_{\infty}}^u (z) = \ell^{q}  \Psi_{I_{\infty}} (\ell^{-1} z )=z^q+\Theta \ell^{q-1}z.
\]
As twisted polynomial,  $\psi_{I_{\infty}}^u$ can be written as 
\begin{equation}\label{Eq:isopPinf}
\psi_{I_{\infty}}^u = \ell^q \Psi_{I_\infty} \ell^{-1}=\tau+\Theta\ell^{q-1} . 
\end{equation}
On the other hand, we write $\psi_{I_\infty}^u= \tau - u$ by assumption. Thus, Equation \eqref{Eq:isopPinf} implies  
\begin{align*}
 u   = -\ell^{q-1} \Theta,
\end{align*}
which yields \eqref{Eq:proell}. 

\item 
Since $ \psi^u $ is sign-normalized by definition, the leading term of $ \psi_{T_0}^u $ must be $1$.
Comparing the leading terms of the equality $\ell^{-1} \psi_{T_0}^u  \ell =\Psi_{T_0}$, we have
\begin{equation}\label{Eq:biTN}
    \ell^{q^N-1} = \bi{T_0}{N}= (-\Theta)^{\gamma_N(\sigma) - W_N},
\end{equation}
where the leading term of $  \Psi_{T_0} $ is obtained in Equation \eqref{Eq:TjN}. 
Equation \eqref{Eq:proell} yields 
\begin{align}\label{Eq:ulT}
    \ell^{q^N-1} = u^{W_N}(-\Theta)^{-W_N}. 
\end{align}
Combining Equations \eqref{Eq:biTN} and \eqref{Eq:ulT}, we have 
\[
    u^{W_N} = (-\Theta)^{ \gamma_N(\sigma) }.
\]

\item By analogy with Equation \eqref{Eq:isopPinf}, we have 
 \[
 \psi_{I_{0}}^u = \ell^q \Psi_{I_0} \ell^{-1}. 
 \]
Applying Proposition \ref{prop:ann} to $\Psi_{I_0}$ yields
\begin{align}\label{Eq:isopP0}
\psi_{I_{0}}^u = \tau + \frac{\theta}{\eta}\Theta \ell^{q-1}. 
\end{align}
Substituting $ \Theta \ell^{q-1} = - u $ into Equation \eqref{Eq:isopP0}, we have 
$
\psi_{I_{0}}^u = \tau - \frac{\theta}{\eta} u .$ 
\end{enumerate}
\end{proof}
Throughout this paper, we fix once and for all $ u $ to be a root of the polynomial
 \[
    x^{W_N} - (-\Theta)^{\gamma_N(\sigma) } \in H[x].
\] 
Combining Theorem \ref{thm:Hilbert} and Proposition \ref{Pro:Hu}, we conclude that the narrow class field of $K$ is given by  
\[
H^+=K(u, \eta) = \mathbb{F}_q(\theta, \eta,u).
\]
Notice that the narrow class number of $\A$ is given by 
\[ [H^+: K ]= [H^+: H ][H: K ] =    W_N \cdot N,  \] 
which matches the formula \eqref{Eq:cl+}. 
\begin{notation}\label{No:etax} 
 We retain the notation for $u$ introduced above.
 \begin{enumerate}
     \item Denote by $ U_\rho $ the set of conjugate elements of $ u $ over $ K $.
\item For $ x\in U_{\rho}$, the minimal polynomial of $x$ over $ H $ is given by 
\[
    x^{W_N} = \left(\frac{1}{\eta_x- \theta} \right)^{\gamma_N(\sigma) }, 
\]
where $ \eta_x $ is a root of $ \rho $. 
\item For $ x\in U_{\rho}$, we denote by $\psi^x$ the Hayes $\A$-module over $H^+$ with 
\(\psi^{x}_{I_\infty}=\tau-x\) or equivalently \(
\psi^{x}_{I_0}=\tau-\frac{\theta}{\eta_x}x\) . 
Notice that for any $\sigma\in \Gal(H^+/K)$ and $a\in \A$, we have $\sigma \psi^{x}_{a}=\psi^{\sigma x}_a$.

 \end{enumerate}   
\end{notation}

With these notations, the following corollary can be derived directly from Theorems \ref{thm:homogeneous} and \ref{thm:Hilbert}. 

\begin{cor}
  All Hayes $\A$-modules over $H^+$ are given by
       $ \psi^{x} $, where $x $ ranges over $U_{\rho}$.   Moreover, $\psi^x$ is of $ \eta_x^{(-1)} $-type.
\end{cor}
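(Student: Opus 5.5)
The corollary has two claims: that the Hayes modules over $H^+$ are exactly the $\psi^x$ with $x\in U_\rho$, and that $\psi^x$ has type $\eta_x^{(-1)}$. The plan is to get both from the count of Hayes modules (Theorem \ref{thm:homogeneous2}), the Galois action on coefficients, and Proposition \ref{Pro:Hu}.

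\textbf{Step 1: the $\psi^x$ exist and are Hayes modules.} Since $\psi^u$ is a Hayes module, for every $g\in\Gal(H^+/K)$ the twist $g\psi^u$ is again a Drinfeld $\A$-module, because $g$ fixes $K\supseteq\iota(\A)$. Its leading terms are $g\bigl(\sigma_{\psi^u}\operatorname{Sgn}_\eta(\iota a)\bigr)$. The restriction of $g$ to $\mathbb F_\rho$ is a power of Frobenius, so these leading terms are again a twisted sign function and $g\psi^u$ is Hayes. By Notation \ref{No:etax}(3), $g\psi^u=\psi^{gu}$, and $gx$ ranges over all of $U_\rho$. Hence every $\psi^x$ with $x\in U_\rho$ is a Hayes module defined over $H^+$.

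\textbf{Step 2: the $\psi^x$ are pairwise distinct.} A Drinfeld module determines its annihilators, so $\psi^x_{I_\infty}=\tau-x$ determines $x$. There are therefore $|U_\rho|$ distinct modules $\psi^x$.

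\textbf{Step 3: these exhaust all Hayes modules.} The minimal polynomial of $u$ over $K$ has degree $[H^+:K]=N\cdot W_N$, which gives $|U_\rho|=N W_N$. By Theorem \ref{thm:homogeneous2} and \eqref{Eq:cl+}, there are exactly $|\Cl^+(\A)|=N W_N$ Hayes modules, and by Theorem \ref{thm:Hilbert}(2) they are all defined over $H^+$. Comparing cardinalities, the $\psi^x$ are all of them.

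The main obstacle is justifying $|U_\rho|=NW_N$. This requires that $x^{W_N}-(-\Theta)^{\gamma_N(\sigma)}$ is irreducible over $H$ and that $H/K$ contributes the factor $N$. I will take $[H^+:K]=NW_N$ from Theorem \ref{thm:Hilbert}(2), which gives $[H^+:H]=|\mathbb F_\infty^*/\mathbb F_q^*|=W_N$.

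An alternative route avoids this count. Every Hayes module $\psi$ is rank one, so by Theorem \ref{thm:homogeneous} it is isomorphic over $\bar L$ to some twist of $\Psi$. Applying Proposition \ref{Pro:Hu} to the appropriate twist shows that $\psi_{I_\infty}=\tau-x'$ with $x'$ a root of a conjugate equation, hence $x'\in U_\rho$.

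\textbf{Step 4: the type.} For $u$ itself, Proposition \ref{Pro:Hu} gives $\Psi=\ell^{-1}\psi^u\ell$. Conjugation by a constant preserves the ratio $\operatorname{LT}(T_1)/\operatorname{LT}(T_0)$ because $\ell$ cancels. By \eqref{Eq:TjN}, $\Psi$ has type $\eta^{(-1)}=\eta_u^{(-1)}$, so $\psi^u$ has type $\eta_u^{(-1)}$. For general $x=gu$, applying $g$ to this ratio gives the type $g(\eta_u)^{(-1)}$. Since $g$ sends the minimal polynomial of $u$ over $H$ to that of $x$, we have $g(\eta_u)=\eta_x$, and $\psi^x$ has type $\eta_x^{(-1)}$.
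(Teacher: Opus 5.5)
Your proposal is correct and is essentially a fleshed-out version of the paper's own justification: the paper only says the corollary follows directly from Theorems \ref{thm:homogeneous} and \ref{thm:Hilbert}, and those results, combined with Proposition \ref{Pro:Hu} and the Galois twisting in your Steps 1 and 4, amount to exactly your alternative route. The one soft spot is in your counting route, where $[H^+:K]=NW_N$ does not by itself give $\deg_K u=NW_N$ because you also need $\eta\in K(u)$; this holds since $\sigma^k$ cyclically permutes the primes $\theta-\eta^{(j)}$, which occur in $(-\Theta)^{\gamma_N(\sigma)}$ with the distinct exponents $-q^{N-1-j}$, so no $g\neq\mathrm{id}$ in $\Gal(H^+/K)$ fixes $u$ (and your route through Theorem \ref{thm:homogeneous} and Proposition \ref{Pro:Hu} avoids the count altogether).
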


\subsection{Galois Action}
To study the Galois group of $H^+/ K$, we introduce the following three automorphisms $ \sigma_{\infty}, ~\sigma_0, ~M_{\mu} $ of $H^+$.
\begin{defn} \label{Def:sigM}
\begin{enumerate}
\item   
Define $\sigma_{\infty}$ to be the automorphism of $H^+$ satisfying
 \[
    \sigma_{\infty} (u) = (\theta - \eta)^{q-1} u^{q} \quad \text{and } \sigma_{\infty}|_{H} = \sigma.
    \]

\item Suppose that $ \mu ^{W_N} = 1 $. We define $ M_{\mu} $ by 
\[
    M_\mu (u) =  \mu \cdot u \quad \text{and }  M_\mu |_{H} = \id .
\] 
\item Set $ \eta_* = \eta^{\frac{1-q}{q}} $. It is clear that $ \eta_*^{W_N} = 1 $.
We define $\sigma_{0}  =   \sigma_{\infty}  M_{\eta_*} $.

\end{enumerate}
\end{defn}
 The following proposition is readily verified.
\begin{prop}\label{Pro:sigmaU}
For $x\in U_{\rho}$, let $\eta_x $ be the same notation as in Notation \ref{No:etax}. 
 The following three statements hold. 
\begin{enumerate}
\item The automorphism $\sigma_{\infty}$ satisfies the equality  
    \[
    \sigma_{\infty} (x) = (\theta - \eta_x)^{q-1} x^{q}. 
\]
\item Let $\mu$ and $\nu$ be two $W_N$-th roots of unity. Then,  
\[
    M_\mu (x) = \eta_x \cdot x.
\] 
Moreover, we have
\[
 \quad M_{\mu}M_{\nu}=M_{\mu\nu}. 
\]
\item The automorphism $\sigma_{0}$ satisfies 
\[
   \sigma_{0} (x) = {\eta_x^{1-q}} {\sigma_{\infty}(x)} = \left(\frac{\theta - \eta_x}{\eta_x}\right)^{q-1} x^{q}  
\]
and 
\begin{equation*}
     \sigma_{0}|_{H} = \sigma.
\end{equation*}
\end{enumerate} 
\end{prop}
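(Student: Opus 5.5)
The plan is to reduce everything to the action on the fixed generator $u$, using that $H^+=H(u)$ is abelian over $K$. By Theorem \ref{thm:Hilbert} (and the construction after Proposition \ref{Pro:Hu}), $H^+=K(u,\eta)$ is the narrow class field. Hence $\Gal(H^+/K)$ is abelian of order $N\cdot W_N$. Since $W_N\mid q^N-1$, the field $\mathbb{F}_{q^N}\subset H$ contains all $W_N$-th roots of unity. So $H^+/H$ is a Kummer extension generated by $u$, with $u^{W_N}=(-\Theta)^{\gamma_N(\sigma)}\in H$.

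The first step is to check that $\sigma_\infty$, $M_\mu$ and $\sigma_0$ of Definition \ref{Def:sigM} are well defined. For $M_\mu$ this is immediate: $\mu u$ is again a root of the same Kummer polynomial over $H$. For $\sigma_\infty$ I must show that $(\theta-\eta)^{q-1}u^q$ is a root of the $\sigma$-conjugate polynomial $x^{W_N}-(-\Theta^{\sigma})^{\gamma_N(\sigma)}$. I will compute
\[
\bigl((\theta-\eta)^{q-1}u^q\bigr)^{W_N}=\Theta^{1-q^N}(-\Theta)^{q\gamma_N(\sigma)} .
\]
Then I use the identity $\sigma\gamma_N(\sigma)-q\gamma_N(\sigma)=\sigma^N-q^N=1-q^N$ in $\mathbb{Z}\{\sigma\}$, which holds because $\sigma^N=1$ on $H$. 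This gives $(-\Theta)^{\sigma\gamma_N(\sigma)}$, up to the sign $(-1)^{W_N(q-1)}=(-1)^{q^N-1}=1$ in characteristic $p$. Consequently $\sigma_\infty$ extends $\sigma$ to an automorphism of $H^+$, and $\sigma_0=\sigma_\infty M_{\eta_*}$ also restricts to $\sigma$ on $H$. Here $\eta_*^{W_N}=\eta^{(1-q^N)/q}=1$ is clear. I expect this exponent bookkeeping in $\mathbb{Z}\{\sigma\}$, including the signs, to be the only real obstacle.

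Next, for a general $x\in U_\rho$, write $x=g(u)$ with $g\in\Gal(H^+/K)$. Applying $g$ to the minimal polynomial of $u$ shows $\eta_x=g(\eta)$. Since the Galois group is abelian, for (1) I get
\[
\sigma_\infty(x)=g(\sigma_\infty(u))=g\bigl((\theta-\eta)^{q-1}u^q\bigr)=(\theta-\eta_x)^{q-1}x^q .
\]

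For (2), $M_\mu(x)=g(\mu u)=g(\mu)\,x$, and $g(\mu)$ is a power of $\mu$; I read the displayed formula in this sense. The multiplicativity $M_\mu M_\nu=M_{\mu\nu}$ follows by evaluating both sides at $u$, using that $M_\mu$ fixes $\nu\in H$.

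For (3), $\sigma_0(x)=g(\sigma_\infty(\eta_* u))=g\bigl(\eta_*^{q}(\theta-\eta)^{q-1}u^q\bigr)$, and $\eta_*^q=\eta^{1-q}$. Applying $g$ then gives $\eta_x^{1-q}\sigma_\infty(x)=\bigl((\theta-\eta_x)/\eta_x\bigr)^{q-1}x^q$. Finally, $\sigma_0|_H=\sigma$ holds because $M_{\eta_*}$ is trivial on $H$.
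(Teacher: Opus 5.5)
Your proof is correct. The paper gives no argument for this proposition; it calls it ``readily verified''.

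The only related reasoning in the paper is in Proposition~\ref{Pro:Gag}. There, commutativity of $\sigma_\infty$ and $M_\mu$ is checked directly on $u$, and that proof in turn cites part (2) of the present proposition to obtain $M_\mu^{W_N}=\id$. You instead take the abelianness of $\Gal(H^+/K)$ from class field theory via Theorem~\ref{thm:Hilbert}, write $x=g(u)$, and transport the formulas from $u$ to $x$. This keeps the logic non-circular.

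You also prove that $\sigma_\infty$ exists at all, which Definition~\ref{Def:sigM} takes for granted. Your exponent identity $\sigma\gamma_N(\sigma)-q\gamma_N(\sigma)=\sigma^N-q^N$ and the sign $(-1)^{q^N-1}=1$ are both right.

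Commutativity by itself can be obtained elementarily: $M_\mu\sigma_\infty$ and $\sigma_\infty M_\mu$ both restrict to $\sigma$ on $H$ and both send $u$ to $(\theta-\eta)^{q-1}\mu^q u^q$. However, reaching an arbitrary $x\in U_\rho$ that way still requires knowing that every element of $\Gal(H^+/K)$ has the form $\sigma_\infty^k M_\nu$, which rests on $[H^+:H]=W_N$. Your route packages all of this into the cited theorem.

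You implicitly use that $\eta_x$ is determined by the constant term $(1/(\eta_x-\theta))^{\gamma_N(\sigma)}$. This holds because the valuations of that element at the $N$ places of $H$ above $P_\rho$ (where $\theta-\eta^{(m)}$ is a uniformizer) single out $\eta_x$. The paper assumes this as well.

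One point to sharpen concerns (2). The displayed formula $M_\mu(x)=\eta_x\cdot x$ is a misprint and is false as written: take $\mu=1$. Instead of saying only that $g(\mu)$ is ``a power of $\mu$'', state the corrected claim explicitly. If $g|_H=\sigma^k$, equivalently $\eta_x=\eta^{q^k}$, then $M_\mu(x)=g(\mu)\,x=\mu^{q^k}x$. This is exactly what the paper uses later, in the relation $M_\nu(u_{k,\mu})=u_{k,\nu\mu}$ of \eqref{Eq:uksig} combined with \eqref{Eq:ukThe}.
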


\begin{prop}\label{Pro:Gag}
    The Galois group $\Gal(H^+/K)$ is generated by $ \sigma_{\infty} $ and $ M_\mu $ with $ \mu^{W_N} =1 $, i.e.,
\[
\Gal(H^+/K)=\{\sigma_{\infty}^k M_{\mu} ~|~ k=0,\cdots,N-1 ~\text{and}~ \mu^{W_{N}}=1\}.
\]
Moreover,  $\Gal(H^+/K)$  is isomorphic to the Abelian group 
\[
    \mathbb{Z}_{N} \times \mathbb{Z}_{W_N} .
\]
\end{prop}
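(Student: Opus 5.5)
We need to prove that $\Gal(H^+/K)=\{\sigma_\infty^k M_\mu\}$ and that it is isomorphic to $\mathbb{Z}_N\times\mathbb{Z}_{W_N}$. We already know $[H^+:K]=N\cdot W_N$, with $H=K(\eta)$ and $H^+=H(u)$.

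\textbf{Step 1: the $M_\mu$ are well-defined and form a cyclic group of order $W_N$.}
The polynomial $x^{W_N}-(-\Theta)^{\gamma_N(\sigma)}$ has degree $W_N=[H^+:H]$. Hence it is the minimal polynomial of $u$ over $H$. Its roots are the $\mu u$ with $\mu^{W_N}=1$; these roots of unity lie in $\mathbb{F}_{q^N}^*\subset H$, because $W_N\mid q^N-1$. Therefore each $M_\mu$ is a well-defined element of $\Gal(H^+/H)$. The map $\mu\mapsto M_\mu$ is an injective homomorphism from the cyclic group $\mu_{W_N}\subset\mathbb{F}_{q^N}^*$ into $\Gal(H^+/H)$. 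Since both groups have order $W_N$, it is an isomorphism, and $\Gal(H^+/H)\cong\mathbb{Z}_{W_N}$.

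\textbf{Step 2: $\sigma_\infty$ is a well-defined automorphism.}
We must check that $v=(\theta-\eta)^{q-1}u^q$ is a root of $\sigma$ applied to the minimal polynomial, i.e.
\[
v^{W_N}=(-\Theta^{\sigma})^{\gamma_N(\sigma)}.
\]
Compute:
\[
v^{W_N}=\Theta^{-(q-1)W_N}\,(-\Theta)^{q\gamma_N(\sigma)}=(-1)^{q\gamma_N(1)}\,\Theta^{\,1-q^N+q\gamma_N(\sigma)}.
\]
We have the identity $q\gamma_N(\sigma)=\sigma\gamma_N(\sigma)-\sigma^N+q^N$. Since $\sigma^N$ acts trivially on $\Theta$, it follows that
\[
\Theta^{1-q^N+q\gamma_N(\sigma)}=\Theta^{\sigma\gamma_N(\sigma)}.
\]
The signs agree because $\gamma_N(1)=W_N$ and $(-1)^{q}=-1$ in characteristic $p$, up to the trivial case $p=2$. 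Hence $\sigma_\infty$ extends $\sigma$, by the extension theorem for simple extensions.

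\textbf{Step 3: the order of $\sigma_\infty$.}
The restriction of $\sigma_\infty$ to $H$ is $\sigma$, which has order $N$. So $\sigma_\infty^N\in\Gal(H^+/H)$, and therefore $\sigma_\infty^N=M_\nu$ for some $\nu$. Iterating Step 2 gives
\[
\sigma_\infty^N(u)=\Theta^{-(q-1)\gamma_N(\sigma)}u^{q^N}=\Theta^{-(q-1)\gamma_N(\sigma)}\,u\,u^{(q-1)W_N}=u,
\]
using $u^{W_N}=(-\Theta)^{\gamma_N(\sigma)}$ and $(-1)^{q-1}=1$. So $\sigma_\infty^N=\id$.

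\textbf{Step 4: the group structure.}
$\Gal(H^+/K)$ is abelian by class field theory (Theorem \ref{thm:Hilbert}). The elements $\sigma_\infty^kM_\mu$ for $0\le k<N$ are pairwise distinct: different $k$ give different restrictions to $H$, and for fixed $k$ different $\mu$ give different values on $u$. This produces $NW_N=[H^+:K]$ distinct automorphisms, so they exhaust $\Gal(H^+/K)$. The subgroups $\langle\sigma_\infty\rangle$ of order $N$ and $\{M_\mu\}$ of order $W_N$ intersect trivially, since $\sigma_\infty^k$ with $0<k<N$ does not fix $\eta$. Hence $\Gal(H^+/K)$ is their direct product, $\mathbb{Z}_N\times\mathbb{Z}_{W_N}$.

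Commutativity can also be checked directly without class field theory: $\sigma_\infty M_\mu(u)=\mu^q\sigma_\infty(u)$, while $M_\mu\sigma_\infty(u)=\mu^q\sigma_\infty(u)$ because $M_\mu$ fixes $\theta-\eta$.

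The main obstacle is the exponent bookkeeping in Steps 2 and 3, particularly the sign check, where $(-1)^{q}$ versus $(-1)^{\sigma}$ must be tracked in odd and even characteristic.
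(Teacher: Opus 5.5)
Your proof is correct and follows essentially the paper's route. The paper likewise checks $\sigma_\infty M_\mu=M_\mu\sigma_\infty$ on $u$, uses $M_\mu M_\nu=M_{\mu\nu}$, and derives $\sigma_\infty^N=\id$ from the iterated formula $\sigma_\infty^j(u)=\Theta^{(1-q)\gamma_j(\sigma)}u^{q^j}$ together with $u^{W_N}=(-\Theta)^{\gamma_N(\sigma)}$. Your additions are what the paper leaves implicit behind its ``it suffices'' step: the well-definedness of $\sigma_\infty$ and $M_\mu$ (via $q\gamma_N(\sigma)=\sigma\gamma_N(\sigma)+q^N-\sigma^N$), and the counting argument showing the $NW_N$ elements $\sigma_\infty^kM_\mu$ are distinct, so they exhaust $\Gal(H^+/K)$ as a direct product of trivially intersecting factors.
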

\begin{proof}
The commutativity relation of $ M_\mu $ and  $ \sigma_{\infty} $ is immediate, i.e.,  
\begin{equation}\label{Eq:commutativity}
    M_\mu \sigma_{\infty }=\sigma_{\infty}M_{\mu}. 
\end{equation}
The restriction of the equality \eqref{Eq:commutativity} to $H$ is evident.  
From Definition \ref{Def:sigM}, we have 
\[
    M_\mu (\sigma_{\infty } (u) ) = M_{\mu}(\theta - \eta)^{q-1} M_\mu (u)^q =(\theta - \eta)^{q-1} \mu^q u^q = \sigma_{\infty}(\mu u) = \sigma_{\infty}(M_{\mu} u ).   
\]
It suffices to prove the equalities 
\begin{equation}\label{Eq:sigid}
   \sigma_{\infty}^{N}=\id,  
\end{equation}
and 
\begin{equation}\label{Eq:Mid}
\quad M_{\mu}^{W_N}=\id.
\end{equation}
Equation \eqref{Eq:Mid} follows immediately from (2) of Proposition \ref{Pro:sigmaU}.   
We claim that
\begin{equation}\label{Eq:sigU}
\sigma_{\infty}^{j}(u) = \Theta^{ (1-q)\gamma_j(\sigma)} u^{q^j } .
\end{equation} 
Notice that $ \sigma_{\infty} (u) = \Theta^{(1-q)} u^q $. Induction then gives
\begin{align*}
      \sigma_{\infty}^{j+1}(u) & = \Theta^{(1-q)(\sum_{i=0}^{j-1} q^i \sigma^{j-i} )} (\sigma_{\infty}u)^{q^j } \\
      & =  \Theta^{(1-q)(\sum_{i=0}^{j-1} q^i \sigma^{j-i} )}\Theta^{(1-q)q^j} u^{q^{j+1}} \\
       & =  \Theta^{(1-q)(\sum_{i=0}^{j} q^i \sigma^{j-i} )}  u^{q^{j+1}}\\
       & =\Theta^{ (1-q)\gamma_{j+1}(\sigma)} u^{q^{j+1} }  . 
\end{align*}
Substituting $j=N$ into Equation \eqref{Eq:sigU} gives
\[
\sigma_{\infty}^{N}(u) = \Theta^{(1-q) \gamma_N(\sigma)} u^{q^N }.  
\]
Combining this with the relation  $u^{W_N}=(-\Theta)^{\gamma_N(\sigma)}$ established in Proposition \ref{Pro:Hu}, we have 
\[
\sigma_{\infty}^{N}(u) =(-u)^{(1-q)W_N}\cdot u^{q^N}=(-1)^{1-q^N}\cdot u=u.  
\]
Combining this equality with $\sigma_{\infty}^N|_{H}=\sigma^N|_{H}=\id$ implies Equation \eqref{Eq:sigid}.
This completes the proof.  
\end{proof}

\begin{notation}\label{No:ukmusig}
 Applying Proposition \ref{Pro:Gag}, we can express all the elements of $ U_\rho $ as 
\begin{align*}
     u_{k,\mu} = \sigma_{\infty}^k M_{\mu}  (u), \text{ where $\mu^{W_N} = 1 $}.
\end{align*}
In particular, we have $u=u_{0,1}$.
\end{notation}
Applying equality \eqref{Eq:sigU} respectively,
we have 
\begin{align}\label{Eq:ukThe}
u_{k,\mu}= \mu^{q^k}\sigma_{\infty}^k(u)=\mu^{q^k}\Theta^{ (1-q)\gamma_k(\sigma)} u^{q^k }. 
\end{align}
  It follows from Proposition \ref{Pro:sigmaU} that $ \eta_{u_{k,\mu}} = \eta^{q^k} $ and 
  \begin{align}\label{Eq:uksig}
    \sigma_{\infty}(u_{k,\mu}) = u_{k+1, \mu};   \qquad
    M_\nu(u_{k,\mu }) = u_{k, \nu\mu};
    \qquad
    \sigma_{0}(u_{k,\mu}) = u_{k+1,\mu \cdot \eta_* }.  
  \end{align}
  



\begin{lem}\label{Lem:siginf0}
 Using the notations from Notation \ref{No:ukmusig}, we have 
\[
\sigma_{0}^{n}\sigma_{\infty}^{j}(u_{k,\mu})=u_{j+n+k,\mu\cdot\eta_*^n}=(\mu\eta_*^n )^{q^{j+n+k}}\Theta^{ (1-q)\gamma_{j+n+k}(\sigma)} u^{q^{j+n+k}}  .
\]
\end{lem}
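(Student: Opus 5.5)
The plan is to reduce everything to the three elementary transition rules in Equation \eqref{Eq:uksig} and then use the closed form \eqref{Eq:ukThe}. Since $\Gal(H^+/K)$ is abelian (Proposition \ref{Pro:Gag}), the order in which the automorphisms are composed does not matter. This is only a convenience, however, because the argument below applies them in the order written.

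\emph{Step 1: apply $\sigma_\infty^j$.} I will argue by induction on $j$, using $\sigma_\infty(u_{k,\mu}) = u_{k+1,\mu}$ from \eqref{Eq:uksig}. This gives $\sigma_\infty^j(u_{k,\mu}) = u_{k+j,\mu}$. The first index should be read modulo $N$, consistent with $\sigma_\infty^N=\id$ from \eqref{Eq:sigid}. Alternatively, one can note directly that $u_{k,\mu}=\sigma_\infty^kM_\mu(u)$, so $\sigma_\infty^j u_{k,\mu}=\sigma_\infty^{k+j}M_\mu(u)=u_{k+j,\mu}$ by the definition in Notation \ref{No:ukmusig}.

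\emph{Step 2: apply $\sigma_0^n$.} I will induct on $n$, using $\sigma_0(u_{k,\mu}) = u_{k+1,\mu\eta_*}$ from \eqref{Eq:uksig}. This yields $\sigma_0^n(u_{k+j,\mu}) = u_{k+j+n,\mu\eta_*^n}$.

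To be safe, I will also justify the rule for $\sigma_0$ itself. By definition $\sigma_0=\sigma_\infty M_{\eta_*}$. Using $M_{\eta_*}\sigma_\infty^kM_\mu = \sigma_\infty^k M_{\eta_*\mu}$, which follows from commutativity \eqref{Eq:commutativity} and $M_\mu M_\nu=M_{\mu\nu}$, we get $\sigma_0\sigma_\infty^kM_\mu(u) = \sigma_\infty^{k+1}M_{\mu\eta_*}(u)$. Here $\eta_*$ is a fixed element of $H$ with $\eta_*^{W_N}=1$, so $M_{\eta_*}$ is a legitimate element of the group.

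\emph{Step 3: the closed form.} Finally, I substitute into \eqref{Eq:ukThe} with $k$ replaced by $j+n+k$ and $\mu$ replaced by $\mu\eta_*^n$:
\[
u_{j+n+k,\mu\eta_*^n} = (\mu\eta_*^n)^{q^{j+n+k}}\,\Theta^{(1-q)\gamma_{j+n+k}(\sigma)}\,u^{q^{j+n+k}}.
\]
Equation \eqref{Eq:ukThe} itself comes from \eqref{Eq:sigU} together with $\sigma_\infty^k(\mu u)=\mu^{q^k}\sigma_\infty^k(u)$. That identity holds because $\mu\in\mathbb{F}_{q^N}$, on which $\sigma_\infty$ acts as the $q$-power map.

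The only delicate point is the bookkeeping of indices. When $j+n+k\geqslant N$, the closed form uses the integer index rather than the index mod $N$. I need to check that this is consistent, which amounts to \eqref{Eq:sigU} holding for all $j\geqslant 0$. That was proved by induction without any reduction mod $N$, and the computation $\sigma_\infty^N(u)=u$ shows the two readings agree. So no real obstacle is expected.
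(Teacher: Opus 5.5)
Your proposal is correct and is essentially the paper's proof. The first equality comes from iterating the transition rules in \eqref{Eq:uksig}, and the closed form comes from \eqref{Eq:ukThe}, that is, \eqref{Eq:sigU} together with $\sigma_\infty^k(\mu)=\mu^{q^k}$. Your extra checks of the $\sigma_0$ rule and of the index bookkeeping mod $N$ are sound; the paper simply takes these for granted.
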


\begin{proof}
From the identities in Equation \eqref{Eq:uksig}, we obtain 
\begin{align*}
  \sigma_{0}^{n}\sigma_{\infty}^{j}(u_{k,\mu})=\sigma_0^{n}(u_{k+j,\mu})=u_{j+n+k,\mu\cdot\eta_*^{n}},
\end{align*}
which establishes the first equality.
Combining Notation \ref{No:ukmusig} with Equation \eqref{Eq:sigU} yields 
\begin{align*}
 u_{j+n+k,\mu\cdot\eta_*^{n}} =(\mu\eta_*^n )^{q^{j+n+k}}\sigma_{\infty}^{j+n+k}(u)=(\mu\eta_*^n )^{q^{j+n+k}}\Theta^{ (1-q)\gamma_{j+n+k}(\sigma)} u^{q^{j+n+k}}.
\end{align*}
\end{proof}

 We are now in a position to describe the Artin symbol of $ H^+ / K $. 
\begin{prop} \label{Pro:AS}
Denote by  
\[
\left(H^+/K;-\right): \text{ideal group of $\A$ }\to  \Gal (H^+/K)
\]
the Artin symbol of the Abelian extension $H^+/K$.  
Then the following three equalities hold:
\begin{equation}\label{Eq:Assiginf}
    \left(H^+/K;I_{\infty}\right) = \sigma_{\infty};
\end{equation}
\begin{equation}\label{Eq:Assig0}
   \left(H^+/K;I_0\right)=  \sigma_{0};
\end{equation}
\begin{equation}\label{Eq:Mi}
   \left(H^+/K;(T_i)\right) = M_{ \eta_*^i },
\end{equation}
where $\eta_* = \eta^{\frac{1-q}{q}}$.
\end{prop}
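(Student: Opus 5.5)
We will use Hayes' characterization of the Artin symbol through isogenies. For a Hayes module $\psi$ and an ideal $I$, part (3) of Theorem \ref{thm:Hilbert} gives $\psi^I=(H^+/K;I)\psi$. Here $\psi^I$ is the module defined by $\psi_I\psi_a=\psi^I_a\psi_I$, where $\psi_I$ is the annihilator. Since a Hayes module is determined by the annihilator of $I_\infty$, it is enough to compute $(\psi^u)^{I}_{I_\infty}$ for $I=I_\infty$ and $I=I_0$. We then read off which conjugate $x\in U_\rho$ satisfies $\psi^x_{I_\infty}=(\psi^u)^I_{I_\infty}$. By Notation \ref{No:etax}, the automorphism sending $u\mapsto x$ is the Artin symbol. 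We only need to check where $u$ goes, together with the restriction to $H$. The restriction to $H$ is forced to be $\sigma$ by the type shift: $\psi^I$ has type shifted by one Frobenius, so $\eta\mapsto\eta^q$ on $H$.

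The key step is the identity $\psi_{I_\infty}\psi_{I_\infty}=\psi^{I_\infty}_{I_\infty}\psi_{I_\infty}$, which holds at the level of annihilators. It follows from $\psi_{I}\psi_{J}=\psi^{J}_{I}\psi_{J}$, a standard consequence of $\psi_{IJ}=\psi^J_I\psi_J$ and the commutativity $IJ=JI$. Write $\psi^{I_\infty}_{I_\infty}=\tau-x$. From $(\tau-x)(\tau-u)=\psi^{I_\infty^2}=(\tau-u')(\tau-u)$ we cannot conclude directly, so we proceed via kernels instead. The module $\psi^{I_\infty}$ is again sign-normalized, so it equals $\psi^x$ for some $x\in U_\rho$. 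To identify $x$, we use the leading-coefficient constraint together with the constant term of $\psi^{I_\infty}_{T_0}$.

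Concretely, the plan is as follows. We have $(\tau-x)(\tau-u)=\psi_{I_\infty^2}$, and $I_\infty^2$ is the annihilator of $\ker\psi^u_{I_\infty}+\ker\psi^x_{I_\infty}$ pulled back. Instead, we compare with the standard module. Proposition \ref{Pro:Hu} gives $\psi^u=\ell\Psi\ell^{-1}$ with $\ell^{q-1}=-u/\Theta$. For $\Psi$, the twist $\Psi^{I_\infty}$ is computed from $\Psi_{I_\infty}=\tau+\Theta$. Because the motive of $\Psi^{I_\infty}$ is generated by $f\cdot\mathbf s_0$ (Proposition \ref{Pro:isomoti}), $\Psi^{I_\infty}$ is isomorphic to $\Psi^{(1)}$, with annihilator $\Psi^{(1)}_{I_\infty}=\tau+\Theta^{\sigma}$. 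Conjugating back by $\ell^q$ gives
\[\psi^{u,I_\infty}=\ell^q\,\Psi^{I_\infty}\,\ell^{-q}.\]
Matching it with the Hayes normalization then forces the new annihilator to be $\tau+\Theta^\sigma\ell^{q(q-1)}$ up to the scaling that $\Psi^{I_\infty}\simeq\Psi^{(1)}$ introduces. Substituting $\ell^{q-1}=-u/\Theta$ gives $x=\Theta^{1-q}u^q$ times a $W_N$-th root of unity. We fix this root of unity by tracking the exact constant in the isomorphism $\Psi^{I_\infty}\to\Psi^{(1)}$, computed from $\psi_{I_\infty}\Psi_{T_0}=\Psi^{I_\infty}_{T_0}\psi_{I_\infty}$ and comparing leading terms as in \eqref{Eq:TjN}. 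The result should be $\sigma_\infty(u)$, which gives \eqref{Eq:Assiginf}.

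For $I_0$ we repeat the computation with $\Psi_{I_0}=\tau+\frac{\theta}{\eta}\Theta$. The extra factor $\theta/\eta$ produces the root of unity $\eta_*$, giving \eqref{Eq:Assig0}. Finally, \eqref{Eq:Ideals} gives $(T_i)=I_0^iI_\infty^{N-i}$, so the Artin symbol of $(T_i)$ is $\sigma_0^i\sigma_\infty^{N-i}=M_{\eta_*}^i\sigma_\infty^N=M_{\eta_*^i}$, using Proposition \ref{Pro:Gag} and $\sigma_\infty^N=\mathrm{id}$. This is \eqref{Eq:Mi}. The main obstacle is pinning down the exact root of unity, that is, the precise isomorphism constant between $\Psi^{I}$ and $\Psi^{(1)}$. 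We would handle it by an explicit leading-term computation with $T_0$ and \eqref{Eq:TjN}.
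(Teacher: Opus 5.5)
\textbf{There is a genuine gap.} Your overall route is a legitimate alternative to the paper: you compute the $I_\infty$-annihilator of $(\psi^u)^{I}$ and then invoke Hayes' relation $\psi^{I}=(H^+/K;I)\psi$ together with $(\sigma\psi^u)_{I_\infty}=\tau-\sigma(u)$. The paper instead checks the Frobenius congruences $\sigma_\infty(\tilde u)\equiv\tilde u^{q}$ and $\sigma_0(u)\equiv u^q$ directly at places of $H^+$ above the degree-one places attached to $I_\infty$ and $I_0$, using $\tilde u=(\eta-\theta)u$ and Kummer theory.

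However, the step that carries all the content is not done, and the method you propose for it cannot work. Every $\mu\,\sigma_\infty(u)$ with $\mu^{W_N}=1$ lies in $U_\rho$ with $\eta_x=\eta^{(1)}$. So the $W_N$-th root of unity is exactly what distinguishes $\sigma_\infty$ from $\sigma_\infty M_\mu$. Suppose you only know $\Psi^{I_\infty}=c\,\Psi^{(1)}c^{-1}$ for an unknown constant $c$. Then $\Psi^{I_\infty}_{I_\infty}=\tau+c^{q-1}\Theta^{\sigma}$. Comparing leading coefficients of the $T_0$-components yields only $c^{q^N-1}=(c^{q-1})^{W_N}$, and constant terms give nothing. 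So your leading-term computation with $T_0$ and \eqref{Eq:TjN} leaves precisely this root of unity undetermined. The conclusion ``the result should be $\sigma_\infty(u)$'' is asserted rather than proved, and the same holds for the factor $\eta_*$ in the $I_0$ case. You also cannot fall back on Proposition~\ref{Pro:isoPsiij} or Proposition~\ref{Pro:sPsiThe}: both depend on Lemma~\ref{Lem:Isophis}, which uses the statement you are proving.

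Two intermediate claims are also wrong.
\begin{itemize}
\item The identity $\psi_{I_\infty}\psi_{I_\infty}=\psi^{I_\infty}_{I_\infty}\psi_{I_\infty}$ would force $\psi^{I_\infty}_{I_\infty}=\psi_{I_\infty}$ by right cancellation. The correct relation is $\psi_{IJ}=\psi^{J}_{I}\psi_{J}$.
\item By Proposition~\ref{Pro:isomoti}, the motive of $\Psi^{I_\infty}$ is generated by $\Psi_{I_\infty}\mathbf s_0=(\tau+\Theta)\mathbf s_0=\frac{1}{t-\eta}$, not by $f\mathbf s_0=\tau\mathbf s_0$.
\end{itemize}

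Working with the correct generator removes the ambiguity altogether. With $\tau(g)=g^{(1)}f$ and $1+\Theta(\eta-\eta^{(1)})=\Theta^{1-\sigma}$, one checks
\begin{gather*}
\frac{1}{t-\eta^{(1)}}\cdot\frac{1}{t-\eta}=\Theta^{\sigma-1}(\tau+\Theta)\Bigl(\frac{1}{t-\eta}\Bigr),\\
\frac{1}{t-\eta^{(1)}}\cdot\frac{t}{\eta(t-\eta)}=\eta^{q-1}\Theta^{\sigma-1}\bigl(\tau+\eta^{1-q}\Theta\bigr)\Bigl(\frac{t}{\eta(t-\eta)}\Bigr).
\end{gather*}
Since $\frac{1}{t-\eta^{(1)}}=\sum_i b_i^{\sigma}T_i$, this gives exactly $\Psi^{I_\infty}_{I_\infty}=\tau+\Theta$ and $\Psi^{I_0}_{I_\infty}=\tau+\eta^{1-q}\Theta$. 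Now use $(\psi^u)^{I}_{I_\infty}=\ell^{q^2}\Psi^{I}_{I_\infty}\ell^{-q}$ with $\ell^{q-1}=-u/\Theta$. This yields $(\psi^u)^{I_\infty}_{I_\infty}=\tau-\Theta^{1-q}u^q=\tau-\sigma_\infty(u)$ and $(\psi^u)^{I_0}_{I_\infty}=\tau-\eta^{1-q}\sigma_\infty(u)=\tau-\sigma_0(u)$.

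With this computation inserted, the rest of your argument goes through: the type argument for the restriction to $H$ and the deduction of \eqref{Eq:Mi} from $(T_i)=I_0^iI_\infty^{N-i}$ are fine.
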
 

\begin{proof}

Let us recall the definition of
the Artin symbol with respect to the Abelian extension $ H^+/ K $. For a prime ideal $P$ of $K$ (or a place in the function field setting) unramified in $H^+/K$, $( H^+/ K; P) $ is the unique automorphism  $ \sigma \in \Gal (H^+/ K ) $ such that 
\[
    \sigma(r) = r^{q^{\deg(P)}} \mod R,
\]
where $R$ is a place of $ H^+ $ over $P$, and $r \in H^+ $ is any integral element at $ R $.

(1) First, to verify the equality \eqref{Eq:Assiginf}, we set 
\[
 \tilde{u} = (\eta- \theta)  u. 
 \]
 Then it follows from  \eqref{Eq:PuTh} that
 \begin{equation}\label{Eq:tilu}
    (\tilde{u})^{W_N} =  (\theta - \eta) ^{W_N}\Theta^{\gamma_N(\sigma)}=\frac{(\theta - \eta) ^{W_N}}{(\theta-\eta^{(N-1)})(\theta-\eta^{(N-2)})^{q}\cdots(\theta-\eta)^{q^{N-1}}},  
 \end{equation}
 and the extension $H^+/K$ is generated by 
 $\tilde{u}$ and $\eta$.

Let $P_{\infty}$ be the place corresponding to the ideal $I_{\infty}$ in $\A$. We need to determine all places in $H^+$ over $P_{\infty}$. 
In the field extension $H/ K $, there exists a unique place, say $Q_{\infty} $, over $P_{\infty}$, 
with ramification index $e(Q_{\infty}| P_{\infty}) =1 $ and relative degree $f (Q_{\infty}| P_{\infty}) = N $. 
It follows from \eqref{Eq:tilu} that
\begin{equation}
 \tilde{u}^{W_N}-1=\prod_{\mu^{W_N}=1} (\tilde{u}-\mu)=0 \mod Q_\infty . 
\end{equation}
Applying Kummer’s Theorem \cite[Theorem 5.8.2]{V-S06}, we conclude that the place $ Q_\infty $ splits completely in the field extension $ H^+ / H $ into $W_N$ distinct places,  corresponding to the zeros $R_{\infty}^{(\mu)}$ of 
$ \tilde{u}- \mu  $ where $\mu^{W_N} = 1$.
In other words, $ R_{\infty}^{(\mu)}$  is the common locus of $1/\theta$ and $\tilde{u}-\mu$. 
The places $Q_\infty $ and $ R_{\infty}^{(\mu)}$
  over $P_\infty$ are illustrated in Figure~\ref{Fig:w1ga0}. 

Moreover, we fix $\mu = \mu_0$ to be some $W_N$-root of unity.  Then 
we know that $\tilde{u} - \mu_0$ is a uniformizer of $ R_{\infty}^{(\mu_0)} $ and integral over $ P_{\infty}$. Thus, the integral elements over $ R_{\infty}^{(\mu_0)} $ are generated by $ \tilde{u} - \mu_0 $ over $\mathbb{F}_{q}(\eta) $. 
This follows from the definition of $\sigma_{\infty}$ that
\[
   \sigma_{\infty}(\tilde{u}) = \frac{\theta-\eta^{(1)} }{ \theta - \eta } \cdot \tilde{u}^q =  \tilde{u}^q \mod R_{\infty}^{(\mu_0)} 
\]
and
\[
    \sigma_{\infty}(v)= v^q
\]
for $ v \in \mathbb{F}_{q} (\eta)\subset H $.  This implies that the equality \eqref{Eq:Assiginf} holds. 
\begin{figure}
\centering
\include{graph.tex}   
 \caption{Diagram of places over $H^+/K$}   
 \label{Fig:w1ga0}  
\end{figure}

(2) Next, we adopt the same approach to check the equality \eqref{Eq:Assig0}.

Let $P_{0}$ be the place of $K$ associated with the ideal $I_{0}$ in $\A$.  For the field extension $H/K$, there exists a unique place $ Q_0 $ over $ P_0 $ with $ e(Q_0| P_0) = 1 $ and $ f(Q_0| P_0) = N $. Let $ R_{0}  $ be a place of $H^+$ over $ Q_{0} $ as in Figure \ref{Fig:w1ga0}.   Then we know that $R_{0} $ is unramified over $P_0$
 and $ u $ is integral at $ P_{0}$ (resp. $ Q_0 $).
It follows from (3) in Proposition \ref{Pro:sigmaU} that 
\[
    \sigma_0 (u) = \frac{\theta - \eta}{-\eta} u^q = u^q \mod R_{0}  
\]
and 
\[
    \sigma_0(v) = v^q
\]
for any $ v \in \mathbb{F}_{q}(\eta)$. 
Since $ u $ and $ v $ are generators of  $H^+/K$, the equality \eqref{Eq:Assig0} therefore holds.

(3) The principal ideal generated by $T_i$ admits a decomposition $I_0^{i} I_\infty^{N-i}$. 
Therefore, we have
\[
 \left(H^+/\mathbb{F}_q(\theta);(T_i)\right) = \left(H^+/\mathbb{F}_q(\theta);I_0\right)^i  \left(H^+/\mathbb{F}_q(\theta);I_\infty\right)^{N-i} = \sigma_{0}^i\sigma_{ \infty}^{N-i}.
\]
Since $\sigma_\infty^N=\id$, it follows that
\[
\sigma_{0}^i\sigma_{ \infty}^{N-i}(u)= M^{i}_{\eta_*}\sigma_{\infty}^N(u)= M_{\eta_*^i}\, (u)=\eta^i_{*}u.
\]
Thus, Equation \eqref{Eq:Mi} holds.

\end{proof}  

\subsection{Annihilator} 
We next give an explicit description of the annihilator $ \psi_{I_{0}^l I_{\infty}^{j-l} }^x  $ of the ideal $ I_0^l I_{\infty}^{j-l} $. 

\begin{notation}\label{No:su}
     For $x \in U_{\rho} $, we denote by $ \mathbf{s} (x) $  the formal generator of the Anderson motive $ M_{\psi^{x} } ~(\cong H^{+}\{\tau\} )$ associated with $ \psi^{x} $ (see Definition \ref{Def:t-m} and Notation \ref{No:etax} (3)). In this notation, we have
        \[
        a \otimes g(\tau) * ( m \mathbf{s}(x) ) = \left( g(\tau) \circ m \circ \psi_a^{x}  \right) \mathbf{s}(x),
        \]
        for all $ a \otimes g(\tau) \in \A \otimes_{\mathbb{F}_q} H^{+}\{ \tau \} $ and $ m \mathbf{s}(x) \in M_{\psi^x} $. 
\end{notation}
Applying Proposition \ref{prop:ann}, we have the following results concerning $ M_{\psi^x}$.
\begin{cor}\label{cor:gensu}
Let $\mathbf{s}(u)$ be the formal generator of $M_{\psi^u}$. 
The following equalities hold: 
     \[
     \frac{1}{ t - \eta  } * \mathbf{s}(u)
     = -\frac{\Theta}{u}(\tau - u) \mathbf{s}(u)
     \]
     and
      \[
     \frac{t}{ \eta(t - \eta)  } * \mathbf{s}(u)= -\frac{\Theta}{u}\left(\tau -\frac{\theta}{\eta} u \right) \mathbf{s}(u).
     \]
\end{cor}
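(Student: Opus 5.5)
The idea is to transport the motive identities of Proposition \ref{prop:ann} from $M_\Psi$ to $M_{\psi^u}$ along the isomorphism $\ell$ of Proposition \ref{Pro:Hu}, using Proposition \ref{Pro:isomoti} to identify the generators.

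\textbf{Step 1: identify the generators.} Since $\Psi_a=\ell^{-1}\psi^u_a\ell$, the constant $\ell^{-1}$ is an isogeny from $\psi^u$ to $\Psi$: indeed $\ell^{-1}\psi^u_a=\Psi_a\ell^{-1}$. By Proposition \ref{Pro:isomoti} applied with $\lambda=\ell^{-1}$, we may take
\[
\mathbf{s}_0=\ell^{-1}\mathbf{s}(u),
\]
with the $\A\otimes L\{\tau\}$-action compatible with this identification.

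\textbf{Step 2: transport the first identity.} By Equation \eqref{Eq:prs0ex}, $\frac{1}{t-\eta}*\mathbf{s}_0=(\tau+\Theta)\mathbf{s}_0$. Substituting Step 1 gives
\[
\frac{1}{t-\eta}*\mathbf{s}(u)=\ell(\tau+\Theta)\ell^{-1}\mathbf{s}(u)=\bigl(\ell^{1-q}\tau+\Theta\bigr)\mathbf{s}(u).
\]
Here we use that $*$ is left $L$-linear, so $\ell$ pulls out. Equation \eqref{Eq:proell} gives $\ell^{q-1}=-u/\Theta$, so $\ell^{1-q}=-\Theta/u$. Hence
\[
\ell^{1-q}\tau+\Theta=-\frac{\Theta}{u}(\tau-u).
\]

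\textbf{Step 3: the second identity.} The same computation applies to $\frac{t}{\eta(t-\eta)}*\mathbf{s}_0=(\tau+\frac{\theta}{\eta}\Theta)\mathbf{s}_0$. It yields $-\frac{\Theta}{u}\tau+\frac{\theta}{\eta}\Theta$, which equals $-\frac{\Theta}{u}\bigl(\tau-\frac{\theta}{\eta}u\bigr)$.

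\textbf{Main obstacle.} The only delicate point is the direction and normalization in Step 1: whether $\mathbf{s}_0$ corresponds to $\ell^{-1}\mathbf{s}(u)$ or to $\ell\,\mathbf{s}(u)$. I would check this directly from Definition \ref{Def:t-m}. By that definition,
\[
a*(\ell^{-1}\mathbf{s}(u))=(\ell^{-1}\psi^u_a)\mathbf{s}(u)=(\Psi_a\ell^{-1})\mathbf{s}(u),
\]
so the map $m\mathbf{s}_0\mapsto m\ell^{-1}\mathbf{s}(u)$ is an isomorphism of motives. This confirms the choice in Step 1 and hence the factor $\ell^{1-q}$.

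As a consistency check, the result should reproduce the annihilator $\psi^u_{I_\infty}=\tau-u$ up to a unit, which it does.
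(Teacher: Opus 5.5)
Your proof is correct and is essentially the paper's argument: the paper likewise takes $\mathbf{s}(u)=\ell\,\mathbf{s}_0$ via Proposition \ref{Pro:isomoti}, transports \eqref{Eq:prs0ex} to obtain $\ell(\tau+\Theta)\ell^{-1}=\ell^{1-q}\tau+\Theta$, and substitutes $\ell^{1-q}=-\Theta/u$ from Proposition \ref{Pro:Hu}. Your check from Definition \ref{Def:t-m} that $m\mathbf{s}_0\mapsto m\ell^{-1}\mathbf{s}(u)$ is an isomorphism of motives only makes explicit the direction of the identification, which the paper asserts without further comment.
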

\begin{proof}
     Recall that $\mathbf{s}_0 (=1)$ is the generator of $M_\Psi$ by the construction of the Drinfeld module $\Psi$.
 Let $\ell$ be an isogeny from $\Psi$ to $\psi^u$. By Proposition \ref{Pro:Hu}, we know that $\ell^{1-q}=-\frac{\Theta}{u}$. Furthermore, Proposition \ref{Pro:isomoti} implies that $\mathbf{s}(u)=\ell \mathbf{s}_0$.
 It follows from Proposition \ref{prop:ann} that
     \[
     \frac{1}{ t - \eta  } * \mathbf{s}(u)=\ell \frac{1}{ t - \eta  }* \mathbf{s}_0=\ell (\tau+\Theta)\ell^{-1}\ell\mathbf{s}_0 =(\ell^{1-q} \tau+\Theta)\mathbf{s}(u)
     = -\frac{\Theta}{u}(\tau - u) \mathbf{s}(u).
     \]
 The second equality can be derived in the same fashion.
\end{proof}
\begin{notation} \label{No:delta_x}
   Define the map $\delta\colon U_\rho\to H^+$ by
\[
    \delta (x) = \frac{\theta}{ \eta_{x} } x
\]
for $x \in U_{\rho} $. 
\end{notation}

\begin{lem}
    The map $\delta$ in Notation \ref{No:delta_x} is Galois invariant. 
\end{lem}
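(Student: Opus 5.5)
The claim to establish is that $\delta$ is Galois equivariant: $\delta(g(x)) = g(\delta(x))$ for every $g \in \Gal(H^+/K)$ and every $x \in U_\rho$. In particular $\delta(U_\rho)$ is stable under the Galois group, and $\delta$ is compatible with $\sigma_\infty$, $\sigma_0$ and $M_\mu$. Since $g$ fixes $\theta$, we have $g(\delta(x)) = \theta\, g(x)/g(\eta_x)$. So the whole statement reduces to the single identity
\[
\eta_{g(x)} = g(\eta_x), \qquad g\in \Gal(H^+/K),\ x\in U_\rho .
\]

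First I will make sure $\eta_x$ is well defined. By Notation \ref{No:etax}, $x$ satisfies $x^{W_N} = \bigl(\frac{1}{\eta_x-\theta}\bigr)^{\gamma_N(\sigma)}$, where $\sigma$ in the exponent acts on $\eta_x$. The right-hand sides for the $N$ different roots of $\rho$ are pairwise distinct elements of $H$. One way to see this is to look at their valuations at the places $[\eta^{(i)}]$: the factor $\frac{1}{\eta_x-\theta}$ carries exponent $q^{N-1}$, while the other conjugates carry smaller powers. Hence a given $x$ satisfies the relation for exactly one root, and that root is $\eta_x$.

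I will then prove the identity directly on generators, using the explicit description $U_\rho = \{u_{k,\mu}\}$ from Notation \ref{No:ukmusig}. We already know $\eta_{u_{k,\mu}} = \eta^{q^k}$, and Proposition \ref{Pro:Gag} says $\Gal(H^+/K)$ is generated by $\sigma_\infty$ and the $M_\nu$, so it suffices to treat these two. By \eqref{Eq:uksig}, $\sigma_\infty(u_{k,\mu}) = u_{k+1,\mu}$, hence
\[
\eta_{\sigma_\infty(u_{k,\mu})} = \eta^{q^{k+1}} = \sigma_\infty(\eta^{q^k}).
\]
Similarly $M_\nu(u_{k,\mu}) = u_{k,\nu\mu}$, so $\eta_{M_\nu(u_{k,\mu})} = \eta^{q^k} = M_\nu(\eta^{q^k})$, because $M_\nu$ fixes $H$. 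Both sides of $\delta(g(x)) = g(\delta(x))$ are compatible with composition in $g$, so equivariance for the generators gives it for the whole group.

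Alternatively, one can argue without generators: apply $g$ to the defining relation of $x$, and uniqueness of $\eta_{g(x)}$ gives the identity at once. The only point needing care, which I expect to be the main (mild) obstacle, is that $g$ commutes with the formal $\sigma$-exponent $\gamma_N(\sigma)$. This holds because $\Gal(H/K)$ is abelian (cyclic), so $g|_H$ is a power of $\sigma$ and commutes with every power of $\sigma$.
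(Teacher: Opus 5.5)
Your proposal is correct and follows the paper's proof: you check that $\delta$ commutes with the generators $\sigma_\infty$ and $M_\mu$ using $\eta_{\sigma_\infty(x)}=\eta_x^q$ and $\eta_{M_\mu(x)}=\eta_x$, then extend to all of $\Gal(H^+/K)$. Your check that $\eta_x$ is well defined, and your generator-free variant (apply $g$ to the defining relation and use that $g|_H$ commutes with $\sigma$), are both sound additions that the paper leaves implicit.
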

\begin{proof}
Applying Proposition \ref{Pro:sigmaU}, we get
    \[
\sigma_\infty\delta(x)=\sigma_\infty(\frac{\theta}{\eta_x}x)=\frac{\theta}{\eta_x^q}\sigma_\infty(x)=\delta\sigma_{\infty}(x)
    \]
and 
   \[ M_{\mu}\delta(x)=M_{\mu}(\frac{\theta}{\eta_x}x)=\frac{\theta}{\eta_x}M_{\mu}(x)=\delta M_{\mu}(x).
   \]
   Since the Galois group $ \Gal(H^+/ K) $ is generated by $ \sigma_\infty $ and $ M_{\mu}$, this lemma is valid.  
\end{proof}

\begin{lem}\label{Lem:sig0desiginf}
 For $x \in U_\rho $, we have the following equality
\begin{equation}\label{Eq:tausigma}
    (\tau - \sigma_{0}(x) )(\tau - \delta (x) ) = (\tau  - \delta \sigma_{\infty} (x) ) (\tau - x ).
\end{equation}
\end{lem}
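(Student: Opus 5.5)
The identity \eqref{Eq:tausigma} is an equality of monic twisted polynomials of degree $2$ in $H^+\{\tau\}$. I will expand both sides using the commutation rule $\tau c = c^q\tau$ and compare the coefficients of $\tau^1$ and $\tau^0$. Write $\eta_x$ for the root of $\rho$ attached to $x$, as in Notation \ref{No:etax}.

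The expansions are
\[
(\tau-\sigma_0(x))(\tau-\delta(x))=\tau^2-\bigl(\delta(x)^q+\sigma_0(x)\bigr)\tau+\sigma_0(x)\delta(x),
\]
\[
(\tau-\delta\sigma_\infty(x))(\tau-x)=\tau^2-\bigl(x^q+\delta\sigma_\infty(x)\bigr)\tau+\delta\sigma_\infty(x)\,x.
\]
The only inputs needed are the following.
\begin{itemize}
\item Proposition \ref{Pro:sigmaU}(1),(3): $\sigma_\infty(x)=(\theta-\eta_x)^{q-1}x^q$ and $\sigma_0(x)=\eta_x^{1-q}\sigma_\infty(x)$.
\item The fact that $\eta_{\sigma_\infty(x)}=\eta_x^q$. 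This holds because $\sigma_\infty|_H=\sigma$, and it is consistent with $\eta_{u_{k,\mu}}=\eta^{q^k}$ after \eqref{Eq:uksig}.
\end{itemize}
From the second point, Notation \ref{No:delta_x} gives $\delta\sigma_\infty(x)=\theta\eta_x^{-q}\sigma_\infty(x)$.

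\emph{Constant terms.} On the left, $\sigma_0(x)\delta(x)=\eta_x^{1-q}\sigma_\infty(x)\cdot\theta\eta_x^{-1}x=\theta\eta_x^{-q}\sigma_\infty(x)x$. This is exactly $\delta\sigma_\infty(x)\,x$, so the constant terms agree.

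\emph{Coefficient of $\tau$.} Both sides are multiples of $x^q$, and I will divide by $x^q$ and multiply by $\eta_x^q$. The left coefficient $\theta^q\eta_x^{-q}x^q+\eta_x^{1-q}(\theta-\eta_x)^{q-1}x^q$ becomes $\theta^q+\eta_x(\theta-\eta_x)^{q-1}$. The right coefficient $x^q+\theta\eta_x^{-q}(\theta-\eta_x)^{q-1}x^q$ becomes $\eta_x^q+\theta(\theta-\eta_x)^{q-1}$. Their difference is
\[
\theta^q-\eta_x^q-(\theta-\eta_x)(\theta-\eta_x)^{q-1}=(\theta-\eta_x)^q-(\theta-\eta_x)^q=0,
\]
using the Frobenius identity $\theta^q-\eta_x^q=(\theta-\eta_x)^q$ in characteristic $p$.

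There is no real obstacle. The one point requiring care is to use $\eta_{\sigma_\infty(x)}=\eta_x^q$ when applying $\delta$ to $\sigma_\infty(x)$, rather than reusing $\eta_x$. Once that is in place, the $\tau$-coefficient comparison reduces to the Frobenius identity above.
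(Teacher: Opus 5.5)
Your proposal is correct and follows the paper's proof: both expand each side with $\tau c = c^q\tau$, use Proposition \ref{Pro:sigmaU} together with $\eta_{\sigma_\infty(x)}=\eta_x^q$ to evaluate $\delta\sigma_\infty(x)$, and compare the $\tau$ and constant coefficients. The paper writes the common $\tau$-coefficient as $\frac{\theta^{q+1}-\eta_x^{q+1}}{(\theta-\eta_x)\eta_x^q}x^q$, while you reduce the comparison to $\theta^q-\eta_x^q=(\theta-\eta_x)^q$; this is the same computation.
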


\begin{proof}
   The equality \eqref{Eq:tausigma} follows by direct computation. The left-hand side is given by 
    \begin{align}\label{Eq:tausig0del}
        (\tau - \sigma_{0}(x) )(\tau - \delta(x) ) & = \tau^2  - \left( \left(\frac{\theta - \eta_x}{\eta_x}\right)^{q-1}  + \frac{\theta^q}{\eta_x^q}  \right) x^q \tau +  \left(\frac{\theta - \eta_x}{\eta_x}\right)^{q-1}\frac{\theta}{ \eta_{x} }  x^{q+1}\nonumber \\
        & = \tau^2  - \frac{\theta^{q+1} - \eta_x^{q+1}}{(\theta - \eta_x ) \eta_x^q }  x^q \tau +  \frac{(\theta - \eta_x)^{q-1} \theta}{\eta_x^q }  x^{q+1}. 
    \end{align}
It follows from Equation \eqref{Eq:uksig} that $\eta_{\sigma_{\infty} (x)}=\eta_x^q$.    Therefore, 
    \[
        \delta \sigma_{\infty}(x) = \frac{\theta}{\eta_{\sigma_{\infty} (x)}} \sigma_{\infty}(x) = \frac{\theta}{\eta_x^q } (\theta - \eta_x)^{q-1} x^q .
    \]
    So the right-hand side equals 
    \begin{equation}\label{Eq:taudelsiginf}
       (\tau  - \delta \sigma_{\infty} (x) ) (\tau - x ) = \tau^2 - \left(\frac{\theta}{\eta_x^q } (\theta - \eta_x)^{q-1}+1\right) x^q \tau + \frac{\theta}{\eta_x^q } (\theta - \eta_x)^{q-1} x^{q+1}.  
    \end{equation}
    Comparing \eqref{Eq:tausig0del} with \eqref{Eq:taudelsiginf}, we complete the proof.

\end{proof}

\begin{lem}\label{Lem:Isophis}
  For $ x \in U_{\rho} $, the following statements hold.
\begin{enumerate}
   \item  $\psi_{I_{0}^{l} I_{\infty}^j}^x $ is the isogeny from $ \psi^{x} $ to $\psi^{ \sigma_{0}^l \sigma_{\infty}^{j} x}$.
   \item Moreover, we have
      \[ 
    \psi_{I_0^{ l}I_\infty^{ j-l} }^{u_{k,\mu}} \mathbf{s} (u_{k, \mu }) =\mathbf{s}( \sigma_{0}^l \sigma_{\infty}^{j-l} u_{k,\mu})= \mathbf{s}( u_{j+k,\mu\cdot \eta_*^{l }} ) .
    \]
\end{enumerate} 
\end{lem}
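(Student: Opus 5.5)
Part (1) follows from the general theory of annihilators together with the Artin symbol computation in Proposition \ref{Pro:AS}. For any ideal $I$, $\psi^x_I$ is by definition a monic right divisor of every $\psi^x_a$, $a\in I$. For each $b\in\A$, the ideal $bI\subseteq I$ is annihilated by $\psi^x_I\psi^x_b$, so the left ideal $\{\psi^x_a: a\in I\}$ is stable under right multiplication by $\psi^x_b$. Hence $\psi^x_I\psi^x_b=\psi'_b\psi^x_I$ for a unique twisted polynomial $\psi'_b$, and $b\mapsto\psi'_b$ is a Drinfeld module $\psi^{x,I}$ (Hayes' construction, \cite{Hay74}). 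Since $\psi^x_I$ is monic, $\psi^{x,I}$ is again sign-normalized, so it is a Hayes module, and Theorem \ref{thm:Hilbert}(3) (equivalently \eqref{eq:psiI}) gives $\psi^{x,I}=(H^+/K;I)\psi^x$. Taking $I=I_0^lI_\infty^j$ and applying \eqref{Eq:Assiginf} and \eqref{Eq:Assig0}, the Artin symbol is $\sigma_0^l\sigma_\infty^j$. Notation \ref{No:etax}(3) gives $\sigma\psi^x=\psi^{\sigma x}$, which yields (1).

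I would also like a self-contained check in the generating cases, since everything else reduces to them. For $I=I_\infty$ one has $\psi^x_{I_\infty}=\tau-x$, and the target is $\psi^{\sigma_\infty x}$ with $\psi^{\sigma_\infty x}_{I_\infty}=\tau-\sigma_\infty x$. Lemma \ref{Lem:sig0desiginf} is exactly the needed compatibility: it exchanges $\tau-x$ and $\tau-\delta(x)$ (the annihilators of $I_\infty$ and $I_0$ for $\psi^x$) with their counterparts for the twisted modules. Writing $\psi^x_{I_0I_\infty}$ in the two orders, $\psi^{\sigma_\infty x}_{I_0}\psi^x_{I_\infty}=\psi^{\sigma_0x}_{I_\infty}\psi^x_{I_0}$, identifies the target modules. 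Multiplicativity of annihilators, $\psi^x_{IJ}=\psi^{x,J}_I\psi^x_J$, then lets me iterate: $\psi^x_{I_0^lI_\infty^j}$ is the composite of $j$ factors $\tau-\sigma_\infty^i x$ followed by $l$ factors $\tau-\delta\sigma_0^{i}\sigma_\infty^j x$. Each factor is an isogeny between consecutive Hayes modules, and the composite is an isogeny $\psi^x\to\psi^{\sigma_0^l\sigma_\infty^jx}$.

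For (2), I would apply Proposition \ref{Pro:isomoti} with $\lambda=\psi^{u_{k,\mu}}_{I_0^lI_\infty^{j-l}}$, which by (1) is an isogeny $\psi^{u_{k,\mu}}\to\psi^{\sigma_0^l\sigma_\infty^{j-l}u_{k,\mu}}$. The proposition says the formal generator of the target motive is realized as $\lambda\,\mathbf{s}(u_{k,\mu})$ inside $M_{\psi^{u_{k,\mu}}}$; this is the meaning of the first equality. The second equality is Lemma \ref{Lem:siginf0} with $n=l$ and exponent $j-l$ for $\sigma_\infty$, giving $u_{(j-l)+l+k,\mu\eta_*^l}=u_{j+k,\mu\eta_*^l}$.

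The main obstacle is the multiplicativity and ordering of the annihilators in (1). Annihilators compose as $\psi_{IJ}=\psi^{J}_I\psi_J$, with the second factor computed for the twisted module, so I must track carefully which twisted Hayes module each factor belongs to. Lemma \ref{Lem:sig0desiginf} is the tool that makes the $I_0$ and $I_\infty$ orders agree, and I would first verify the case $l=j=1$ explicitly before inducting.
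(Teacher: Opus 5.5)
Your proposal is correct and follows the paper's proof. Part (1) goes through Hayes' ideal action on sign-normalized modules, Theorem \ref{thm:Hilbert}(3), the Artin symbols of Proposition \ref{Pro:AS} and $\sigma\psi^x=\psi^{\sigma x}$, and part (2) through Proposition \ref{Pro:isomoti} together with Lemma \ref{Lem:siginf0}.

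Your supplementary ``self-contained check'' is not needed, so the ``main obstacle'' you name never arises. As written it also presupposes the targets it claims to identify: Lemma \ref{Lem:sig0desiginf} only shows that $\sigma_\infty x$ and $\sigma_0 x$ are consistent. To actually pin down the targets $\psi^y$ and $\psi^z$, you would compare coefficients in $(\tau-\delta(y))(\tau-x)=(\tau-z)(\tau-\delta(x))$ and use the type to get $\eta_y=\eta_x^{q}$.
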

\begin{proof}
(1) By definition, $ \psi_{I_0^l I_{\infty}^{j}}^x $ is the isogeny from  $\psi^x$ to $(\psi^{x})^{I_{0}^l I_{\infty}^{j} }$.  
Applying Theorem \ref{thm:Hilbert} and Proposition \ref{Pro:AS},  we have the isogeny 
\[
     (\psi^{x})^{I_{0}^l I_{\infty}^{j} } = \sigma_{0}^l \sigma_{\infty}^{j}\psi^{ x} .
\]
From (3) of Notation \ref{No:etax}, we have 
\[\sigma_{0}^l \sigma_{\infty}^{j}\psi^{ x} = \psi^{\sigma_{0}^l \sigma_{\infty}^{j}x } 
\]
This proves the first assertion.

(2) In particular,  by taking $x=u_{k,\mu}$,  we find that  $\psi_{I_0^{ l} I_\infty^{ j-l} }^{u_{k,\mu}}$ is an isogeny from $\psi^{u_{k,\mu}}$ to $\psi^{ \sigma_{0}^l \sigma_{\infty}^{j-l} u_{k,\mu}}$.  Applying Proposition \ref{Pro:isomoti} yields that 
\[
 \psi_{I_0^{ l}I_\infty^{ j-l}  }^{u_{k,\mu}}  \mathbf{s} (u_{k, \mu }) =\mathbf{s}( \sigma_{0}^l \sigma_{\infty}^{j-l} u_{k,\mu}).
\]
So the second assertion follows from
     \begin{equation}\label{Eq:sig0inf}
     \sigma_{0}^l \sigma_{\infty}^{j-l} u_{k,\mu} = u_{ j+k , \mu\cdot \eta_*^{l }},
     \end{equation}
    where we make use of Lemma \ref{Lem:siginf0}. 
\end{proof}

\begin{notation}
Let $ S $ be a subset of $\{ 0, \cdots, j-1 \}$ with $ j \geqslant 1 $. For $i<j$, we introduce the notations:
\[
    [S_i^-] := \# \{ s \in S | s < i  \},   
\]
and
\[
    [S_i] := \# \{ s \in S | s = i  \}  = 0 \text{ or } 1. 
\]
In particular, we always have $[S_0^-]=0$.  For $x \in U_{\rho}$, we define the twisted polynomial 
\begin{equation}\label{Eq:psijS}
  \psi_{j,S}^x =  ( \tau - \delta^{[S_{j-1}]} \sigma_{0}^{[S_{j-1}^-]} \sigma_{\infty}^{j-1-[S_{j-1}^-]} x)\cdots  ( \tau - \delta^{[S_1]} \sigma_{0}^{[S_1^-]} \sigma_{\infty}^{1-[S_1^-]} x) ( \tau - \delta^{[S_0]}  x).  
\end{equation}
\end{notation}

\begin{lem}\label{Lem:psiSS'}
The twisted polynomial $ \psi_{j, S }^x $  depends only on the cardinality of $ S $; that is 
\begin{equation}\label{Eq:psiSS'}
    \psi_{j, S}^ x  =\psi_{j, S'}^x,
\end{equation}
whenever $S' $ has the same cardinality as $S$.
\end{lem}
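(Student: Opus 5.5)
The plan is to reduce the statement to an elementary exchange move on the characteristic vector of $S$, and then handle that move with Lemma \ref{Lem:sig0desiginf}. Two subsets of $\{0,\dots,j-1\}$ with the same cardinality are connected by a finite chain of such moves. Each move replaces $S$ by $S'=(S\setminus\{i\})\cup\{i+1\}$, where $i\in S$ and $i+1\notin S$; in other words it swaps an adjacent pair ``$1,0$'' at positions $i,i+1$ to ``$0,1$''. Indeed, any $S$ can be carried to the initial segment $\{0,\dots,|S|-1\}$ by repeatedly applying the inverse move, pushing elements to the left. So it suffices to prove $\psi^x_{j,S}=\psi^x_{j,S'}$ for one such adjacent transposition.

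For this move, I would compare the factors of \eqref{Eq:psijS} index by index. For indices $m<i$, the quantities $[S_m]$ and $[S_m^-]$ are unchanged. For indices $m>i+1$, $[S_m^-]$ is also unchanged, because exactly one element of $S$ below $m$ has moved and it is still below $m$; $[S_m]$ is unchanged as well. Hence all factors except those at positions $i$ and $i+1$ agree.

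Write $a=[S_i^-]$ and set $y=\sigma_0^{a}\sigma_\infty^{i-a}x$, which lies in $U_\rho$. For $S$, the two middle factors (in composition order, with position $i+1$ on the left) are:
\[
\bigl(\tau-\sigma_0^{a+1}\sigma_\infty^{i-a}x\bigr)\bigl(\tau-\delta\,\sigma_0^{a}\sigma_\infty^{i-a}x\bigr)=(\tau-\sigma_0 y)(\tau-\delta y).
\]
For $S'$, the two middle factors are:
\[
\bigl(\tau-\delta\,\sigma_0^{a}\sigma_\infty^{i+1-a}x\bigr)\bigl(\tau-\sigma_0^{a}\sigma_\infty^{i-a}x\bigr)=(\tau-\delta\sigma_\infty y)(\tau-y).
\]
Here I use that $\sigma_0$ and $\sigma_\infty$ commute, since $\Gal(H^+/K)$ is abelian by Proposition \ref{Pro:Gag}, and that $\delta$ commutes with the Galois action (the lemma preceding Lemma \ref{Lem:sig0desiginf}). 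Lemma \ref{Lem:sig0desiginf} applied to $y$ shows that the two products are equal. This proves the elementary move, and therefore the lemma.

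The main obstacle is bookkeeping rather than mathematics. One must get the exponents of $\sigma_0$ and $\sigma_\infty$ right at positions $i$ and $i+1$ under both choices of $S$, and confirm that they combine into the shape $y$, $\sigma_0 y$, $\sigma_\infty y$ that Lemma \ref{Lem:sig0desiginf} requires. It also has to be checked that no factor outside positions $i,i+1$ changes, in particular that the counts $[S_m^-]$ for $m>i+1$ are preserved. Finally, the lemma must be applied to a conjugate $y\in U_\rho$ rather than to $x$ itself, which is legitimate because it was stated for arbitrary $x\in U_\rho$.
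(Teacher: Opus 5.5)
Your proposal is correct and is essentially the paper's proof. The paper also reduces to a single adjacent swap $s_n\mapsto s_n+1$, checks that only the factors at positions $s_n,s_n+1$ change, and applies Lemma \ref{Lem:sig0desiginf} to $\sigma_0^{n-1}\sigma_\infty^{s_n+1-n}x$, which is your $y$ with $a=n-1$, $i=s_n$; your appeal to the Galois-equivariance of $\delta$ is superfluous, since $\delta$ is always applied last and only the commutativity $\sigma_0\sigma_\infty=\sigma_\infty\sigma_0$ is needed.
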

\begin{proof}
 Let $ l $ be the cardinality of $S$. Assume that 
\[
S=\{s_1,s_2, \cdots, s_l\}
\]
and
\[
S'=\{s_1',s_2', \cdots, s_l'\},
\]
where $s_m<s_{m+1}$, $s_m'<s_{m+1}'$, $s_{l+1}=s_{l+1}'=j$ and $1\leqslant m\leqslant l$.
Without loss of generality, to prove the equality \eqref{Eq:psiSS'} it suffices to consider the case where $S$ and $S'$ differ by exactly one element; any two such sequences can be connected by a series of such elementary swaps. Hence we may assume that there exists an index $n$ ($1 \leqslant n \leqslant l$) such that $s_m = s_m'$ for all $m \neq n$, and $s_n' = s_n + 1$, with the additional condition $s_n + 1 < s_{n+1}$ to preserve the strict ordering (where $s_{l+1}=j$).

Under this assumption, the following relations clearly hold:
\[
[S_{s_{n}+1}]=[S'_{s_{n}'-1}]=0, \qquad [S_{s_{n}}]=[S'_{s_{n}'}]=1,
\]
\[
[S_{s_{n}}^-]=[S^{'-}_{s_{n}'-1}]=[S^{'-}_{s_{n}'}]=n-1,  \qquad[S^-_{s_{n}+1}]=n.
\]
From Equation \eqref{Eq:psijS},  we obtain 
\begin{align*}
  \psi_{j,S}^x =\Lambda' \circ \Lambda_S  \circ \Lambda''
\end{align*}
and 
\begin{align*}
  \psi_{j,S'}^x = \Lambda' \circ\Lambda_{S'}  \circ \Lambda'', 
\end{align*}
where $\Lambda'$, 
$\Lambda''$,
$\Lambda_S$ and $\Lambda_{S'}$ are twisted polynomials defined as follows:
\[
\Lambda'=( \tau - \delta^{[S_{j-1}]} \sigma_{0}^{[S_{j-1}^-]} \sigma_{\infty}^{j-1-[S_{j-1}^-]} x)\cdots( \tau - \delta^{[S_{s_n+2}]} \sigma_{0}^{[S_{s_n+2}^-]} \sigma_{\infty}^{s_n+2-[S_{s_n+2}^-]} x)
\]
\[
\Lambda''=( \tau - \delta^{[S_{s_n-1}]} \sigma_{0}^{[S_{s_n-1}^-]} \sigma_{\infty}^{s_n-1-[S_{s_n-1}^-]} x)\cdots ( \tau - \delta^{[S_1]} \sigma_{0}^{[S_1^-]} \sigma_{\infty}^{1-[S_1^-]} x) ( \tau - \delta^{[S_0]} x)
\]
\[
\Lambda_S = ( \tau - \sigma_{0}^{n} \sigma_{\infty}^{s_{n}+1-n} x)( \tau - \delta \sigma_{0}^{n-1} \sigma_{\infty}^{s_n-n+1} x)
\]
and
\[
\Lambda_{S'} = ( \tau - \delta\sigma_{0}^{n-1} \sigma_{\infty}^{s_{n}-n+2} x)( \tau -  \sigma_{0}^{n-1} \sigma_{\infty}^{s_n-n+1} x).
\]
%
  Using Lemma \ref{Lem:sig0desiginf} by replacing variable $x$ with the expression  $\sigma_0^{n-1}\sigma_{\infty}^{s_n+1-n}x$, we get
  \(
\Lambda_{S}= \Lambda_{S'}.
  \)
Therefore, the equality \eqref{Eq:psiSS'} holds.
    
\end{proof}


\begin{notation}\label{No:Gammai}
 For $x\in U_{\rho}$ and $i,j\in \mathbb{Z}_{\geqslant 0}$,  we define 
\begin{equation}\label{Eq:noGa}
  \FS{x}_{j}^{i}=  
\begin{cases}
 \sigma_{\infty}^i(x),  \qquad &  \text{ for } 0\leqslant i<j;\\
   \delta\sigma_0^{i-j}\sigma_{\infty}^j(x), \qquad & \text{ for } i\geqslant j.
    \end{cases}
\end{equation}

\end{notation}

\begin{lem}\label{Lem:exFSx}
 The symbol $\{ x \}_{j}^i$ admits the explicit formula
    \begin{equation}\label{Eq:FSukmu}
  \FS{x}_{j}^{i}=  
\begin{cases}
  \Theta^{ \sigma^k (1-q)\gamma_i(\sigma)} x^{q^i},  \qquad & { \rm for } ~0\leqslant i<j;\\
     \theta\eta_x^{ q^{i-1}(1-q)(i-j) - q^i} \Theta^{\sigma^{k} (1-q)\gamma_i(\sigma)}x^{q^i}, \qquad & {\rm for } ~i\geqslant j,
    \end{cases}
\end{equation}
where $k$ is the integer such that $ \eta^{q^k} = \eta_x $. 
\end{lem}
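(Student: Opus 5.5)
The plan is to handle the two cases of \eqref{Eq:noGa} separately. In both cases we reduce to the explicit action of $\sigma_\infty$ and $\sigma_0$ from Proposition \ref{Pro:sigmaU}, and we track the root $\eta_y$ attached to each intermediate conjugate $y$. Throughout we fix $k$ with $\eta_x=\eta^{q^k}$. Then $\Theta^{\sigma^k}=\frac{1}{\theta-\eta_x}$, so Proposition \ref{Pro:sigmaU}(1) reads
\[
\sigma_\infty(x)=\Theta^{\sigma^k(1-q)}x^q .
\]
By \eqref{Eq:uksig} we also have $\eta_{\sigma_\infty(y)}=\eta_{\sigma_0(y)}=\eta_y^q$ for every $y\in U_\rho$, and every automorphism in $\Gal(H^+/K)$ used here restricts to $\sigma$ on $H$.

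\emph{Case $0\leqslant i<j$.} Here we prove $\sigma_\infty^i(x)=\Theta^{\sigma^k(1-q)\gamma_i(\sigma)}x^{q^i}$ by induction on $i$, exactly as in the proof of \eqref{Eq:sigU}, with $\Theta$ replaced by $\Theta^{\sigma^k}$. For the inductive step, apply $\sigma_\infty$ to both sides of the identity for $i$. The constant $\Theta^{\sigma^k(1-q)\gamma_i(\sigma)}\in H$ is sent to its $\sigma$-twist, and $x^{q^i}$ is sent to $(\Theta^{\sigma^k(1-q)}x^q)^{q^i}$. The resulting exponent is $(1-q)\sigma^k(\sigma\gamma_i(\sigma)+q^i)$, and we conclude using the identity $\gamma_{i+1}(\sigma)=\sigma\gamma_i(\sigma)+q^i$.

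\emph{Case $i\geqslant j$.} Put $z=\sigma_\infty^j(x)$ and $n=i-j$. Proposition \ref{Pro:sigmaU}(3) gives $\sigma_0(y)=\eta_y^{1-q}\sigma_\infty(y)$. We then show by induction on $n$ that
\[
\sigma_0^n(z)=\eta_z^{\,n q^{n-1}(1-q)}\,\sigma_\infty^n(z).
\]
For the step, apply $\sigma_0$ to the identity for $n$. The factor $\eta_z^{nq^{n-1}(1-q)}$ is sent to its $q$-th power. Applying the one-step formula to $y=\sigma_\infty^n(z)$, whose attached root is $\eta_y=\eta_z^{q^n}$, gives $\sigma_0(\sigma_\infty^n z)=\eta_z^{q^n(1-q)}\sigma_\infty^{n+1}(z)$. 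Adding the exponents gives $(n+1)q^n(1-q)$. Equivalently, one may write $\sigma_0^n=\sigma_\infty^nM_{\eta_*^n}$ and use \eqref{Eq:ukThe} directly.

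With $\eta_z=\eta_x^{q^j}$, the exponent becomes $\eta_x^{q^{i-1}(1-q)(i-j)}$, and the first case gives $\sigma_\infty^n(z)=\sigma_\infty^i(x)$. Finally we apply $\delta$, which multiplies by $\theta/\eta_w$ for $w=\sigma_0^{n}\sigma_\infty^j(x)$, where $\eta_w=\eta_x^{q^i}$. This produces the factor $\theta\eta_x^{-q^i}$, and combining with the first case yields \eqref{Eq:FSukmu}.

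The only delicate point is the bookkeeping of the $\eta$-factors. When $\sigma_0$ is applied repeatedly, each earlier factor $\eta^{1-q}$ is itself Frobenius-twisted, so the factors stack into $\eta^{nq^{n-1}(1-q)}$ rather than a naive product. The induction on $n$ is set up precisely to control this. Everything else follows from Proposition \ref{Pro:sigmaU} and \eqref{Eq:uksig}.
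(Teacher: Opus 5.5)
Your proof is correct. It is organized differently from the paper's.

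The paper proves only the case $i\geqslant j$, and it routes everything through the base point $u$:
\begin{itemize}
\item It writes $x=u_{k,\mu}$ and quotes Lemma \ref{Lem:siginf0} to obtain $\sigma_0^n\sigma_\infty^j(u_{k,\mu})=u_{j+n+k,\mu\eta_*^n}=(\mu\eta_*^n)^{q^{j+n+k}}\Theta^{(1-q)\gamma_{j+n+k}(\sigma)}u^{q^{j+n+k}}$.
\item It then rewrites this in terms of $x$ using the splitting $\gamma_{j+n+k}(\sigma)=\gamma_{j+n}(\sigma)\sigma^k+q^{j+n}\gamma_k(\sigma)$ together with \eqref{Eq:ukThe}.
\item The $\eta$-factor then appears in one stroke as $\eta_*^{nq^{k+n+j}}=\eta_x^{q^{i-1}(1-q)(i-j)}$.
\end{itemize}

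You instead work directly with $x$. You iterate the one-step formulas of Proposition \ref{Pro:sigmaU} together with the rule $\eta_{\sigma_\infty(y)}=\eta_{\sigma_0(y)}=\eta_y^q$. This needs only the simpler recursion $\gamma_{i+1}(\sigma)=\sigma\gamma_i(\sigma)+q^i$, and it produces the exponent $nq^{n-1}(1-q)$ by induction rather than through the root of unity $\eta_*$.

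What each route buys:
\begin{itemize}
\item The paper's route is shorter, given the closed forms already established in Lemma \ref{Lem:siginf0}.
\item Your route is more self-contained. It never fixes the parametrization $x=u_{k,\mu}$ or the auxiliary $\eta_*$; it uses \eqref{Eq:uksig} only to track $\eta_y$.
\item Your route also supplies the case $0\leqslant i<j$, which the paper leaves to the reader. That case is just \eqref{Eq:sigU} with $\Theta$ replaced by $\Theta^{\sigma^k}$.
\end{itemize}

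One small presentational point. Your induction in the first case never uses $i<j$, so it proves $\sigma_\infty^i(x)=\Theta^{\sigma^k(1-q)\gamma_i(\sigma)}x^{q^i}$ for all $i\geqslant 0$. State this explicitly, because the second case invokes it at an index $i\geqslant j$.
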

\begin{proof}
    We verify only the second case.
    Set $x = u_{k,\mu}$ and $ \eta_* = \eta^{\frac{1-q}{q}}$. 
    It is evident that 
    \[
        \gamma_{j+n+k}(\sigma) =  \gamma_{j+n} (\sigma)   \sigma^k + q^{j+n}\gamma_k(\sigma) .
    \]
    Then 
    \begin{align*}
        \sigma_{0}^{n}\sigma_{\infty}^{j}(u_{k,\mu}) & =\eta_*^{nq^{k+n+j}}\Theta^{\sigma^k(1-q)\gamma_{j+n}(\sigma)} (\mu^{q^k}\Theta^{ (1-q)\gamma_k(\sigma)} u^{q^k } )^{q^{j+n}}  \quad \text{by Lemma \ref{Lem:siginf0}} \\ 
        & = \eta_*^{nq^{k+n+j}}\Theta^{\sigma^k(1-q)\gamma_{j+n}(\sigma)} u_{k,\mu}^{q^{j+n}}  \quad\text{by Equation \eqref{Eq:ukThe}}.
    \end{align*}
  Substituting $n = i-j$ yields
    \[
          \delta \sigma_{0}^{i-j}\sigma_{\infty}^{j}(u_{k,\mu}) = \theta \eta_*^{(i-j)q^{k+i }}\eta^{ - q^{k+i}}  \Theta^{\sigma^k(1-q)\gamma_{ i }(\sigma)} u_{k,\mu}^{q^{ i }} .
    \]
   This confirms the second equality in Equation \eqref{Eq:FSukmu}.
\end{proof}
Using these notations,  
Equation \eqref{Eq:psijS} can be rewritten as  
\begin{equation}\label{Eq:psiSj}
  \psi_{j,S}^x  =(\tau-\FS{x}^{j-1}_{j-n})\circ\cdots\circ(\tau-\FS{x}^{1}_{j-n} )\circ (\tau-\FS{x}^0_{j-n} ),
\end{equation}
by fixing the subset $S = \{ j-n ,\, j-n+1,\,  \cdots ,\,j-1 \}$. 
If we choose $x = u_{k,\mu}$, then 
\begin{equation}\label{Eq:psiSj_u}
      \psi_{j,S}^x = \psi_{j,S}^{\prime} \psi_{j,S}^{\prime\prime},
\end{equation}
where
\[
        \psi_{j,S}^{\prime} = (\tau - \delta u_{k+j - 1 ,\mu \eta_*^{n}} )\circ (\tau - \delta u_{k+j - 2 ,\mu \eta_*^{n-1}} )\circ\cdots \circ(\tau - \delta u_{k+j - n ,\mu \eta_*} )
\]
and 
\[
        \psi_{j,S}^{\prime \prime} = (\tau -   u_{k+j - n -1,\mu} )\circ(\tau -   u_{k+j - n -2, \mu})\circ\cdots \circ (\tau -   u_{k+1,\mu} )\circ(\tau -   u_{k,\mu} ).
\]
The following lemma describes the annihilators for ideals  $I_0^nI_\infty^{j-n}$.
 
 \begin{lem}\label{Lem:psiIS}  
     For $x\in U_\rho$, $ 0 \leqslant n \leqslant j $ and any subset $S \subseteq \{ 0 , \cdots , j-1 \}$ of cardinality $ n $, we have 
    \begin{equation}\label{Eq:psiIS}
        \psi_{I_0^{ n} I_\infty^{ j-n}  }^x = \psi_{j, S}^x  .
    \end{equation}
   \end{lem}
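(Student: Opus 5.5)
By Lemma \ref{Lem:psiSS'}, the right-hand side $\psi_{j,S}^x$ depends only on $n=\#S$. It therefore suffices to prove \eqref{Eq:psiIS} for one convenient choice of $S$, and I would take $S=\{0,1,\dots,n-1\}$, the first $n$ indices. With this choice the factors read, from right to left,
\[
(\tau-\delta x),\ (\tau-\delta\sigma_0 x),\ \dots,\ (\tau-\delta\sigma_0^{n-1}x),
\]
followed by
\[
(\tau-\sigma_0^{n}x),\ (\tau-\sigma_\infty\sigma_0^{n}x),\ \dots,\ (\tau-\sigma_\infty^{j-n-1}\sigma_0^{n}x).
\]
Here I use that $\delta$ and $\sigma_0,\sigma_\infty$ commute, which holds since $\delta$ is Galois invariant and the Galois group is abelian. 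This ordering corresponds to factoring the ideal as $I_\infty^{j-n}\cdot I_0^{n}$, applying the $I_0$-annihilators first.

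The proof then goes by induction on $j$, using two facts about annihilators. The first is the one-step annihilators $\psi^y_{I_\infty}=\tau-y$ and $\psi^y_{I_0}=\tau-\delta(y)$ for every $y\in U_\rho$, which is Notation \ref{No:etax}(3). The second is the multiplicativity
\[
\psi^x_{IJ}=\psi^{\,y}_{J}\circ\psi^x_{I},\qquad y=(H^+/K;I)\,x .
\]
For multiplicativity I would argue as follows. By Lemma \ref{Lem:Isophis}(1), $\psi^x_I$ is an isogeny from $\psi^x$ to $\psi^{y}$, and the ideal action gives $(\psi^x)^I=\psi^y$ with $\psi^x_I$ the corresponding isogeny. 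The kernel of $\psi^x_{IJ}$ is the $IJ$-torsion $\bigcap_{a\in IJ}\ker\psi^x_a$. The map $\psi^x_I$ sends this kernel onto the $J$-torsion of $\psi^y$, with kernel $\ker\psi^x_I$. Counting degrees, $\deg_\tau\psi_{IJ}=\deg(IJ)=\deg I+\deg J$ for rank one. Both sides are then monic separable twisted polynomials with the same kernel, so they are equal.

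Granting these two facts, I iterate. Write $I_0^nI_\infty^{j-n}$ as $I_0$ applied $n$ times and then $I_\infty$ applied $j-n$ times. The $m$-th $I_0$-step starts from $\sigma_0^{m}x$ and contributes $\tau-\delta\sigma_0^m x$. The subsequent $I_\infty$-steps start from $\sigma_\infty^{r}\sigma_0^n x$ and contribute $\tau-\sigma_\infty^{r}\sigma_0^nx$. The Artin symbols used here come from Proposition \ref{Pro:AS}: $(H^+/K;I_0)=\sigma_0$ and $(H^+/K;I_\infty)=\sigma_\infty$. Comparing with \eqref{Eq:psijS} for $S=\{0,\dots,n-1\}$, the factor at position $i<n$ has $[S_i]=1$ and $[S_i^-]=i$, giving $\tau-\delta\sigma_0^i x$. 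The factor at position $i\ge n$ has $[S_i]=0$ and $[S_i^-]=n$, giving $\tau-\sigma_0^n\sigma_\infty^{i-n}x$. These match exactly.

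The main obstacle is justifying multiplicativity rigorously, specifically the kernel argument. We need that $\psi^x_I$ maps the $IJ$-torsion of $\psi^x$ onto the $J$-torsion of $\psi^y$. The cleanest route uses $\psi^y_a\psi^x_I=\psi^x_I\psi^x_a$ together with the module-theoretic fact that the $IJ$-torsion is free of rank one over $\A/IJ$ in generic characteristic. Alternatively, one can argue with motives via Proposition \ref{Pro:isomoti}: the submodule generated by $\psi^x_{IJ}\mathbf{s}(x)$ equals $IJ\cdot M_{\psi^x}$, which is $J\cdot(I M_{\psi^x})$, and this yields the composition directly. If that gets delicate, a fallback is to verify the degree count and the right-divisibility of every $\psi_a$, $a\in IJ$, by the composite. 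The composite's degree is $j$, which equals $\deg(I_0^nI_\infty^{j-n})$ since both $I_0$ and $I_\infty$ have degree $1$, so maximality forces equality.
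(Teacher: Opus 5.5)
Your proposal is correct and follows essentially the paper's route. You use Lemma \ref{Lem:psiSS'} to reduce to a single convenient $S$, and then build $\psi^x_{I_0^nI_\infty^{j-n}}$ by chaining the one-step annihilators $\tau-y$ and $\tau-\delta(y)$ along the Artin symbols $\sigma_0$ and $\sigma_\infty$. The paper differs in two small ways. It takes $S=\{j-n,\dots,j-1\}$, so the $I_\infty$-steps act first. It also does the chaining through the motive generators, via $(\tau-u_{k,\mu})\mathbf{s}(u_{k,\mu})=\mathbf{s}(u_{k+1,\mu})$, $(\tau-\delta u_{k,\mu})\mathbf{s}(u_{k,\mu})=\mathbf{s}(u_{k+1,\mu\eta_*})$ and Lemma \ref{Lem:Isophis}(2). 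That step tacitly relies on the multiplicativity $\psi^x_{IJ}=\psi^{y}_J\circ\psi^x_I$, which you state and justify explicitly; either your kernel/degree argument or your motive-submodule argument is enough.
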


   \begin{proof}
For the case $ j = 1 $, the subset $S$ must be $ \emptyset$ or $\{0\}$. In this case, the equality \eqref{Eq:psiIS} in fact 
 reduces to the definitions in Notation \ref{No:etax}, i.e., 
 \begin{equation}\label{Eq:lepsiIinfx}
      \psi^{x}_{I_{\infty}} = \tau - x,  
 \end{equation}
    and
\begin{align}\label{Eq:lepsiI0x}
      \psi^{x}_{I_{0}} = \tau - \delta x.  
\end{align}
To prove the case $ j\geqslant 2$, we take $x=u_{k,\mu}$.    According to Lemma \ref{Lem:Isophis}, it suffices to show that 
       \begin{equation}\label{Eq:psijS=II}
            \psi_{j,S}^{u_{k,\mu}} \mathbf{s}(u_{k,\mu}) = \mathbf{s} ( \sigma_0^{n } \sigma_\infty^{j - n}u_{k,\mu})=\mathbf{s} ( u_{k+j,\mu\cdot \eta_*^n}).
       \end{equation}   
From Equation \eqref{Eq:lepsiIinfx} and Lemma \ref{Lem:Isophis}, we have 
     \begin{equation}\label{Eq:tau-xs}
  (\tau - u_{k,\mu} ) \mathbf{s} (u_{k, \mu})   =  \mathbf{s} ( \sigma_{\infty}(u_{k, \mu}) )  =  \mathbf{s} (u_{k+1, \mu})
     \end{equation}
    and
    \begin{equation}\label{Eq:tau-dexs}
  (\tau - \delta u_{k,\mu}) \mathbf{s} (u_{k, \mu})  =\mathbf{s} (\sigma_0 (u_{k, \mu}) )=\mathbf{s} (u_{k+1, \mu \cdot \eta_* }) .
    \end{equation} 
It follows from Lemma \ref{Lem:psiSS'} that $\psi^x_{j,S}$ does not depend on the choice of  $S$.  We may assume that $S= \{ j-n ,\, j-n+1,\,  \cdots ,\,j-1 \}$ of cardinality $ n $.   
Substituting the equalities \eqref{Eq:tau-xs} and \eqref{Eq:tau-dexs} into \eqref{Eq:psiSj_u} gives
\[
\psi_{j,S}^{u_{k,\mu}} \mathbf{s}(u_{k,\mu})=     \mathbf{s}( u_{k+j,\mu\eta_*^n}),
\]
which completes the proof. 
   \end{proof}
 
\subsection{Hayes Modules over {$H^+$}}
In this section, we derive the explicit expression of Hayes $\A$-modules over $H^+$ with the help of the explicit expression $ \psi_{ I_0^{ n} I_\infty^{ j-n}  }^x$. Firstly, we note that the formulas in Corollary \ref{cor:gensu}  can be generalized in the following way.

   \begin{lem}\label{Lem:psis}
   With the notations in Notation \ref{No:su}, we obtain 
    \begin{equation}\label{Eq:lesiginfs}
     \frac{1}{t - \eta^{(k)} } * \mathbf{s} (u_{k, \mu}) 
 =    -\frac{\Theta^{\sigma^k}}{u_{k,\mu}} \mathbf{s}(
 u_{k+1,\mu})
    \end{equation}
 and  
 \begin{equation}\label{Eq:lesig0s}
  \frac{t}{\eta^{(k)} }  \frac{1}{ t - \eta^{(k)}  }    * \mathbf{s} (u_{k, \mu })
  =  -\frac{ \Theta^{\sigma^k}}{u_{k,\mu }} \mathbf{s} ( u_{k+1,\mu\eta_* }).
    \end{equation}
\end{lem}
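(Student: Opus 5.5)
The plan is to reduce to Corollary \ref{cor:gensu} by applying the Galois automorphism $\sigma_\infty^k M_\mu$. I will then match the resulting annihilators with the generators $\mathbf{s}(u_{k+1,\mu})$ and $\mathbf{s}(u_{k+1,\mu\eta_*})$ via Lemma \ref{Lem:Isophis}.

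First, by Notation \ref{No:etax}(3) and Notation \ref{No:ukmusig}, the module $\psi^{u_{k,\mu}}$ is the $\sigma_\infty^k M_\mu$-twist of $\psi^u$. The element $\sigma_\infty^k M_\mu$ acts on $H$ as $\sigma^k$, so it sends $\eta\mapsto\eta^{(k)}$, fixes $\theta$ and $t$, and sends $\Theta\mapsto\Theta^{\sigma^k}$ and $u\mapsto u_{k,\mu}$. Corollary \ref{cor:gensu} is an identity in the Anderson motive whose coefficients lie in $H^+$. Applying this automorphism coefficientwise (on $\A\otimes_{\mathbb{F}_q}H^+\{\tau\}$ and on the motive $H^+\{\tau\}\mathbf{s}(x)$, which is legitimate because the motive structure is defined by $\psi^x_a$ and $\sigma\psi^x_a=\psi^{\sigma x}_a$) gives
\[
\frac{1}{t-\eta^{(k)}}*\mathbf{s}(u_{k,\mu}) = -\frac{\Theta^{\sigma^k}}{u_{k,\mu}}(\tau-u_{k,\mu})\,\mathbf{s}(u_{k,\mu})
\]
and
\[
\frac{t}{\eta^{(k)}(t-\eta^{(k)})}*\mathbf{s}(u_{k,\mu}) = -\frac{\Theta^{\sigma^k}}{u_{k,\mu}}\Bigl(\tau-\frac{\theta}{\eta^{(k)}}u_{k,\mu}\Bigr)\mathbf{s}(u_{k,\mu}).
\]
Since $\eta_{u_{k,\mu}}=\eta^{(k)}$, the last factor is $\tau-\delta(u_{k,\mu})$.

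Second, I identify the annihilators. By Notation \ref{No:etax}(3), $\tau-u_{k,\mu}=\psi^{u_{k,\mu}}_{I_\infty}$ and $\tau-\delta u_{k,\mu}=\psi^{u_{k,\mu}}_{I_0}$. Lemma \ref{Lem:Isophis}(2), with $j=1$ and $l=0$ or $l=1$ (equivalently Equations \eqref{Eq:tau-xs} and \eqref{Eq:tau-dexs}), gives $(\tau-u_{k,\mu})\mathbf{s}(u_{k,\mu})=\mathbf{s}(u_{k+1,\mu})$ and $(\tau-\delta u_{k,\mu})\mathbf{s}(u_{k,\mu})=\mathbf{s}(u_{k+1,\mu\eta_*})$. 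Substituting these into the two displays yields \eqref{Eq:lesiginfs} and \eqref{Eq:lesig0s}.

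The main obstacle is justifying the twisting step: one must check that $\sigma_\infty^k M_\mu$ transports the $*$-action compatibly, including the identification of generators $\mathbf{s}(u)\mapsto\mathbf{s}(u_{k,\mu})$. If this is unclear, I would redo the proof of Corollary \ref{cor:gensu} directly. That means using the isogeny $\ell_k$ from $\Psi^{(k)}$ to $\psi^{u_{k,\mu}}$, whose existence follows because both have type $\eta^{(k-1)}$, together with the $\sigma^k$-twist of Proposition \ref{prop:ann}, namely $\frac{1}{t-\eta^{(k)}}*\mathbf{s}_0=(\tau+\Theta^{\sigma^k})\mathbf{s}_0$ for $\Psi^{(k)}$, and the relation $\ell_k^{1-q}=-\Theta^{\sigma^k}/u_{k,\mu}$, which is the twist of Proposition \ref{Pro:Hu}(1).
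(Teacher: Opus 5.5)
Your proposal is correct and follows essentially the same route as the paper: you twist Corollary \ref{cor:gensu} by $\sigma_\infty^k M_\mu$, which sends $\eta\mapsto\eta^{(k)}$, $\Theta\mapsto\Theta^{\sigma^k}$ and $u\mapsto u_{k,\mu}$, and then apply Equations \eqref{Eq:tau-xs} and \eqref{Eq:tau-dexs}. The paper resolves your ``main obstacle'' by first rewriting the motive identity as the twisted-polynomial identity $\sum_i b_i\psi^u_{T_i}=-\frac{\Theta}{u}(\tau-u)$ and applying the automorphism to that, which is the same justification you sketch via $\sigma\psi^x_a=\psi^{\sigma x}_a$; for the second identity the paper uses $\sigma_0^kM_{\mu/\eta_*^k}$, which coincides with your $\sigma_\infty^k M_\mu$.
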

\begin{proof}
Let $b_i$ be the same notation as in Notation \ref{Not:bi}. Then
     \begin{align*}
\left(\sum_{i=0}^{N-1} b_i \psi_{T_i}^{u}\right) \mathbf{s}(u)  &=  \left( \sum_{i=0}^{N-1} b_i T_i\right) *\mathbf{s}(u) \\
&=\frac{1}{ t - \eta  } * \mathbf{s}(u) \quad  \\
&= -\frac{\Theta}{u}(\tau - u) \mathbf{s}(u) \quad \text{ by Corollary \ref{cor:gensu}},
\end{align*}
i.e.,
 \begin{equation}\label{Eq:psiThe}
\sum_{i=0}^{N-1} b_i \psi_{T_i}^{u}   = -\frac{\Theta}{u}(\tau - u) .
\end{equation}
    Applying $ \sigma_{{\infty}}^k M_{\mu} $ to both sides of Equation \eqref{Eq:psiThe} we have 
    \begin{equation}\label{Eq:siginfMpsiThe}  
  \sum_{i=0}^{N-1}   b_i^{\sigma^k} \sigma_{{\infty}}^k M_{\mu}(\psi_{T_i}^{u}) =  -\sigma_{{\infty}}^k M_{\mu}\left(\frac{\Theta}{u}(\tau - u)\right)= -\frac{\Theta^{\sigma^k}}{u_{k,\mu}} (\tau - u_{k,\mu}) .       
    \end{equation}
   Note that by definition $ \sigma \psi^{u} =\psi^{\sigma u } $ (see (3) in Notation \ref{No:etax}). The first term in \eqref{Eq:siginfMpsiThe} equals 
 \begin{equation*}
       \sum_{i=0}^{N-1}   b_i^{\sigma^k}  \psi_{T_i}^{\sigma_{{\infty}}^k M_{\mu}(u)}   =\sum_{i=0}^{N-1} b_i^{\sigma^k}    \psi_{T_i}^{u_{k,\mu}} .
\end{equation*}
This implies that \eqref{Eq:siginfMpsiThe} is equivalent to 
\begin{equation}\label{Eq:siginfpsi}
   \sum_{i=0}^{N-1} b_i^{\sigma^k}    \psi_{T_i}^{u_{k,\mu}}= -\frac{\Theta^{\sigma^k}}{u_{k,\mu}} (\tau - u_{k,\mu}) .
\end{equation}
Moreover, we have
\begin{align*}
 \nonumber  \frac{1}{ t - \eta^{(k)} } * \mathbf{s}(u_{k,\mu})&= \sigma^k\left(\frac{1}{ t - \eta }\right) * \mathbf{s}(u_{k,\mu})\\
 &=\left(\sum_{i=0}^{N-1} b_i^{\sigma^k} T_i\right) *\mathbf{s}(u_{k,\mu}) \quad \text{by Notation \ref{Not:bi}}\\
 &= \sum_{i=0}^{N-1} b_i^{\sigma^k}   \psi_{T_i}^{u_{k,\mu}}\mathbf{s}(u_{k,\mu})\\
&=
-\frac{\Theta^{\sigma^k}}{u_{k,\mu}} (\tau - u_{k,\mu})\mathbf{s}(u_{k,\mu}) \quad \text{by Equation \eqref{Eq:siginfpsi} }\\
&=-\frac{\Theta^{\sigma^k}}{u_{k,\mu}} \mathbf{s}(u_{k+1,\mu})  \quad \text{by Equation \eqref{Eq:tau-xs}.}
\end{align*}
   Therefore,  Equation \eqref{Eq:lesiginfs} holds.

In the following, 
we proceed analogously to establish Equation \eqref{Eq:lesig0s}. 
Analogous to Equation \eqref{Eq:psiThe}, we have 
 \begin{equation}\label{Eq:psiThe2}
\frac{t}{\eta}\sum_{i=0}^{N-1} b_i \psi_{T_i}^{u}   = -\frac{\Theta}{u}(\tau -\frac{\theta}{\eta} u) .
\end{equation}
  Applying $ \sigma_{0}^kM_{\frac{\mu}{\eta_*^k}} $ to both sides of Equation \eqref{Eq:psiThe2} we have 
\begin{equation}\label{Eq:sig0MpsiThe}  
 \frac{t}{\eta^{(k)}} \sum_{i=0}^{N-1} b_i^{\sigma^k} \sigma_{0}^k M_{\frac{\mu}{\eta_*^k}}(\psi_{T_i}^{u}) =  -\sigma_{0}^k M_{\frac{\mu}{\eta_*^k}}\left(\frac{\Theta}{u}(\tau - \frac{\theta}{\eta}u)\right).       
    \end{equation}
  Expand both sides of  \eqref{Eq:sig0MpsiThe} to obtain 
 \begin{align}\label{Eq:sig0psi}
  \nonumber  \frac{t}{\eta^{(k)}}\sum_{i=0}^{N-1} b_i^{\sigma^k}   \psi_{T_i}^{u_{k,\mu}} & =  \frac{t}{\eta^{(k)}}
  \sum_{i=0}^{N-1} b_i^{\sigma^k}  \psi_{T_i}^{\sigma_{0}^k M_{\frac{\mu}{\eta_*^k}}(u)} \quad \text{by Notation \ref{No:ukmusig}}\\\nonumber
  &=  \frac{t}{\eta^{(k)}}\sum_{i=0}^{N-1} b_i^{\sigma^k} \sigma_{0}^k M_{\frac{\mu}{\eta_*^k}}(\psi_{T_i}^{u})  \quad \text{by Notations \ref{No:etax}} \\\nonumber
  &=-\sigma_{0}^k M_{\frac{\mu}{\eta_*^k}} \left(\frac{\Theta}{u}(\tau - \frac{\theta}{\eta}u) \right) \quad \text{by Equation \eqref{Eq:sig0MpsiThe}} \\
 &= -\frac{\Theta^{\sigma^k}}{u_{k,\mu}} (\tau - \delta (u_{k,\mu}) ) \quad \text{by Notations \ref{No:delta_x}}.
\end{align}
Moreover, we have
\begin{align*}
 \nonumber  \frac{t}{\eta^{(k)} (t - \eta^{(k)}) } * \mathbf{s}(u_{k,\mu})&= \sigma^k\left(\frac{t}{\eta(t - \eta) }\right) * \mathbf{s}(u_{k,\mu})\\
 &=\sigma^k\left(\frac{t}{\eta}\sum_{i=0}^{N-1} b_i T_i\right) *\mathbf{s}(u_{k,\mu})\\
 &= \frac{t}{\eta^{(k)}}\sum_{i=0}^{N-1} b_i^{\sigma^k}  \psi_{T_i}^{u_{k,\mu}}\cdot\mathbf{s}(u_{k,\mu})\\
&=
-\frac{\Theta^{\sigma^k}}{u_{k,\mu}} (\tau -\delta( u_{k,\mu}) )\mathbf{s}(u_{k,\mu}) \quad \text{by Equation \eqref{Eq:sig0psi}  }\\
&=-\frac{\Theta^{\sigma^k}}{u_{k,\mu}} \mathbf{s}(u_{k+1,\mu\eta_*})  \quad \text{by Equation \eqref{Eq:tau-dexs} }.
\end{align*}
      Therefore,   Equation \eqref{Eq:lesig0s} holds. 
\end{proof}

Combining the equalities in the previous lemma, we obtain a general formula for the Anderson $\A$-motive $M_{\psi^x}$. 

\begin{lem}\label{lem:psi0infsu}
     Let $\mathbf{s}(u_{k,\mu})$ be as in Notation \ref{No:su}. Then
  \begin{equation}\label{Eq:psi0infsu}
    \frac{t^n}{(t-\eta^{(j-1)})(t-\eta^{(j-2)})\cdots(t-\eta^{(0)})} * \mathbf{s}(u_{0,\mu})
    = \eta^{n q^{j-1}} \cdot \frac{(-\Theta)^{\gamma_{j}(\sigma)}}{(\mu u)^{W_j}} \mathbf{s}(u_{j,\mu\eta_*^n}).
  \end{equation}
\end{lem}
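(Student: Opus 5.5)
The plan is to factor the rational function on the left as an ordered product of the elementary factors handled in Lemma \ref{Lem:psis}. Write
\[
\frac{t^n}{(t-\eta^{(j-1)})\cdots(t-\eta^{(0)})}=\prod_{i=0}^{j-1} g_i(t),
\]
where $g_i(t)=\frac{t}{t-\eta^{(i)}}$ for the $n$ indices in a chosen set of size $n$, and $g_i(t)=\frac{1}{t-\eta^{(i)}}$ otherwise. We assume $0\leqslant n\leqslant j$. Since the $*$-action of $\A\otimes L$ is a module action, we can apply the factors one at a time: first $g_0$, then $g_1$, and so on. The choice is $g_i = t/(t-\eta^{(i)})$ for $i\in\{j-n,\dots,j-1\}$, matching the choice $S=\{j-n,\dots,j-1\}$ used in Lemma \ref{Lem:psiIS}. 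Any other subset gives the same final answer, which provides a consistency check but is not needed.

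The induction runs on $i$. Suppose that after applying $g_0,\dots,g_{i-1}$ we have a scalar times $\mathbf{s}(u_{i,\nu})$, where $\nu=\mu\eta_*^{m}$ and $m$ is the number of $t$-factors used so far. Lemma \ref{Lem:psis} gives the next step. For a $1/(t-\eta^{(i)})$ factor we pick up $-\Theta^{\sigma^i}/u_{i,\nu}$ and move to $\mathbf{s}(u_{i+1,\nu})$. For a $t/(t-\eta^{(i)})$ factor we first write it as $\eta^{(i)}\cdot\frac{t}{\eta^{(i)}(t-\eta^{(i)})}$. This picks up $-\eta^{q^i}\Theta^{\sigma^i}/u_{i,\nu}$ and moves to $\mathbf{s}(u_{i+1,\nu\eta_*})$.

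The scalars commute past the action, since $L$-scalars act $L$-linearly through $1\otimes L$. The scalar from step $i$ is a constant multiplying $\mathbf{s}$, so applying the next factor simply multiplies on the left. After $j$ steps we land on $\mathbf{s}(u_{j,\mu\eta_*^n})$ with accumulated scalar
\[
\prod_{i=0}^{j-1}\Bigl(-\frac{\Theta^{\sigma^i}}{u_{i,\nu_i}}\Bigr)\cdot\prod_{i=j-n}^{j-1}\eta^{q^i}.
\]

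The remaining work, and the main obstacle, is simplifying this product using \eqref{Eq:ukThe}, namely $u_{i,\nu}=\nu^{q^i}\Theta^{(1-q)\gamma_i(\sigma)}u^{q^i}$. Collecting the powers of $u$ gives $u^{W_j}$. The $\mu$-powers give $\mu^{W_j}$. For the $\Theta$-exponent we need
\[
\sum_{i<j}\bigl(\sigma^i+(q-1)\gamma_i(\sigma)\bigr)=\gamma_j(\sigma),
\]
and this follows from the recursion $\gamma_{i+1}=\sigma\gamma_i+q^i$; the cleanest check is to compare coefficients of each $\sigma^m$. The signs combine to $(-1)^j$, which, together with $(-1)^{\gamma_j(1)}=(-1)^{W_j}$ and the formal sign convention, should match $(-\Theta)^{\gamma_j(\sigma)}/(\mu u)^{W_j}$.

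The delicate part is the $\eta$-bookkeeping. The factors $\eta_*^{-(i-j+n)q^i}$ coming from $\nu_i$ at the $t$-steps must combine with $\prod_{i=j-n}^{j-1}\eta^{q^i}$ to give exactly $\eta^{nq^{j-1}}$. Using $\eta_*=\eta^{(1-q)/q}$, step $i=j-n+r$ contributes
\[
\eta^{q^{i}}\,\eta^{-r(1-q)q^{i-1}}=\eta^{q^{i}+r(q^{i}-q^{i-1})}.
\]
We verify that the sum over $r=0,\dots,n-1$ telescopes to $nq^{j-1}$ modulo $q^N-1$; the exponent is $\sum_r\bigl(q^{i}+r q^{i}-rq^{i-1}\bigr)$ with $i=j-n+r$. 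If this indexing is off by one, the alternative is to put the $t$-factors at the start instead, $S=\{0,\dots,n-1\}$, and redo the count, since Lemma \ref{Lem:psiSS'} guarantees the result is independent of the choice.
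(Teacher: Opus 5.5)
Your proposal is correct and follows the paper's proof: you apply Lemma \ref{Lem:psis} one factor at a time and simplify the accumulated scalar with \eqref{Eq:ukThe}. The only difference is that the paper attaches the $t$'s to $\eta^{(0)},\dots,\eta^{(n-1)}$ and applies them first, which costs it the closed form of $\sum_{i} iq^{i}$, whereas your choice of placing them last makes the $\eta$-exponent telescope directly.

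The checks you hedged on all go through:
\begin{itemize}
\item With $a=j-n$ one gets $\sum_{r=0}^{n-1}\bigl(q^{a+r}+r(q^{a+r}-q^{a+r-1})\bigr)=nq^{j-1}$ exactly, so your indexing is right and no fallback is needed.
\item The $\Theta$-exponent identity is the telescoping of $\gamma_{i+1}=q\gamma_i+\sigma^i$, not of the recursion $\gamma_{i+1}=\sigma\gamma_i+q^i$ that you cited.
\item The sign needs the fact $(-1)^j=(-1)^{W_j}$ in $\mathbb{F}_q$. This holds because $W_j\equiv j \pmod 2$ when $q$ is odd, and $-1=1$ when $q$ is even.
\end{itemize}
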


\begin{proof}
Applying Equations \eqref{Eq:lesiginfs} and \eqref{Eq:lesig0s} recursively, we have   
      \begin{align}
        &\frac{t^n}{(t- \eta^{(j-1)})(t- \eta^{(j-2)})\cdots (t- \eta^{(0)})}  * \mathbf{s}({u_{0,\mu}})  \nonumber \\  
       ={}& \frac{1}{(t-\eta^{(j-1)})\cdots(t-\eta^{(n)})}*\frac{t^n}{(t- \eta^{(n-1)})(t- \eta^{(n-2)})\cdots (t- \eta^{(0)})}  * \mathbf{s}({u_{0,\mu}})   \nonumber \\
       ={}&\frac{1}{(t-\eta^{(j-1)})\cdots(t-\eta^{(n)})}*\Big(\eta^{W_n}\frac{(-\Theta)^{1+\sigma+\cdots+\sigma^{n-1}}}{u_{0,\mu}u_{1,\mu\eta_*}\cdots u_{n-1,\mu\eta_*^{n-1}}}\mathbf{s}({u_{n,\mu\eta_*^n}}) \Big) \nonumber \\
       ={}&\eta^{W_n}\frac{(-\Theta)^{1+\sigma+\cdots+\sigma^{n-1}}}{u_{0,\mu}u_{1,\mu\eta_*}\cdots u_{n-1,\mu\eta_*^{n-1}}}\Big(\frac{1}{(t-\eta^{(j-1)})\cdots(t-\eta^{(n)})}*\mathbf{s}({u_{n,\mu\eta_*^n}}) \Big)   \nonumber \\
       ={}&\eta^{W_n}\frac{(-\Theta)^{1+\sigma+\cdots+\sigma^{n-1}}}{u_{0,\mu}u_{1,\mu\eta_*}\cdots u_{n-1,\mu\eta_*^{n-1}} }\Big(\frac{1}{(t-\eta^{(j-1)})\cdots(t-\eta^{(n+1)})}*\frac{(-\Theta)^{\sigma^n}}{u_{n,\mu\eta_*^n}}\mathbf{s}({u_{n+1,\mu\eta_*^n}})\Big)
         \nonumber \\
       ={}&\eta^{W_n}\frac{(-\Theta)^{1+\sigma+\cdots+\sigma^{n}}}{u_{0,\mu}u_{1,\mu\eta_*}\cdots u_{n-1,\mu\eta_*^{n-1}} u_{n,\mu\eta_*^{n}}}\Big(\frac{1}{(t-\eta^{(j-1)})\cdots(t-\eta^{(n+1)})}*\mathbf{s}({u_{n+1,\mu\eta_*^n}})\Big)\nonumber \\
       ={}&\eta^{W_n}\frac{(-\Theta)^{1+\sigma+\cdots+\sigma^{j-1}}}{U}\mathbf{s}({u_{j,\mu\eta_*^n}}), \label{Eq:aligntn}
     \end{align}
     where $U$
 denotes the product:
     \[
     U = {u_{0,\mu}u_{1,\mu\eta_*}\cdots u_{n-1,\mu\eta_*^{n-1}} u_{n,\mu\eta_*^{n}}\cdots u_{j-1,\mu\eta_*^n}}.
     \]
    By Equation \eqref{Eq:ukThe},
     \[
     u_{j,\mu\eta_*^n}=(\mu\eta_*^n)^{q^j}\sigma_\infty^j(u)=(\mu\eta_*^n)^{q^j}\Theta^{(1-q)\gamma_j(\sigma)}u^{q^j}.
     \]
       It follows that
     \[
     u_{0,\mu}u_{1,\mu\eta_*}\cdots u_{n-1,\mu\eta_*^{n-1}}=  (\mu)^{q^0} (\mu \eta_*)^{q^1} (\mu \eta_*^2 )^{q^2} \cdots (\mu \eta_{*}^{n-1})^{q^{n-1}} \Theta^{(1-q) \sum_{i=0}^{n-1} \gamma_{i}(\sigma) } u^{\sum_{i=0}^{n-1} q^i }
     \]
     and
     \[
      u_{n,\mu\eta_*^{n}}\cdots u_{j-1,\mu\eta_*^n} = (\mu \eta_*^n)^{q^n} \cdots (\mu \eta_*^n)^{q^{j-1}}\Theta^{(1-q) \sum_{i=n}^{j-1} \gamma_{i}(\sigma) } u^{\sum_{i=n}^{j-1} q^i }.
     \]
 Using the equality 
    \[
    \sum_{i=1}^{n-1} i q^i = \frac{ q - n q^n + (n-1) q^{n+1}}{(1-q)^2},
    \]
    we obtain 
    \begin{align*}
      U &= u_{0,\mu}u_{1,\mu\eta_*}\cdots u_{n-1,\mu\eta_*^{n-1}}  \cdot u_{n,\mu\eta_*^{n}}\cdots u_{j-1,\mu\eta_*^n} \\
     & = (\mu u)^{W_j} \eta_*^{\sum_{i=1}^{n-1} i q^i  + n q^n W_{j-n} } \Theta^{(1-q) \sum_{i=0}^{j-1} \gamma_i (\sigma)} \\
     & = (\mu u)^{W_j} \eta_*^{\frac{ q - n q^n + (n-1) q^{n+1}}{(1-q)^2} +  \frac{n q^{j}- n q^n}{q-1} } \Theta^{(1-q) \sum_{i=0}^{j-1} \gamma_i (\sigma)}.
     \end{align*}
     Substituting $U$ into Equation \eqref{Eq:aligntn} yields
     \begin{align*} 
      &\frac{t^n}{(t- \eta^{(j-1)})(t- \eta^{(j-2)})\cdots (t- \eta^{(0)})}  * \mathbf{s}({u_{0,\mu}})\\  
      = &\eta^{W_n }\cdot \eta_*^{ - \frac{ q - n q^n + (n-1) q^{n+1}}{(1-q)^2} -  \frac{n q^{j}- n q^n}{q-1} } \cdot \frac{(-\Theta)^{1+\sigma+\cdots+\sigma^{j-1}}}{ ( \mu u)^{W_j}  \Theta^{(1-q) \sum_{i=0}^{j-1} \gamma_i (\sigma)} }  \mathbf{s}({u_{j,\mu\eta_*^n}}) \\
      = &\eta^{W_n }\cdot \eta ^{ - \frac{ 1 - n q^{n-1} + (n-1) q^{n}}{(1-q)} +  n q^{j-1}- n q^{n-1} } \cdot \frac{ (-1)^{j} (\Theta)^{ \gamma_{j}(\sigma)  }}{ (  \mu u)^{W_j}  }  \mathbf{s}({u_{j,\mu\eta_*^n}})\\  
      = &\eta^{ n q^{j-1}   } \cdot \frac{ (-\Theta)^{ \gamma_{j}(\sigma)  }}{ ( \mu u)^{W_j}  }  \mathbf{s}({u_{j,\mu\eta_*^n}}) .
      \end{align*}
 Here we make use of the equality
    \begin{equation}\label{Eq:sum1-qgam}
      (1-q) \sum_{i=0}^{j-1} \gamma_i (\sigma)=-\sum_{i=0}^{j-1}(q^{j-i-1}-1)\sigma^i=-\sum_{i=0}^{j-1}q^{i}\sigma^{j-1-i}+\sum_{i=0}^{j-1}\sigma^i.   
    \end{equation}
  This completes the proof.
\end{proof}

  \begin{cor}\label{Cor:rhotspsiSs}
 For $j\geqslant n$, the following equality holds:
      \begin{align}\label{Eq:cosuj}
 \frac{t^n}{(t- \eta^{(j-1)})(t- \eta^{(j-2)})\cdots (t- \eta^{(0)})}  * \mathbf{s}(u) 
   =\eta^{ nq^{j-1}   } \cdot \frac{ (-\Theta)^{ \gamma_{j}(\sigma)  }}{  u^{W_j}  }   \psi_{j,S}^u \, \mathbf{s}(u) ,
   \end{align}
    for any subset $ S \subseteq \{ 0, \cdots, j - 1 \} $ of cardinality $ n $. 
  \end{cor}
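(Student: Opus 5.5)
The plan is to specialize Lemma \ref{lem:psi0infsu} to $\mu=1$ and then replace the generator on the right-hand side by an annihilator applied to $\mathbf{s}(u)$.

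\textbf{Step 1: specialize Lemma \ref{lem:psi0infsu}.} Setting $\mu=1$, so that $u_{0,1}=u$, the lemma gives
\[
\frac{t^n}{(t-\eta^{(j-1)})\cdots(t-\eta^{(0)})} * \mathbf{s}(u) = \eta^{nq^{j-1}}\,\frac{(-\Theta)^{\gamma_j(\sigma)}}{u^{W_j}}\,\mathbf{s}(u_{j,\eta_*^n}).
\]
This is exactly the prefactor appearing in \eqref{Eq:cosuj}. It therefore remains to show that
\[
\mathbf{s}(u_{j,\eta_*^n}) = \psi^u_{j,S}\,\mathbf{s}(u)
\]
for every $S\subseteq\{0,\dots,j-1\}$ with $\#S=n$.

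\textbf{Step 2: identify $\mathbf{s}(u_{j,\eta_*^n})$ as an annihilator applied to $\mathbf{s}(u)$.} Apply Lemma \ref{Lem:Isophis}(2) with $k=0$, $\mu=1$, and $l=n\le j$. This gives
\[
\psi^{u}_{I_0^nI_\infty^{j-n}}\,\mathbf{s}(u) = \mathbf{s}(\sigma_0^n\sigma_\infty^{j-n}u) = \mathbf{s}(u_{j,\eta_*^n}).
\]

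\textbf{Step 3: replace the annihilator by $\psi^u_{j,S}$.} By Lemma \ref{Lem:psiIS}, $\psi^u_{I_0^nI_\infty^{j-n}}=\psi^u_{j,S}$ for any $S$ of cardinality $n$, and Lemma \ref{Lem:psiSS'} confirms that the choice of $S$ is irrelevant. Combining Steps 1–3 yields \eqref{Eq:cosuj}.

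\textbf{Expected obstacle.} The only real check is bookkeeping: the index $\mu\eta_*^n$ produced by Lemma \ref{lem:psi0infsu} must agree with $\sigma_0^n\sigma_\infty^{j-n}u=u_{j,\eta_*^n}$ as computed in Lemma \ref{Lem:siginf0}. This holds because $\sigma_\infty$ leaves $\mu$ unchanged while each application of $\sigma_0$ multiplies it by $\eta_*$.

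A second point to watch is the edge case $j=n$, where $I_\infty^0=\A$. Lemma \ref{Lem:psiIS} already allows $n=j$, so this case is covered.
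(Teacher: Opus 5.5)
Your proposal is correct and follows the paper's own proof exactly: specialize Lemma~\ref{lem:psi0infsu} to $\mu=1$, rewrite $\mathbf{s}(u_{j,\eta_*^n})$ as $\psi^u_{I_0^nI_\infty^{j-n}}\,\mathbf{s}(u)$ via Lemma~\ref{Lem:Isophis}, and then replace the annihilator by $\psi^u_{j,S}$ using Lemma~\ref{Lem:psiIS}. Your index bookkeeping also agrees with Lemma~\ref{Lem:siginf0}: taking $k=0$, $\mu=1$, $l=n$ gives $\sigma_0^n\sigma_\infty^{j-n}u=u_{j,\eta_*^n}$.
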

  \begin{proof}
Substituting $ \mu=1 $ into Lemma \ref{lem:psi0infsu} yields 
      \begin{align*}
   &  \frac{t^n}{(t- \eta^{(j-1)})(t- \eta^{(j-2)})\cdots (t- \eta^{(0)})}  * \mathbf{s}(u) =\eta^{ nq^{j-1}   } \cdot \frac{ (-\Theta)^{ \gamma_{j}(\sigma)  }}{  u^{W_j}  }   \mathbf{s}({u_{j,\eta_*^n}}) \\
    ={}&\eta^{ nq^{j-1}   } \cdot \frac{ (-\Theta)^{ \gamma_{j}(\sigma)  }}{  u^{W_j}  }   \psi_{I_0^n I_\infty^{j-n}}^u \mathbf{s}(u)\qquad \text{by Lemma \ref{Lem:Isophis}  }\\
   ={}&\eta^{ nq^{j-1}   } \cdot \frac{ (-\Theta)^{ \gamma_{j}(\sigma)  }}{  u^{W_j}  }   \psi_{j,S}^u\,\mathbf{s}(u) \qquad \text{by Lemma \ref{Lem:psiIS}},
   \end{align*}
    for any subset $ S \subseteq \{ 0, \cdots, j - 1 \} $ of cardinality $ n $.  
  \end{proof}
  
 We now give an explicit description of $\psi^u$. 

  \begin{thm}\label{Thm:Hmopsi}
The Hayes $\A$-module $ \psi^u $ over $H^+ $ with  $ \ker\psi_{I_\infty}^u = \tau - u$
is given by  
\begin{align}\label{Eq:thpsiTNS}
     \psi_{T_n}^u=  \eta^{n q^{-1} } \psi_{N,S}^{u}
\end{align}
for an arbitrary subset $S$ contained in $\{0,1, \cdots, N-1\}$ of cardinality $n$. 
More precisely, we have 
 \begin{align}\label{Eq:thpsiTn}
    \psi_{T_n}^u =\eta^{n q^{-1} } (\tau-\FS{u}^{N-1}_{N-n} )\circ\cdots\circ(\tau-\FS{u}^{1}_{N-n} )\circ (\tau-\FS{u}^0_{N-n} ),
\end{align}
    where $ \FS{u}^{i}_{l} $ is given by Lemma \ref{Lem:exFSx}. 
\end{thm}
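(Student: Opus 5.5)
The plan is to specialize Corollary \ref{Cor:rhotspsiSs} to $j = N$ and read off $\psi^u_{T_n}$ from the motive action. Since $\rho(t) = (t-\eta^{(N-1)})\cdots(t-\eta^{(0)})$ over $\mathbb{F}_q(\eta)$, the left-hand side of Equation \eqref{Eq:cosuj} with $j=N$ is exactly $T_n * \mathbf{s}(u)$. By Notation \ref{No:su}, $T_n * \mathbf{s}(u) = \psi^u_{T_n}\mathbf{s}(u)$. Because $M_{\psi^u}$ is free of rank one over $H^+\{\tau\}$ with generator $\mathbf{s}(u)$, comparing coefficients gives
\[
\psi^u_{T_n} = \eta^{n q^{N-1}} \frac{(-\Theta)^{\gamma_N(\sigma)}}{u^{W_N}}\, \psi^u_{N,S}
\]
for every $S\subseteq\{0,\dots,N-1\}$ of cardinality $n$. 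Here $0\le n\le N-1$, so the hypothesis $j\geqslant n$ holds.

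Next I simplify the scalar. By Equation \eqref{Eq:PuTh}, $u^{W_N} = (-\Theta)^{\gamma_N(\sigma)}$, so the fraction equals $1$. Since $\eta\in\mathbb{F}_{q^N}$, we have $\eta^{q^{N-1}} = \eta^{q^{-1}}$, so the scalar is $\eta^{nq^{-1}}$. This proves Equation \eqref{Eq:thpsiTNS}.

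For the explicit form \eqref{Eq:thpsiTn}, I choose $S=\{N-n,\dots,N-1\}$. Equation \eqref{Eq:psiSj} then rewrites $\psi^u_{N,S}$ as the ordered product $(\tau-\FS{u}^{N-1}_{N-n})\circ\cdots\circ(\tau-\FS{u}^{0}_{N-n})$. Lemma \ref{Lem:psiSS'} makes the choice of $S$ irrelevant, and Lemma \ref{Lem:exFSx} supplies the explicit values of the factors.

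The main point to check is the identification of $T_n$ with the rational function in Corollary \ref{Cor:rhotspsiSs}, that is, that $\rho(t)$ factors as $\prod_{i=0}^{N-1}(t-\eta^{(i)})$ in $\A\otimes\mathbb{F}_{q^N}$. A second check is that the action $*$ of $\A\otimes L$ is compatible with extending scalars to $\mathbb{F}_q(\eta)$. Both are routine, so there is no substantial obstacle. As a sanity check, $\psi^u_{T_0}$ should come out monic, matching sign normalization.
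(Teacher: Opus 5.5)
Your proposal is correct and follows the paper's proof essentially verbatim: specialize Corollary~\ref{Cor:rhotspsiSs} to $j=N$, identify the left side with $T_n * \mathbf{s}(u)=\psi^u_{T_n}\mathbf{s}(u)$, cancel the scalar $(-\Theta)^{\gamma_N(\sigma)}/u^{W_N}$ using Equation~\eqref{Eq:PuTh}, and then take $S=\{N-n,\dots,N-1\}$ in Equation~\eqref{Eq:psiSj} to get the explicit product. Your remark that $\eta^{q^{N-1}}=\eta^{q^{-1}}$ because $\eta\in\mathbb{F}_{q^N}$ just spells out a step the paper leaves implicit.
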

\begin{proof}
Recall $T_n = \frac{t^n}{\rho(t)}$ and $ u = u_{0,1}$. By the definition of Anderson motive, we have 
   \begin{align*}
  \psi_{T_n}^u \mathbf{s}(u) ={}& \frac{t^n}{\rho(t)}  * \mathbf{s}(u) \qquad \text{by Definition \ref{Def:t-m}}  \\
    ={}&\eta^{n q^{-1} } \psi_{N,S}^{u} \, \mathbf{s}(u )\qquad \text{by Corollary \ref{Cor:rhotspsiSs} and Equation \eqref{Eq:PuTh}},
    \end{align*}
    for any subset $ S \subseteq \{ 0, \cdots, N - 1 \} $ of cardinality $ n $. This implies the equality \eqref{Eq:thpsiTNS}.
    
  Now we fix $ S = \{ N-n, \cdots, N-1 \} $. Then Equation \eqref{Eq:thpsiTn} follows immediately by setting $j=N$ and $x=u$ in Equation \eqref{Eq:psiSj}. 
\end{proof}
Note that Theorem \ref{Thm:Hmopsi}, together with Equation \eqref{Eq:psiSj_u}, yields part (2) of Theorem \ref{Thm:expressions}.

\subsection{Standard Model of Drinfeld Modules}
The purpose of this section is to derive the standard model $\Psi$ of Drinfeld $\A$-modules over the Hilbert class field $H$.
As in the case of Hayes $\A$-modules,  we explicitly describe the Anderson $\A$-motive of $\Psi$.
\begin{notation}\label{No:Thetakj}
  Let $\FS{u}^{i}_{l}$ be the bracket as in Notation \ref{No:Gammai}. Let $\ell$ be a $(q-1)$-th root of $-\frac{u}{\Theta}$.  For $i,l\in \mathbb{Z}_{\geqslant 0}$, we define $\FT{\Theta}_{l}^{i}\in H$ by
\begin{equation}\label{Eq:GauThe}
 \ell^{-q^{i+1}}(\tau - \FS{u}^{i}_{l}  ) \ell^{q^i}= \tau+\FT{\Theta}^i_{l}.
\end{equation}
Explicitly, these quantities are given by
\begin{equation}\label{Eq:noGaThe}
  \FT{\Theta}_{l}^{i}=  
\begin{cases}
 \Theta^{ (1-q)\gamma_i(\sigma)} \Theta^{q^i},  \quad& \text{for } 0\leqslant i<l;\\
  \theta\eta^{ q^{i-1}(1-q)(i-l) - q^i} \Theta^{ (1-q)\gamma_i(\sigma)}\Theta^{q^i}, \quad & \text{for } i\geqslant l.
    \end{cases}
\end{equation}
\end{notation}
The following proposition is analogous to Corollary \ref{Cor:rhotspsiSs}.
\begin{prop}\label{Pro:sPsiThe}
 Let $ \mathbf{s}_0 $ be the formal generator of the Anderson motive $ M_{\Psi }$. For $j\geqslant n$,  we have
  \begin{align}\label{Eq:prorhoPsis}
  \nonumber&\frac{t^n}{(t - \eta^{(0)})(t - \eta^{(1)}) \cdots (t - \eta^{(j-1)})} * \mathbf{s}_0 \\
  &=\eta^{ nq^{j-1}}\Theta^{ \gamma_{j}(\sigma)  -W_j  }
  \cdot \left(\tau+\FT{\Theta}^{j-1}_{j-n}\right)\circ\cdots\circ\left(\tau+\FT{\Theta}^{1}_{j-n} \right)\circ \left(\tau+\FT{\Theta}^0_{j-n}\right)\mathbf{s}_{0}.
  \end{align}
 
\end{prop}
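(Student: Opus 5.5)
The plan is to transport Corollary \ref{Cor:rhotspsiSs} from the Hayes module $\psi^u$ back to $\Psi$ along the isogeny $\ell$ of Proposition \ref{Pro:Hu}, where $\ell^{q-1}=-u/\Theta$. Write $\rho_{j,n}(t)=t^n/\prod_{i=0}^{j-1}(t-\eta^{(i)})$.

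By Proposition \ref{Pro:isomoti}, as in the proof of Corollary \ref{cor:gensu}, we have $\mathbf{s}(u)=\ell\,\mathbf{s}_0$. The action of $\A_L$ on the motive commutes with left multiplication by constants composed on the twisted polynomial side. Hence $\rho_{j,n}*\mathbf{s}(u)=\ell\,(\rho_{j,n}*\mathbf{s}_0)$, and in general, if $\rho*\mathbf{s}_0=\lambda\,\mathbf{s}_0$, then $\rho*\mathbf{s}(u)=(\ell\lambda\ell^{-1})\,\mathbf{s}(u)$. This holds for $\rho=\rho_{j,n}$ even though $\rho_{j,n}$ lies in $\A\otimes L$ rather than in $\A$, because we only use $L$-linearity of $*$ in the $\A_L$-variable, which is built into Definition \ref{Def:t-m}.

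Corollary \ref{Cor:rhotspsiSs}, with $S=\{j-n,\dots,j-1\}$ and the factorization \eqref{Eq:psiSj}, gives
\[
\rho_{j,n}*\mathbf{s}_0=\eta^{nq^{j-1}}\frac{(-\Theta)^{\gamma_j(\sigma)}}{u^{W_j}}\,\ell^{-1}(\tau-\FS{u}^{j-1}_{j-n})\cdots(\tau-\FS{u}^{0}_{j-n})\,\ell\,\mathbf{s}_0 .
\]
The scalar prefactor is unaffected by this conjugation, since it sits to the left and $\ell^{-1}c\,\ell=c$ for scalars $c$.

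Next, insert the telescoping identity $\ell^{-1}=\ell^{-q^{j}}\cdot\ell^{q^j-1}$ and the factors $\ell^{q^i}\ell^{-q^i}$ between consecutive linear terms. By the defining relation \eqref{Eq:GauThe}, each $\ell^{-q^{i+1}}(\tau-\FS{u}^{i}_{j-n})\ell^{q^i}$ equals $\tau+\FT{\Theta}^i_{j-n}$. The product then becomes
\[
\ell^{q^j-1}\,(\tau+\FT{\Theta}^{j-1}_{j-n})\cdots(\tau+\FT{\Theta}^{0}_{j-n}),
\]
where the innermost factor uses $\ell^{q^0}=\ell$.

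It remains to simplify the scalar $\ell^{q^j-1}(-\Theta)^{\gamma_j(\sigma)}u^{-W_j}$. Since $\ell^{q^j-1}=(\ell^{q-1})^{W_j}=(-u/\Theta)^{W_j}$, the powers of $u$ cancel. What is left is $(-1)^{W_j}\Theta^{-W_j}(-1)^{\gamma_j(1)}\Theta^{\gamma_j(\sigma)}$. Now $\gamma_j(1)=W_j$, so the signs cancel, and we obtain exactly $\eta^{nq^{j-1}}\Theta^{\gamma_j(\sigma)-W_j}$, as claimed.

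The main obstacle is the sign and exponent bookkeeping in the conjugation by $\ell$. In particular, one must check that the prefactor survives unchanged, and that the explicit formula \eqref{Eq:noGaThe} is consistent with \eqref{Eq:GauThe}. For the latter, I would verify it directly: $\ell^{-q^{i+1}}\FS{u}^i_l\ell^{q^i}=\FS{u}^i_l\,\ell^{-q^i(q-1)}=\FS{u}^i_l\,(-\Theta/u)^{q^i}$, which via Lemma \ref{Lem:exFSx} with $k=0$ gives $-\FT{\Theta}^i_l$.
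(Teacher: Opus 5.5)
Your proposal is correct and follows the paper's proof essentially step for step: you identify $\mathbf{s}(u)=\ell\,\mathbf{s}_0$, pull Corollary~\ref{Cor:rhotspsiSs} (with $S=\{j-n,\dots,j-1\}$) back through $\ell$, telescope the factors $\ell^{q^i}$ using \eqref{Eq:GauThe}, and cancel the scalar via $\ell^{q^j-1}=(-u/\Theta)^{W_j}$. Your additional checks are also correct and make explicit what the paper leaves implicit: the $L$-linearity argument for $\rho_{j,n}\in\A\otimes L$, the sign cancellation via $\gamma_j(1)=W_j$, and the consistency of \eqref{Eq:noGaThe} with \eqref{Eq:GauThe}.
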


\begin{proof}
     As before, we let $\ell$ be an isogeny from $\Psi$ to $\psi^{u}$   with $\ell^{1-q}=-\frac{\Theta}{u}$. Then the formal generators $ \mathbf{s}_0 $ and $ \mathbf{s}(u) $ satisfy 
     \[
     \mathbf{s}(u)=\ell \mathbf{s}_0.
     \]
Therefore,  for any subset $ S \subseteq \{ 0, \cdots, j - 1 \} $ of cardinality $ n $, we have 
\begin{align}
 \mathbf{s}_{n,j}:= &\frac{t^n}{(t - \eta^{(0)})(t - \eta^{(1)}) \cdots (t - \eta^{(j-1)})} *  \mathbf{s}_0  \nonumber \\ 
  &=\frac{\ell^{-1} t^n}{(t - \eta^{(0)})(t - \eta^{(1)}) \cdots (t - \eta^{(j-1)})} *  \ell \mathbf{s}_0  \nonumber\\ 
  &=\frac{\ell^{-1} t^n}{(t- \eta^{(j-1)})(t- \eta^{(j-2)})\cdots (t- \eta^{(0)})}  * \mathbf{s}(u)  \qquad \text{by $\mathbf{s}(u)=\ell \mathbf{s}_0$ } \label{Eq:pro271}  \nonumber \\ 
    &=\eta^{ nq^{j-1}  }  \ell^{-1} \frac{ (-\Theta)^{ \gamma_{j}(\sigma)  }}{  u^{W_j}  } \psi_{j,S}^u \mathbf{s}(u)\qquad\text{by Corollary \ref{Cor:rhotspsiSs}}.    
\end{align}
From Equations \eqref{Eq:psiSj} and \eqref{Eq:GauThe}, it follows that 
  \begin{align}
  \mathbf{s}_{n,j}&=\eta^{ nq^{j-1}  }  \ell^{-1} \frac{ (-\Theta)^{ \gamma_{j}(\sigma)  }}{  u^{W_j}  } (\tau-\FS{u}^{j-1}_{j-n} )\circ\cdots\circ(\tau-\FS{u}^{1}_{j-n} )\circ (\tau-\FS{u}^0_{j-n} )\ell \mathbf{s}_0  \nonumber \\ 
  &=\eta^{ nq^{j-1}  }  \ell^{q^j-1} \frac{ (-\Theta)^{ \gamma_{j}(\sigma)  }}{  u^{W_j}  }\ell^{-q^j}(\tau-\FS{u}^{j-1}_{j-n} )\ell^{q^{j-1}}\cdots \ell^{-q^{2}}(\tau-\FS{u}^{1}_{j-n} )\ell^q\circ \ell^{-q}(\tau-\FS{u}^{0}_{j-n} ) \ell \mathbf{s}_0 \nonumber \\ 
  &= \eta^{ nq^{j-1}   } \frac{ (-\Theta)^{ \gamma_{j}(\sigma)  }}{  u^{W_j}  }\ell^{q^j-1}  \left(\tau+\FT{\Theta}^{j-1}_{j-n}\right)\circ\cdots\circ\left(\tau+\FT{\Theta}^{1}_{j-n} \right)\circ \left(\tau+\FT{\Theta}^0_{j-n}\right)\mathbf{s}_0  .
\end{align}
Applying the equality $(\ell^{1-q})^{-W_j} =\Big(-\frac{\Theta}{u}\Big)^{-W_j}$, we obtain Equation \eqref{Eq:prorhoPsis}.  
\end{proof}

 \begin{thm}\label{Thm:DMPsi}
 With the above notation, the Drinfeld module $\Psi$ is given by
 \begin{equation}\label{Eq:thPsiTn}
    \Psi_{T_n}  =\eta^{n q^{-1} } \Theta^{\gamma_N(\sigma) - W_N}\left(\tau+\FT{\Theta}^{N-1}_{N-n}\right)\circ\cdots\circ\left(\tau+\FT{\Theta}^{1}_{N-n} \right)\circ \left(\tau+\FT{\Theta}^0_{N-n}\right),
\end{equation}
for $0\leqslant n\leqslant N-1$.
\end{thm}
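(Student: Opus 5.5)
The proof mirrors that of Theorem \ref{Thm:Hmopsi}, now with Proposition \ref{Pro:sPsiThe} in place of Corollary \ref{Cor:rhotspsiSs}. By Definition \ref{Def:PL}, $\Psi_{T_n}$ is the unique twisted polynomial with $T_n * \mathbf{s}_0 = \Psi_{T_n}\mathbf{s}_0$. Since $\rho$ is monic irreducible with roots $\eta^{(0)},\dots,\eta^{(N-1)}$, we have
\[
\rho(t) = (t-\eta^{(0)})(t-\eta^{(1)})\cdots(t-\eta^{(N-1)})
\]
in $L[t]$, and therefore
\[
T_n = \frac{t^n}{(t-\eta^{(0)})\cdots(t-\eta^{(N-1)})}.
\]
Applying Proposition \ref{Pro:sPsiThe} with $j=N$ (valid because $0\leqslant n\leqslant N-1<N$) gives
\[
T_n*\mathbf{s}_0=\eta^{nq^{N-1}}\Theta^{\gamma_N(\sigma)-W_N}\left(\tau+\FT{\Theta}^{N-1}_{N-n}\right)\circ\cdots\circ\left(\tau+\FT{\Theta}^{0}_{N-n}\right)\mathbf{s}_0 .
\]
Since $M_\Psi$ is free of rank one over $L\{\tau\}$ with generator $\mathbf{s}_0$ (Equation \eqref{Eq:HLiso}), the twisted polynomial multiplying $\mathbf{s}_0$ is uniquely determined. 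It is therefore equal to $\Psi_{T_n}$.

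Next I reconcile the scalar. The target formula has $\eta^{nq^{-1}}$, while Proposition \ref{Pro:sPsiThe} gives $\eta^{nq^{N-1}}$. Because $\eta\in\mathbb{F}_{q^N}$, we have $\eta^{q^N}=\eta$, so $\eta^{q^{N-1}}=\eta^{q^{-1}}$, where $q^{-1}$ denotes the inverse Frobenius on $\mathbb{F}_{q^N}$. Hence $\eta^{nq^{N-1}}=\eta^{nq^{-1}}$. The same identity was used implicitly in Theorem \ref{Thm:Hmopsi}, where $(-\Theta)^{\gamma_N(\sigma)}/u^{W_N}=1$ by Equation \eqref{Eq:PuTh}.

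The real content sits in Proposition \ref{Pro:sPsiThe}, which we may assume. That proposition already absorbs two facts: the conjugation of the Hayes factorization by $\ell$ with $\ell^{q-1}=-u/\Theta$, and the independence of the subset $S$ (Lemma \ref{Lem:psiSS'}). The only remaining point to check is that the quantities $\FT{\Theta}^i_{N-n}$, defined in \eqref{Eq:GauThe} through $\ell$ and $u$, actually lie in $H$ and are given by the explicit formula \eqref{Eq:noGaThe}. I would confirm this by substituting Lemma \ref{Lem:exFSx} (with $k=0$, so $\eta_u=\eta$) into
\[
\FT{\Theta}^i_l = -\ell^{q^i(1-q)}\FS{u}^i_l .
\]
Here $\ell^{q^i(1-q)}=(-\Theta/u)^{q^i}$, and the powers of $u$ cancel against the factor $u^{q^i}$ in $\FS{u}^i_l$. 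This small check is the only potential obstacle, and it is a routine sign and exponent verification.
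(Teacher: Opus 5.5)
Your proposal is correct and follows the paper's proof exactly: rewrite $T_n=t^n/\prod_{i=0}^{N-1}(t-\eta^{(i)})$, use Definition~\ref{Def:PL} to identify $T_n*\mathbf{s}_0=\Psi_{T_n}\mathbf{s}_0$, and read off the result from Proposition~\ref{Pro:sPsiThe} with $j=N$. You also carry out two checks that the paper leaves implicit, and both are right: $\eta^{nq^{N-1}}=\eta^{nq^{-1}}$ because $\eta^{q^N}=\eta$, and $\FT{\Theta}^i_l=-(-\Theta/u)^{q^i}\FS{u}^i_l$ reproduces \eqref{Eq:noGaThe} via Lemma~\ref{Lem:exFSx}.
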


\begin{proof}
    By Definition \ref{Def:PL}, 
    \begin{align*}
    \frac{t^n}{(t - \eta^{(0)})(t - \eta^{(1)}) \cdots (t - \eta^{(N-1)})} *  \mathbf{s}_0
   = T_n *\mathbf{s}_0 =  \Psi_{T_n} \cdot \mathbf{s}_0.   
\end{align*}
Therefore, the theorem follows immediately from Proposition \ref{Pro:sPsiThe}.
\end{proof}
In analogy with Lemma \ref{Lem:psiIS}, we have the following corollary.

\begin{cor}
    The annihilator of $ I_0^{ n}I_\infty^{ j-n} $ is given by 
     \begin{equation}\label{Eq:lePsiI0Iinf}
    \Psi_{ I_0^{ n}I_\infty^{ j-n} }= \left(\tau+\FT{\Theta}^{j-1}_{j-n}\right)\circ\cdots\circ\left(\tau+\FT{\Theta}^{1}_{j-n} \right)\circ \left(\tau+\FT{\Theta}^0_{j-n}\right).
    \end{equation} 
\end{cor}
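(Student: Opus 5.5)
We transport the known annihilator of the Hayes module $\psi^u$ along the isomorphism $\ell$, just as Proposition \ref{Pro:sPsiThe} was deduced from Corollary \ref{Cor:rhotspsiSs}.

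Let $\ell$ be a $(q-1)$-th root of $-u/\Theta$. Then $\ell$ is an isomorphism from $\Psi$ to $\psi^u$, with $\Psi_a=\ell^{-1}\psi^u_a\ell$ for all $a\in\A$. For any ideal $I$, the left ideal $\{\Psi_a : a\in I\}$ of $H^+\{\tau\}$ equals $\ell^{-1}\{\psi^u_a : a \in I\}\ell$. Its generators are therefore $\ell^{-1}\psi^u_I\ell$ up to a left constant factor.

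The annihilator $\Psi_I$ is by definition monic. Since $\psi^u_I$ is monic of degree $d$, the product $\ell^{-1}\psi^u_I\ell$ has leading coefficient $\ell^{q^d-1}$. This gives
\[
\Psi_I=\ell^{-q^d}\,\psi^u_I\,\ell .
\]
This is the same mechanism as Equation \eqref{Eq:isopPinf}, run in reverse.

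Now take $I=I_0^nI_\infty^{j-n}$, so $d=j$. By Lemma \ref{Lem:psiIS} with $x=u$ and the choice $S=\{j-n,\dots,j-1\}$, together with Equation \eqref{Eq:psiSj}, we have
\[
\psi^u_{I_0^nI_\infty^{j-n}}=(\tau-\FS{u}^{j-1}_{j-n})\circ\cdots\circ(\tau-\FS{u}^{0}_{j-n}).
\]
Insert the telescoping factors $\ell^{q^i}\ell^{-q^i}$ between consecutive factors, exactly as in the proof of Proposition \ref{Pro:sPsiThe}. This gives
\[
\ell^{-q^j}\psi^u_{I}\ell=\prod_{i=j-1}^{0}\bigl(\ell^{-q^{i+1}}(\tau-\FS{u}^{i}_{j-n})\ell^{q^i}\bigr).
\]
By the defining relation \eqref{Eq:GauThe}, each factor equals $\tau+\FT{\Theta}^i_{j-n}$. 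This yields \eqref{Eq:lePsiI0Iinf}.

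The only real care point is the normalization: one must confirm that conjugation multiplies the generator by exactly $\ell^{q^j-1}$, so that the monic generator is obtained without stray constants. One could also check consistency against Proposition \ref{prop:ann} in the case $j=1$, where the formula should give $\tau+\Theta$ and $\tau+\frac{\theta}{\eta}\Theta$.

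An alternative route works directly with motives. Proposition \ref{Pro:sPsiThe} shows that the right-hand side, times a unit, equals $\frac{t^n}{\prod(t-\eta^{(i)})}*\mathbf{s}_0$. That element lies in the image of $I$, which gives a consistency check. The conjugation argument above, however, is cleaner.
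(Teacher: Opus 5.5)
Your proposal is correct and follows the paper's proof. It takes $S=\{j-n,\dots,j-1\}$ in Lemma \ref{Lem:psiIS} and Equation \eqref{Eq:psiSj} to write down $\psi^u_{I_0^nI_\infty^{j-n}}$, uses the conjugation relation $\psi^u_{I_0^nI_\infty^{j-n}}=\ell^{q^j}\Psi_{I_0^nI_\infty^{j-n}}\ell^{-1}$, and telescopes with \eqref{Eq:GauThe}; your explicit check that conjugation rescales the leading coefficient by exactly $\ell^{q^j-1}$ fills in the step the paper only describes as ``similar to Equation \eqref{Eq:proPpsi}''.
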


\begin{proof}
 
  We fix $ S = \{ j-n, \cdots, j-1 \} $.
 The expression of $\psi_{j,S}^u$ in Equation \eqref{Eq:psiSj} together with Lemma \ref{Lem:psiIS} yields 
 \begin{align}\label{Eq:psiI0Iinf}
\psi_{ I_0^{ n}I_\infty^{ j-n} }^u= \psi_{j,S}^u=
(\tau-\FS{u}^{j-1}_{j-n} )\circ\cdots\circ(\tau-\FS{u}^{1}_{j-n} )\circ (\tau-\FS{u}^0_{j-n} ).
\end{align}
 Similar to Equation \eqref{Eq:proPpsi} in Proposition \ref{Pro:Hu}, we have 
 \[
\psi_{ I_0^{ n}I_\infty^{ j-n} }^u=\ell^{q^j} \Psi_{I_0^{ n}I_\infty^{ j-n} }\ell^{-1}.
    \]
   Therefore, Equation \eqref{Eq:lePsiI0Iinf} follows from Equations \eqref{Eq:GauThe} and \eqref{Eq:psiI0Iinf}.
\end{proof}

\subsection{Examples}
In this section, we present explicit expressions for Drinfeld $\A$-modules for the cases $ N =2 $ and $ N =3$. 
\begin{example}\label{Exam:Psi2}
Let $ N =2 $. Applying Theorem \ref{Thm:DMPsi},  the rank-one Drinfeld module $\Psi$ of type $\eta^{(-1)}$ is given by 
\begin{align*}
    \Psi_{T_0} ={}&  \Theta^{\sigma -1} \tau ^{2} + (\Theta^{\sigma}+\Theta ^{q-1+\sigma})
\tau +\Theta^{1 + \sigma}; 
   \\
    \Psi_{T_1} ={}&\eta^{(1)} \Theta^{\sigma -1} \tau ^{2} +(\theta \Theta^{\sigma}+\eta^{(1)} \Theta^{q-1+
\sigma} )\tau +\theta \Theta^{1 + \sigma}. 
\end{align*}
\end{example}

\begin{example}\label{Exam:psi2}
    Let $ N =2 $. 
 By Theorem \ref{Thm:Hmopsi}, the rank-one Hayes $\A$-module parameterized by $u$ is given by
    \begin{align*}\label{Eq:psiExp}
\psi _{T_0}^u &= (\tau - \sigma_{\infty} u )(\tau - u ) = \tau^{2} - ( \Theta^{1+ \sigma}u^{-1}+u^{q} )\tau + \Theta^{1 + \sigma} ;\\
\psi _{T_1}^u &= \eta^{(1)} \left(\tau - \theta \Theta^{ (1-q)\gamma_{1}{(\sigma)}}   \left(\frac{ u }{\eta} \right)^{q }\right) (\tau - u ) = \eta^{(1)}\tau^{2}- (\theta \Theta^{1+\sigma}u^{-1}+\eta^{(1)} u^{q})\tau + \theta \Theta^{1 + \sigma}.
\end{align*}
\end{example}
 Examples \ref{Exam:Psi2} and \ref{Exam:psi2} coincide with our previous results in \cite{HH24}.
\begin{example}
Let $ N =3 $. Applying Theorem \ref{Thm:DMPsi},   the rank-one Drinfeld module $\Psi$ of type $\eta^{(-1)}$ is given by 
\begin{align*}
    \Psi_{T_0} ={}& \Theta^{\sigma^2+q\sigma-q-1} \tau^3 +\big(\Theta^{\sigma^2+q\sigma+q^2-q-1}+\Theta^{-1+\sigma^2+q\sigma}+\Theta^{\sigma^2+\sigma-1}\big)\tau^2\\
    &+\big(\Theta^{\sigma^2+q\sigma+q-1}+\Theta^{\sigma^2+\sigma+q-1}+\Theta^{\sigma^2+\sigma}\big)\tau+ \Theta^{1 + \sigma+\sigma^2};\\
\Psi_{T_1} = & \eta^{q^2}\Theta^{\sigma^2+q\sigma-q-1} \tau^3 +\big(\eta^{q^2}\Theta^{\sigma^2+q\sigma+q^2-q-1}+\eta^{q^2}\Theta^{-1+\sigma^2+q\sigma}+\theta\Theta^{\sigma^2+\sigma-1}\big)\tau^2\\
    &+\big(\eta^{q^2}\Theta^{\sigma^2+q\sigma+q-1}+\theta\Theta^{\sigma^2+\sigma+q-1}+\theta \Theta^{\sigma^2+\sigma}\big)\tau+\theta \Theta^{1 + \sigma+\sigma^2}   
;\\
    \Psi_{T_2} ={}& \eta^{2q^2}\Theta^{\sigma^2+q\sigma-q-1} \tau^3 +\big(\eta^{2q^2}\Theta^{\sigma^2+q\sigma+q^2-q-1}+\eta^{q^2}\theta^{q}\Theta^{-1+\sigma^2+q\sigma}+\eta^{(1)}\theta \Theta^{\sigma^2+\sigma-1}\big)\tau^2\\
&+\big(\eta^{q^2}\theta^{q}\Theta^{\sigma^2+q\sigma+q-1}+\eta^{(1)}\theta \Theta^{\sigma^2+\sigma+q-1}+\theta^2 \Theta^{\sigma^2+\sigma}\big)\tau+\theta^2 \Theta^{1 + \sigma+\sigma^2} .
 \end{align*}

\end{example}

\begin{example}
Let $ N =3 $.  Theorem \ref{Thm:Hmopsi} implies that the rank-one Hayes $\A$-module parameterized by $ u $ is given by 
\begin{align*}
    \psi_{T_0}^u ={}&  \tau^3 +\frac{-u^{q^2+q+1}+\Theta^{\sigma^2+q\sigma+q}+\Theta^{\sigma^2+\sigma+q}}{u^{1+q}}\tau^2\\
&-\frac{\Theta^{\sigma^2+q\sigma+q}+\Theta^{\sigma^2+\sigma+q}+\Theta^{\sigma^2+\sigma+1}}{u}\tau+ \Theta^{1 + \sigma+\sigma^2};\\
          \psi_{T_1}^u = & \eta^{q^2} \tau^3 +\frac{-\eta^{q^2} u^{q^2+q+1}+ \eta^{q^2} \Theta^{\sigma^2+q\sigma+q}+ \theta \Theta^{\sigma^2+\sigma+q}}{u^{1+q}}\tau^2\\
    &-\frac{\eta^{q^2}\Theta^{\sigma^2+q\sigma+q}+\theta \Theta^{\sigma^2+\sigma+q}+\theta \Theta^{\sigma^2+\sigma+1}}{u}\tau+\theta \Theta^{1 + \sigma+\sigma^2};   
\\
    \psi_{T_2}^u ={}& \eta^{2q^2}\tau^3 +\frac{-\eta^{2q^2} u^{q^2+q+1}+ \eta^{q^2} \theta^q \Theta^{\sigma^2+q\sigma+q}+ \eta^{(1)} \theta \Theta^{\sigma^2+\sigma+q}}{u^{1+q}}\tau^2\\
&-\frac{\eta^{q^2} \theta^q \Theta^{\sigma^2+q\sigma+q}+\eta^{(1)} \theta  \Theta^{\sigma^2+\sigma+q}+\theta^2 \Theta^{\sigma^2+\sigma+1}}{u}\tau+\theta^2 \Theta^{1 + \sigma+\sigma^2} .
 \end{align*}
Note that the coefficients lie in the narrow class field $ H^+ = K(\eta,u) $. 
\end{example}

\section{Isogenies and Period Lattices of Drinfeld Modules}\label{Sec:IPL}
\subsection{The Period Lattices of Drinfeld Modules}
This section is devoted to computing the period lattices of $\Psi^{(j)}$.  
Recall that in Theorem \ref{Th:kenelpitil}, we have described the period lattices of $ \Psi^{(2)}$. Once we establish the isogeny relation between $ \Psi^{(j)}$ and $ \Psi^{(2)}$, the corresponding period lattices of $ \Psi^{(j)}$ can be obtained easily.
\begin{prop}\label{Pro:isoPsiij}
 For $ j > i $, we set 
 \begin{equation}\label{Eq:coCj}
     C_{i,j}= \sqrt[q-1]{\Theta^{\sigma^j -\sigma^i }} \cdot\Theta^{\sigma^i (\gamma_{j-i}(\sigma)-  W_{j-i})},
     \end{equation}
     where $\sqrt[q-1]{\Theta^{\sigma^j -\sigma^i }}$ denotes a $ (q-1) $-th root of $ \Theta^{\sigma^j-\sigma^i} $.
 Then the twisted polynomial 
  \begin{equation}\label{Eq:prolambda}
      \lambda:= C_{i,j} \left(\tau + \sigma^i\FT{\Theta}^{j-i-1}_{j-i} \right)\circ \cdots \circ  \left(\tau + \sigma^i\FT{\Theta}^{1}_{j-i}  \right)\circ\left(\tau + \sigma^i\FT{\Theta}^{0}_{j-i} \right)
  \end{equation}
    is an isogeny from $ \Psi^{(i)}$ to $\Psi^{(j)} $.
 \end{prop}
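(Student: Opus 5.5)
Since the statement is $\sigma^i$-equivariant, I first reduce to $i=0$. Applying $\sigma^i$ to the coefficients of any isogeny $\Psi\to\Psi^{(j-i)}$ gives an isogeny $\Psi^{(i)}\to\Psi^{(j)}$. Moreover, $C_{i,j}$ is (up to the choice of $(q-1)$-th root) the $\sigma^i$-twist of $C_{0,j-i}$, and the factors twist to $\sigma^i\FT{\Theta}^{k}_{j-i}$. So it suffices to show that
\[
\lambda_0=C_{0,m}\left(\tau+\FT{\Theta}^{m-1}_{m}\right)\circ\cdots\circ\left(\tau+\FT{\Theta}^{0}_{m}\right),\qquad m=j-i\geqslant 1,
\]
is an isogeny $\Psi\to\Psi^{(m)}$.

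By the Corollary giving $\Psi_{I_0^nI_\infty^{j-n}}$ with $n=0$, the product of the factors $(\tau+\FT{\Theta}^{k}_{m})$ equals the annihilator $\Psi_{I_\infty^m}$. By definition of the ideal action, $\Psi_{I_\infty^m}$ is an isogeny from $\Psi$ to $\Psi^{I_\infty^m}$. The remaining task is to show that $\Psi^{I_\infty^m}=C_{0,m}^{-1}\Psi^{(m)}C_{0,m}$. Then $C_{0,m}\Psi_{I_\infty^m}\Psi_a=\Psi^{(m)}_a C_{0,m}\Psi_{I_\infty^m}$ for all $a$.

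For this I pass through the Hayes modules, where the Galois action is explicit. Write $\Psi=\ell^{-1}\psi^u\ell$ with $\ell^{q-1}=-u/\Theta$ (Proposition \ref{Pro:Hu}). Then
\[
\Psi_{I_\infty^m}=\ell^{-q^m}\psi^u_{I_\infty^m}\ell,
\]
as in the proof of the Corollary. Lemma \ref{Lem:Isophis} shows that $\psi^u_{I_\infty^m}$ is an isogeny $\psi^u\to\psi^{\sigma_\infty^m u}=\sigma_\infty^m\psi^u$. Hence $\Psi^{I_\infty^m}=\ell^{-q^m}(\sigma_\infty^m\psi^u)\ell^{q^m}$. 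Applying $\sigma_\infty^m$ to the relation $\psi^u=\ell\Psi\ell^{-1}$ gives $\sigma_\infty^m\psi^u=\ell'\Psi^{(m)}\ell'^{-1}$, where $\ell'$ is an extension of $\sigma_\infty^m$ applied to $\ell$. Note that $\sigma_\infty^m$ acts on $H$ as $\sigma^m$, so it sends $\Psi$ to $\Psi^{(m)}$. Therefore
\[
\Psi^{I_\infty^m}=c^{-1}\Psi^{(m)}c,\qquad c=\ell'\ell^{-q^m}.
\]
It remains to check that $c$ agrees with $C_{0,m}$ up to $\mathbb{F}_q^*$; scalars in $\mathbb{F}_q^*$ commute with everything and are harmless.

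To compute $c^{q-1}$, use
\[
\ell'^{\,q-1}=-\frac{\sigma_\infty^m(u)}{\Theta^{\sigma^m}},\qquad \sigma_\infty^m(u)=\Theta^{(1-q)\gamma_m(\sigma)}u^{q^m}
\]
(Equation \eqref{Eq:sigU}). Together with $\ell^{q^m(q-1)}=(-u/\Theta)^{q^m}$, this gives
\[
c^{q-1}=\frac{-\Theta^{(1-q)\gamma_m(\sigma)}u^{q^m}\Theta^{-\sigma^m}}{(-u)^{q^m}\Theta^{-q^m}}=\Theta^{q^m-\sigma^m+(1-q)\gamma_m(\sigma)},
\]
since $(-1)^{1-q^m}=1$. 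On the other hand,
\[
C_{0,m}^{q-1}=\Theta^{\sigma^m-1+(q-1)(\gamma_m(\sigma)-W_m)}=\Theta^{\sigma^m-q^m+(q-1)\gamma_m(\sigma)},
\]
using $(q-1)W_m=q^m-1$. So $c^{q-1}=C_{0,m}^{1-q}$, i.e.\ $c$ is $C_{0,m}^{-1}$ up to $\mathbb{F}_q^*$. This gives $\Psi^{I_\infty^m}=C_{0,m}\Psi^{(m)}C_{0,m}^{-1}$, which is the inverse of the conjugation required above. I expect this exponent and direction bookkeeping to be the main obstacle.

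If the direction indeed comes out inverted, I would recheck the convention for $c$: the conjugation formula $\Psi^{I_\infty^m}=c^{-1}\Psi^{(m)}c$ may actually produce $c=\ell^{q^m}\ell'^{-1}$. Failing that, I would fall back on a direct leading-term comparison. A twisted polynomial $\mu\Psi_{I_\infty^m}$ is an isogeny to $\Psi^{(m)}$ exactly when the leading terms match after the $\tau^m$ shift. Comparing the $\tau^N$ coefficients of $\Psi_{T_0}$ and $\Psi^{(m)}_{T_0}$ via Equation \eqref{Eq:TjN} forces $\mu^{q^N-1}=(\Theta^{\gamma_N-W_N})^{\sigma^m-q^m}$, which pins down $\mu$ up to roots of unity and confirms the sign of the exponent.
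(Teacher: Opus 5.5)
Your strategy matches the paper's: transport the isogeny $\psi^u_{I_\infty^m}\colon\psi^u\to\psi^{\sigma_\infty^m u}$ back along $\ell$ and its $\sigma_\infty^m$-conjugate, then compute the $(q-1)$-th power of the resulting scalar. Your value $c^{q-1}=C_{0,m}^{1-q}$ is also correct. \textbf{The gap is a slip in the final assembly step, which is why you end up with the inverse conjugation.} From $\Psi^{I_\infty^m}=\ell^{-q^m}(\sigma_\infty^m\psi^u)\ell^{q^m}$ and $\sigma_\infty^m\psi^u=\ell'\Psi^{(m)}(\ell')^{-1}$ you get
\[
\Psi^{I_\infty^m}=(\ell^{-q^m}\ell')\,\Psi^{(m)}\,((\ell')^{-1}\ell^{q^m})=c\,\Psi^{(m)}c^{-1},\qquad c=\ell'\ell^{-q^m},
\]
not $c^{-1}\Psi^{(m)}c$. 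Hence $c^{-1}\Psi_{I_\infty^m}$ is an isogeny $\Psi\to\Psi^{(m)}$. Your own exponent computation gives $(cC_{0,m})^{q-1}=1$, so $cC_{0,m}\in\mathbb{F}_q^*$, and therefore $C_{0,m}\Psi_{I_\infty^m}=\lambda_0$ is also an isogeny $\Psi\to\Psi^{(m)}$. Your first contingency (the conjugation $\Psi^{I_\infty^m}=c^{-1}\Psi^{(m)}c$ holds with $c=\ell^{q^m}(\ell')^{-1}$) is exactly the right fix, and with it the proof is complete. The paper avoids this bookkeeping by writing $\lambda=\ell_j^{-1}\psi^{u_{i,1}}_{I_\infty^{j-i}}\ell_i$ directly as the composite $\Psi^{(i)}\to\psi^{u_{i,1}}\to\psi^{u_{j,1}}\to\Psi^{(j)}$ and reading off $C_{i,j}=\ell_j^{-1}\ell_i^{q^{j-i}}$. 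That also handles general $i$ without your twisting reduction, which is nonetheless valid.

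Do not rely on your fallback as stated, for two reasons.
\begin{enumerate}
\item The sign is flipped. Comparing top coefficients in $\mu\Psi_{I_\infty^m}\Psi_{T_0}=\Psi^{(m)}_{T_0}\mu\Psi_{I_\infty^m}$ gives $\mu\,\Theta^{q^m(\gamma_N(\sigma)-W_N)}=\Theta^{\sigma^m(\gamma_N(\sigma)-W_N)}\mu^{q^N}$, i.e.\ $\mu^{q^N-1}=\Theta^{(q^m-\sigma^m)(\gamma_N(\sigma)-W_N)}$. This is the opposite sign from what you wrote, and your version would have ``confirmed'' the wrong direction. The correct equation is satisfied by $\mu=C_{0,m}$, using $(q^N-1)\gamma_m(\sigma)=(q^m-\sigma^m)\gamma_N(\sigma)$ modulo $\sigma^N=1$.
\item It is not an ``exactly when'' criterion. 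This single condition determines $\mu$ only up to $\mathbb{F}_{q^N}^*$, whereas being an isogeny onto $\Psi^{(m)}$ pins $\mu$ down up to $\mathbb{F}_q^*$. It also presupposes that $\Psi^{I_\infty^m}$ is already known to be conjugate to $\Psi^{(m)}$, so it can serve only as a consistency check.
\end{enumerate}
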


\begin{proof}
Let $\ell_i$ (resp. $\ell_j$) be a  $(q-1)$-th root of $- \frac{\sigma^{i}_\infty u}{ \sigma_{\infty}^i \Theta}$ (resp. $- \frac{\sigma_{\infty}^j u}{ \sigma_{\infty}^j \Theta}$).
 Similar to Equation \eqref{Eq:proell} in Proposition \ref{Pro:Hu}, it follows that $\ell_i$ and $\ell_j$ represent the isogenies
 \[
 \ell_i: ~~\Psi^{(i)} \to \psi^{ u_{i,1}}
 \]
 and 
  \[
 \ell_j: ~~\Psi^{(j)} \to \psi^{u_{j,1}}
 \]
respectively. 
    It follows from Lemma \ref{Lem:Isophis}  that  $\psi^{u_{i,1}}_{I_\infty^{j-i}}$ is an isogeny from  $\psi^{u_{i,1}}$ to $\psi^{ u_{j,1}}$. Therefore, we have the relations of isogenies:
    \[
    \begin{tikzcd}
        \psi^{u_{i,1}}\ar[r, "\psi^{ u_{i,1}}_{I_\infty^{j-i}}"]  & \psi^{ u_{j,1} } \\
        \Psi^{(i)} \ar[u, "\ell_i"]\ar[r , "\lambda"]  & \Psi^{(j)} \ar[u, "\ell_j"] \, ,\\
    \end{tikzcd}
    \]
for some twisted polynomial $ \lambda $. 
By combining Equation \eqref{Eq:psiSj} and Lemma \ref{Lem:psiIS}, we have   
    \begin{align*}
  \psi^{u_{i,1}}_{I_\infty^{j-i}}=\psi_{j-i,S}^{u_{i,1}}=  (\tau -  \FS{u_{i,1}}_{j-i}^{j-i-1}  )\circ \cdots\circ (\tau - \FS{u_{i,1}}_{j-i}^{1}  ) \circ (\tau - \FS{u_{i,1}}_{j-i}^{0}  ),     
    \end{align*}
    where $S=\emptyset$.
   Therefore, the isogeny $ \lambda $ is given by 
    \begin{align*}
    \lambda =  \ell^{-1}_j\psi^{u_{i,1}}_{I_\infty^{j-i}}\ell_i  =  \ell_j^{-1}(\tau -  \FS{u_{i,1}}_{j-i}^{j-i-1}  )\circ \cdots\circ (\tau - \FS{u_{i,1}}_{j-i}^{1}  ) \circ (\tau - \FS{u_{i,1}}_{j-i}^{0}  ) \ell_i .    
    \end{align*}
Equation \eqref{Eq:FSukmu} in Notation \ref{No:Gammai} yields 
    \begin{align}\label{Eq:laisoPiPj}
   \nonumber \lambda ={}&  \ell_j^{-1} \ell_i^{q^{j-i}}
\ell_i^{-q^{j-i}}(\tau - \Theta^{\sigma^i(1-q)\gamma_{j-i-1}(\sigma) } u_{i,1} ^{q^{j-i-1} }  )\ell_i^{q^{j-i-1}}\circ\\
&\cdots\circ \ell_i^{q^2}(\tau - \Theta^{\sigma^i(1-q)\gamma_{1}(\sigma) } u_{i,1} ^{q }  )\circ\ell_i^q\ell_i^{-q} (\tau - u_{i,1}  )\ell_i.  
    \end{align}
Following the notation in Notation \ref{No:Gammai}, a direct computation shows that
\begin{equation*}
 \ell_i^{-q^{l+1}}(\tau - \Theta^{\sigma^i(1-q)\gamma_{l}(\sigma) } u_{i,1} ^{q^{l}} ) \ell_i^{q^l}= \tau+\sigma^i\FT{\Theta}^{l}_{j-i} 
\end{equation*}
for each $0\leqslant l \leqslant j-i$.     
    Therefore, Equation \eqref{Eq:laisoPiPj} can be rewritten as 
    \[ \lambda = \ell_j^{-1} \ell_i^{q^{j-i}}\left(\tau + \sigma^i\FT{\Theta}^{j-i-1}_{j-i} \right)\circ \cdots \circ  \left(\tau + \sigma^i\FT{\Theta}^{1}_{j-i}  \right)\circ\left(\tau + \sigma^i\FT{\Theta}^{0}_{j-i} \right).
    \]
    Let $ C_{i,j}= \ell_j^{-1}\ell_i^{q^{j-i}} $. Then 
    \begin{align*}
   \nonumber C_{i,j} ^{q-1} &= \frac{\sigma^j \Theta }{ \sigma_\infty^j u} \frac{(\sigma_\infty^i u)^{q^{j-i}}}{(\sigma^i \Theta)^{q^{j-i}}} = \frac{\Theta^{\sigma^j -q^{j-i}\sigma^i }}{\Theta^{(1-q)(\gamma_j(\sigma) - q^{j-i}\gamma_i(\sigma) )}} \\
    &= \Theta^{(q-1)\sigma^i (\gamma_{j-i}(\sigma)-  W_{j-i})} \Theta^{\sigma^j -\sigma^i }. 
    \end{align*}
    This yields Equation \eqref{Eq:coCj}.
 \end{proof}

The following theorem (see \cite{Goss96}*{Corollary 4.9.5} or \cite{P23}*{Theorem 5.2.11}) establishes a connection between the lattices of isogenous Drinfeld modules.

\begin{thm}\label{Thm:isgdm}
    Let $\phi$ and $\psi$ be two Drinfeld modules over $\mathbb {C}_{\infty}$ associated with lattices $\Lambda_{\phi}$ and $\Lambda_{\psi}$ respectively. Suppose that the twisted polynomial $\lambda$ is an isogeny from 
$\phi$ to $\psi$ with nonzero constant term $ \partial(\lambda) $ and $I$ is the ideal in $\A$ annihilated by $\lambda$, i.e., $ \lambda = \ell \phi_I$ for some constant $\ell$. Then we have
\[
	\Lambda_{\psi} =  \Lambda_{\phi} \cdot \partial(\lambda) I^{-1}
\]
and 
\[
\lambda(\exp_{\phi}(z) )=\exp_{\psi} (\partial(\lambda)z).
\]

\end{thm}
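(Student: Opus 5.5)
This is the standard isogeny–lattice correspondence, as in \cite{Goss96}*{Corollary 4.9.5}; the plan is to rederive it from the uniformization $\exp_\phi:\mathbb{C}_\infty/\Lambda_\phi \xrightarrow{\sim}\mathbb{C}_\infty$. First establish the functional identity $\lambda\circ\exp_\phi(z)=\exp_\psi(\partial(\lambda) z)$, then read off the lattice from the kernels.

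\emph{Step 1 (the identity).} Set $F(z)=\lambda(\exp_\phi(z))$. It is an entire $\mathbb{F}_q$-linear power series whose linear coefficient is $\partial(\lambda)$. For $a\in\A$, using $\lambda\phi_a=\psi_a\lambda$ and the functional equation \eqref{Eq:exponential} for $\exp_\phi$, we get
\[
F(\iota(a)z)=\lambda\phi_a\exp_\phi(z)=\psi_a F(z).
\]
Put $G(z)=\exp_\psi(\partial(\lambda)z)$. Then $G$ satisfies the same relation $G(\iota(a)z)=\psi_a G(z)$ and has the same linear coefficient. Choose a nonconstant $a$ with $|\iota(a)|>1$. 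Comparing coefficients of $z^{q^n}$ in the relation, $(\iota(a)^{q^n}-\iota(a))c_n$ is determined by $c_0,\dots,c_{n-1}$. Since $\iota(a)^{q^n}\neq\iota(a)$ for $n\geqslant 1$, the coefficients $c_n$ are uniquely determined recursively. Hence $F=G$. This uniqueness is the key point; it uses generic characteristic and the nonvanishing of $\partial(\lambda)$.

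\emph{Step 2 (kernels).} From $F=G$ we have $\ker(\lambda\circ\exp_\phi)=\partial(\lambda)^{-1}\Lambda_\psi$. Since $\exp_\phi$ is surjective with kernel $\Lambda_\phi$, the left side equals $\exp_\phi^{-1}(\ker\lambda)$. Write $\lambda=\ell\phi_I$. By \eqref{Eq:Annihilator}, $\ker\lambda=\bigcap_{a\in I}\ker\phi_a$. Under the uniformization, $\ker\phi_a$ corresponds to $a^{-1}\Lambda_\phi/\Lambda_\phi$. Therefore
\[
\exp_\phi^{-1}(\ker\lambda)=\bigcap_{a\in I}a^{-1}\Lambda_\phi=\{z:\ Iz\subseteq\Lambda_\phi\}=I^{-1}\Lambda_\phi .
\]

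\emph{Step 3 (the main obstacle).} The last equality, $(\Lambda_\phi:I)=I^{-1}\Lambda_\phi$, is where the real work lies, since $\Lambda_\phi$ need not be free. Here I would use that $\Lambda_\phi$ is a finitely generated projective $A$-module of rank one, hence isomorphic to a fractional ideal $J$ inside $K\cdot\omega\subset\mathbb{C}_\infty$ for some $\omega$. For fractional ideals in a Dedekind domain one has $(J:I)=I^{-1}J$. Combining the steps gives $\Lambda_\psi=\partial(\lambda)I^{-1}\Lambda_\phi$, as claimed. The only further checks are routine: that finiteness of $\ker\lambda$ matches the containment $I^{-1}\Lambda_\phi/\Lambda_\phi$, which holds by counting (both sides have $|A/I|$ elements), and that rank one is used only to place $\Lambda_\phi$ in a single $K$-line.
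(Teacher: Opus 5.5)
Your proof is correct. The paper gives no argument for this statement; it only cites Goss (Corollary 4.9.5) and Papikian (Theorem 5.2.11). Your Steps 1 and 2 are the standard argument behind those references.

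\begin{enumerate}
\item Step 1 is the uniqueness of the $\mathbb{F}_q$-linear entire solution of $F(\iota(a)z)=\psi_a F(z)$ with prescribed linear coefficient. With $\psi_a=\sum_i g_i\tau^i$, this follows from the recursion $(\iota(a)^{q^n}-\iota(a))c_n=\sum_{i\geqslant 1} g_i c_{n-i}^{q^i}$.
\item Step 2 is the kernel comparison $\partial(\lambda)^{-1}\Lambda_\psi=\exp_\phi^{-1}(\phi[I])=\{z:\ Iz\subseteq\Lambda_\phi\}$.
\end{enumerate}

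What you call the main obstacle in Step 3 is actually a one-line fact, valid for every $A$-submodule $\Lambda\subseteq\mathbb{C}_\infty$.
\begin{itemize}
\item The inclusion $I^{-1}\Lambda\subseteq(\Lambda:I)$ follows from $I\cdot I^{-1}=A$.
\item Conversely, write $1=\sum_k x_k y_k$ with $x_k\in I^{-1}$ and $y_k\in I$. Then any $z$ with $Iz\subseteq\Lambda$ satisfies $z=\sum_k x_k(y_k z)\in I^{-1}\Lambda$.
\end{itemize}
So you need neither projectivity nor the counting check. You also do not need the rank-one restriction, which the statement does not impose, so your argument covers the theorem as stated.

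One minor correction: the uniqueness in Step 1 uses only the injectivity of $\iota$, which gives $\iota(a)^{q^n}\neq\iota(a)$ for nonconstant $a$ and $n\geqslant 1$. The hypothesis $\partial(\lambda)\neq 0$ is used afterwards, when you identify $\ker G$ with $\partial(\lambda)^{-1}\Lambda_\psi$.
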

  
The relationship between  $\exp_{(i)}$ and  $\exp_{(j)}$ is derived in the following corollary.

\begin{cor}\label{Cor:expij} 
 Using the notations for $ \exp_{(j)} $ and $C_{i,j} $ as above. 
For $j\geqslant i\geqslant 0$, we have
     \begin{align}\label{Eq:coexpij}
 & \exp_{(j)}\left( \sqrt[q-1]{\Theta^{\sigma^j -\sigma^i }} \cdot \Theta^{\sum_{l=i}^{j-1}\sigma^{l} }  U \right) \\  \nonumber
     ={}& C_{i,j}\left(\tau + \sigma^i\FT{\Theta}^{j-i-1}_{j-i} \right)\circ \cdots \circ\left(\tau + \sigma^i\FT{\Theta}^{0}_{j-i} \right)\exp_{(i)}(U).
     \end{align}
 \end{cor}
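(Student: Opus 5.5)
This is a direct application of Theorem \ref{Thm:isgdm} to the isogeny $\lambda\colon\Psi^{(i)}\to\Psi^{(j)}$ of Proposition \ref{Pro:isoPsiij}. That theorem gives
\[
\lambda(\exp_{(i)}(U)) = \exp_{(j)}(\partial(\lambda)\,U)
\]
for all $U\in\mathbb{C}_\infty$. So the task reduces to computing the constant term $\partial(\lambda)$ and matching it with the argument $\sqrt[q-1]{\Theta^{\sigma^j-\sigma^i}}\,\Theta^{\sum_{l=i}^{j-1}\sigma^l}U$ on the left-hand side of \eqref{Eq:coexpij}. The right-hand side of \eqref{Eq:coexpij} is literally $\lambda$ from \eqref{Eq:prolambda} applied to $\exp_{(i)}(U)$. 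The case $j=i$ is trivial: the product is empty and $C_{i,i}=1$, consistent with both sides.

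We first check the hypotheses of Theorem \ref{Thm:isgdm}. From the proof of Proposition \ref{Pro:isoPsiij}, $\lambda=\ell_j^{-1}\psi^{u_{i,1}}_{I_\infty^{j-i}}\ell_i$. Conjugating the annihilator identity by the constants $\ell_i,\ell_j$ shows that $\lambda$ is a constant multiple of $\Psi^{(i)}_{I_\infty^{j-i}}$. Its constant term is nonzero because each factor $\tau+\sigma^i\FT{\Theta}^l_{j-i}$ has nonzero constant $\sigma^i\FT{\Theta}^l_{j-i}$. The lattice statement is not needed here; only the functional equation is used.

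The main step is computing $\partial(\lambda)$. For a composition of monic linear factors the constant term is the product of the constants, so
\[
\partial(\lambda)=C_{i,j}\prod_{l=0}^{j-i-1}\sigma^i\FT{\Theta}^l_{j-i}.
\]
Since $l<j-i$, we are always in the first case of \eqref{Eq:noGaThe}, which gives $\FT{\Theta}^l_{j-i}=\Theta^{(1-q)\gamma_l(\sigma)+q^l}$. Hence
\[
\prod_{l=0}^{j-i-1}\FT{\Theta}^l_{j-i}=\Theta^{W_{j-i}+(1-q)\sum_{l=0}^{j-i-1}\gamma_l(\sigma)}.
\]
By \eqref{Eq:sum1-qgam} with $j$ replaced by $j-i$, we have $(1-q)\sum_{l=0}^{j-i-1}\gamma_l(\sigma)=-\gamma_{j-i}(\sigma)+\sum_{l=0}^{j-i-1}\sigma^l$. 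The product therefore equals $\Theta^{W_{j-i}-\gamma_{j-i}(\sigma)+\sum_{l=0}^{j-i-1}\sigma^l}$.

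Applying $\sigma^i$ and multiplying by $C_{i,j}$ from \eqref{Eq:coCj}, the factors $\Theta^{\sigma^i(\gamma_{j-i}(\sigma)-W_{j-i})}$ cancel. What remains is
\[
\partial(\lambda)=\sqrt[q-1]{\Theta^{\sigma^j-\sigma^i}}\,\Theta^{\sum_{l=i}^{j-1}\sigma^l}.
\]
Substituting this into the identity from Theorem \ref{Thm:isgdm} gives \eqref{Eq:coexpij}.

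The only delicate point is keeping the exponent bookkeeping in $\mathbb{Z}[\sigma]$ consistent: $\sigma^i$ must act on the $\sigma$-exponents and not on $q$-powers, and the identity \eqref{Eq:sum1-qgam} must be applied with the correct index shift.
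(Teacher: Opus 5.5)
Your proposal is correct and follows the paper's proof. Both apply Theorem~\ref{Thm:isgdm} to the isogeny $\lambda$ of Proposition~\ref{Pro:isoPsiij}, compute $\partial\lambda = C_{i,j}\prod_{l=0}^{j-i-1}\sigma^i\FT{\Theta}^{l}_{j-i}$ via the first case of \eqref{Eq:noGaThe}, and use \eqref{Eq:sum1-qgam} to cancel the factor $\Theta^{\sigma^i(\gamma_{j-i}(\sigma)-W_{j-i})}$ of $C_{i,j}$; your explicit check that $\lambda$ is a constant multiple of $\Psi^{(i)}_{I_\infty^{j-i}}$ with nonzero constant term, left implicit in the paper, is a welcome addition.
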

 
\begin{proof}
 By Theorem \ref{Thm:isgdm}, it suffices to compute the constant term $\partial \lambda$ of the isogeny $\lambda\colon\Psi^{(i)}\to\Psi^{(j)}$.
From Equation \eqref{Eq:prolambda} and the expression of $\FT{\Theta}^i_{l}$ in Notation \ref{No:Gammai}, we have
 \[
        \partial \lambda=C_{i,j}\prod_{k=0}^{j-i-1} \left(\sigma^i\FT{\Theta}^{k}_{j-i}\right)=C_{i,j}\Theta^{\sigma^i \left(W_{j-i}+(1-q) \sum_{l=0}^{j-i-1} \gamma_l (\sigma)\right)}.
    \]
It follows from Equation \eqref{Eq:sum1-qgam} that   \begin{equation*} 
      (1-q)\sigma^i \sum_{l=0}^{j-i-1} \gamma_l (\sigma)=-\sigma^i \gamma_{j-i}(\sigma)+\sum_{l=i}^{j-1}\sigma^{l}.   
    \end{equation*}
 Combining with the expression for $C_{i,j}$ in Equation \eqref{Eq:coCj},
   \begin{equation}\label{Eq:parlam}
      \partial \lambda=\sqrt[q-1]{\Theta^{\sigma^j -\sigma^i }} \cdot\Theta^{\sum_{l=i}^{j-1}\sigma^{l}}.
    \end{equation}
Therefore, Equation \eqref{Eq:coexpij} is derived from Theorem \ref{Thm:isgdm}. 

\end{proof}

\begin{cor}
 With the above notations, the period lattice of $\Psi^{(j+2)}$ is
  \[
    \ker \exp_{(j+2)} = \tilde \pi \frac{\Theta^{1+ \sigma + \cdots +\sigma^{j+1} }}{  { \sqrt[q-1]{\Theta^{\sigma^2- \sigma^{j+2}}  } } \cdot \Theta^{1+ \sigma}} I_{\infty}^{-j}.
  \]
   In particular, by setting $ j = N-2 $, we have 
\begin{equation}\label{Eq:Coken0}
    \ker \exp_{(0)} =   \frac{\tilde \pi}{\sqrt[q-1]{\Theta^{  \sigma^2-1} }\Theta^{1+\sigma}}\cdot   I_{\infty}^{2 }.
  \end{equation}
\end{cor}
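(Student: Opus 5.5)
The plan is to transport the lattice $\tilde\pi A$ of $\Psi^{(2)}$ (Theorem \ref{Th:kenelpitil}) along the isogeny of Proposition \ref{Pro:isoPsiij} with $i=2$ and target index $j+2$, using Theorem \ref{Thm:isgdm}.

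\textbf{Step 1: the isogeny and its ideal.} The isogeny $\lambda\colon \Psi^{(2)}\to\Psi^{(j+2)}$ of Proposition \ref{Pro:isoPsiij} was built as $\ell_{j+2}^{-1}\psi^{u_{2,1}}_{I_\infty^{j}}\ell_2$. Conjugating by the constant $\ell_2$ carries annihilators of $\psi^{u_{2,1}}$ to those of $\Psi^{(2)}$. Hence $\lambda = C\cdot \Psi^{(2)}_{I_\infty^{j}}$ for a nonzero constant $C$. This means the ideal annihilated by $\lambda$ is $I=I_\infty^{j}$, which is exactly the shape required in Theorem \ref{Thm:isgdm}.

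\textbf{Step 2: the constant term.} Equation \eqref{Eq:parlam} in the proof of Corollary \ref{Cor:expij}, with $i=2$ and $j$ replaced by $j+2$, gives
\[
\partial\lambda=\sqrt[q-1]{\Theta^{\sigma^{j+2}-\sigma^2}}\;\Theta^{\sigma^2+\cdots+\sigma^{j+1}}.
\]
Rewriting, $\Theta^{\sigma^2+\cdots+\sigma^{j+1}}=\Theta^{1+\sigma+\cdots+\sigma^{j+1}}/\Theta^{1+\sigma}$ and $\sqrt[q-1]{\Theta^{\sigma^{j+2}-\sigma^2}}=1/\sqrt[q-1]{\Theta^{\sigma^2-\sigma^{j+2}}}$. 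Any discrepancy in the choice of $(q-1)$-th root is an element of $\mathbb{F}_q^*$, which is absorbed since the lattice is an $A$-module.

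\textbf{Step 3: conclusion.} Theorem \ref{Thm:isgdm} now gives
\[
\ker\exp_{(j+2)}=\tilde\pi A\cdot\partial\lambda\cdot I_\infty^{-j},
\]
which is the claimed formula.

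\textbf{Special case $j=N-2$.} Here $\Psi^{(N)}=\Psi^{(0)}$, since $\sigma^N=\mathrm{id}$ on $H$. The product $\Theta^{1+\sigma+\cdots+\sigma^{N-1}}$ is the norm $\Theta^{W}$-type element $1/\rho(\theta)$ up to sign; more precisely $\prod_k(\theta-\eta^{(k)})=\rho(\theta)$, so it equals $T_0(\theta)$. By \eqref{Eq:Ideals}, $T_0A=I_\infty^{N}$. Therefore
\[
T_0\, I_\infty^{-(N-2)}=I_\infty^{2},
\]
and the remaining factor simplifies to $\tilde\pi/\bigl(\sqrt[q-1]{\Theta^{\sigma^2-1}}\,\Theta^{1+\sigma}\bigr)$, which gives \eqref{Eq:Coken0}.

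The main obstacle is Step 1: justifying rigorously that $\lambda$ is a constant multiple of the annihilator $\Psi^{(2)}_{I_\infty^j}$. I would handle this by applying $\sigma^2$ to the corollary giving $\Psi_{I_0^nI_\infty^{j-n}}$ (with $n=0$), and comparing with \eqref{Eq:prolambda}.
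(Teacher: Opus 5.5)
Your proposal is correct and is essentially the paper's argument: you transport $\tilde\pi A$ along the isogeny $\Psi^{(2)}\to\Psi^{(j+2)}$ of Proposition \ref{Pro:isoPsiij} via Theorem \ref{Thm:isgdm}, using the constant term \eqref{Eq:parlam}. Beyond what the paper writes, you also make explicit two points it leaves implicit, and both check out: that $\lambda=C_{2,j+2}\,\Psi^{(2)}_{I_\infty^{j}}$, so the relevant ideal is $I_\infty^{j}$, and that $T_0(\theta)\,I_\infty^{-(N-2)}=I_\infty^{2}$ gives the special case.
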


\begin{proof}

Recall in Theorem \ref{Th:kenelpitil} that the kernel of $ \exp_{(2)} $ is isomorphic to the rank-one $A$-module generated by $ \tilde{\pi }  $.   Theorem \ref{Thm:isgdm} implies that the kernel of $ \exp_{(j+2) } $ is given by 
  \[
    \ker \exp_{(j+2)} = \tilde \pi \Theta^{\sum_{l=2}^{j+1}\sigma^{l}+
\frac{\sigma^{j+2} -\sigma^2}{q-1} } I_{\infty}^{-j}.
  \]

In particular, Equation \eqref{Eq:Coken0} holds.
  \end{proof}
  
\subsection{Exponential Action}
The relation between the exponential functions attached to isogenous Drinfeld modules given in Theorem \ref{Thm:isgdm} is not well adapted for explicit computations. To address this issue, we introduce the following exponential action.
Let $\phi$ be a Drinfeld module over $\mathbb{C}_{\infty}$ with associated exponential function $\exp_{\phi}(U)$. We define a natural map
\[
    \triangleright_{\phi}:  \A \times \mathbb{C}_{\infty} \to \mathbb{C}_{\infty} 
 \]
 by 
 \[
    a \triangleright_{\phi} U  := \exp_{\phi}( a(\theta) \cdot U ), 
 \]
 for $a \in \A$. Moreover, we extend $\triangleright_{\phi}$ as follows.
 
\begin{defn}\label{Def:expgU}
Let $ \phi $ be a Drinfeld $ \A $-module over $\mathbb{C}_{\infty}$. We define 
 \begin{align*}
    \triangleright_{\phi}  :  (\K \otimes_{\mathbb{F}_q} \mathbb{C}_{\infty}) \times \mathbb{C}_{\infty} &{} \to \mathbb{C}_{\infty} \\
     \left( \frac{\sum_{i=1}^{m} b_i \otimes l_i }{a }\right) \triangleright_{\phi}  U &{} := \sum_{i=1}^{m} l_i \phi_{b_i }\exp_{\phi}(\frac{U}{a(\theta)}) 
\end{align*}
 for $ a(t),~ b_i(t)\in \A  $ and $ l_i , ~U \in \mathbb{C}_{\infty}$.
\end{defn}
We remark that Definition  \ref{Def:expgU} is well-defined and independent of the choices of $a,~ b_i, ~l_i $.
The following lemma holds immediately.
\begin{lem}\label{lem:gaU}
 Assume that $g(t) \in \K \otimes_{\mathbb{F}_q} \mathbb{C}_{\infty}$ and $ U \in \mathbb{C}_\infty $.
    \begin{enumerate}
        \item For $a(t) \in \K $, we get
    \[
        ( g(t)\cdot a(t) ) \triangleright_{\phi} U = g(t) \triangleright_{\phi} ( a(\theta) U).
    \]
    \item For $ l \in \mathbb{C}_{\infty} $, we have 
    \[
        (l \cdot  g(t)) \triangleright_{\phi} U = l \cdot( g(t) \triangleright_{\phi}  U ) .
    \]
    \end{enumerate}
 \end{lem}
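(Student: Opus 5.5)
Both identities in Lemma \ref{lem:gaU} come straight from Definition \ref{Def:expgU}. I will write $g(t)$ in the normal form of that definition and compute both sides. Before doing so I will record why the definition is independent of the chosen representation, since part (1) rests on exactly that.

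Write $g(t) = \frac{\sum_{i=1}^m b_i\otimes l_i}{c}$ with $b_i, c \in \A$ and $l_i \in \mathbb{C}_\infty$. This is possible because $\K$ is the fraction field of $\A$: clear denominators in each tensor factor, then use a common denominator.

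The independence check goes as follows. Suppose two representations differ by multiplying numerator and denominator by some $d \in \A$. Then
\[
l_i\,\phi_{b_i d}\exp_\phi\!\Big(\frac{U}{c(\theta)d(\theta)}\Big)
= l_i\,\phi_{b_i}\phi_d\exp_\phi\!\Big(\frac{U}{c(\theta)d(\theta)}\Big)
= l_i\,\phi_{b_i}\exp_\phi\!\Big(\frac{U}{c(\theta)}\Big),
\]
using the functional equation \eqref{Eq:exponential}. Additivity and $\mathbb{F}_q$-bilinearity in the numerator are clear. These two facts handle any two representations after passing to a common multiple.

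\textbf{Part (1).} Write $a = b/d$ with $b, d \in \A$. Then $g\cdot a = \frac{\sum_i (b_i b)\otimes l_i}{c d}$, so
\[
(g a)\triangleright_\phi U = \sum_i l_i\,\phi_{b_i}\phi_b \exp_\phi\!\Big(\frac{U}{c(\theta)d(\theta)}\Big)
= \sum_i l_i\,\phi_{b_i}\exp_\phi\!\Big(\frac{b(\theta)U}{c(\theta)d(\theta)}\Big),
\]
again by \eqref{Eq:exponential}. Since $b(\theta)/d(\theta) = a(\theta)$, the right-hand side is $\sum_i l_i\,\phi_{b_i}\exp_\phi\big(\frac{a(\theta)U}{c(\theta)}\big) = g\triangleright_\phi(a(\theta)U)$.

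\textbf{Part (2).} Multiplication by $l$ replaces each $l_i$ by $l\,l_i$ in the representation. The defining sum is linear in the $l_i$, so $l$ factors out.

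The only real subtlety is the well-definedness, which the paper states as a remark; the argument above settles it.
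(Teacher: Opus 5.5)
Your proposal is correct and takes the same route as the paper, which says only that the lemma ``holds immediately'' from Definition~\ref{Def:expgU} and asserts, without proof, that the definition does not depend on the chosen representation. Your argument spells this out: you use $\phi_{b_i d}=\phi_{b_i}\phi_d$ and $\phi_d\exp_\phi(z)=\exp_\phi(d(\theta)z)$, and you also supply the well-definedness check that the paper leaves implicit.
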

 The value of $g(t) \triangleright_{\phi} U$ can be derived from the motive of $\phi$ via the following lemma.

\begin{lem}\label{lem:motivetri}

    Assume that $ g(t) \in a^{-1}\A \otimes_{\mathbb{F}_q} \mathbb{C}_{\infty } $ for some $ a \in \A $. Let $ h(\tau)$ be the twisted polynomial determined by the equality
\[
 g(t) a(t)  * \mathbf{s}_{\phi} = h(\tau)\mathbf{s}_{\phi}  
\] 
in the motive of $ \phi $. Then 
\begin{equation}\label{eq:gttri}
     g(t) \triangleright_{\phi} U = h(\tau) \left(\exp_{\phi}(\frac{U}{a(\theta)})\right).
\end{equation}
\end{lem}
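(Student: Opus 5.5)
This is a direct unwinding of Definition \ref{Def:expgU} combined with the motive structure of Definition \ref{Def:t-m}. Since $g(t)a(t) \in \A\otimes_{\mathbb{F}_q}\mathbb{C}_\infty$, we can write
\[
g(t)a(t)=\sum_{i=1}^m b_i\otimes l_i
\]
with $b_i\in\A$ and $l_i\in\mathbb{C}_\infty$. Then $g(t)=\bigl(\sum_i b_i\otimes l_i\bigr)/a$, and by definition
\[
g(t)\triangleright_\phi U=\sum_i l_i\,\phi_{b_i}\Bigl(\exp_\phi\bigl(\tfrac{U}{a(\theta)}\bigr)\Bigr).
\]
The well-definedness remark after Definition \ref{Def:expgU} lets us use any such representation.

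Next we compute $h(\tau)$ from the motive. By Definition \ref{Def:t-m}, $(b\otimes l)*_\phi \mathbf{s}_\phi = (l\circ\phi_b)\mathbf{s}_\phi$. By additivity of the module action,
\[
g(t)a(t)*\mathbf{s}_\phi=\Bigl(\sum_i l\_i\phi_{b_i}\Bigr)\mathbf{s}_\phi .
\]
Since $M_\phi$ is free of rank one over $\mathbb{C}_\infty\{\tau\}$ on $\mathbf{s}_\phi$, the twisted polynomial $h$ is uniquely determined, so $h(\tau)=\sum_i l_i\phi_{b_i}$.

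Finally, evaluate $h(\tau)$ at $\exp_\phi(U/a(\theta))$ via the natural action of $\mathbb{C}_\infty\{\tau\}$ on $\mathbb{C}_\infty$. Since multiplication by the constant $l_i$ commutes with that evaluation, this gives exactly the expression above, which establishes \eqref{eq:gttri}.

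There is no real obstacle. The only care needed is to match conventions: scalars from $\mathbb{C}_\infty$ must act on the left in both the motive and the evaluation, and the decomposition of $g a$ must be taken in $\A\otimes\mathbb{C}_\infty$ rather than $\K\otimes\mathbb{C}_\infty$. Both conventions are fixed by the definitions already in place.
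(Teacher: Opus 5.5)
Your proposal is correct and follows the paper's proof: you write $g(t)a(t)=\sum_i b_i\otimes l_i$, read off $h(\tau)=\sum_i l_i\phi_{b_i}$ from the motive structure of Definition~\ref{Def:t-m}, and compare with Definition~\ref{Def:expgU}. You also spell out two steps the paper leaves implicit, namely that $h$ is unique because $M_\phi$ is free of rank one on $\mathbf{s}_\phi$, and that the constant scalars act on the left.
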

\begin{proof}
     Write $ g(t) = \frac{1}{a}\sum_{i=1}^m b_i \otimes l_i $. The assumption of this lemma yields
     \[
         \sum_{i=1}^m  l_i \phi_{b_i} = h(\tau), 
     \]
     which confirms the equality \eqref{eq:gttri} directly. 
\end{proof}
 
\begin{lem}\label{Lem:eguexp}
    Let $ \phi $ be a Drinfeld $\A$-module of rank $r$ over $\mathbb{C}_{\infty} $.
    Assume that the exponential function of $\phi $ is given by 
    \[
        \exp_{\phi} (U) = \sum_{n=0}^\infty \frac{ U^{q^n} }{D_n^{\phi}},
    \]
    where $ D_n^{\phi} \in \mathbb{C}_{\infty} $. 
        For $ g(t) \in \K \otimes_{\mathbb{F}_q} \mathbb{C}_{\infty} $,  we have 
    \begin{align}\label{Eq:gtUgthe}
          g(t)\triangleright_{\phi}  U  =  \sum_{n=0}^{\infty} \frac{U^{q^n}}{D_n^{\phi}} g(\theta^{q^n}).
    \end{align}
 
\end{lem}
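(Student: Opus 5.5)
The plan is to reduce to the case $g(t)=b(t)\otimes l/a(t)$ with $a,b\in\A$, $l\in\mathbb{C}_\infty$. Both sides of \eqref{Eq:gtUgthe} are additive in $g$, and both satisfy Lemma \ref{lem:gaU}(2): the left side by that lemma, and the right side trivially, since $(l g)(\theta^{q^n})=l\,g(\theta^{q^n})$ when $l$ is a scalar in $\mathbb{C}_\infty$. Here evaluation $t\mapsto\theta^{q^n}$ means we substitute for $t$ and leave the $\mathbb{C}_\infty$-coefficients untouched. Every element of $\K\otimes_{\mathbb{F}_q}\mathbb{C}_\infty$ can be written over a common denominator in the form $\frac1a\sum_i b_i\otimes l_i$, as in Definition \ref{Def:expgU}. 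So it suffices to prove
\[
\phi_b\Big(\exp_\phi\big(\tfrac{U}{a(\theta)}\big)\Big)=\sum_{n\geqslant 0}\frac{U^{q^n}}{D_n^\phi}\,\frac{b(\theta^{q^n})}{a(\theta^{q^n})}.
\]

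First, by the functional equation \eqref{Eq:exponential} we have $\phi_b(\exp_\phi(V))=\exp_\phi(b(\theta)V)$. Taking $V=U/a(\theta)$, the left side becomes $\exp_\phi\big(\tfrac{b(\theta)}{a(\theta)}U\big)$. Next we expand $\exp_\phi$ using its series. Because $\exp_\phi$ is $\mathbb{F}_q$-linear and $b(\theta)/a(\theta)\in K$ has coefficients in $\mathbb{F}_q$, we get
\[
\Big(\tfrac{b(\theta)}{a(\theta)}U\Big)^{q^n}=\Big(\tfrac{b(\theta)}{a(\theta)}\Big)^{q^n}U^{q^n}=\frac{b(\theta^{q^n})}{a(\theta^{q^n})}\,U^{q^n}.
\]
This gives exactly the $n$-th term on the right. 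The equality $c(\theta)^{q^n}=c(\theta^{q^n})$ for a rational function $c$ with $\mathbb{F}_q$-coefficients is the key point, since Frobenius fixes $\mathbb{F}_q$.

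Convergence is not an issue. The left side is an entire function evaluated at a point. The right side is the same convergent series rearranged term by term; no rearrangement is needed, only an identification of the individual terms.

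The only delicate step is well-definedness under the reduction: we must check that the linearity used to reduce to simple tensors matches the way Definition \ref{Def:expgU} defines $\triangleright_\phi$ for a general element. The definition is stated directly on representatives $\frac1a\sum b_i\otimes l_i$, so this matches provided the map $g\mapsto g(\theta^{q^n})$ is well defined on $\K\otimes\mathbb{C}_\infty$. That holds away from the poles, and $a(\theta^{q^n})\neq0$ because $\theta$ is transcendental over $\mathbb{F}_q$. Summing over $i$ with the weights $l_i$ then finishes the proof.
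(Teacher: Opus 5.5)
Your proof is correct and follows the paper's argument. Both write $g$ over a common denominator as $\frac{1}{a}\sum_i b_i\otimes l_i$, use $\phi_{b_i}\exp_\phi(V)=\exp_\phi(b_i(\theta)V)$ from the definition of $\triangleright_\phi$, expand the series, and use $\bigl(b_i(\theta)/a(\theta)\bigr)^{q^n}=b_i(\theta^{q^n})/a(\theta^{q^n})$. Your preliminary reduction to a single term is harmless but unnecessary, since the definition already applies to the full representative and the finite sum can be interchanged with the series directly. Also, $(cU)^{q^n}=c^{q^n}U^{q^n}$ comes from multiplicativity of the $q^n$-th power map, not from $\mathbb{F}_q$-linearity of $\exp_\phi$.
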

\begin{proof}
    Assume that $ g(t) \in \A  \otimes_{\mathbb{F}_q} \mathbb{C}_{\infty} $ is written as 
    $ g(t) = \frac{1}{a} \sum_{i=1}^m b_i \otimes l_i  $.  By definition, we have 
    \begin{align*}
       g(t) \triangleright_{\phi} U  
      & =  \sum_{i=1}^{m}  l_i   \exp_{\phi}(\frac{ b_i(\theta)U }{a(\theta)} ) 
      \\ 
     & =   \sum_{n=0}^{\infty}  \frac{  U^{q^n}}{D_n^{\phi}} \sum_{i=1}^{m}  l_i \frac{ b_i(\theta^{q^n} )}{a(\theta^{q^n}) } 
     \\& =   \sum_{n=0}^{\infty}  \frac{  U^{q^n}}{D_n^{\phi}} g(\theta^{q^n}). 
    \end{align*}
   So the equality in this lemma holds. 
\end{proof}

 Proposition \ref{Pro:sPsiThe} allows us to relate the actions $\triangleright_{\Psi}$ and $\triangleright_{\Psi^{(j)}}$.

\begin{cor}\label{Cor:ePsie} 
For $j>0$, let $ \exp_{(j)} $ denote the exponential function associated with $\Psi^{(j)}$. Using the symbol $\FT{\Theta}^{i}_{j}$  as defined in Notation \ref{No:Thetakj}, we obtain the following identities:
     \begin{align*}
       \nonumber &  \frac{1}{(t - \eta^{(0)})(t - \eta^{(1)}) \cdots (t - \eta^{(j-1)})} \triangleright_{\Psi}  U      \\  
          ={}& \Theta^{ \gamma_{j}(\sigma)-  W_{j}}\left(\tau+\FT{\Theta}^{j-1}_{j}\right)\circ\cdots\circ\left(\tau+\FT{\Theta}^{1}_{j} \right)\circ \left(\tau+\FT{\Theta}^0_{j}\right) \exp_{(0)}(U) \\
     ={}& \frac{1}{ \sqrt[q-1]{\Theta^{\sigma^j-1}}}\exp_{(j)}\left(     \frac{ U \sqrt[q-1]{\Theta^{\sigma^j-1}} }{(\theta - \eta^{(0)})(\theta - \eta^{(1)}) \cdots (\theta - \eta^{(j-1)})} \right )\\
       ={}& \frac{1}{ \sqrt[q-1]{\Theta^{\sigma^j-1}}}\triangleright_{\Psi^{(j)} } \left(     \frac{  U \sqrt[q-1]{\Theta^{\sigma^j-1}}  }{(\theta - \eta^{(0)})(\theta - \eta^{(1)}) \cdots (\theta - \eta^{(j-1)})} \right ).
     \end{align*}
 \end{cor}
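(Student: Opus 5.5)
The plan is to prove the three displayed equalities of Corollary \ref{Cor:ePsie} in order, each reducing to a result already established. Throughout, set
\[
g_j(t)=\frac{1}{(t-\eta^{(0)})\cdots(t-\eta^{(j-1)})},
\]
which lies in $\K\otimes_{\mathbb{F}_q}\mathbb{C}_\infty$. Since $\rho(t)=\prod_{k=0}^{N-1}(t-\eta^{(k)})$, we have $g_j\cdot\rho^{m}\in\A\otimes_{\mathbb{F}_q}\mathbb{C}_\infty$ for $m$ large. It is cleaner, though, to note that $g_j$ already lies in $\A_{\mathbb{C}_\infty}$: its only poles are at the $[\eta^{(k)}]$, and it vanishes at $t=\infty$.

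\textbf{First equality.} Because $g_j\in\A_{\mathbb{C}_\infty}$, we may apply Lemma \ref{lem:motivetri} with $a=1$. The twisted polynomial $h(\tau)$ with $g_j*\mathbf{s}_0=h(\tau)\mathbf{s}_0$ is read off from Proposition \ref{Pro:sPsiThe} with $n=0$. In that case $\eta^{nq^{j-1}}=1$ and the brackets become $\FT{\Theta}^i_j$, so
\[
h(\tau)=\Theta^{\gamma_j(\sigma)-W_j}\bigl(\tau+\FT{\Theta}^{j-1}_{j}\bigr)\circ\cdots\circ\bigl(\tau+\FT{\Theta}^{0}_{j}\bigr).
\]
Lemma \ref{lem:motivetri} then gives $g_j\triangleright_\Psi U=h(\tau)(\exp_{(0)}(U))$, which is the first equality.

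\textbf{Second equality.} Apply Corollary \ref{Cor:expij} with $i=0$. Its right-hand side is $C_{0,j}$ times the same product of factors $\tau+\FT{\Theta}^{l}_j$ applied to $\exp_{(0)}(U)$. It therefore suffices to compare the constants. By \eqref{Eq:coCj},
\[
C_{0,j}=\sqrt[q-1]{\Theta^{\sigma^j-1}}\,\Theta^{\gamma_j(\sigma)-W_j},
\]
so
\[
\Theta^{\gamma_j(\sigma)-W_j}(\cdots)\exp_{(0)}(U)=\frac{1}{\sqrt[q-1]{\Theta^{\sigma^j-1}}}\exp_{(j)}\Bigl(\sqrt[q-1]{\Theta^{\sigma^j-1}}\;\Theta^{\sum_{l=0}^{j-1}\sigma^l}\,U\Bigr).
\]
Since $\Theta^{\sigma^l}=1/(\theta-\eta^{(l)})$, the product $\Theta^{\sum_{l=0}^{j-1}\sigma^l}$ equals $1/\prod_{l=0}^{j-1}(\theta-\eta^{(l)})=g_j(\theta)$. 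This gives the second line. One must use the same choice of $(q-1)$-th root in Corollary \ref{Cor:expij} and in $C_{0,j}$, which is consistent because $C_{0,j}$ was defined with that root.

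\textbf{Third equality.} By Lemma \ref{lem:gaU}(2), the constant $c^{-1}$, where $c=\sqrt[q-1]{\Theta^{\sigma^j-1}}$, can be pulled out: $c^{-1}\triangleright_{\Psi^{(j)}}V=c^{-1}\,(1\triangleright_{\Psi^{(j)}}V)$. By definition, $1\triangleright_{\Psi^{(j)}}V=\exp_{(j)}(V)$. Taking $V=cU\,g_j(\theta)$ gives the third line.

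\textbf{Expected obstacle.} The only delicate point is the first step: justifying that $g_j$ lies in $\A\otimes\mathbb{C}_\infty$, or else handling a denominator $a$ when $j>N$. If $j>N$, the factor $t-\eta^{(k)}$ repeats, but $g_j$ is still regular away from the $[\eta^{(k)}]$ and vanishes at infinity, so it belongs to $\A_{\mathbb{C}_\infty}$. This is the same situation in which Proposition \ref{Pro:sPsiThe} was already applied, so no extra work is needed there. As a cross-check, one can verify the first equality termwise via Lemma \ref{Lem:eguexp}, using the formula $D_n^{\sigma^j}$ from \eqref{Eq:D_nsigj}.
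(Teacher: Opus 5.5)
Your proposal is correct and is essentially the paper's proof. The paper simply cites Proposition \ref{Pro:sPsiThe} (with $n=0$) and Lemma \ref{lem:motivetri} (with $a=1$) for the first equality, Corollary \ref{Cor:expij} (with $i=0$, matching the constant $C_{0,j}$) for the second, and the definition of $\triangleright_{\Psi^{(j)}}$ for the third.

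One slip, which you never use: the aside that $g_j\rho^m\in\A\otimes_{\mathbb{F}_q}\mathbb{C}_\infty$ for large $m$ is false. Here $\rho\notin\A$; rather $\rho^{-1}=T_0\in\A$, because $t=\infty$ is a finite place of $\A$, so $g_j\rho^m$ is a polynomial of positive degree for large $m$. Your actual justification, that $g_j$ itself lies in $\A_{\mathbb{C}_\infty}$, is right.
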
 
 \begin{proof}
 The corollary follows directly by applying Proposition \ref{Pro:sPsiThe}, Corollary  \ref{Cor:expij} and Lemma \ref{lem:motivetri}.
 \end{proof}
 
As a consequence, we have the following corollary.
\begin{cor} \label{Cor:Psi0jtri}
 For $ g(t) \in \K \otimes_{\mathbb{F}_q} \mathbb{C}_{\infty} $,  we have 
 \begin{align}\label{Eq:Psiphitrige}
       \nonumber &  \frac{g(t)}{(t - \eta^{(0)})(t - \eta^{(1)}) \cdots (t - \eta^{(j-1)})} \triangleright_{\Psi}  U      \\  
       ={}& \frac{g(t)}{ \sqrt[q-1]{\Theta^{\sigma^j-1}}}\triangleright_{\Psi^{(j)} } \left(     \frac{  U \sqrt[q-1]{\Theta^{\sigma^j-1}}  }{(\theta - \eta^{(0)})(\theta - \eta^{(1)}) \cdots (\theta - \eta^{(j-1)})} \right ).
\end{align}
\end{cor}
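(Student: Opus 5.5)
We reduce the general statement to Corollary~\ref{Cor:ePsie} by using the $\mathbb{C}_\infty$-linearity and $\K$-compatibility of the exponential action in Lemma~\ref{lem:gaU}.

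Set $P(t)=(t-\eta^{(0)})\cdots(t-\eta^{(j-1)})$ and $c=\sqrt[q-1]{\Theta^{\sigma^j-1}}$. Every $g(t)\in\K\otimes_{\mathbb{F}_q}\mathbb{C}_\infty$ is a finite sum $g=\sum_i l_i\otimes a_i(t)$ with $l_i\in\mathbb{C}_\infty$ and $a_i\in\K$. By Lemma~\ref{lem:gaU}(2), both sides of \eqref{Eq:Psiphitrige} are additive in $g$; this additivity is also built into Definition~\ref{Def:expgU}. Both sides are moreover $\mathbb{C}_\infty$-linear in the scalar $l_i$. It therefore suffices to treat the case $g=a(t)\in\K$.

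For such $g=a(t)$, Lemma~\ref{lem:gaU}(1) gives
\[
\frac{a(t)}{P(t)}\triangleright_{\Psi}U=\frac{1}{P(t)}\triangleright_{\Psi}\bigl(a(\theta)U\bigr).
\]
We apply the last identity of Corollary~\ref{Cor:ePsie} with $U$ replaced by $a(\theta)U$. The result is
\[
\frac{1}{c}\triangleright_{\Psi^{(j)}}\Bigl(\frac{c\,a(\theta)U}{P(\theta)}\Bigr).
\]
Using Lemma~\ref{lem:gaU}(1) again, now for $\Psi^{(j)}$, we move $a(\theta)$ back inside as the factor $a(t)$. This yields
\[
\frac{a(t)}{c}\triangleright_{\Psi^{(j)}}\Bigl(\frac{cU}{P(\theta)}\Bigr),
\]
which is the right-hand side of \eqref{Eq:Psiphitrige}.

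The only point needing care is whether Lemma~\ref{lem:gaU}(1) is legitimate when $g$ itself has poles. Its hypothesis allows $g\in\K\otimes\mathbb{C}_\infty$ arbitrary, so it is. One should also confirm that the constant $c$ depends only on $\Theta$ and $j$, and not on $U$. With that checked, the step is immediate.

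As an alternative check, Lemma~\ref{Lem:eguexp} lets us compare coefficients of $U^{q^n}$ on both sides. The factor $g(\theta^{q^n})$ appears identically on each side, so the identity reduces to the case $g=1$, which is again Corollary~\ref{Cor:ePsie}.
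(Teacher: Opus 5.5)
Your argument is correct and is essentially the paper's route. The paper gives no separate proof and presents the statement as an immediate consequence of Corollary~\ref{Cor:ePsie}. The implicit step is exactly yours: use Lemma~\ref{lem:gaU} to reduce to $g=a(t)\in\K$, then move $a(\theta)$ into the argument and back out. Your alternative coefficient comparison via Lemma~\ref{Lem:eguexp} is an equally valid way to see it.
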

From Lemma \ref{Lem:eguexp} and Corollary \ref{Cor:Psi0jtri}, one can easily derive the coefficients of $ \exp_{(j)}$, which recovers \eqref{Eq:D_nsigj}.
 
\section{Anderson Generating Functions}\label{Sec:AGF}
In this section, we point out that the set of solutions of the differential operator $ \nabla^{f} := \tau - f $ forms a one-dimensional $ \K$-linear space spanned by the Anderson-Thakur function $ \omega_{f}$. We show that this kind of Anderson-Thakur function connects to two Drinfeld $\A$-modules, i.e., $ \Phi $ and $ \Psi $.   We aim to give the product series expression of $ \omega_{f}$ in Theorem \ref{Thm:omeg0}, and then investigate its deformation versions, namely the Pellarin-type (Definition \ref{Def:GU}), the Anderson-type (Definition \ref{Def:U}) and the deformed logarithm given in Definition \ref{Defn:deformationLog}.
\subsection{Frobenius Differential Equations and Anderson-Thakur Function}
Recall that 
$ \mathbb{C}_{\infty} $ is the completion of an algebraic closure of $ K_{\rho}$ associated with the map $ (-)^{(1)} : = x \mapsto x^q $. The following notation is the appropriate counterpart of the formal series ring $\mathbb{C}_{\infty}(\!(\ft)\!)$ in the context of Carlitz module theory.
\begin{notation}
    Let $ \mathbb{C}_{\infty}[\![I_{\infty} ]\!] $ be the formal completion of the $ \mathbb{C}_{\infty}$-algebra $ \mathbb{C}_{\infty}[\frac{\ft^0}{\rho(\ft)}, \cdots , \frac{\ft^{N-1}}{\rho(\ft)}]  $ at the ideal $(\frac{\ft^0}{\rho(\ft)}, \cdots , \frac{\ft^{N-1}}{\rho(\ft)})$. Denote by $\mathbb{C}_{\infty}(\!(I_{\infty})\!)$ the quotient field of $\mathbb{C}_{\infty}[\![I_\infty]\!]$.
\end{notation}
Obviously, $ \A_{ \mathbb{C}_{\infty} } = \A\otimes_{\mathbb{F}_q}\mathbb{C}_{\infty} $ embeds into $ \mathbb{C}_{\infty}(\!(I_{\infty})\!) $ by sending $ a \otimes l $ to $ a(\ft) \cdot l $. In this way, we view $ \mathbb{C}_{\infty}(\!(I_{\infty})\!) $ as an $\A_{ \mathbb{C}_{\infty} } $-module.
The twist $(-)^{(1)}$ on $ \mathbb{C}_{\infty}$ (resp. $\A\otimes_{\mathbb{F}_q} \mathbb{C}_{\infty}$) extends canonically to   $\mathbb{C}_{\infty}(\!(I_\infty)\!)$. 
Combining the two structures, we obtain a natural action of $ \A_{ \mathbb{C}_{\infty} } \{\tau \} $ on the field $ \mathbb{C}_{\infty}(\!(I_{\infty})\!) $, described as follows: 
\begin{equation}\label{Eq:actaulg}
 \tau \diamond g(\ft) = g(\ft)^{(1)} ,\qquad l(t,\theta) \diamond g(\ft) = l(\ft,\theta) \cdot g(\ft)
\end{equation}
for $g(\ft)\in \mathbb{C}_{\infty}(\!(I_{\infty})\!) $ and  $l(t,\theta) \in \A_ {\mathbb{C}_{\infty} }$. Since the multiplication by $a \in \A$ commutes with the $ \tau $-action, we indeed obtain a $\tau$-module structure (see Definition \ref{Def:t-m}) on  $\mathbb{C}_{\infty}(\!(I_{\infty})\!) $.

\begin{defn}\label{Defn:FDO}
 Given $a\in \A$,   we define the Frobenius differential operator $\nabla_{a}$ associated with $a$ by: 
\[
\nabla_{a}  (\omega)= (\Psi_{a}-a) \diamond \omega = \Psi_{a} \diamond \omega - a(\ft) \cdot \omega,  
\]
for  $ \omega = \omega(\ft) \in  \mathbb{C}_{\infty}(\!(I_{\infty})\!) $. We also define the differential operator $ \nabla^{f} $ associated with the shtuka function $f(t) \in \A $ as follows:
\begin{equation*}
    \nabla^{f}(\omega)=(\tau - f(t) )\diamond \omega=\omega^{(1)}-f(\ft)\cdot \omega.
\end{equation*}
\end{defn}
\begin{thm}\label{Thm:Nabsoluset}
The solutions of $ \nabla^{f} $ and $ \nabla_{a} $ for all $ a \in \A $ coincide. More precisely, the functions $ \omega_*\in  \mathbb{C}_{\infty}(\!(I_{\infty})\!) $ such that 
\begin{equation}\label{Eq:thnaPo}
      f(\ft) \omega_{*} = \omega_{*}^{(1)}.
    \end{equation}
 are the solutions satisfying
\begin{equation}\label{Eq:thnaa}
        \nabla_{a}(\omega_* ) = 0 ,
    \end{equation}
    for all $ a \in \A $. 
\end{thm}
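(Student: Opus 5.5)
The plan is to prove both inclusions directly, using only the motive description of $\Psi$ (Definition \ref{Def:PL}) and the identity \eqref{Eq:sumbiPTi} from Proposition \ref{prop:ann}. First we record that $\diamond$ is a genuine left action of $\A_{\mathbb{C}_\infty}\{\tau\}$ on $\mathbb{C}_{\infty}(\!(I_\infty)\!)$. This holds because $(-)^{(1)}$ is a ring endomorphism of $\mathbb{C}_{\infty}(\!(I_\infty)\!)$ that fixes $\ft$ and raises scalars to the $q$-th power, which matches the relation $\tau l = l^q\tau$. Hence for $\lambda,\mu\in\mathbb{C}_\infty\{\tau\}$ we have $(\lambda\mu)\diamond\omega=\lambda\diamond(\mu\diamond\omega)$, and for $\Psi_a=\sum_i \bi{a}{i}\tau^i$ we get $\Psi_a\diamond\omega=\sum_i \bi{a}{i}\,\omega^{(i)}$.

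\emph{Step 1: \eqref{Eq:thnaPo} implies \eqref{Eq:thnaa}.} Assume $\omega_*^{(1)}=f(\ft)\omega_*$. We prove by induction that $\omega_*^{(i)}=\mathbf{s}_i(\ft)\,\omega_*$ for all $i\geqslant 0$, with $\mathbf{s}_i$ as in \eqref{Eq:f_i}. The inductive step uses $\mathbf{s}_{i+1}=f\cdot \mathbf{s}_i^{(1)}$, which is immediate because $\mathbf{s}_i^{(1)}=f^{(1)}\cdots f^{(i)}$. Indeed,
\[
\omega_*^{(i+1)}=(\mathbf{s}_i\omega_*)^{(1)}=\mathbf{s}_i^{(1)}\,f\,\omega_*=\mathbf{s}_{i+1}\omega_*.
\]
By Definition \ref{Def:PL}, $a*\mathbf{s}_0=\Psi_a\mathbf{s}_0$ means exactly the identity $\sum_i \bi{a}{i}\mathbf{s}_i(t)=a(t)$ in $\A_{\mathbb{C}_\infty}$. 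Substituting $\ft$ for $t$ and multiplying by $\omega_*$ gives
\[
\Psi_a\diamond\omega_*=\sum_i \bi{a}{i}\mathbf{s}_i(\ft)\omega_*=a(\ft)\omega_*,
\]
that is, $\nabla_a(\omega_*)=0$.

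\emph{Step 2: \eqref{Eq:thnaa} for all $a$ implies \eqref{Eq:thnaPo}.} Take the $b_i\in\mathbb{F}_{q^N}$ of Notation \ref{Not:bi}. By \eqref{Eq:sumbiPTi} we have $\sum_i b_i\Psi_{T_i}=\tau+\Theta$. Apply this operator to $\omega_*$, use linearity of $\diamond$ and the hypothesis $\Psi_{T_i}\diamond\omega_*=T_i(\ft)\omega_*$, and then \eqref{Eq:nosumbiTi}:
\[
\omega_*^{(1)}+\Theta\omega_*=(\tau+\Theta)\diamond\omega_*=\sum_i b_i\,T_i(\ft)\,\omega_*=\frac{\omega_*}{\ft-\eta}.
\]
Therefore $\omega_*^{(1)}=\big(\tfrac{1}{\ft-\eta}-\Theta\big)\omega_*=f(\ft)\omega_*$.

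The argument uses only the hypothesis for $a=T_0,\dots,T_{N-1}$. Since $\A$ is generated by the $T_i$ and $a\mapsto\Psi_a$ is a ring homomorphism, this is consistent with requiring \eqref{Eq:thnaa} for all $a\in\A$.

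The only delicate point is bookkeeping rather than mathematics. The scalars $b_i$ and $\bi{a}{i}$ must act by plain multiplication, with no twist. They do: in $\lambda\diamond\omega$ the coefficients of $\lambda$ stand to the left of $\tau^i$, so they are never twisted. Also, the identity $\sum_i\bi{a}{i}\mathbf{s}_i(t)=a(t)$ must be read in $\A_{\mathbb{C}_\infty}$ and then transported to $\mathbb{C}_\infty(\!(I_\infty)\!)$ through the embedding $t\mapsto\ft$. Both checks are immediate from the definitions.
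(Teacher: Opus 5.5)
Your proposal is correct and follows essentially the same proof as the paper. One direction uses the identity $\sum_i b_i(\Psi_{T_i}-T_i)=\tau-f$; the other rests on the fact that $\omega_*$ behaves like the generator $\mathbf{s}_0$ of the motive $\A_{\mathbb{C}_\infty}$. Your induction $\omega_*^{(i)}=\mathbf{s}_i(\ft)\,\omega_*$ combined with $a(t)=\sum_i \bi{a}{i}\mathbf{s}_i(t)$ simply makes explicit the paper's appeal to the $\tau$-module isomorphism $\mathbb{C}_\infty\{\tau\}\diamond\omega_*\cong\A_{\mathbb{C}_\infty}$.
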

\begin{proof}
    Recall that the coefficients 
  $ b _i\in \mathbb{F}_q(\eta) $ satisfy Equations  \eqref{Eq:nosumbiTi} and \eqref{Eq:sumbiPTi}. 
We obtain 
\begin{equation}\label{Eq:nablaPsi}
      \nabla^{f} =\tau-f(t) = \tau + \Theta - \frac{1}{t -\eta } = \sum_{i=0}^{N-1} b_i (\Psi_{T_i}-T_i) .
\end{equation}
Substituting $a=T_i$ in Equation \eqref{Eq:thnaa} for $i=0,1\cdots, N-1$, it follows that
\[
\nabla_{T_i}(\omega_*) =\Psi_{T_i} \diamond \omega_* - T_i(\ft) \cdot \omega_*=0.
\]
Combining this with Equation \eqref{Eq:nablaPsi}, we have   
\[
 \nabla^{f}(\omega_{*}) =\sum_{i=0}^{N-1} b_i (\Psi_{T_i}-T_i) \diamond \omega_* =  \sum_{i=0}^{N-1}{b_i \nabla_{T_i}(\omega_*)} =0.
\]
This implies Equation \eqref{Eq:thnaPo}.

Conversely, let $ \omega_* $ be a solution of \eqref{Eq:thnaPo} and let $ M_{\omega_*} = \mathbb{C}_{\infty}\{ \tau \} \diamond \omega_* $ be the submodule of $\mathbb{C}_{\infty}(\!(I_\infty)\!)$ generated by $ \omega_*$. It is naturally isomorphic to $ \A_{\mathbb{C}_\infty} $ as a rank-one $\mathbb{C}_{\infty}\{\tau\} $-module (see Equation \eqref{Eq:HLiso}). Indeed, they are isomorphic as $\tau$-modules since Equation \eqref{Eq:thnaPo} implies that
\[
    f(t) \diamond \omega_* = \tau \diamond \omega_*,
\]
which exactly matches the Anderson motive structure of $\A_{ \mathbb{C}_{\infty} } $.
According to Definition \ref{Def:PL} of $ \Psi $, we obtain 
\[
    a(t) \diamond \omega_* = \Psi_{a} \diamond \omega_*
\]
for $a(t) \in \A$. In other words, $ \omega_* $ is a solution of $\nabla_a$ for all $ a(t) \in \A$.  
\end{proof}
\begin{defn}
   Let $\omega_{f}(\ft) $  be a function in   $\mathbb{C}_{\infty}(\!(I_{\infty})\!)$ defined by the following product series:
\begin{equation}\label{Eq:omegaf}
     \omega_{f}(\ft) := \sqrt[q-1]{-\Theta}\,  \prod_{k=0}^{\infty} \left( \frac{\Theta^{q^k}}{\Theta^{q^k} - \frac{1}{\ft - \eta^{(k)}}}\right).
\end{equation}
\end{defn}
\begin{thm}\label{Thm:omeg0}
   The solution space of $ \nabla^{f} $ is a one-dimensional $ \K=\mathbb{F}_q(t)$-linear space spanned by the function $ \omega_{f}$.
\end{thm}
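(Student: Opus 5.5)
Two things need to be shown: $\omega_f$ is a nonzero element of $\mathbb{C}_{\infty}(\!(I_\infty)\!)$ solving $\omega^{(1)}=f(\ft)\,\omega$, and every other solution is a $\K$-multiple of it.

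\emph{Step 1: convergence.} Write the $k$-th factor of \eqref{Eq:omegaf} as
\[
\frac{\Theta^{q^k}}{\Theta^{q^k}-\frac{1}{\ft-\eta^{(k)}}}=\Bigl(1-\frac{\Theta^{-q^k}}{\ft-\eta^{(k)}}\Bigr)^{-1}.
\]
Since $|\Theta|_\eta=q$, we have $|\Theta^{-q^k}|_\eta=q^{-q^k}$. The function $\frac{1}{\ft-\eta^{(k)}}$ is the $\sigma^k$-twist of $\sum_i b_iT_i$ from Notation \ref{Not:bi}, so it lies in $\mathbb{F}_{q^N}\otimes\A$. Each factor is therefore $1$ plus an element of the $I_\infty$-adic algebra with coefficients tending to $0$. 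The product converges to a unit, and $\omega_f\neq 0$.

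\emph{Step 2: the functional equation.} Apply the twist $(-)^{(1)}$, which raises constants to the $q$-th power but fixes $\ft$. It sends $\sqrt[q-1]{-\Theta}$ to $(-\Theta)\sqrt[q-1]{-\Theta}$ and sends the $k$-th factor to the $(k+1)$-st. Hence
\[
\omega_f^{(1)}=\omega_f\cdot(-\Theta)\Bigl(\Theta-\frac{1}{\ft-\eta}\Bigr)^{-1}\Theta .
\]
Here the factor $\bigl(\Theta-\tfrac{1}{\ft-\eta}\bigr)^{-1}\Theta$ is the reciprocal of the $k=0$ factor, which drops out of the shifted product. This simplifies to
\[
-\Theta^2\Bigl(\Theta-\frac{1}{\ft-\eta}\Bigr)^{-1}.
\]
This is not literally $f(\ft)$. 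So before writing the argument I would recheck the normalization: either the intended prefactor and factors include an extra $\Theta^{q^k}$, or one verifies directly that the ratio equals $\frac{1}{\ft-\eta}-\Theta$. The plan is to compare the telescoped product with $f$. If there is a discrepancy of the form $\Theta^{2}/(\ldots)$ against $-(\Theta-\ldots)$, it indicates that the factors should be inverted. Concretely, I would write each factor as $\bigl(\Theta^{q^k}-\frac{1}{\ft-\eta^{(k)}}\bigr)/\Theta^{q^k}$ in the convergent normalization and check that the telescoping yields exactly $\frac{1}{\ft-\eta}-\Theta=f(\ft)$ with the prefactor contributing $-1$ correctly. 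This sign and normalization check is the one real computation in the step. By Theorem \ref{Thm:Nabsoluset}, the resulting $\omega_f$ is also annihilated by every $\nabla_a$.

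\emph{Step 3: one-dimensionality.} Let $\omega_*$ be any solution and set $h=\omega_*/\omega_f$, which is legitimate since $\omega_f$ is a unit. Then $h^{(1)}=h$, so $h$ lies in the fixed field of the twist acting on $\mathbb{C}_{\infty}(\!(I_\infty)\!)$. Expand $h$ in a uniformizer-type parameter at one of the places $[\eta^{(i)}]$, for instance $\ft-\eta$ after passing to a completion. The twist acts coefficientwise but moves the centre $\eta$ to $\eta^q$. Handling this is the main obstacle: one has to choose coordinates, such as $T_0$ and its powers, that are defined over $\mathbb{F}_q$. Using $\mathbb{C}_\infty(\!(I_\infty)\!)\supseteq\mathbb{C}_\infty\otimes\K$-style expansions in the $\mathbb{F}_q$-rational elements $T_i$, each coefficient $c$ must satisfy $c^q=c$, so $c\in\mathbb{F}_q$. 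Thus $h\in\mathbb{F}_q(\!(I_\infty)\!)$, the $P_\rho$-adic completion of $\K$.

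To conclude $h\in\K$ one then needs a rationality input. I would compare the growth of the coefficients of $\omega_*$ with those of $\omega_f$, or restrict the solution space to $\mathbb{T}_\eta$ as in the introduction, where an Anderson-style argument shows that $\mathbb{T}_\eta^{\tau=1}$ is $\K$ (functions with $\mathbb{F}_q$-coefficients converging on $\mathcal{D}$ are rational). I expect this rationality step to be the main difficulty. Meanwhile, $\K$-linearity of the solution space is immediate, because multiplication by $a(\ft)$ commutes with $\tau$.
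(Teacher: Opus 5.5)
\textbf{The gap is in Step 2.} It contains an arithmetic slip, and the remedy you propose for the resulting ``discrepancy'' would give the wrong function. After twisting, the prefactor becomes $-\Theta\sqrt[q-1]{-\Theta}$ and the product becomes $\prod_{k\geqslant 1}$. That is the full product \emph{divided} by the $k=0$ factor $\Theta\bigl(\Theta-\frac{1}{\ft-\eta}\bigr)^{-1}$. You multiplied by it instead: the expression $\bigl(\Theta-\frac{1}{\ft-\eta}\bigr)^{-1}\Theta$ that you call the reciprocal of the $k=0$ factor is that factor itself. Done correctly, the telescoping gives
\[
\omega_f^{(1)}=-\Theta\cdot\frac{\Theta-\frac{1}{\ft-\eta}}{\Theta}\cdot\omega_f=\Bigl(\frac{1}{\ft-\eta}-\Theta\Bigr)\omega_f=f(\ft)\,\omega_f ,
\]
which is exactly the computation in the paper's proof. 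The normalization in \eqref{Eq:omegaf} is correct as stated. Inverting the factors, as you suggest, would produce precisely the spurious relation you found, $\omega^{(1)}=-\Theta^{2}\bigl(\Theta-\frac{1}{\ft-\eta}\bigr)^{-1}\omega=\Theta^{2}f(\ft)^{-1}\omega$, so that function would not solve $\nabla^f$. This one computation is the entire content of the paper's argument, so your proposal as written does not establish that $\omega_f$ is a solution.

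\textbf{On Step 3, you are more careful than the paper,} which simply calls the $\K$-one-dimensionality obvious, and your diagnosis is right. Two small corrections first. The ring $\mathbb{C}_\infty[\![I_\infty]\!]$ is the completion at the $\mathbb{F}_q$-rational point $t=\infty$, so the relevant place is $P_\infty$, cut out by $I_\infty$, not $P_\rho$. Also, no centres move: $T_{N-1}$ is a uniformizer there defined over $\mathbb{F}_q$, so the ring is $\mathbb{C}_\infty[\![T_{N-1}]\!]$ with the twist acting coefficientwise. The twist-invariants of its fraction field are therefore $\mathbb{F}_q(\!(T_{N-1})\!)$, and $h\,\omega_f$ solves $\nabla^f$ for every such $h$. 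Hence the statement over $\K$ only holds after restricting to a smaller ring such as $\mathbb{T}_\eta$. There $\omega_f$ is invertible, since each factor $\bigl(1-\frac{\Theta^{-q^k}}{\ft-\eta^{(k)}}\bigr)^{-1}$ has absolute value $1$ on $\mathcal{D}$. However, the invariants of $\mathbb{T}_\eta$ itself cannot be $\K$: for example, $\rho(\ft)\notin\mathbb{C}_\infty[\![I_\infty]\!]$. The analogue of $\mathbb{T}^{\tau=1}=\mathbb{F}_q[t]$ should involve $\A$, with $\K$ appearing only after passing to fractions. This rationality step is left unproved both in your sketch and in the paper, so it is a gap you share with the source rather than a defect of your approach.
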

\begin{proof}
It is obvious that the solution space of $ \nabla^{f} $ is a one-dimensional $\K$-linear space. We only need to verify that $ \omega_{f} $ is a nontrivial solution of $ \nabla^{f} $. 
A direct computation yields
\begin{align*}
    \omega_{f}^{(1)}& =-\Theta \sqrt[q-1]{-\Theta}\, \prod_{k=1}^{\infty} \left( \frac{\Theta^{q^k}}{\Theta^{q^k} - \frac{1}{\ft - \eta^{(k)}}}\right)  
     =  \frac{1}{\eta - \theta } \cdot \frac{\ft -\theta}{\ft-\eta}\cdot \omega_{f}\\
    &=  \left(  \frac{1}{\ft - \eta } - \frac{1}{\theta  - \eta  }\right) \cdot \omega_{f}=f(\ft) \diamond \omega_{f}.
\end{align*}
This confirms that $ \omega_{f}$ is a nontrivial solution of $ \nabla^{f}$.

\end{proof}
 Due to Theorem \ref{Thm:omeg0}, we call $ \omega_f $ the Anderson-Thakur function associated with the shtuka function $f$.

\subsection{The Dual Drinfeld Module of $ \Phi$}
We remark that the residue formula of $ \omega_f $ connects to Carlitz period of the Drinfeld module $ \Phi$, defined as follows.   
\begin{defn}\label{Def:Phi}
    Recall that $ C_{0,2} = \sqrt[q-1]{\Theta^{\sigma^2 -1} } \Theta^{\sigma-1}  $ in Equation \eqref{Eq:coCj}.
    Let us define 
    $ \Phi $ to be the $\eta^{(1)}$-type Drinfeld $\A$-module defined by
        \[ 
    \Phi_a(\xi) = C_{0,2}^{-1}\Psi_{a}^{(2)}(C_{0,2} \xi )
    \]
    for $a \in \A $. 
    \end{defn}
    \begin{example}
   For example, when $N=2$, according to Example \ref{Exam:Psi2}, we have 
\begin{align}\label{Eq:Phi}
\begin{cases}
    \Phi_{T_0}  =(\Theta^{\sigma-1} )^{q^2}\tau^2+(\Theta+\Theta^q)(\Theta^{\sigma-1} )^q\tau+\Theta^{1 + \sigma} \\
      \Phi_{T_1} = \eta^{(1)}(\Theta^{\sigma-1} )^{q^2}\tau^2+(\theta\Theta+\eta^{(1)}\Theta^q)(\Theta^{\sigma-1} )^q\tau+\theta\Theta^{1 + \sigma}. 
\end{cases} 
\end{align}
\end{example}
By definition, it is easy to see that 
\begin{equation}\label{eq:expPhi}
\exp_{\Phi}(U) =  C_{0,2}^{-1} \exp_{(2)} ( C_{0,2} U )
\end{equation}
and 
\begin{equation}\label{eq:logPhi}
\log_{\Phi}(U) =  C_{0,2}^{-1} \log_{(2)} ( C_{0,2} U ).
\end{equation}

Thus, the period $\tilde{\pi}_{\Phi}$ of $ \Phi $ is given by 
\begin{equation}\label{Eq:tilpiPhi}
    \tilde{\pi}_{\Phi} = \frac{\tilde\pi}{C_{0,2}}.
\end{equation}

Moreover, from Corollary \ref{Cor:expij}, we have 
\[
    C_{0,2} (\tau + \Theta)^2 \exp_{(0)} (U) = \exp_{(2)} (C_{0,2} \Theta^2 U ).  
\]
Thus, Equation \eqref{eq:expPhi} yields 
\begin{equation}\label{eq:expPhiand0}
    \exp_{\Phi}(U) =    (\tau + \Theta)^2 \exp_{(0)}  ( \frac{U}{\Theta^2} ).
\end{equation}
By Equation \eqref{Eq:asslog} and Lemma \ref{Lem:leLj}, the logarithm function of $\Phi$ can be expressed as 
\begin{equation}\label{Eq:logphi}
   \log_{\Phi}(\xi) = \xi + \sum_{n=1}^{\infty} \frac{1}{f^{(1)}(\theta) \cdots f^{(n)}(\theta)} \xi^{q^n} . 
\end{equation}

\begin{thm}\label{thm:resomw0}
    The Carlitz period $ \tilde \pi_{\Phi}$ of $\Phi$ (given in Equation \eqref{Eq:tilpiPhi}) can be recovered via the residue formula of $ \omega_f $:
     \[
     \Res_{\ft = \theta} \omega_{f} d\frac{1}{\ft - \eta}    = - \frac{\tilde{\pi}}{C_{0,2}}=-\tilde{\pi}_\Phi .
     \]
\end{thm}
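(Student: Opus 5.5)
The plan is to compute the residue directly from the product expansion \eqref{Eq:omegaf} of $\omega_f$, using the local coordinate $X:=\frac{1}{\ft-\eta}$ (so $d\frac{1}{\ft-\eta}=dX$), and then to compare the result with the product formula \eqref{eq:pi} for $\tilde\pi$ divided by $C_{0,2}$ from Definition \ref{Def:Phi}.

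First I locate the poles of the factors of $\omega_f$. The $k$-th factor $\Theta^{q^k}/(\Theta^{q^k}-\frac{1}{\ft-\eta^{(k)}})$ has a pole exactly where $\ft-\eta^{(k)}=(\theta-\eta)^{q^k}=\theta^{q^k}-\eta^{(k)}$, that is, at $\ft=\theta^{q^k}$. Hence only the factor $k=0$ is singular at $\ft=\theta$. For $k\geqslant1$ the factors are regular and nonzero there. Their product converges at $\ft=\theta$ with respect to $|\cdot|_\eta$, because $v_\eta(\Theta^{q^k})=-q^k$ while $v_\eta(\Theta^{\sigma^k})=v_\eta(\frac1{\theta-\eta^{(k)}})=0$; this is the same estimate used in Lemma \ref{Lem:ald}. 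I will also use the fact that $\omega_f\in\mathbb{T}_\eta$ is meromorphic in a neighbourhood of $\ft=\theta$, so that the residue is the usual local one.

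Next I compute the residue in the coordinate $X$. The point $\ft=\theta$ corresponds to $X=\Theta$, and the singular factor is $\Theta/(\Theta-X)$, a simple pole in $X$ with $\Res_{X=\Theta}\frac{\Theta}{\Theta-X}dX=-\Theta$. This gives
\[
\Res_{\ft=\theta}\omega_f\, d\tfrac{1}{\ft-\eta}=-\Theta\,\sqrt[q-1]{-\Theta}\prod_{k=1}^{\infty}\frac{\Theta^{q^k}}{\Theta^{q^k}-\Theta^{\sigma^k}},
\]
where I use $\frac{1}{\theta-\eta^{(k)}}=\Theta^{\sigma^k}$.

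On the other side, \eqref{eq:pi} and \eqref{Eq:coCj} give
\[
\frac{\tilde\pi}{C_{0,2}}=\frac{(-\Theta)^{\sigma^2/(q-1)}}{\sqrt[q-1]{\Theta^{\sigma^2-1}}}\cdot\Theta^{\sigma-(\sigma-1)}\prod_{k\geqslant1}\frac{\Theta^{q^k}}{\Theta^{q^k}-\Theta^{\sigma^k}}.
\]
The prefactor $\Theta^{\sigma-(\sigma-1)}$ is just $\Theta$, and the infinite products on the two sides are identical. The remaining quotient $(-\Theta)^{\sigma^2/(q-1)}/\sqrt[q-1]{\Theta^{\sigma^2-1}}$ has $(q-1)$-th power equal to $-\Theta$. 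So the two sides agree up to a $(q-1)$-th root of unity.

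The main obstacle is this normalization of roots. The root $\sqrt[q-1]{-\Theta}$ in $\omega_f$, the root $(-\Theta)^{\sigma^2/(q-1)}$ in $\tilde\pi$, and the root $\sqrt[q-1]{\Theta^{\sigma^2-1}}$ in $C_{0,2}$ are each only defined up to $\mathbb{F}_q^\times$. I would resolve this by fixing $C_{0,2}$ so that $(-\Theta)^{\sigma^2/(q-1)}=\sqrt[q-1]{-\Theta}\cdot\sqrt[q-1]{\Theta^{\sigma^2-1}}$, which is consistent because both sides are $(q-1)$-th roots of the same element. I would then note that this choice is compatible with \eqref{eq:expPhi} and \eqref{Eq:tilpiPhi}, since a different root changes $\tilde\pi_\Phi$ only by an $\mathbb{F}_q^\times$-multiple. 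Such a change does not alter the lattice $\tilde\pi_\Phi A$, so the identity holds for a suitable generator. With this convention the two expressions coincide and the theorem follows.
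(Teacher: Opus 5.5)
Your proposal is correct and follows the paper's own proof. Like the paper, you compute the residue directly from the product \eqref{Eq:omegaf} as $-\Theta\sqrt[q-1]{-\Theta}\prod_{k\geqslant1}\frac{\Theta^{q^k}}{\Theta^{q^k}-\Theta^{\sigma^k}}$ and compare it with \eqref{eq:pi} divided by $C_{0,2}=\sqrt[q-1]{\Theta^{\sigma^2-1}}\,\Theta^{\sigma-1}$; your explicit compatible choice of $(q-1)$-th roots makes precise a normalization the paper leaves implicit. Two harmless slips: $v_\eta(\Theta^{\sigma^k})=-1$ rather than $0$ when $N\mid k$, though convergence is unaffected, and the meromorphy of $\omega_f$ near $\ft=\theta$ should be justified from the product itself rather than from membership in $\mathbb{T}_\eta$, since $\theta\notin\mathcal{D}$.
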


\begin{proof}
It is straightforward to verify that the residue of $ \omega_f $ at $ \ft = \theta $ is given by
\[
    \Res_{\ft = \theta} \omega_{f} d\frac{1}{\ft - \eta} =  -\Theta (-\Theta)^{\frac{1}{q-1}}  \prod_{k=1}^{\infty} (\frac{\Theta^{q^k}}{\Theta^{q^k} - \Theta^{\sigma^k} }).
\]
The lemma immediately follows from the expression of $\tilde \pi$ in Theorem \ref{Th:kenelpitil}.
\end{proof}

\begin{lem}\label{lem:expPhi} 
The exponential function of $ \Phi $ is given by 
   \[\exp_\Phi(U) =  \sum_{n=0}^\infty \frac{U^{q^n}}{D_n^{\Phi}},
   \] 
   where the coefficient $ D_n^{\Phi} $  is defined as
   \[
    D_n^{\Phi} := \frac{\theta - \eta }{\theta - \eta^{(1)} } D_n \Theta^{2 q^n} (\theta ^{q^n} - \eta)(\theta ^{q^n} - \eta^{(1)}). 
\]
\end{lem}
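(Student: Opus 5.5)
The cleanest route is through Equation \eqref{eq:expPhiand0}, which gives
\[
\exp_{\Phi}(U) = (\tau+\Theta)^2 \exp_{(0)}\bigl(U/\Theta^2\bigr).
\]
I will expand the right-hand side term by term using the explicit series \eqref{Eq:Dexp} for $\exp_{(0)}$, with coefficients $D_n$. Two alternatives would also work: the exponential action, via Lemma \ref{Lem:eguexp} applied to $g(t)=\frac{1}{(t-\eta)(t-\eta^{(1)})}$ together with Corollary \ref{Cor:ePsie} for $j=2$; or starting from $\exp_\Phi(U)=C_{0,2}^{-1}\exp_{(2)}(C_{0,2}U)$ and Equation \eqref{Eq:D_nsigj} for $D_n^{\sigma^2}$. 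The direct expansion is the most transparent, so I will use it.

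First I set $E(U)=\exp_{(0)}(U/\Theta^2)=\sum_n U^{q^n}\Theta^{-2q^n}/D_n$. Applying $\tau+\Theta$ twice gives
\[
(\tau+\Theta)^2 = \tau^2 + (\Theta^q+\Theta)\tau + \Theta^2 ,
\]
so
\[
\exp_\Phi(U)=E(U)^{q^2}+(\Theta^q+\Theta)E(U)^q+\Theta^2E(U).
\]
Next I collect the coefficient of $U^{q^n}$. This gives three contributions, with the convention that terms with negative index vanish:
\[
\frac{1}{D_{n-2}^{q^2}\Theta^{2q^n}}+\frac{\Theta^q+\Theta}{D_{n-1}^{q}\Theta^{2q^n}}+\frac{\Theta^2}{D_n\Theta^{2q^n}}.
\]
I then rewrite $D_{n-1}^q$ and $D_{n-2}^{q^2}$ in terms of $D_n$ using the recursion of Corollary \ref{Cor:Dn}. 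Writing $\beta_i=\frac{\theta-\theta^{q^i}}{(\theta^{q^i}-\eta)(\theta-\eta)}$, the recursion reads $D_i=D_{i-1}^q\beta_i$. Hence $D_{n-1}^q=D_n/\beta_n$ and $D_{n-2}^{q^2}=D_n/(\beta_n\beta_{n-1}^q)$. The coefficient therefore becomes
\[
\frac{1}{D_n\Theta^{2q^n}}\Bigl(\beta_n\beta_{n-1}^q+(\Theta^q+\Theta)\beta_n+\Theta^2\Bigr).
\]

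The remaining step, and the main obstacle, is the rational-function identity
\[
\beta_n\beta_{n-1}^q+(\Theta+\Theta^q)\beta_n+\Theta^2=\frac{\theta-\eta^{(1)}}{(\theta-\eta)(\theta^{q^n}-\eta)(\theta^{q^n}-\eta^{(1)})}.
\]
To prove it I substitute $x=\theta^{q^n}$. Then $\beta_n=\Theta-\frac{1}{x-\eta}$ and $\beta_{n-1}^q=\Theta^q-\frac{1}{x-\eta^{(1)}}$; these follow from $\frac{\theta-x}{(x-\eta)(\theta-\eta)}=\Theta-\frac{1}{x-\eta}$ and its $q$-th power. The left side becomes a polynomial in $a=\frac{1}{x-\eta}$ and $b=\frac{1}{x-\eta^{(1)}}$:
\[
(\Theta-a)(\Theta^q-b)+(\Theta+\Theta^q)(\Theta-a)+\Theta^2 .
\]
I expect this to collapse, using $ab(\eta-\eta^{(1)})=b-a$, to $\frac{\theta-\eta^{(1)}}{\theta-\eta}\,ab$ up to a sign. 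Any discrepancy would come from the sign conventions in $(\tau+\Theta)$ against $\Psi_{I_\infty}$, so I will pin those down while grinding the algebra. The small cases $n=0,1$ need a separate check, since there some of the three terms are absent.

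Once the identity holds, inverting the bracket gives exactly the claimed formula:
\[
D_n^\Phi=\frac{\theta-\eta}{\theta-\eta^{(1)}}\,D_n\,\Theta^{2q^n}(\theta^{q^n}-\eta)(\theta^{q^n}-\eta^{(1)}).
\]
As a consistency check, Lemma \ref{Lem:eguexp} applied to $\frac{\theta-\eta}{\theta-\eta^{(1)}}\cdot\frac{1}{\Theta^{2}(t-\eta)(t-\eta^{(1)})}$ would produce the same shape of coefficient.
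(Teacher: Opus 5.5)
Your route works, but the one nontrivial step is set up with the wrong signs, and as written the bracket would \emph{not} collapse. The culprit is not $(\tau+\Theta)$: your expansion $(\tau+\Theta)^2=\tau^2+(\Theta^q+\Theta)\tau+\Theta^2$ is correct and matches $\Psi_{I_\infty}=\tau+\Theta$. The error is in your rewriting of $\beta_n$. In fact
\[
\beta_n=\frac{\theta-\theta^{q^n}}{(\theta^{q^n}-\eta)(\theta-\eta)}=\frac{1}{\theta^{q^n}-\eta}-\Theta=f(\theta^{q^n}),
\]
so $\beta_n=a-\Theta$ and $\beta_{n-1}^q=f^{(1)}(\theta^{q^n})=b-\Theta^q$. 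Substituting $\Theta-a$ and $\Theta^q-b$ amounts to replacing $\beta_n$ by $-\beta_n$. That leaves a spurious term $-2(\Theta+\Theta^q)\beta_n$, which survives in odd characteristic. With the correct signs everything cancels:
\[
(a-\Theta)(b-\Theta^q)+(\Theta+\Theta^q)(a-\Theta)+\Theta^2=ab+\Theta(a-b)=ab\bigl(1+\Theta(\eta-\eta^{(1)})\bigr)=\frac{\theta-\eta^{(1)}}{\theta-\eta}\,ab .
\]
This uses $a-b=(\eta-\eta^{(1)})ab$; your relation has $b-a$, which is a second sign slip. So the identity holds exactly, with no sign ambiguity. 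The cases $n=0,1$ also need no separate check: $\beta_0=f(\theta)=0$, so the general formula automatically kills the absent terms.

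With this fix, your proof is correct and more hands-on than the paper's. The paper argues in three steps:
\begin{enumerate}
\item it takes Corollary \ref{Cor:ePsie} with $j=2$, namely $\frac{1}{(t-\eta)(t-\eta^{(1)})}\triangleright_\Psi U=\Theta^{\sigma-1}(\tau+\Theta)^2\exp_{(0)}(U)$;
\item it evaluates the left side by Lemma \ref{Lem:eguexp};
\item it replaces $U$ by $U/\Theta^2$ via \eqref{eq:expPhiand0}.
\end{enumerate}
Your rational identity is exactly the underlying motive relation $\mathbf{s}_2+(\Theta+\Theta^q)\mathbf{s}_1+\Theta^2\mathbf{s}_0=\frac{\theta-\eta^{(1)}}{\theta-\eta}\cdot\frac{1}{(t-\eta)(t-\eta^{(1)})}$, specialized at $t=\theta^{q^n}$. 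This is because $\mathbf{s}_k(\theta^{q^n})=D_n/D_{n-k}^{q^k}$ for $k\leqslant n$, and $\mathbf{s}_k(\theta^{q^n})=0$ for $k>n$.

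So you verify coefficient by coefficient, using only \eqref{eq:expPhiand0} and the recursion \eqref{Eq:Dn}, what the paper transports through the exponential action. The paper's packaging is shorter given its machinery, and it yields the intermediate identity \eqref{Eq:lesunproUtauThe}, which is reused in Proposition \ref{prop:G(1)fG}.

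Your closing ``consistency check'' is in fact the paper's proof, but its constants are off. The correct form is $\exp_\Phi(U)=\frac{\theta-\eta^{(1)}}{\theta-\eta}\bigl(\frac{1}{(t-\eta)(t-\eta^{(1)})}\triangleright_\Psi\frac{U}{\Theta^2}\bigr)$. The factor $\Theta^{-2}$ must sit in the argument, so that it gets raised to the $q^n$-th power. The constant is the reciprocal of the one you wrote.
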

\begin{proof}
From Corollary \ref{Cor:ePsie}, we have\begin{equation}\label{Eq:expUThe}
\frac{1}{(t - \eta )(t - \eta^{(1)} )}  \triangleright_{\Psi} U = \Theta^{\sigma-1} (\tau+ \Theta)^2 \exp_{(0)}(U).
\end{equation}
On the other hand, 
it follows from Lemma \ref{Lem:eguexp} that
\begin{equation}\label{Eq:eUexpU}
\frac{1}{(t - \eta )(t - \eta^{(1)} )}  \triangleright_{\Psi} U =  \sum_{n=0}^{\infty} \frac{U^{q^n}}{D_n}\frac{1}{(\theta^{q^n} - \eta )(\theta^{q^n} - \eta^q )}   . 
\end{equation}
    Equating Equations \eqref{Eq:expUThe} and \eqref{Eq:eUexpU}, we derive
\begin{equation}
\label{Eq:lesunproUtauThe} 
\frac{\theta-\eta^{(1)}}{\theta-\eta} \sum_{n=0}^{\infty} \frac{U^{q^n}}{D_n }  \frac{1}{(\theta^{q^n} - \eta )(\theta^{q^n} - \eta^{(1)} )}=(\tau+ \Theta)^2 \exp_{(0)}\big(U\big).
\end{equation}
Combining this with Equation \eqref{eq:expPhiand0},  we obtain 
\begin{align*}
\exp_\Phi(U)
& =\frac{\theta-\eta^{(1)}}{\theta-\eta} \sum_{n=0}^{\infty} \frac{ (\frac{U}{\Theta^2}) ^{q^n}}{D_n } \frac{1}{(\theta^{q^n} - \eta )(\theta^{q^n} - \eta^{(1)} )} \nonumber \\
& = \sum_{n=0}^\infty \frac{U^{q^n}}{D_n^{\Phi}}.
\end{align*}
\end{proof}
Applying Lemma \ref{Lem:eguexp}, the following corollary follows immediately.

\begin{cor}\label{cor:PhiAction}
 Using the above notations, we have
    \begin{align}\label{Eq:gtriPhiU}
          g(t)\triangleright_{\Phi}  U  = \frac{\theta - \eta^{(1)} }{\theta - \eta} \sum_{n=0}^{\infty}\left(\frac{U}{\Theta^2}\right)^{q^n}  \frac{g(\theta^{q^n})}{ D_n(\theta ^{q^n} - \eta)(\theta ^{q^n} - \eta^{(1)})} 
    \end{align}
    for $ g(t) \in \K \otimes_{\mathbb{F}_q} \mathbb{C}_\infty $.
\end{cor}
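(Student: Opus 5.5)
The plan is to substitute the explicit coefficient formula of Lemma \ref{lem:expPhi} into the series expansion of the exponential action given by Lemma \ref{Lem:eguexp}. Concretely, Lemma \ref{Lem:eguexp} applied to $\phi=\Phi$ gives, for any $g(t)\in\K\otimes_{\mathbb{F}_q}\mathbb{C}_\infty$,
\[
g(t)\triangleright_{\Phi} U=\sum_{n=0}^{\infty}\frac{U^{q^n}}{D_n^{\Phi}}\,g(\theta^{q^n}).
\]
Lemma \ref{lem:expPhi} provides
\[
D_n^{\Phi}=\frac{\theta-\eta}{\theta-\eta^{(1)}}\,D_n\,\Theta^{2q^n}(\theta^{q^n}-\eta)(\theta^{q^n}-\eta^{(1)}).
\]
Inverting this yields the prefactor $\frac{\theta-\eta^{(1)}}{\theta-\eta}$. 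Since $\Theta^{2q^n}=(\Theta^2)^{q^n}$, the factor $U^{q^n}/\Theta^{2q^n}$ combines to $(U/\Theta^2)^{q^n}$, and what remains is exactly the right-hand side of \eqref{Eq:gtriPhiU}.

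The only point needing care is the applicability of Lemma \ref{Lem:eguexp}. Its proof treats $g=\frac1a\sum b_i\otimes l_i$ and interchanges the finite sum over $i$ with the $q^n$-expansion of $\exp_\Phi$. This interchange is legitimate because $\exp_\Phi$ is entire and the sum over $i$ is finite.

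One should also check that the right-hand side is well defined, that is, that $g(\theta^{q^n})$ has no pole. The only possible poles of $g$ lie at the roots of $a(t)$. These are algebraic over $\mathbb{F}_q$, while $\theta^{q^n}$ is transcendental over $\mathbb{F}_q$, so $a(\theta^{q^n})\neq 0$. The same observation handles the denominators $(\theta^{q^n}-\eta)(\theta^{q^n}-\eta^{(1)})$, which are likewise nonzero.

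I expect no real obstacle. The only thing to get right is the bookkeeping of the constant $\frac{\theta-\eta^{(1)}}{\theta-\eta}$ and of the $\Theta^{2q^n}$ factor when moving them out of $D_n^\Phi$.
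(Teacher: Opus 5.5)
Your proposal is correct and matches the paper's proof: the paper also obtains this corollary by substituting the coefficients $D_n^{\Phi}$ from Lemma \ref{lem:expPhi} into the series formula of Lemma \ref{Lem:eguexp} with $\phi=\Phi$. Your bookkeeping of the prefactor $\frac{\theta-\eta^{(1)}}{\theta-\eta}$ and of $\Theta^{2q^n}=(\Theta^2)^{q^n}$ is right, and your remarks on well-definedness are sound additions that the paper leaves implicit.
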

  
 Similarly, we obtain a relation between the actions $ \triangleright_{\Psi} $ and $ \triangleright_{\Phi} $.
\begin{cor}
For $ g(t) \in \K \otimes_{\mathbb{F}_q} \mathbb{C}_{\infty}  $,  we have
  \begin{equation}\label{Eq:PsiPhitri}
    \frac{g(t)}{(t - \eta )(t - \eta^{(1)} )} \triangleright_\Psi \frac{U}{\Theta^2}  =   \frac{\theta - \eta }{\theta -\eta^{(1)}} \Big( g(t) \triangleright_{\Phi} U  \Big). 
    \end{equation}
\end{cor}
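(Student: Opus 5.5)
The plan is to reduce both sides of \eqref{Eq:PsiPhitri} to explicit power series in $U$ using Lemma \ref{Lem:eguexp} and Corollary \ref{cor:PhiAction}, and then compare them term by term.

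For the left-hand side, apply Lemma \ref{Lem:eguexp} to $\Psi$ (whose exponential has coefficients $D_n$) with the rational function $g(t)/((t-\eta)(t-\eta^{(1)}))$ and argument $U/\Theta^2$. This gives
\[
\frac{g(t)}{(t - \eta )(t - \eta^{(1)} )} \triangleright_\Psi \frac{U}{\Theta^2} = \sum_{n=0}^{\infty} \left(\frac{U}{\Theta^2}\right)^{q^n} \frac{g(\theta^{q^n})}{D_n(\theta^{q^n}-\eta)(\theta^{q^n}-\eta^{(1)})}.
\]

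For the right-hand side, Corollary \ref{cor:PhiAction} gives $g(t)\triangleright_\Phi U$ as the same series multiplied by $\frac{\theta-\eta^{(1)}}{\theta-\eta}$. Multiplying by $\frac{\theta-\eta}{\theta-\eta^{(1)}}$ cancels this prefactor, so the two sides agree termwise and the identity follows.

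The only real point to check is the validity of Lemma \ref{Lem:eguexp} for a general $g \in \K\otimes_{\mathbb{F}_q}\mathbb{C}_\infty$, and in particular after multiplying $g$ by $1/((t-\eta)(t-\eta^{(1)}))$. Two things need confirming:
\begin{enumerate}
\item The factor $1/((t-\eta)(t-\eta^{(1)}))$ lies in $\A\otimes\mathbb{F}_q(\eta)$. It is a combination of the $1/(t-\eta^{(i)})$, which is the situation of Definition \ref{Def:expgU}.
\item The evaluations $g(\theta^{q^n})$ are defined and the series converge in $\mathbb{C}_\infty$. This holds because $\theta^{q^n}$ is never a pole of an element of $\A_{\mathbb{C}_\infty}$; any denominator $a(\theta^{q^n})$ is nonzero, since the characteristic is generic.
\end{enumerate}

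Alternatively, the identity can be obtained more structurally. Combine Equation \eqref{Eq:expUThe} (via Lemma \ref{lem:motivetri} with $g$ inserted) with Equation \eqref{eq:expPhiand0}, and use Lemma \ref{lem:gaU} to move the factor $g$ through. The series comparison above is the most direct route, and I expect no genuine obstacle beyond this bookkeeping.
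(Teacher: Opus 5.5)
Your proposal is correct and is essentially the paper's proof: the paper also expands the left side with Lemma \ref{Lem:eguexp} as $\sum_{n}(U/\Theta^2)^{q^n}\, g(\theta^{q^n})/\bigl(D_n(\theta^{q^n}-\eta)(\theta^{q^n}-\eta^{(1)})\bigr)$ and then reads off the identity by comparing with Corollary \ref{cor:PhiAction}. Your well-definedness remarks and the alternative structural route via \eqref{Eq:expUThe} and \eqref{eq:expPhiand0} are sound, but they are not needed beyond what Lemma \ref{Lem:eguexp} already gives.
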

\begin{proof}
Lemma \ref{Lem:eguexp} yields that  
\begin{align*}
 \frac{\theta -\eta^{(1)}}{\theta - \eta }  \frac{g(t)}{(t - \eta )(t - \eta^{(1)} )} \triangleright_\Psi \frac{U}{\Theta^2}  =&\frac{\theta -\eta^{(1)}}{\theta - \eta }\sum_{n=0}^{\infty}\left(\frac{U}{\Theta^2}\right)^{q^n}  \frac{g(\theta^{q^n})}{ D_n(\theta ^{q^n} - \eta)(\theta ^{q^n} - \eta^{(1)})} . 
\end{align*}
 The equality \eqref{Eq:PsiPhitri} follows from Corollary \ref{cor:PhiAction}.   
\end{proof}

\subsection{Tate Algebra}
In this part, we introduce the generalized Tate algebra $\mathbb{T}_{\eta}$ and show that $\omega_f$ lies in $\mathbb{T}_{\eta}$.

\begin{defn} We define the domain 
    \begin{equation}\label{Eq:D}
    \mathcal{D}=\left\{z \in  \mathbb{C}_{\infty}\, \Big| ~\left|  ~\frac{1}{z-\eta^{(j)}}\right|_{\eta}\leqslant 1, j=0,\cdots,N-1\right\}.
\end{equation}
In other words, $z \in  \mathcal{D}$ is equivalent to $ v_{\eta}(z - \eta^{(j)})  \leqslant 0 $ for each $ j =0,\cdots,N-1$.
\end{defn} 
Notice that the open disk
\[
\{ z \in \mathbb{C}_{\infty}\, | \, |z|_{\eta}<1 \}
\]
is contained in $ \mathcal{D} $.
\begin{lem}
     None of the points $   \theta^{q^k} $ for $ k \in \mathbb{Z}_{\geqslant 0}  $ lie in $ \mathcal{D}$.
\end{lem}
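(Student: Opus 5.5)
The plan is to exhibit, for each $k \geqslant 0$, a single index $j$ with $v_{\eta}(\theta^{q^k} - \eta^{(j)}) > 0$. By the definition \eqref{Eq:D}, this forces $\left|\frac{1}{\theta^{q^k}-\eta^{(j)}}\right|_\eta > 1$, so $\theta^{q^k} \notin \mathcal{D}$. The whole argument therefore reduces to understanding $v_\eta$ on the elements $\theta^{q^k}-\eta^{(j)}$. The natural candidate is $j \equiv k \pmod N$, because $\eta^{(k)} = \eta^{q^k}$ depends only on $k$ modulo $N$ (since $\eta\in\mathbb{F}_{q^N}$).

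The key step is the factorization
\[
\theta^{q^k} - \eta^{q^k} = (\theta - \eta)^{q^k}.
\]
This holds because raising to the $q^k$-th power is additive in characteristic $p$. It then follows that
\[
v_\eta(\theta^{q^k}-\eta^{(k)}) = q^k\, v_\eta(\theta-\eta) = q^k > 0,
\]
using the normalization $v_\eta(\theta-\eta)=1$ fixed in Section \ref{Sec:NandP}. Taking $j$ to be the residue of $k$ modulo $N$, so that $0\leqslant j\leqslant N-1$ and $\eta^{(j)}=\eta^{(k)}$, we get $v_\eta(\theta^{q^k}-\eta^{(j)}) = q^k > 0$. This contradicts the defining condition of $\mathcal{D}$, namely $v_\eta(z-\eta^{(j)})\leqslant 0$ for every $j$. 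The same computation is already used implicitly in Lemma \ref{Lem:ald}, where $v_\eta(\bket{k}) = -q^k$.

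There is no genuine obstacle here. The only points needing care are, first, that $\eta^{(k)}$ really is one of the $N$ points $\eta^{(0)},\dots,\eta^{(N-1)}$ appearing in \eqref{Eq:D}, and second, that we use the valuation $v_\eta$ on $\mathbb{C}_\infty$ extending the one on $K_\rho$ with $v_\eta(\theta-\eta)=1$. Both are immediate from $\eta^{q^N}=\eta$ and from the setup of $K_\rho$. As a by-product, the remaining factors $\theta^{q^k}-\eta^{(i)}$ with $i\not\equiv k \pmod N$ have valuation $0$, since $\eta^{(i)}-\eta^{(k)}$ is a nonzero constant. Hence $\theta^{q^k}$ lies exactly at the excluded "pole" $[\eta^{(k)}]$-neighbourhood, which is consistent with the later use of this lemma for the poles of $\omega_f$.
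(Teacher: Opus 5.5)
Your proof is correct. It rests on the same fact as the paper's proof, namely that $\theta^{q^k}$ lies in the open disc of radius $<1$ around $\eta^{(k)}$, but it is organized the other way round.

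You evaluate the defining inequality of $\mathcal{D}$ at the point $\theta^{q^k}$ itself. Using $\theta^{q^k}-\eta^{(k)}=(\theta-\eta)^{q^k}$, you show the inequality fails for $j\equiv k \pmod N$.

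The paper instead takes an arbitrary $z\in\mathcal{D}$ and proves the uniform bound $|z-\theta^{q^k}|_\eta\geqslant 1$ (inequality \eqref{Equ:normttheta}); non-membership then follows as a corollary. This buys something extra. The same bound is reused right afterwards to get $|\omega_k(\ft)|_\eta\leqslant q^{-q^k}$, and hence $\omega_f\in\mathbb{T}_\eta$. Your version, as stated, only gives that the points are excluded.

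On the other hand, your version is more explicit. The paper writes out only $v_\eta(z-\theta)=v_\eta(z-\eta)$ and then passes to $\theta^{q^k}$ without comment. That passage tacitly uses exactly your Frobenius identity.

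If you want the paper's estimate from your setup, one ultrametric step suffices. For $z\in\mathcal{D}$,
\[
v_\eta(z-\theta^{q^k})=v_\eta\bigl((z-\eta^{(k)})-(\theta-\eta)^{q^k}\bigr)=v_\eta(z-\eta^{(k)})\leqslant 0,
\]
since $v_\eta\bigl((\theta-\eta)^{q^k}\bigr)=q^k>0$.
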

\begin{proof}
  To see this, note that for any $ z \in \mathcal{D} $, 
\[
    v_{\eta} (z - \theta) = v_{\eta} \left( (z - \eta )- (\theta - \eta) \right)  = v_{\eta} (z - \eta) \leqslant 0 ,
\]
since $  v_{\eta} ( \theta - \eta ) = 1 $. 
It follows that 
\begin{equation}\label{Equ:normttheta}
| z - \theta^{q^k} |_{\eta} = q^{- v_{\eta} (z - \eta)} \geqslant 1 . 
\end{equation}
Consequently, $ z \not = \theta^{q^k}$ for any $ k \geqslant 0 $.  
\end{proof}
\begin{lem}    For $ j = 0, \cdots, N-1 $, the closed disks 
    \[ \mathcal{D}_j := \{ z \in \mathbb{C}_{\infty} | ~| T_j(z ) |_{\eta} \leqslant 1 \} 
    \]
coincide, each being equal to the domain $\mathcal{D}$. 
\end{lem}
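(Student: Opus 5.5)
Write $T_j(z)=\frac{z^j}{\rho(z)}=\frac{z^j}{\prod_{k=0}^{N-1}(z-\eta^{(k)})}$ and compare $v_\eta(T_j(z))$ with the quantities $v_\eta(z-\eta^{(k)})$ that define $\mathcal{D}$ in \eqref{Eq:D}. The plan is to show that for every $j$ the condition $v_\eta(T_j(z))\geqslant 0$ is equivalent to $v_\eta(z-\eta^{(k)})\leqslant 0$ for all $k=0,\dots,N-1$. This gives $\mathcal{D}_j=\mathcal{D}$ for each $j$.

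First record the basic valuations. Since $\theta$ is transcendental over $\mathbb{F}_q$ and $\eta$ is a unit, every root of unity and every element of $\mathbb{F}_\rho^*$ has $v_\eta=0$. For distinct roots $\eta^{(k)}\neq\eta^{(l)}$ we have $v_\eta(\eta^{(k)}-\eta^{(l)})=0$, because the difference is a nonzero element of the finite field $\mathbb{F}_\rho$.

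Next split $z$ by the size of $v_\eta(z)$.

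\emph{Case $v_\eta(z)<0$.} Then $v_\eta(z-\eta^{(k)})=v_\eta(z)<0$ for every $k$, so $z\in\mathcal{D}$. Also
\[
v_\eta(T_j(z))=jv_\eta(z)-Nv_\eta(z)=(N-j)\,v_\eta(z)\cdot(-1)\cdot(-1)\!,
\]
that is, $v_\eta(T_j(z))=(j-N)v_\eta(z)>0$ since $j\leqslant N-1$. Hence $z\in\mathcal{D}_j$.

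\emph{Case $v_\eta(z)\geqslant 0$.} Then $v_\eta(z^j)\geqslant 0$. Reduce modulo the maximal ideal: the residue field of $\mathbb{C}_\infty$ contains $\mathbb{F}_\rho$, and the reductions $\bar\eta^{(k)}$ of the $N$ roots are distinct. So $\bar z$ equals at most one $\bar\eta^{(k)}$.

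\emph{Subcase: $\bar z\neq\bar\eta^{(k)}$ for all $k$.} Then every $v_\eta(z-\eta^{(k)})=0$, so $z\in\mathcal{D}$. Moreover $\bar z\neq 0$, because $0$ is not a root of $\rho$ (as $\rho$ is irreducible of degree $N\geqslant 2$). Thus $v_\eta(z)=0$ and $v_\eta(T_j(z))=0$, so $z\in\mathcal{D}_j$.

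\emph{Subcase: $\bar z=\bar\eta^{(k_0)}$ for exactly one $k_0$.} Then $v_\eta(z-\eta^{(k_0)})>0$, so $z\notin\mathcal{D}$. For $k\neq k_0$ we have $v_\eta(z-\eta^{(k)})=0$, and $v_\eta(z)=0$. Therefore
\[
v_\eta(T_j(z))=-v_\eta(z-\eta^{(k_0)})<0,
\]
so $z\notin\mathcal{D}_j$.

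In every case $z\in\mathcal{D}_j$ exactly when $z\in\mathcal{D}$, and the answer does not depend on $j$. The only delicate point is the first case, where the strict inequality $j<N$ is needed for $v_\eta(T_j(z))>0$. The second point to check is that $0$ is not a root of $\rho$. Both are immediate from the hypotheses $0\leqslant j\leqslant N-1$ and $\rho$ irreducible of degree $N\geqslant 2$.
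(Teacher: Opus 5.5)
Your case analysis matches the paper's proof. The paper proves the two inclusions separately and distinguishes three situations: some $v_\eta(z-\eta^{(n)})<0$, all of them $=0$, or one of them $>0$. These are exactly your cases $v_\eta(z)<0$, $\bar z\neq\bar\eta^{(k)}$ for all $k$, and $\bar z=\bar\eta^{(k_0)}$. Your first case and last subcase are correct. The display in the first case is garbled: $(N-j)\,v_\eta(z)\cdot(-1)\cdot(-1)$ equals $(N-j)v_\eta(z)<0$. The sentence after it gives the right value, $(j-N)v_\eta(z)>0$.

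The first subcase of $v_\eta(z)\geqslant 0$ contains a false step. From $\bar z\neq\bar\eta^{(k)}$ for all $k$ you cannot conclude $\bar z\neq 0$. That $0$ is not a root of $\rho$ only says $\bar\eta^{(k)}\neq 0$, so $\bar z=0$ is compatible with this subcase; take $z=0$ or $z=\theta-\eta$. For such $z$ and $j\geqslant 1$, the claims $v_\eta(z)=0$ and $v_\eta(T_j(z))=0$ are wrong: in fact $v_\eta(T_j(z))>0$. The conclusion still holds, with the argument the paper uses. Since every $v_\eta(z-\eta^{(k)})=0$ here,
\[
v_\eta(T_j(z)) = j\,v_\eta(z)-\sum_{k=0}^{N-1}v_\eta(z-\eta^{(k)}) = j\,v_\eta(z)\geqslant 0,
\]
so $z\in\mathcal{D}_j$. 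After this fix, the remark that $0$ is not a root of $\rho$ is unnecessary. In the last subcase, $v_\eta(z)=0$ follows simply because $\eta^{(k_0)}$ is a nonzero element of a finite field, hence a unit.
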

\begin{proof}
    First,  we show the inclusion $ \mathcal{D}_{i} \subseteq \mathcal{D} $. 
    Set $z \in \mathcal{D}_i $, and it follows from the definition that $\left|  ~T_i(z) \right|_{\eta}\leqslant 1$.
    Assume that $ z  \not\in \mathcal{D} $. Then there exists some $ n $, such that $v_\eta(z -\eta^{(n)})> 0$. Thus 
    \[
     v_\eta(z)=v_\eta(z-\eta^{(n)}+\eta^{(n)})= 0 ,
     \]
     and for $i\neq n$,
     \[
     v_\eta(z-\eta^{(i)})=v_\eta(z-\eta^{(n)}+\eta^{(n)}-\eta^{(i)})= 0.
    \] 
     Therefore, we obtain 
    \[
    v_\eta\left(\frac{z^j}{(z-\eta)\cdots (z-\eta^{(N-1)})}\right)=jv_\eta(z)-v_{\eta}(z-\eta)-\cdots -v_{\eta}(z-\eta^{(N-1)}) = -v_{\eta}(z - \eta^{(n)}) <  0 ,
    \]
    i.e.,  $ | T_j |_{\eta} > 1 $ for all $j$.
    This contradicts our assumption.
 
    Conversely, we check $ \mathcal{D}_{i} \supseteq \mathcal{D}  $. 
Let $ z \in \mathcal{D} $. This means $v_\eta(z-\eta^{(j)})\leqslant 0$ for $j=0,\cdots,N-1$. 
If there exists some $n$, such that $v_\eta(z-\eta^{(n)})< 0$, then we
    have 
 \[
     v_\eta(z)=v_\eta(z-\eta^{(n)}+\eta^{(n)})= v_\eta(z-\eta^{(n)}) 
     \]
     and 
     \[ v_\eta(z-\eta^{(j)})=v_\eta(z-\eta^{(n)}+\eta^{(n)}-\eta^{(j)}) = v_\eta(z-\eta^{(n)}),
    \] 
    for $j\neq n$. It is evident that
    \begin{align}\label{Eq:vetaTi2}
       v_\eta\left(T_i(z)\right)
       =&{}iv_\eta(z)-v_{\eta}(z-\eta)-\cdots -v_{\eta}(z-\eta^{(N-1)})\nonumber\\
       =&{}i v_\eta(z-\eta^{(n)}) -N v_\eta(z-\eta^{(n)}) > 0, 
    \end{align}
    which yields $ |T_i(z)|_{\eta} < 1 $ for $i=0,\cdots,N-1$. So in this case we have $z \in \mathcal{D}_{i} $. 

Otherwise, we have $v_\eta(z-\eta^{(j)})= 0$ for all $ j $.  Notice that 
\[ v_{\eta}(z) \geqslant \min \left(v_{\eta}(z - \eta^{(j)}), v_\eta (\eta^{(j)}) \right) \geqslant 0 .
\] 
Then  
    \begin{align*} 
v_\eta\left(T_i(z)\right)=iv_\eta(z)-v_{\eta}(z-\eta)-\cdots -v_{\eta}(z-\eta^{(N-1)})=i v_\eta(z)   \geqslant 0. 
    \end{align*}
    Again, we have $z \in \mathcal{D}_i $. 
    This completes the proof. 
\end{proof}


    

\begin{defn}[Tate algebra]
    The Tate algebra $\mathbb{T}_{\eta}$ is the subalgebra of $ \mathbb{C}_{\infty} [\![I_{\infty}]\!]$ consisting of functions $g(\ft) \in \mathbb{C}_{\infty} [\![I_{\infty}]\!]$ that converge in the domain $  \mathcal{D}$ with respect to the $\eta$-norm $| \cdot |_{\eta}$. The Gauss norm of $ g(\ft )$ is defined as
\[
\|g(\ft)\|_{\eta} = \sup_{z \in \mathcal{D}}{ |g(z)|_{\eta} }.
\]
\end{defn}

\begin{prop}
    The Anderson-Thakur function $ \omega_{f}$  belongs to the Tate algebra $ \mathbb{T}_{\eta} $. 
\end{prop}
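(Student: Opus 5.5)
We need to show that $\omega_f$ lies in $\mathbb{T}_\eta$: it should be an element of $\mathbb{C}_\infty[\![I_\infty]\!]$ that converges on $\mathcal{D}$. The plan is to treat the infinite product in \eqref{Eq:omegaf} factor by factor. First, rewrite the $k$-th factor as
\[
\frac{\Theta^{q^k}}{\Theta^{q^k} - \frac{1}{\ft - \eta^{(k)}}} = \Bigl(1 - \frac{(\theta-\eta)^{q^k}}{\ft - \eta^{(k)}}\Bigr)^{-1} = \sum_{m\geqslant 0} \Bigl(\frac{(\theta-\eta)^{q^k}}{\ft-\eta^{(k)}}\Bigr)^m .
\]
Note that $\frac{1}{\ft-\eta^{(k)}}$ depends only on $k \bmod N$, since $\eta^{(k+N)}=\eta^{(k)}$. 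By Notation \ref{Not:bi}, twisted by $\sigma^k$, it equals $\sum_i b_i^{\sigma^k} T_i(\ft)$. This is a polynomial in the $T_i$ with coefficients in $\mathbb{F}_q(\eta)$, all of which have $|\cdot|_\eta=1$. Hence each factor is a geometric series in an element of $\mathbb{C}_\infty[T_0,\dots,T_{N-1}]$, and it makes sense in the completion.

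Next, bound each factor on $\mathcal{D}$. For $z\in\mathcal{D}$ we have $|1/(z-\eta^{(k)})|_\eta\leqslant 1$ by \eqref{Eq:D}, while $|(\theta-\eta)^{q^k}|_\eta = q^{-q^k}$. So the quantity $x_k(z)=(\theta-\eta)^{q^k}/(z-\eta^{(k)})$ satisfies $\sup_{\mathcal{D}}|x_k|_\eta\leqslant q^{-q^k}<1$. On the Gauss-norm side, $\|\sum_i b_i^{\sigma^k}T_i\|_\eta\leqslant 1$ because the lemma above identifies $\mathcal{D}$ with each disk $\{|T_j|_\eta\leqslant 1\}$. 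It follows that the geometric series converges in $\mathbb{T}_\eta$, and that
\[
\Bigl\|\Bigl(1-x_k\Bigr)^{-1}-1\Bigr\|_\eta\leqslant q^{-q^k}.
\]
The factor $(1-x_k)^{-1}$ is therefore a unit of $\mathbb{T}_\eta$ of the form $1+\epsilon_k$, with $\|\epsilon_k\|_\eta\to 0$.

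Finally, because $\|\epsilon_k\|_\eta\to 0$ and the Gauss norm is non-archimedean and submultiplicative, the partial products $P_n=\prod_{k\leqslant n}(1+\epsilon_k)$ form a Cauchy sequence. Indeed, $\|P_{n+1}-P_n\|_\eta\leqslant\|P_n\|_\eta\|\epsilon_{n+1}\|_\eta\leqslant q^{-q^{n+1}}$, using $\|P_n\|_\eta\leqslant 1$. Multiplying the limit by the constant $\sqrt[q-1]{-\Theta}$ gives $\omega_f\in\mathbb{T}_\eta$, and the product converges uniformly on $\mathcal{D}$.

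The main obstacle is justifying that $\mathbb{T}_\eta$ is complete for $\|\cdot\|_\eta$ and that the sup over $\mathcal{D}$ behaves as a genuine norm on power series in the $T_i$. I would handle this by noting that every estimate above is uniform in $z\in\mathcal{D}$. This lets us define $\omega_f$ pointwise as a uniform limit on $\mathcal{D}$, while the coefficientwise expansion in $\mathbb{C}_\infty[\![I_\infty]\!]$ agrees with the formal product. The remaining check is that on $\mathcal{D}$ no factor has a pole, i.e. $x_k(z)\neq 1$. This holds since $|x_k(z)|_\eta<1$, which matches the lemma stating that the points $\theta^{q^k}$ lie outside $\mathcal{D}$.
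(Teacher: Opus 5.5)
Your proposal is correct and follows the paper's proof. The paper writes each factor as $1+\omega_k(\ft)$ with $\omega_k(\ft)=(\theta-\eta)^{q^k}/(\ft-\theta^{q^k})$, which is exactly your $\epsilon_k=x_k/(1-x_k)$, and uses $|z-\theta^{q^k}|_\eta\geqslant 1$ on $\mathcal{D}$ to get the same uniform bound $q^{-q^k}$ and hence convergence of the product; your expansion in the $T_i$ and your Cauchy estimate on the partial products just spell out the membership in $\mathbb{C}_\infty[\![I_\infty]\!]$ and the completeness that the paper leaves implicit.
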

\begin{proof}
We verify the convergence of $ \omega_f $. 
According to \eqref{Eq:omegaf}, 
\[
    \omega_f(\ft) =\sqrt[q-1]{-\Theta}  \prod_{k=0}^{\infty} (1 + \omega_k(\ft) ) ,
\]
where 
\[
\omega_k(\ft)=  \frac{\Theta^{q^k}}{\Theta^{q^k} - \frac{1}{\ft - \eta^{(k)}}} - 1 = \frac{\frac{1}{\ft - \eta^{(k)}}}{\Theta^{q^k} - \frac{1}{\ft - \eta^{(k)}}} =  \frac{(\theta  - \eta)^{q^k} }{\ft - \theta^{q^k}}.    
\]
From the inequality \eqref{Equ:normttheta}, it follows that
\[
    |\omega_k(\ft)|_{\eta} = q^{-q^k} \cdot \left|\frac{1}{\ft - \theta^{q^k}} \right|_{\eta} \leqslant q^{-q^k}. 
\]
Therefore, $ |\omega_k(\ft)|_{\eta} \to 0 $ as $ k \to \infty$, which yields that $ \omega_f $ converges for $ \ft \in \mathcal{D} $.
\end{proof}

\subsection{Pellarin-Type Generating Function}


\begin{defn}\label{Def:GU}
    We define the Pellarin-type generating function $G(U;\ft)$ as
\begin{align}\label{Eq:deG} 
  G(U; \ft )= \sum_{n =0 }^{\infty } \frac{U^{q^n}} { D_n Q_n(\ft) },  
\end{align}
where $Q_n(\ft) $ is defined as
\begin{align}\label{Eq:Qn} 
    Q_n(\ft) = (\theta^{q^n} - \eta)^2 \Theta^{2q^n} \left(\frac{1}{\theta^{q^n} - \eta} - \frac{1}{\ft - \eta } \right)=\left(\frac{\theta^{q^n} - \eta}{\theta^{q^n} - \eta^{(n)}} \right)^2  \left(\frac{1}{\theta^{q^n} - \eta} - \frac{1}{\ft - \eta } \right)
\end{align}
and  $D_n$'s are the coefficients of the exponential function $\exp_{(0)}$.
\end{defn}
The following lemma shows that $ G(U;\ft) $ satisfies the deformed version of the Frobenius differential equation in  \eqref{Eq:thnaPo}.  
\begin{prop} \label{prop:G(1)fG}
   We have the following equalities
   \begin{equation}\label{Eq:GfGapr1}
         G^{(1)}(U;\ft) 
         =f(\ft)\cdot G (U;\ft) +   \exp_{\Phi}(U).
    \end{equation}
\end{prop}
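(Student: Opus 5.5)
The plan is a direct termwise comparison of coefficients of $U^{q^n}$ on both sides of \eqref{Eq:GfGapr1}. Here $G^{(1)}$ means applying $(-)^{(1)}$ to all coefficients (in $\theta$, $\eta$ and $U$) while leaving $\ft$ fixed, exactly as in the proof of Theorem \ref{Thm:omeg0}. The inputs are:
\begin{itemize}
\item the recursion $D_{n+1}=D_n^q\,\frac{\theta-\theta^{q^{n+1}}}{(\theta^{q^{n+1}}-\eta)(\theta-\eta)}$ of Corollary \ref{Cor:Dn};
\item the explicit coefficients $D_n^{\Phi}$ of $\exp_\Phi$ from Lemma \ref{lem:expPhi};
\item the definition \eqref{Eq:Qn} of $Q_n(\ft)$.
\end{itemize}

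\emph{Step 1: reindex the twisted series.} Write
\[
G^{(1)}(U;\ft)=\sum_{n\ge 0}\frac{U^{q^{n+1}}}{D_n^{q}\,Q_n^{(1)}(\ft)} .
\]
Use Corollary \ref{Cor:Dn} to replace $1/D_n^q$ by $\frac{\theta-\theta^{q^{n+1}}}{D_{n+1}(\theta^{q^{n+1}}-\eta)(\theta-\eta)}$, and set $m=n+1$. This gives $G^{(1)}=\sum_{m\ge1}\frac{U^{q^m}}{D_m}\,c_m(\ft)$ for explicit rational functions $c_m(\ft)$.

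\emph{Step 2: reduce to one identity per $m$.} On the right-hand side, the coefficient of $U^{q^m}/D_m$ in $f(\ft)G$ is $f(\ft)/Q_m(\ft)$. By Lemma \ref{lem:expPhi}, the coefficient in $\exp_\Phi(U)$ is $D_m/D_m^\Phi=\frac{\theta-\eta^{(1)}}{\theta-\eta}\,\Theta^{-2q^m}(\theta^{q^m}-\eta)^{-1}(\theta^{q^m}-\eta^{(1)})^{-1}$. The proposition is therefore equivalent to two rational-function identities in $\ft$:
\[
\frac{f(\ft)}{Q_0(\ft)}+\frac{1}{D_0^\Phi}=0 ,
\qquad
c_m(\ft)=\frac{f(\ft)}{Q_m(\ft)}+\frac{D_m}{D_m^\Phi}\quad(m\ge1).
\]

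\emph{Step 3: partial fractions in $\ft$.} To handle $f/Q_m$, split
\[
f(\ft)=\Big(\frac{1}{\ft-\eta}-\frac{1}{\theta^{q^m}-\eta}\Big)+\Big(\frac{1}{\theta^{q^m}-\eta}-\Theta\Big).
\]
The first bracket is, up to sign, exactly the $\ft$-dependent factor of $Q_m$. So $f/Q_m$ equals the constant $-\big(\frac{\theta^{q^m}-\eta^{(m)}}{\theta^{q^m}-\eta}\big)^2$ plus a single pole term $\frac{1/(\theta^{q^m}-\eta)-\Theta}{Q_m(\ft)}$. One then checks two things.
\begin{itemize}
\item The constant cancels against $D_m/D_m^\Phi$ for $m\ge1$; for $m=0$ this cancellation gives the first identity of Step 2. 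This should use $\Theta^{q^m}=(\theta^{q^m}-\eta^{(m)})^{-1}$.
\item The pole term coincides with $c_m(\ft)$. Here $\theta-\theta^{q^m}$ is rewritten via \eqref{Eq:ttq2k} as $\bket{m}/\Theta^{q^m+\sigma^m}$, with $\bket{m}=-f^{(m)}(\theta)$.
\end{itemize}

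\emph{Main obstacle.} The delicate point is the pole term. Twisting $Q_{m-1}$ moves its pole from $\frac{1}{\ft-\eta}$ to $\frac{1}{\ft-\eta^{(1)}}$, whereas $Q_m$ involves $\frac{1}{\ft-\eta}$. So the two sides must be compared as functions of $\ft$, not merely as evaluations.

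I would first write both pole terms in the normalized form $\frac{\alpha_m}{\ft-\theta^{q^m}}+\beta_m$. This is possible because both vanish or blow up only at $\ft=\theta^{q^m}$, apart from the points over $P_\rho$. I would then match $\alpha_m$ and $\beta_m$ using the relation $\frac{1}{\theta^{q^m}-\eta^{(1)}}-\frac{1}{\ft-\eta^{(1)}}=\frac{\ft-\theta^{q^m}}{(\theta^{q^m}-\eta^{(1)})(\ft-\eta^{(1)})}$ and its analogue for $\eta$.

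If a residual factor $\frac{\ft-\eta}{\ft-\eta^{(1)}}$ survives, I would absorb it using the normalizing constants $\big(\frac{\theta^{q^n}-\eta}{\theta^{q^n}-\eta^{(n)}}\big)^2$ in $Q_n$ together with the factor $\frac{\theta-\eta^{(1)}}{\theta-\eta}$ in $D_n^\Phi$. These constants were evidently chosen to make exactly this bookkeeping close up. Convergence in $\mathbb{T}_\eta$ is not an issue, since all series involved converge by the estimate \eqref{Equ:normttheta}.
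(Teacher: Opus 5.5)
Your reduction in Steps 1--2 is correct. The proposition is equivalent to $c_m(\ft)=f(\ft)/Q_m(\ft)+D_m/D_m^{\Phi}$ for $m\geqslant 1$, together with the $m=0$ identity, which does hold. This is the same per-term identity the paper checks through $\lambda_n=c_nQ_n-f$ in \eqref{Eq:delan}--\eqref{Eq:lanexp}.

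The gap is in Step 3: both bulleted claims are false for every $m\geqslant 1$. Write $P_m(\ft)$ for your pole term. Then $c_m/P_m=Q_m/Q^{(1)}_{m-1}=\frac{(\theta^{q^m}-\eta)(\ft-\eta^{(1)})}{(\theta^{q^m}-\eta^{(1)})(\ft-\eta)}\neq 1$. More precisely,
\[
c_m(\ft)-P_m(\ft)=\kappa_m:=\frac{(\theta-\theta^{q^m})(\eta^{(1)}-\eta)(\theta^{q^m}-\eta^{(m)})^2}{(\theta-\eta)(\theta^{q^m}-\eta)^2(\theta^{q^m}-\eta^{(1)})}.
\]
On the other hand, $(\theta-\eta^{(1)})(\theta^{q^m}-\eta)-(\theta-\eta)(\theta^{q^m}-\eta^{(1)})=(\theta-\theta^{q^m})(\eta^{(1)}-\eta)$, and this gives
\[
-\Bigl(\frac{\theta^{q^m}-\eta^{(m)}}{\theta^{q^m}-\eta}\Bigr)^2+\frac{D_m}{D_m^{\Phi}}=\kappa_m .
\]
Each of your two claims amounts to $\kappa_m=0$. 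That would require $\eta^{(1)}=\eta$, which is the Carlitz-type situation, and it fails here because $N\geqslant 2$.

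Your fallback does not rescue this. The leftover factor $\frac{\ft-\eta}{\ft-\eta^{(1)}}$ depends on $\ft$, so no $\ft$-independent normalizing constant can absorb it. If you actually carry out the normalization $\frac{\alpha_m}{\ft-\theta^{q^m}}+\beta_m$, you will find that the $\alpha_m$ agree, since the residues at $\ft=\theta^{q^m}$ match. The $\beta_m$, however, differ by exactly $\kappa_m$.

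The repair is to stop treating the constant part and the pole part as separately balanced. The mismatch $\kappa_m$ between $c_m$ and $P_m$ is precisely the nonzero constant left over from $-\bigl(\frac{\theta^{q^m}-\eta^{(m)}}{\theta^{q^m}-\eta}\bigr)^2+D_m/D_m^{\Phi}$. Hence $c_m=P_m+\kappa_m=f/Q_m+D_m/D_m^{\Phi}$.

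The paper does this in one step. Using \eqref{Eq:QnQn-1}, it computes $c_mQ_m-f$ directly and shows it equals
\[
\frac{(\theta-\eta^{(1)})\,Q_m(\ft)}{(\theta^{q^m}-\eta^{(1)})(\theta^{q^m}-\eta)(\theta-\eta)\Theta^{2q^m}}=\frac{D_m}{D_m^{\Phi}}\,Q_m(\ft).
\]
It then sums over $m$ using \eqref{Eq:lesunproUtauThe} and \eqref{eq:expPhiand0}, which is equivalent to your use of Lemma \ref{lem:expPhi}.
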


\begin{proof}
 
Set 
  \begin{align}\label{Eq:delan} 
  \lambda_n (\ft) :=   \left( \frac{1}{\theta^{q^n}- \eta } - \frac{1}{ \theta - \eta } \right) \cdot  \frac{Q_{n} (\ft)}{ Q_{n-1}^{(1)} (\ft)} - f(\ft). 
\end{align}
It follows from the expression of $Q_n$ in Equation \eqref{Eq:Qn} that
\begin{align}\label{Eq:QnQn-1}
\frac{Q_{n}(\ft)}{ Q_{n-1}^{(1)}(\ft)} 
=\frac{(\theta^{q^n}-\eta)(\ft-\eta^{(1)})}{(\theta^{q^n}-\eta^{(1)})(\ft-\eta)}.
\end{align}
This implies that  
\begin{align}\label{Eq:lanexp}
\nonumber \lambda_n(\ft)&=\frac{(\theta-\theta^{q^n})(\ft-\eta^{(1)})}{(\theta^{q^n}-\eta^{(1)})(\theta-\eta)(\ft-\eta)}-\frac{\theta-\ft}{(\theta-\eta)(\ft-\eta)}
\\\nonumber
      &=\frac{(\ft-\theta^{q^n})(\theta-\eta^{(1)})}{(\theta^{q^n}-\eta^{(1)})(\theta-\eta)(\ft-\eta)}
 \\    \nonumber
    &=\frac{\theta-\eta^{(1)}}{(\theta^{q^n}-\eta^{(1)})(\theta-\eta)}-\frac{(\theta^{q^n}-\eta)(\theta-\eta^{(1)})}{(\theta^{q^n}-\eta^{(1)})(\theta-\eta)(\ft-\eta)}\\
    &=\frac{  (\theta - \eta^{(1)})Q_n(\ft)}{(\theta^{q^n} - \eta^{(1)}) (\theta^{q^n} - \eta)(\theta - \eta) \Theta^{2 q^{n}}}.
\end{align}
By Definition \ref{Def:GU}, we have
    \begin{align*} 
 G^{(1)}(U; \ft )&= \sum_{n =0 }^{\infty } \frac{U^{q^{n+1}}} { D_n^{(1)} Q_n^{(1)}(\ft) }
\\ 
&= \sum_{n =0 }^{\infty } \frac{U^{q^n}} { D_n Q_{n-1}^{(1)}(\ft) } ( \frac{1}{\theta^{q^n}- \eta } - \frac{1}{ \theta - \eta } ) \qquad \text{by Equation \eqref{Eq:Dn}}\\
&= \sum_{n =0 }^{\infty } \frac{U^{q^n}} { D_n Q_n(\ft) }  \left( \frac{1}{\theta^{q^n}- \eta } - \frac{1}{ \theta - \eta } \right) \cdot  \frac{Q_{n}(\ft)}{ Q_{n-1}^{(1)}(\ft)}\\ 
&=\sum_{n =0 }^{\infty } \frac{U^{q^n}} { D_n Q_n(\ft) }   \left(f(\ft)+\lambda_n(\ft)\right) \qquad  \text{by Equation \eqref{Eq:delan}}\\ 
&=f(\ft) G (U;\ft) + \frac{  \theta - \eta^{(1)}}{ \theta - \eta} \sum_{n =0 }^{\infty } \frac{1 } { D_n }\Big(\frac{1}{\theta^{q^n} - \eta^{(1)}}  \frac{1}{\theta^{q^n} - \eta} \Big) \left( \frac{U}{\Theta^2}\right)^{q^n}   \qquad  \text{by Equation \eqref{Eq:lanexp}}\\
&=f(\ft) G (U;\ft) +  (\tau + \Theta)^2 \exp_{(0)} \left(\frac{U}{\Theta^2}\right)  \qquad  \text{by Equation \eqref{Eq:lesunproUtauThe}}.
\end{align*}
Utilizing the relation of $\exp_{(0)}$ and $\exp_{\Phi}$ in \eqref{eq:expPhiand0}, we complete the proof of Equation \eqref{Eq:GfGapr1}.
\end{proof}

The following theorem is the consequence of Theorem  \ref{Thm:omeg0} and Proposition \ref{prop:G(1)fG}.
\begin{thm}
Recall that $\tilde \pi_{\Phi} = \frac{\tilde{\pi}}{C_{0,2}}$ denotes the Carlitz period of $ \Phi $.   We obtain the following equality:
\begin{equation}\label{eq:omegafft}
    \omega_{f}(\ft)= G ( \tilde{\pi}_{\Phi} ; \ft ).
\end{equation}

\end{thm}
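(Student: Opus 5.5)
Plugging $U=\tilde\pi_\Phi$ into Proposition \ref{prop:G(1)fG} gives
\[
G^{(1)}(\tilde\pi_\Phi;\ft)=f(\ft)\,G(\tilde\pi_\Phi;\ft)+\exp_\Phi(\tilde\pi_\Phi).
\]
By \eqref{eq:expPhi} and \eqref{Eq:tilpiPhi}, $\exp_\Phi(\tilde\pi_\Phi)=C_{0,2}^{-1}\exp_{(2)}(\tilde\pi)=0$, because $\tilde\pi\in\ker\exp_{(2)}$ by Theorem \ref{Th:kenelpitil}. Hence $G(\tilde\pi_\Phi;\ft)$ solves $\nabla^f$.

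Next we check that $G(\tilde\pi_\Phi;\ft)$ lies in the space where Theorem \ref{Thm:omeg0} applies, namely $\mathbb{C}_\infty(\!(I_\infty)\!)$. For this we use the growth of $D_n$: by \eqref{Eq:D_i}, $v_\eta(D_n)\approx -nq^n$ up to lower-order terms. The factor $1/Q_n(\ft)$ is $\frac{(\theta^{q^n}-\eta^{(n)})^2}{(\theta^{q^n}-\eta)^2}$ times the rational function
\[
\Bigl(\tfrac{1}{\theta^{q^n}-\eta}-\tfrac{1}{\ft-\eta}\Bigr)^{-1}=\frac{(\theta^{q^n}-\eta)(\ft-\eta)}{\ft-\theta^{q^n}}.
\]
Expanding this in $1/(\ft-\eta)$ as a geometric series in $(\theta^{q^n}-\eta)/(\ft-\eta)$ gives an element of $\mathbb{C}_\infty(\!(I_\infty)\!)$. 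The coefficients then converge since $\tilde\pi_\Phi^{q^n}/D_n\to 0$, because the lattice generator lies inside the convergence region of the entire function $\exp_\Phi$.

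By Theorem \ref{Thm:omeg0} the solution space is $\K\,\omega_f$, so $G(\tilde\pi_\Phi;\ft)=c(\ft)\,\omega_f(\ft)$ for some $c\in\K=\mathbb{F}_q(\ft)$. The plan to pin down $c$ is to compare poles and residues.

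1. \emph{Poles.} Both functions are meromorphic in $\ft$. $\omega_f$ has simple poles exactly at $\ft=\theta^{q^k}$, from the factors $\Theta^{q^k}(\ft-\theta^{q^k})^{-1}(\ft-\eta^{(k)})$, and its zeros sit only at the $\eta^{(k)}$, which are finite in number up to periodicity. $G$ has at most simple poles at $\theta^{q^n}$. Hence $c$ has no poles or zeros away from finitely many points; together with the behaviour as a $\K$-multiple, this should force $c$ to be a constant in $\mathbb{F}_q$.

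2. \emph{Normalisation.} Compute the residue at $\ft=\theta$ with respect to $d\frac{1}{\ft-\eta}$. Only the $n=0$ term of $G$ contributes. Here $Q_0=\frac{1}{\theta-\eta}-\frac{1}{\ft-\eta}$, whose residue of $1/Q_0$ in the variable $1/(\ft-\eta)$ is $-1$, so
\[
\Res_{\ft=\theta}\, G(\tilde\pi_\Phi;\ft)\,d\tfrac{1}{\ft-\eta}=-\tilde\pi_\Phi.
\]
This matches Theorem \ref{thm:resomw0}, giving $c=1$.

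The main obstacle is step 1, rigorously excluding a nonconstant rational factor $c(\ft)$. The route I would try first avoids poles entirely. The functional equation forces $c^{(1)}=c$, but $c\in\mathbb{F}_q(\ft)$ is already fixed by the twist, so that condition is vacuous; the real constraint is analytic. If both sides lie in $\mathbb{T}_\eta$ after multiplying by $\prod_{k<M}(\ft-\theta^{q^k})$, then $c$ can only have poles and zeros where $\omega_f$ or $G$ do. Comparing residues at every $\theta^{q^k}$, which are related by twisting via the functional equation, leaves no freedom beyond a constant. A constant $c$ is then fixed to be $1$ by step 2.
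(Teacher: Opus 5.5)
Your route is the paper's: substitute $U=\tilde\pi_\Phi$ into Proposition~\ref{prop:G(1)fG}, and use $\exp_\Phi(\tilde\pi_\Phi)=C_{0,2}^{-1}\exp_{(2)}(\tilde\pi)=0$ (your justification via \eqref{eq:expPhi} and Theorem~\ref{Th:kenelpitil} is more explicit than the paper's). Then write $G(\tilde\pi_\Phi;\ft)=c(\ft)\,\omega_f(\ft)$ with $c\in\K$ by Theorem~\ref{Thm:omeg0}, and normalise by the residue at $\ft=\theta$.

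\textbf{The gap.} As written the proof is not finished. You hold that $c$ must first be shown to be constant, and your Step~1 does not show it.
\begin{itemize}
\item The reason you give (``$c$ has no poles or zeros away from finitely many points'') is true of every rational function, so it forces nothing.
\item Your picture of $\omega_f$ is inaccurate. We have $\omega_f=\sqrt[q-1]{-\Theta}\prod_{k}\frac{\ft-\eta^{(k)}}{\ft-\theta^{q^k}}$, which has no zeros where it is meromorphic. The $\eta^{(j)}$ are accumulation points of the poles $\theta^{q^k}$ (since $v_\eta(\theta^{q^k}-\eta^{(k)})=q^k$), not zeros.
\item Even a correct pole comparison only shows that $c$ is regular outside $P_\rho$, i.e.\ $c\in\A$. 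That alone does not exclude, say, $c=1+T_0(\ft)$ without further information (zeros of $G$, growth near the $\eta^{(j)}$).
\item Your alternative ``analytic'' route is likewise left as a sketch.
\end{itemize}

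\textbf{The missing observation: Step~1 is not needed.} Because $c\in\mathbb{F}_q(\ft)$ and $\theta$ is transcendental over $\mathbb{F}_q$, $c$ is automatically regular at $\ft=\theta$. So your Step~2 together with Theorem~\ref{thm:resomw0} gives $-c(\theta)\,\tilde\pi_\Phi=-\tilde\pi_\Phi$, i.e.\ $c(\theta)=1$. Then $c-1$ is a rational function over $\mathbb{F}_q$ vanishing at a transcendental point, hence $c=1$. This is exactly the (tacit) justification of the paper's last step, ``$\lambda(\theta)=1$ and thus $\lambda(\ft)=1$''. With Step~1 replaced by this sentence, your proof coincides with the paper's.

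Your extra check that $G(\tilde\pi_\Phi;\ft)$ lies in $\mathbb{C}_\infty(\!(I_\infty)\!)$ is a welcome detail the paper omits. Two small corrections there:
\begin{itemize}
\item The decay of $U^{q^n}/D_n$ comes from $\exp_{(0)}$ being entire, since the $D_n$ are its coefficients, not from $\exp_\Phi$.
\item $v_\eta(D_n)$ is roughly $-\lfloor n/N\rfloor q^n$, not $-nq^n$.
\end{itemize}
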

\begin{proof}
    By setting  $U = \tilde{\pi}_{\Phi} $ in Proposition \ref{prop:G(1)fG}, we have
    \[
        G(\tilde{\pi}_{\Phi};\ft)^{(1)}  = f G(\tilde{\pi}_{\Phi};\ft) + \exp_{\Phi}(U)= f G(\tilde{\pi}_{\Phi};\ft) .
    \]
    So $G(\tilde{\pi}_{\Phi};t)$ is a solution of $ \nabla^{f} $.
  By Theorem \ref{Thm:omeg0}, we get 
    \begin{equation*}
        G(\tilde{\pi}_{\Phi}; \ft) = \lambda(\ft)\cdot \omega_{f}(\ft)
    \end{equation*}
    for some rational function $\lambda(\ft)$, which is regular at $\ft=\theta$.
Combining this with Theorem \ref{thm:resomw0}, we have 
\[
    \Res_{\ft = \theta} G(\tilde{\pi}_{\Phi}; \ft) d\frac{1}{\ft - \eta} =  \Res_{\ft = \theta} \lambda(\ft) \omega_{f}(\ft)d\frac{1}{\ft - \eta}=-\lambda(\theta)\tilde{\pi}_{\Phi} . 
\]
On the other hand, a direct computation shows that
\[
    \Res_{\ft = \theta} G(\tilde{\pi}_{\Phi}; \ft) d\frac{1}{\ft - \eta} = - \tilde{\pi}_{\Phi}.
\]
It follows that $\lambda(\theta) = 1$ and thus $\lambda(\ft)=1$. So we obtain \eqref{eq:omegafft}.
\end{proof}
\subsection{Anderson-Type Generating Function}

\begin{defn}\label{Def:expgtU}
 We extend the exponential action $\triangleright_{\phi}$ from Definition \ref{Def:expgU} to the Tate algebra $\mathbb{T}_{\eta}$ as follows. Define
\[
\triangleright_{\phi} : (\K \otimes_{\mathbb{F}_q} \mathbb{T}_{\eta}) \times \mathbb{C}_{\infty} \longrightarrow \mathbb{T}_{\eta}
\]
by
\[
\biggl( \sum_{i} a_i(t) \otimes T_i(\ft) \biggr) \triangleright_{\phi} U = \sum_{i} \bigl( a_i(t) \triangleright_{\phi} U \bigr) T_i(\ft),
\]
where $a_i(t) \in \K$, $T_i(\ft) \in \mathbb{T}_{\eta}$, and the index $i$ may range over an infinite set provided the series converge.
\end{defn}
This notation allows us to naturally generalize the Anderson-type generating function. For the case of Drinfeld module $ \phi $ over polynomial ring $\mathbb{F}_q[t]$, the algebra $ \K $ can be realized as the rational function field $ \mathbb{F}_q(t) $.  We have $ t \triangleright_{\phi} U = \phi_{ t} \exp_{\phi}(U) = \exp_{\phi} (\theta U )$. In analogy with Definition \ref{Def:expgU}, we get 
\[ \frac{1}{t} \triangleright_{\phi} U =  \exp_{\phi} \left({\frac{U}{\theta}}  \right). \]
From the Taylor expansion
\begin{equation}\label{eq:Taylor}
    \frac{1}{t -\ft } = \frac{1}{t} +  \frac{1}{t^2} \ft + \frac{1}{t^3} \ft^2 + \cdots ,
\end{equation}
we have 
\begin{align}
   \frac{1}{t -\ft }  \triangleright_{\phi} U :=&\frac{1}{t} \triangleright_{\phi}  U   +  \left( \frac{1}{t^2 }\triangleright_{\phi} U  \right) \ft + \left( \frac{1}{t^3} \triangleright _{\phi} U   \right) \ft^2 + \cdots \nonumber \\
   =&\sum_{k=0}^{\infty}\exp_{\phi}({\frac{U}{\theta^{k+1}})} \ft^{k}. \label{eq:Andersonphi}
\end{align}
This is exactly the desired Anderson-type generating function over polynomial ring $\mathbb{F}_q[t]$.

 Inspired by this construction, we define the Anderson-type generating function in our setting as follows.
\begin{defn}\label{Def:U}
    For any $ U \in \mathbb{C}_{\infty}$,  the Anderson-type generating function is defined as 
\[
H(U;\ft ) := \frac{\theta - \eta  }{\theta - \eta^{(1)}  } \frac{t - \eta^{(1)} }{ t - \eta  } \left(    \frac{1}{ t - \eta} - \frac{1}{ \ft - \eta } \right) ^{-1} \triangleright_{\Phi} U,
\]
where $\triangleright_{\Phi}$ is the symbol for the exponential action associated with  $\Phi$. 
\end{defn}
From the relation \eqref{Eq:PsiPhitri} between $ \triangleright_{\Psi} $ and $ \triangleright_{\Phi} $, it follows that 
\[
H(U;\ft ) := \frac{1 }{ (t-\eta)^2 }   \left(    \frac{1}{ t - \eta} - \frac{1}{ \ft - \eta } \right) ^{-1} \triangleright_{\Psi} \frac{U}{\Theta^{2}}.
\]

To derive the explicit expression of $ H(U;\ft)$, we need the expansion of the inverse of the expression $1 - \frac{t - \eta}{\ft - \eta}$  analogous to Equation \eqref{eq:Taylor}.

\begin{lem}\label{Lem:1-theeaexp}
Let $ \bar{\sigma} $ be a generator of $\Gal(H/ K)$. Then the inverse of  $1 - \frac{t - \eta}{\ft - \eta}$  can be expressed as the series
\begin{equation}\label{Eq:le1-thet}
        \left({1 - \frac{t - \eta}{\ft - \eta}} \right)^{-1} =  1 + \frac{t - \eta }{\ft - \bar{\sigma}\eta }+ \frac{(t - \eta )(t - \bar{\sigma}\eta)}{(\ft - \bar{\sigma}\eta) (\ft - \bar{\sigma}^2\eta) }+ \frac{(t - \eta )(t - \bar{\sigma}\eta)(t - \bar{\sigma}^2 \eta)}{(\ft - \bar{\sigma}\eta) (\ft - \bar{\sigma}^2\eta)(\ft - \bar{\sigma}^3 \eta) } + \cdots.
\end{equation}
\end{lem}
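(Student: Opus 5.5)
The plan is to prove the identity by a telescoping computation on the partial sums, and then to show that the remainder tends to zero in the appropriate topology (the $I_\infty$-adic one, or the $\eta$-norm for $\ft\in\mathcal{D}$). Write $\eta_k:=\bar{\sigma}^k\eta$, so $\eta_0=\eta$. Define, for $n\geqslant 0$,
\[
S_n := \sum_{k=0}^{n} \frac{\prod_{i=0}^{k-1}(t-\eta_i)}{\prod_{i=1}^{k}(\ft-\eta_i)}, \qquad R_n := \frac{\prod_{i=0}^{n}(t-\eta_i)}{(\ft-\eta)\prod_{i=1}^{n}(\ft-\eta_i)} .
\]
Empty products are taken to be $1$.

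The key algebraic identity is
\[
\Bigl(1-\frac{t-\eta}{\ft-\eta}\Bigr) S_n = 1 - R_n .
\]
I will prove it by induction on $n$. For $n=0$ both sides equal $1-\frac{t-\eta}{\ft-\eta}$. For the inductive step, let $P_{n+1}$ be the $(n+1)$-st summand of $S_{n+1}$. Then
\[
\Bigl(1-\tfrac{t-\eta}{\ft-\eta}\Bigr) P_{n+1} = \frac{\ft-t}{\ft-\eta} \cdot \frac{\prod_{i=0}^{n}(t-\eta_i)}{\prod_{i=1}^{n+1}(\ft-\eta_i)} .
\]
Next, write $\ft-t=(\ft-\eta_{n+1})-(t-\eta_{n+1})$. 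This splits the right-hand side into $R_n-R_{n+1}$, which gives the step. This telescoping is the natural analogue of the geometric series \eqref{eq:Taylor}. The choice of poles $\eta_1,\eta_2,\dots$ in the denominators is exactly what makes the splitting work.

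It remains to show $R_n\to 0$. Since $\bar{\sigma}$ has order $N$, the factors of $R_n$ are periodic in the index. Grouping them in blocks of $N$ consecutive indices, each block contributes a factor of the form
\[
\frac{\rho(t)}{\rho(\ft)} = \frac{\prod_{i}(t-\eta_i)}{\prod_i(\ft-\eta_i)}
\]
(taking the product over one full orbit). Up to a bounded boundary term, this gives $R_n \approx (\rho(t)/\rho(\ft))^{\lfloor n/N\rfloor}$. The factor $1/\rho(\ft)=T_0(\ft)$ lies in the ideal $I_\infty$ of $\mathbb{C}_\infty[\![I_\infty]\!]$. Hence in the $\ft$-variable $R_n$ lies in $T_0(\ft)^{\lfloor n/N\rfloor}$ times a fixed finite set of elements. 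Therefore $R_n\to0$ $I_\infty$-adically, coefficientwise in $t$. This is the same sense in which \eqref{eq:Taylor} converges, namely as an expansion in $\ft$ with coefficients in $\K\otimes\mathbb{C}_\infty$.

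The main obstacle is making precise the ring in which \eqref{Eq:le1-thet} is asserted, so that it can be fed into $\triangleright_\Phi$ via Definition \ref{Def:expgtU}. I would interpret it in $\K\otimes_{\mathbb{F}_q}\mathbb{C}_\infty[\![I_\infty]\!]$. For each fixed $n$, the boundary factor $\prod_{i=1}^{r}(\ft-\eta_i)^{-1}$ with $r<N$ is a product of the elements $\frac{1}{\ft-\eta^{(j)}}$. These elements lie in $\A\otimes\mathbb{F}_{q^N}$, expressed through the $T_i(\ft)$ as in \eqref{Eq:nosumbiTi}, so they are integral there. The factor $(\ft-\eta)^{-1}$ appearing in $R_n$ is integral for the same reason. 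Once this is checked, the whole statement reduces to the telescoping identity together with $T_0(\ft)^m\to0$.
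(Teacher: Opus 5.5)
Your proof is correct and follows the same route as the paper. Your identity $\bigl(1-\frac{t-\eta}{\ft-\eta}\bigr)S_n=1-R_n$, proved by induction, is a rearrangement of the paper's telescoping identity for $\sum_{l=1}^{m}\bigl(\mathcal{B}_l-\frac{t-\eta}{\ft-\eta}\mathcal{B}_{l-1}\bigr)$, and both arguments finish by showing that the remainder tends to zero. The paper only asserts that limit; your observation that $R_n=\prod_{i=0}^{n}\frac{t-\bar{\sigma}^i\eta}{\ft-\bar{\sigma}^i\eta}$ collapses over each full Galois orbit to $\rho(t)/\rho(\ft)$, and so carries powers of $T_0(\ft)\in I_\infty$, justifies the limit and pins down the topology in which the series converges.
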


\begin{proof}
The equality in the lemma can be rewritten as
\begin{align}\label{Eq:LeGDzeta}
\left(
  1 - \frac{t - \eta}{ \ft - \eta}\right)^{-1} = \sum_{l=0}^{\infty} \mathcal{B}_l,
\end{align}
where
\begin{equation}\label{Eq:Bl}
  \mathcal{B}_{l}=\frac{(t - \eta)(t - \bar{\sigma}^{1}\eta)\cdots (t - \bar{\sigma}^{l-1}\eta)}{(\ft - \bar{\sigma}^{1}\eta)(\ft - \bar{\sigma}^{2}\eta)\cdots(\ft - \bar{\sigma}^{l}\eta)}\quad\text{and } \qquad \mathcal{B}_{0}=1.  
\end{equation}
We claim that for $  m \geqslant 1 $, the following identity holds
\begin{equation}\label{Eq:mathcalBl}
\sum_{l=1}^{m}\left(\mathcal{B}_l-\frac{t - \eta}{ \ft - \eta}\mathcal{B}_{l-1}\right)=\frac{\bar{\sigma}^{m}\eta-\eta}{ \ft - \eta}\mathcal{B}_m.
\end{equation}
Observe that the sequence $ \{ \mathcal{B}_{l} \} $ verifies the recursive formula 
\begin{equation}\label{Eq:Bll-1} 
\mathcal{B}_{l}=\frac{\ft-\bar{\sigma}^{l+1}\eta}{t-\bar{\sigma}^l\eta}\mathcal{B}_{l+1}  .  
\end{equation}
From Equation \eqref {Eq:Bll-1}, it is clear that Equation \eqref {Eq:mathcalBl} holds for $m = 1$:
\[
\mathcal{B}_1-\frac{t - \eta}{ \ft - \eta}\mathcal{B}_{0}= \left(1-\frac{\ft -\bar{\sigma}\eta}{ \ft - \eta} \right) \mathcal{B}_1=\frac{\bar{\sigma}\eta-\eta}{  \ft - \eta}\mathcal{B}_1.
\]
Assume that the equality \eqref{Eq:mathcalBl} holds for $m \geqslant 1$.
Then, we have 
\begin{align*}
\sum_{l=1}^{m+1}\left(\mathcal{B}_l-\frac{t - \eta}{ \ft - \eta}\mathcal{B}_{l-1}\right)={}&\frac{\bar{\sigma}^{m}\eta-\eta}{ \ft - \eta}\mathcal{B}_m-\frac{t-\eta}{\ft-\eta}\mathcal{B}_{m}+\mathcal{B}_{m+1}\\
={}&\left(-\frac{\ft-\bar{\sigma}^{m+1}\eta}{ \ft - \eta}+1\right)\mathcal{B}_{m+1} ~\text{by Equation \eqref{Eq:Bll-1}} \\
={}&\frac{\bar{\sigma}^{m+1}\eta-\eta}{  \ft - \eta}\mathcal{B}_{m+1}.
\end{align*}
The equality \eqref{Eq:mathcalBl} follows by induction.
Therefore, we have 
 \begin{align*} 
   \sum_{l=1}^{\infty}\left(\mathcal{B}_l-\frac{t - \eta}{ \ft - \eta}\mathcal{B}_{l-1}\right)= \lim_{m\to \infty}\frac{\bar{\sigma}^{m}\eta-\eta}{  \ft - \eta}\mathcal{B}_{m}=0.
\end{align*}
 This implies
      \begin{align}\label{Eq:twosum}
   -1+\sum_{l=0}^{\infty} \mathcal{B}_l= \sum_{l=1}^{\infty} \mathcal{B}_l=\frac{t - \eta}{ \ft - \eta}\sum_{l=0}^{\infty} \mathcal{B}_l.
     \end{align}
Therefore, Equation \eqref{Eq:LeGDzeta} holds.

\end{proof}

 \begin{notation}\label{No:Upk}
For $k\in \mathbb{Z}$, we introduce the rational functions:
 \[
\Upsilon_k(t) =  
\begin{cases}
    (t-\eta^{(1)})\prod_{i=k}^{-1} \frac{1}{t - \eta^{(-i)}} & \text{for $k\leqslant-1$; } \\
    \prod_{i=-1}^{k-1} (t - \eta^{(-i)}) & \text{for $k\geqslant 0$}.
\end{cases}
\]
 \end{notation}

We are now ready to express the Anderson-type generating function  $ H(U;\ft)$ in terms of the quantities  $\Upsilon_k(t)$.
 
\begin{cor}\label{Cor:HUexp}
   The generating function $ H(U;\ft)$ admits the following expansion:
    \begin{align*}
    H(U;\ft) = \frac{\theta - \eta  }{\theta - \eta^{(1)}  } \left( \Upsilon_0(t)  \triangleright_{\Phi}U \right)  + \sum_{k=1}^{\infty} \left( \Upsilon_k(t) \triangleright_{\Phi}U \right)  \frac{\theta - \eta  }{\theta - \eta^{(1)}  } \prod_{i=1}^k \frac{1}{\ft - \eta^{(-i)}}. 
\end{align*}

\end{cor}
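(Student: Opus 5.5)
The plan is to expand the kernel in Definition \ref{Def:U} using Lemma \ref{Lem:1-theeaexp} with the choice $\bar\sigma=\sigma^{-1}$, so that $\bar\sigma^i\eta=\eta^{(-i)}$. First we rewrite the kernel as
\[
\frac{t-\eta^{(1)}}{t-\eta}\left(\frac{1}{t-\eta}-\frac{1}{\ft-\eta}\right)^{-1}
=(t-\eta^{(1)})\left(1-\frac{t-\eta}{\ft-\eta}\right)^{-1}.
\]
Lemma \ref{Lem:1-theeaexp} then gives the series
\[
(t-\eta^{(1)})\sum_{l\geqslant0}\mathcal{B}_l,\qquad
\mathcal{B}_l=\prod_{i=0}^{l-1}(t-\eta^{(-i)})\prod_{i=1}^{l}\frac{1}{\ft-\eta^{(-i)}}.
\]
Comparing with Notation \ref{No:Upk}, we have $(t-\eta^{(1)})\prod_{i=0}^{k-1}(t-\eta^{(-i)})=\prod_{i=-1}^{k-1}(t-\eta^{(-i)})=\Upsilon_k(t)$ for $k\geqslant0$. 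In particular $(t-\eta^{(1)})\mathcal{B}_0=\Upsilon_0(t)=t-\eta^{(1)}$. Hence the kernel equals
\[
\sum_{k\geqslant0}\Upsilon_k(t)\otimes\prod_{i=1}^k\frac{1}{\ft-\eta^{(-i)}}.
\]
This is an element of the completed tensor product $\K\otimes\mathbb{T}_\eta$, with $t$-parts in $\K$ and $\ft$-parts in $\mathbb{T}_\eta$.

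Next we apply Definition \ref{Def:expgtU}, which extends $\triangleright_\Phi$ termwise over such sums. The constant $\frac{\theta-\eta}{\theta-\eta^{(1)}}$ is pulled out by Lemma \ref{lem:gaU}(2). This yields exactly the stated formula, with the $k=0$ term separated.

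The main obstacle is justifying this termwise application, that is, showing the resulting series converges in $\mathbb{T}_\eta$. We would use Lemma \ref{Lem:eguexp}, more precisely Corollary \ref{cor:PhiAction}, to write
\[
\Upsilon_k(t)\triangleright_\Phi U=\frac{\theta-\eta^{(1)}}{\theta-\eta}\sum_n\left(\frac{U}{\Theta^2}\right)^{q^n}\frac{\Upsilon_k(\theta^{q^n})}{D_n(\theta^{q^n}-\eta)(\theta^{q^n}-\eta^{(1)})}.
\]
Since $\Upsilon_k(\theta^{q^n})/\big((\theta^{q^n}-\eta)(\theta^{q^n}-\eta^{(1)})\big)=\prod_{i=1}^{k-1}(\theta^{q^n}-\eta^{(-i)})$ (empty for $k=1$, and $1/(\theta^{q^n}-\eta)$ for $k=0$), we are left with double sums whose terms carry the factor $\prod_{i=1}^{k-1}\frac{\theta^{q^n}-\eta^{(-i)}}{\ft-\eta^{(-i)}}$.

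On $\mathcal{D}$ we have $|\ft-\eta^{(-i)}|_\eta\geqslant1$, while $v_\eta(\theta^{q^n}-\eta^{(-i)})=q^n$ whenever $-i\equiv n\pmod N$ and is $0$ otherwise. So the factors in this product have norm at most $1$, and for every block of $N$ consecutive $i$ some factor is at most $q^{-q^n}$. This decay in $k$, uniform in $n$, together with the convergence of $\exp_\Phi$ in $n$, should make the double series converge absolutely in the Gauss norm. That convergence permits the interchange of summation and the termwise action, which completes the argument.
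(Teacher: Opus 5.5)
Your proposal is correct and is essentially the paper's proof: rewrite the kernel as $(t-\eta^{(1)})\bigl(1-\frac{t-\eta}{\ft-\eta}\bigr)^{-1}$, expand it by Lemma~\ref{Lem:1-theeaexp} with $\bar\sigma=\sigma^{-1}$ so that $(t-\eta^{(1)})\mathcal{B}_k=\Upsilon_k(t)\prod_{i=1}^k(\ft-\eta^{(-i)})^{-1}$, then apply $\triangleright_\Phi$ termwise via Definition~\ref{Def:expgtU}, pulling out $\frac{\theta-\eta}{\theta-\eta^{(1)}}$. Your Gauss-norm estimate on the resulting series (each block of $N$ consecutive factors $\frac{\theta^{q^n}-\eta^{(-i)}}{\ft-\eta^{(-i)}}$ contains one of norm at most $q^{-q^n}$) is a correct addition that the paper leaves implicit, and it is the right thing to check; one small correction is that the kernel itself is not an element of a Gauss-norm completion of $\K\otimes\mathbb{T}_\eta$, since its $\ft$-parts all have Gauss norm $1$ (e.g.\ at $\ft=0\in\mathcal{D}$), so it converges only $I_\infty$-adically.
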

\begin{proof}
By applying Lemma \ref{Lem:1-theeaexp}, we first derive the identity
     \begin{align*}
   \frac{\theta - \eta  }{\theta - \eta^{(1)}  }\frac{t - \eta^{(1)} }{t - \eta  }  (\frac{1}{t - \eta} - \frac{1}{\ft - \eta })^{-1} = \frac{\theta - \eta  }{\theta - \eta^{(1)}  } \Upsilon_0(t)  + \sum_{k=1}^{\infty} \Upsilon_k(t) \frac{\theta - \eta  }{\theta - \eta^{(1)}  } \prod_{i=1}^k \frac{1}{\ft - \eta^{(-i)}}. 
\end{align*}
Substituting this result into the definition of 
$ H(U;\ft)$
 (see Definition \ref{Def:U}) 
 immediately yields the desired expansion.
\end{proof}
It remains to compute the action of $\Upsilon_k(t) $ on $U$ for all integers $k$. The following lemma provides the explicit expression of this action.
\begin{lem}\label{lem:Upsilon}
     We keep the above notations.
    \begin{enumerate}
        \item For arbitrary integers  $k,l$, the following identity holds:
    \begin{align}\label{Eq:Upkre}
         \Upsilon_k (t)\rho(t)^l = \Upsilon_{k +N l }(t).
    \end{align}
    \item For arbitrary integers  $k,l$,  we have 
    \begin{align}\label{Eq:leUpUp}
         \Upsilon_{k}(t) \triangleright_{\Phi} \left(\rho(\theta)^{l}U \right)=\Upsilon_{k+lN}(t) \triangleright_{\Phi} U.
    \end{align}
    \item If $k < 0$, then  $ \Upsilon_{k}(t) \triangleright_{\Phi} \mu 
 $ vanishes for $ \mu \in \tilde{\pi}_{\Phi} A \subseteq \mathbb{C}_\infty $.
 \item For all integers $k$, we have
\begin{align}\label{Eq:UptriPhiU}
    \Upsilon_{k}(t) \triangleright_{\Phi} U = \frac{\theta - \eta^{(1)} }{\theta - \eta} \sum_{n=0}^{\infty}\left(\frac{U}{\Theta^2}\right)^{q^n}  \frac{\Upsilon_k (\theta^{q^n})}{ D_n(\theta ^{q^n} - \eta)(\theta ^{q^n} - \eta^{(1)})}.  
    \end{align}
    \item For all integers $k$, the action of $ \Upsilon_k(t)\triangleright_{\Phi} $ on $ U $ admits the exponential-form expression:
 \begin{align}\label{Eq:Upk<0tri}
    \Upsilon_{k}(t) \triangleright_{\Phi} U =\frac{\theta - \eta^{(1)} }{\theta - \eta} \cdot\frac{1}{ \sqrt[q-1]{\Theta^{\sigma^{1-k}-1}}} \cdot \exp_{(1-k)}\left(   \frac{\theta - \eta}{\theta - \eta^{(1)}} \cdot \sqrt[q-1]{\Theta^{\sigma^{1-k}-1}}
   \cdot \Upsilon_k(\theta)
  \cdot U \right ).
    \end{align}
    \end{enumerate}
\end{lem}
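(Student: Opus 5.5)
The plan is to first record a uniform recursion for the $\Upsilon_k$, then derive parts (1)--(4) from it and from the earlier lemmas, and finally reduce part (5) to the case $k<0$, where Corollary \ref{Cor:Psi0jtri} applies directly.

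\textbf{The recursion and part (1).} Directly from Notation \ref{No:Upk} we have $\Upsilon_{-1}(t)=\frac{t-\eta^{(1)}}{t-\eta^{(1)}}=1$ and $\Upsilon_0(t)=t-\eta^{(1)}$. In both ranges of $k$ the definition then gives
\[
\Upsilon_k(t)=(t-\eta^{(-(k-1))})\,\Upsilon_{k-1}(t)\qquad\text{for all } k\in\mathbb{Z}.
\]
Iterating $Nl$ times, $\Upsilon_{k+Nl}/\Upsilon_k$ is a product of $Nl$ factors $t-\eta^{(-i)}$ over consecutive indices $i$. Since $\eta^{(-i)}$ depends only on $i \bmod N$ and $\rho(t)=\prod_{i\in\mathbb{Z}_N}(t-\eta^{(i)})$, this product equals $\rho(t)^l$. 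The case $l<0$ follows by inverting. This proves \eqref{Eq:Upkre}.

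\textbf{Parts (2) and (4).} Part (2) follows from (1) together with Lemma \ref{lem:gaU}(1), applied with $a=\rho^l\in\K$. Part (4) is Corollary \ref{cor:PhiAction} with $g=\Upsilon_k$.

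\textbf{Part (3).} For $k\le -1$ we have
\[
\Upsilon_k(t)=\frac{t-\eta^{(1)}}{\prod_{i=1}^{-k}(t-\eta^{(i)})}.
\]
This function is regular away from the points $[\eta^{(j)}]$, including at $t=\infty$, since its degree is $\le 0$. Hence it is a polynomial in the $\frac{1}{t-\eta^{(j)}}$, so $\Upsilon_k\in \A\otimes_{\mathbb{F}_q}\mathbb{C}_\infty$, say $\Upsilon_k=\sum_i b_i\otimes l_i$ with $b_i\in\A$. By Definition \ref{Def:expgU},
\[
\Upsilon_k\triangleright_\Phi\mu=\sum_i l_i\,\Phi_{b_i}\bigl(\exp_\Phi(\mu)\bigr)=0
\quad\text{for } \mu\in\tilde\pi_\Phi A,
\]
because $\tilde\pi_\Phi A=\ker\exp_\Phi$ by \eqref{Eq:tilpiPhi} and Theorem \ref{Th:kenelpitil}.

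\textbf{Part (5), case $k<0$.} Use \eqref{Eq:PsiPhitri} with $g=\Upsilon_k$:
\[
\Upsilon_k\triangleright_\Phi U=\frac{\theta-\eta^{(1)}}{\theta-\eta}\,\Bigl(\frac{\Upsilon_k(t)}{(t-\eta)(t-\eta^{(1)})}\triangleright_\Psi \frac{U}{\Theta^2}\Bigr).
\]
After cancelling, $\frac{\Upsilon_k(t)}{(t-\eta)(t-\eta^{(1)})}=\prod_{i=0}^{-k}(t-\eta^{(i)})^{-1}$, which has $j=1-k$ factors. Corollary \ref{Cor:ePsie} (equivalently Corollary \ref{Cor:Psi0jtri} with $g=1$) with $j=1-k$ then yields an $\exp_{(1-k)}$ expression. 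Its argument is $\frac{U}{\Theta^2}\sqrt[q-1]{\Theta^{\sigma^{1-k}-1}}\big/\prod_{i=0}^{-k}(\theta-\eta^{(i)})$. It matches \eqref{Eq:Upk<0tri} because
\[
\frac{\theta-\eta}{\theta-\eta^{(1)}}\,\Upsilon_k(\theta)=\frac{(\theta-\eta)^2}{\prod_{i=0}^{-k}(\theta-\eta^{(i)})}
\qquad\text{and}\qquad \frac{1}{\Theta^2}=(\theta-\eta)^2.
\]

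\textbf{Part (5), case $k\ge 0$.} Choose $l$ with $k-lN<0$. By (2), $\Upsilon_k\triangleright_\Phi U=\Upsilon_{k-lN}\triangleright_\Phi(\rho(\theta)^lU)$, and we apply the negative case. Three things must then be checked:
\begin{itemize}
\item $\Upsilon_{k-lN}(\theta)\rho(\theta)^l=\Upsilon_k(\theta)$, which is (1) evaluated at $t=\theta$;
\item $\exp_{(1-k+lN)}=\exp_{(1-k)}$, since $\sigma^N=\id$ on $H$;
\item $\Theta^{\sigma^{1-k+lN}-1}=\Theta^{\sigma^{1-k}-1}$.
\end{itemize}

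\textbf{Main obstacle.} The delicate point is the third item. The radicals $\sqrt[q-1]{\Theta^{\sigma^{j}-1}}$ are only defined up to $\mathbb{F}_q^*$, so for $j\equiv 1-k \pmod N$ we must fix the same root for all such $j$. With a compatible choice the two sides of \eqref{Eq:Upk<0tri} are literally equal. Any other choice changes the right-hand side only by a factor $c^{-1}\exp_{(1-k)}(c\,\cdot)$ with $c\in\mathbb{F}_q^*$, and by $\mathbb{F}_q$-linearity of $\exp_{(1-k)}$ this factor is trivial. So the identity holds independently of the choice of root.
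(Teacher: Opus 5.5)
Your proof is correct and follows essentially the same route as the paper: parts (2)--(4) come from Lemma~\ref{lem:gaU}, Definition~\ref{Def:expgU} and Corollary~\ref{cor:PhiAction}, and part (5) is reduced via (2) to a negative index, where \eqref{Eq:PsiPhitri} and Corollary~\ref{Cor:ePsie} give the $\exp_{(1-k)}$ formula. Your uniform recursion $\Upsilon_k=(t-\eta^{(1-k)})\Upsilon_{k-1}$ for (1) is slightly cleaner than the paper's ``without loss of generality $l=1$, $-N<k<-1$'' computation, and your remark that the choice of $(q-1)$-th root is immaterial by $\mathbb{F}_q$-linearity is a small addition, but neither changes the argument.
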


\begin{proof}
 (1)  To prove Equation \eqref{Eq:Upkre}, we may assume without loss of generality that $ l = 1 $
and $ -N < k < -1 $. By Notation \ref{No:Upk}, we compute directly
 \begin{align*}
     \Upsilon_k(t)\rho(t)=&\frac{1}{t-\eta^{(2)}}\cdots \frac{1}{t-\eta^{(-k)}}\left( (t-\eta)(t-\eta^{(-1)})\cdots (t-\eta^{(1-N)})\right)\\
     =&(t-\eta^{(1-N)})(t-\eta^{(-k-N+1)})\cdots (t-\eta^{(1)})(t-\eta)=\Upsilon_{k+N}(t).
 \end{align*}
 
(2) Equation \eqref{Eq:leUpUp} follows from the first assertion and Corollary \ref{lem:gaU}.

(3) Since $ \Upsilon_{k}(t) \in \A\otimes_{\mathbb{F}_q} \mathbb{F}_\eta $,
we write 
\[
    \Upsilon_k(t) = \sum_{i=0}^{N-1} a_i \eta^{(i)}
\]
for some $a_i \in \A $. 
Definition \ref{Def:expgU} implies that 
\[
\Upsilon_k(t) \triangleright_{\Phi}  \mu =\sum_{i=0}^{N-1} \eta^{(i)}\exp_{\Phi}( a_i(\theta) \mu) =0.
\]

(4) 
Equation \eqref{Eq:UptriPhiU} follows directly from Equation \eqref{Eq:gtriPhiU}.

    (5) 
     We write $ k = -m + Nl $ for some integers $l $ and $ m \geqslant 2 $. 
Applying Equation \eqref{Eq:leUpUp}, we obtain
    \begin{align*}
      \Upsilon_{k}(t) \triangleright_{\Phi} U= \Upsilon_{-m}(t)\triangleright_{\Phi} (\rho(\theta)^{l}\cdot U).
    \end{align*}
Substituting $g(t)=\Upsilon_{-m}(t)$ into Equation
\eqref{Eq:PsiPhitri} yields
\begin{align*}
      \Upsilon_{k}(t) \triangleright_{\Phi} U= \frac{\theta - \eta^{(1)} }{\theta - \eta}\frac{\Upsilon_{-m}(t)}{(t - \eta )(t - \eta^{(1)} )} \triangleright_\Psi \frac{\rho(\theta)^{l}\cdot U}{\Theta^2}.
    \end{align*}
 From Notation \ref{No:Upk}, we express 
 \[
 \Upsilon_{-m }(t)=\frac{1}{(t - \eta^{(2)} )(t - \eta^{(3)} )\cdots (t-\eta^{(m)})},
 \]
 which gives
  \begin{align*}
      \Upsilon_{k}(t) \triangleright_{\Phi} U=\frac{\theta - \eta^{(1)} }{\theta - \eta} \frac{1}{(t - \eta )(t - \eta^{(1)} )\cdots (t-\eta^{(m)})} \triangleright_\Psi \frac{\rho(\theta)^{l}\cdot U}{\Theta^2}.
    \end{align*}
  Combining this with Corollary \ref{Cor:ePsie}  implies 
      \begin{align*}
      \Upsilon_{k}(t) \triangleright_{\Phi} U=&  \frac{\theta - \eta^{(1)} }{\theta - \eta} \cdot\frac{1}{ \sqrt[q-1]{\Theta^{\sigma^{m+1}-1}}}\cdot  \exp_{(m+1)}\left(     \frac{  \sqrt[q-1]{\Theta^{\sigma^{m+1}-1}} \cdot \rho(\theta)^{l} }{(\theta - \eta^{(0)})(\theta - \eta^{(1)}) \cdots (\theta - \eta^{(m)})}\cdot\frac{U}{\Theta^2} \right )\\
       =&  \frac{\theta - \eta^{(1)} }{\theta - \eta} \cdot\frac{1}{ \sqrt[q-1]{\Theta^{\sigma^{m+1}-1}}}\cdot  \exp_{(m+1)}\left(  \frac{\theta - \eta}{\theta - \eta^{(1)}} \cdot \sqrt[q-1]{\Theta^{\sigma^{m+1}-1}}
   \cdot \Upsilon_k(\theta)
  \cdot U  \right ).
    \end{align*}
    Notice that the equality $ m+1 =1- k + Nl  $ yields $ \exp_{m+1} (\mu) = \exp_{1-k} (\mu )$.
    Therefore, we arrive at Equation \eqref{Eq:Upk<0tri}.
\end{proof}

We remark that combining Corollary \ref{Cor:HUexp} with Lemma \ref{lem:Upsilon} (5) gives an exponential formula of $ H(U; \ft) $, which is analogous to Equation \eqref{eq:Andersonphi}.

Our next task is to prove the two generating functions  
$G(U;\ft)$ and $H(U;\ft)$ are indeed identical. 
\begin{thm}
     We keep the above notations. The following equality holds:
    \begin{align}\label{Eq:thmEG}
    G(U;\ft)=H(U;\ft).
    \end{align}
\end{thm}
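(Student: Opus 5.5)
The plan is to compute $H(U;\ft)$ termwise with Corollary~\ref{cor:PhiAction} (equivalently Lemma~\ref{Lem:eguexp}) and compare with Definition~\ref{Def:GU}. By Definition~\ref{Def:expgtU} and Corollary~\ref{Cor:HUexp}, $H(U;\ft)$ is a convergent sum $\sum_k (\Upsilon_k(t)\triangleright_\Phi U)\, c_k(\ft)$ with $c_k(\ft)\in\mathbb{T}_\eta$. Lemma~\ref{lem:Upsilon}(4) writes each $\Upsilon_k(t)\triangleright_\Phi U$ as a series in $n$ whose $n$-th term is $\left(U/\Theta^2\right)^{q^n}$ times $\Upsilon_k(\theta^{q^n})$, up to factors independent of $k$. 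Interchanging the sums over $k$ and $n$, the inner sum over $k$ is exactly the kernel
\[
\frac{\theta-\eta}{\theta-\eta^{(1)}}\,\frac{t-\eta^{(1)}}{t-\eta}\Bigl(\frac{1}{t-\eta}-\frac{1}{\ft-\eta}\Bigr)^{-1}
\]
evaluated at $t=\theta^{q^n}$, by the identity of Lemma~\ref{Lem:1-theeaexp} (with $\bar\sigma=\sigma^{-1}$). This yields
\[
H(U;\ft)=\sum_{n\ge0}\Bigl(\frac{U}{\Theta^2}\Bigr)^{q^n}\frac{1}{D_n(\theta^{q^n}-\eta)(\theta^{q^n}-\eta^{(1)})}\cdot\frac{\theta^{q^n}-\eta^{(1)}}{\theta^{q^n}-\eta}\Bigl(\frac{1}{\theta^{q^n}-\eta}-\frac{1}{\ft-\eta}\Bigr)^{-1}.
\]
The prefactors $\frac{\theta-\eta}{\theta-\eta^{(1)}}$ and $\frac{\theta-\eta^{(1)}}{\theta-\eta}$ cancel. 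Simplifying and using $\Theta^{-2q^n}=(\theta^{q^n}-\eta^{(n)})^2$, the $n$-th term becomes
\[
\frac{U^{q^n}}{D_n}\Bigl(\frac{\theta^{q^n}-\eta^{(n)}}{\theta^{q^n}-\eta}\Bigr)^2\Bigl(\frac{1}{\theta^{q^n}-\eta}-\frac{1}{\ft-\eta}\Bigr)^{-1}=\frac{U^{q^n}}{D_nQ_n(\ft)},
\]
which is the $n$-th term of $G(U;\ft)$ in \eqref{Eq:Qn}. This gives \eqref{Eq:thmEG}.

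A cleaner route avoids the termwise expansion. One checks that $g\mapsto g\triangleright_\Phi U$ is continuous and $\mathbb{T}_\eta$-linear, so Lemma~\ref{Lem:eguexp} extends from $\K\otimes\mathbb{C}_\infty$ to $\K\otimes\mathbb{T}_\eta$ by substituting $t=\theta^{q^n}$ in the kernel. Applying Corollary~\ref{cor:PhiAction} directly to the kernel with $\ft$ as a parameter then gives the same computation.

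The main obstacle is justifying this substitution and interchange. The expansion of Lemma~\ref{Lem:1-theeaexp} converges in the $\ft$-adic or $I_\infty$-adic sense. Evaluated at $t=\theta^{q^n}$, however, the terms $\Upsilon_k(\theta^{q^n})\prod_i(\ft-\eta^{(-i)})^{-1}$ have $|\cdot|_\eta$ growing with $n$, since $|\theta^{q^n}-\eta^{(j)}|_\eta$ is not small for $j\ne n$. I would first establish absolute convergence of the double series on $\mathcal{D}$. The route is to bound $|\Upsilon_k(\theta^{q^n})/\prod_{i\le k}(\ft-\eta^{(-i)})|_\eta$ by a power of $q^{q^n}$ that is beaten by $|1/D_n|_\eta\,|U/\Theta^2|_\eta^{q^n}$, using $v_\eta(D_n)$ from \eqref{Eq:D_i}. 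If that fails, I would fall back on the functional equation. Both $G$ and $H$ satisfy $X^{(1)}=fX+\exp_\Phi(U)$: for $G$ this is Proposition~\ref{prop:G(1)fG}, and for $H$ it would be checked from the $\triangleright_\Phi$ definition. Their difference then lies in the one-dimensional solution space of Theorem~\ref{Thm:omeg0}, and comparing residues at $\ft=\theta$ forces the difference to vanish.
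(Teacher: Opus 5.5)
Your argument is correct and is essentially the paper's proof run in the opposite direction. The paper starts from $G$, expands each factor $\bigl(1-\frac{\theta^{q^n}-\eta}{\ft-\eta}\bigr)^{-1}$ by Lemma~\ref{Lem:1-theeaexp} with $\bar\sigma=\sigma^{-1}$, swaps the sums, and identifies the coefficient of $\prod_{i=1}^{k}(\ft-\eta^{(-i)})^{-1}$ via Lemma~\ref{lem:Upsilon}(4) and Corollary~\ref{Cor:HUexp}; you expand $H$ and resum. The interchange, which the paper does not justify, is easier than you expect, because the terms do not grow with $n$: on $\mathcal{D}$ every factor $\theta^{q^n}-\eta^{(j)}$ and $(\ft-\eta^{(-i)})^{-1}$ has $|\cdot|_\eta\leqslant 1$, so the $(n,k)$-term is bounded by $|U^{q^n}/D_n^{\Phi}|_\eta$ (Lemma~\ref{lem:expPhi}), which tends to $0$, and for fixed $n$ it decays in $k$ since $|\theta^{q^n}-\eta^{(n)}|_\eta=q^{-q^n}$; hence the double family tends to $0$, can be summed in either order, and no functional-equation fallback is needed.
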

\begin{proof}
 Recall the expression of $G(U;\ft)$ in Equation \eqref{Eq:deG}:
\begin{align*}
 G(U;\ft) =&{} \sum \frac{U^{q^n}}{D_n Q_n(\ft)} \\
 = {}&\sum_{n= 0}^{\infty} \frac{1}{ D_n (\theta^{q^n} - \eta)} \left({ 1- \frac{\theta^{q^n} - \eta}{\ft - \eta}}\right)^{-1}  \left(\frac{U}{\Theta^2} \right)^{q^n}.
 \end{align*}
 Applying Lemma \ref{Lem:1-theeaexp} with $ \bar{\sigma}=\sigma^{-1}$ and $t = \theta^{q^n}$, we get
    \begin{align*}
         \left({ 1- \frac{\theta^{q^n} - \eta}{\ft - \eta}}\right)^{-1}=&{} 1 + \frac{\theta^{q^n} - \eta }{\ft - \eta^{(-1)} }+ \frac{(\theta^{q^n} - \eta )(\theta^{q^n} - \eta^{(-1)})}{(\ft - \eta^{(-1)}) (\ft - \eta^{(-2)}) } \\
         &{} + \frac{(\theta^{q^n} - \eta )(\theta^{q^n} - \eta^{(-1)})(\theta^{q^n} - \eta^{(-2)})}{(\ft - \eta^{(-1)}) (\ft - \eta^{(-2)})(\ft - \eta^{(-3)}) } + \cdots.
    \end{align*}
 Substituting this formula into $G(U;\ft)$, we obtain from Lemma \ref{lem:Upsilon} (4) that 
\begin{align*}
 G(U;\ft) 
 ={}&\sum_{n= 0}^{\infty} \frac{ 1}{ D_n }\frac{1}{\theta^{q^n} - \eta}\left(\frac{U}{\Theta^2} \right)^{q^n}
+\frac{1}{\ft - \eta^{(-1)} }\sum_{n= 0}^{\infty} \frac{ 1}{ D_n }\left(\frac{U}{\Theta^2} \right)^{q^n}
\\
& +\frac{1}{(\ft - \eta^{(-1)}) (\ft - \eta^{(-2)}) }\sum_{n= 0}^{\infty} \frac{  1}{ D_n }(\theta^{q^n} - \eta^{(-1)})\left(\frac{U}{\Theta^2} \right)^{q^n} +\cdots\\
&+\frac{1}{\prod_{i=1}^{k+1}\left(\ft - \eta^{(-i)}\right)}\sum_{n= 0}^{\infty} \frac{ 1}{ D_n }\prod_{i=1}^{k}\left(\theta^{q^n} - \eta^{(-i)}\right) \left(\frac{U}{\Theta^2} \right)^{q^n} +\cdots
\\
 = {}&{\frac{\theta - \eta}{\theta - \eta^{(1)} }}  (\Upsilon_0 \triangleright_{\Phi} U) + {\frac{\theta - \eta}{\theta - \eta^{(1)} }} \sum_{k=1}^{\infty} (\Upsilon_k \triangleright_{\Phi} U) \prod_{i=1}^k \frac{1}{\ft - \eta^{(-i)}}  .
\end{align*}
This coincides with the expression of $H(U;\ft)$ in Corollary \ref{Cor:HUexp}.
\end{proof}

\subsection{Deformation of Logarithm}
The following definition is due to Papanikolas \cite{P08}.
\begin{defn}\label{Defn:deformationLog}
  For $ \xi \in \mathbb{C}_{\infty} $, we define the function 
\begin{equation}\label{Eq:Log}
      \Log (\xi; \ft ) = \xi + \sum_{n=1}^{\infty} \frac{\xi^{q^{n}}}{f^{(1)}(\ft) \cdots f^{(n)}(\ft)}.
\end{equation}
\end{defn}
From Equation \eqref{Eq:logphi}, it is evident that $\log_{\Phi}(\xi) = \Log(\xi; \theta) $. In other words, $ \Log (\xi; \ft ) $ acts as a deformation of $ \log_{\Phi} $ in a neighborhood of $ \ft = \theta $.  
Clearly, it follows from Equation \eqref{Eq:Log} that
\[
    \Log (\xi; \ft )^{(1)} = f^{(1)}(\ft) \Log(\xi;\ft ) - \xi f^{(1)}(\ft),
\]
or equivalently
\[
    \nabla^{f} \left( -\frac{\Log(\xi;\ft )}{f(\ft)} \right) = \xi.  
\]

 The following proposition establishes the connection between $\Log(\xi;\ft)$ and $G(U;\ft)$.
\begin{prop}
Assume that $ \exp_{\Phi} (U) = \xi $. Then
    \begin{equation}\label{eq:ProLfG}
    \Log (\xi; \ft )=-f(\ft)G(U;\ft).
    \end{equation}
\end{prop}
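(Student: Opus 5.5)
Set $F(\ft) := -\Log(\xi;\ft)/f(\ft)$. We will show that $F$ and $G(U;\ft)$ satisfy the same inhomogeneous Frobenius equation, so their difference solves $\nabla^f$; we then pin the difference down as a $\K$-multiple of $\omega_f$ and prove that multiple is zero.

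\textbf{Step 1: common equation.} By Proposition \ref{prop:G(1)fG},
\[
\nabla^f(G(U;\ft)) = G^{(1)} - fG = \exp_\Phi(U) = \xi .
\]
The remark after Definition \ref{Defn:deformationLog} gives
\[
\nabla^f(F) = \xi .
\]
Hence $\Delta := F - G(U;\ft)$ satisfies $\nabla^f(\Delta)=0$.

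\textbf{Step 2: $\Delta$ is a $\K$-multiple of $\omega_f$.} First we place both functions in $\mathbb{C}_\infty(\!(I_\infty)\!)$. Each term of $G$ is rational in $\ft$ with a single pole at $\ft=\theta^{q^n}$, and these points lie outside $\mathcal{D}$. The series converges because of the decay of $U^{q^n}/D_n$, exactly as for $\exp_\Phi$. For $\Log$, the term $1/(f^{(1)}\cdots f^{(n)})$ has size governed by $|\Theta^{q^k}|_\eta^{-1}$, so convergence holds for $\xi$ small; we extend to general $\xi$ by rigid continuation, or treat the identity as formal. Theorem \ref{Thm:omeg0} then gives $\Delta = c(\ft)\,\omega_f$ with $c \in \K$.

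\textbf{Step 3: $c=0$ by comparing poles.} The plan is to look at the poles at $\ft=\theta^{q^k}$, $k\ge 1$. The factor $f^{(k)}(\ft)$ vanishes at $\ft=\theta^{q^k}$, so $F$ has at most simple poles at $\theta^{q^k}$ for $k\ge1$, and $F$ is regular at $\ft=\theta$ apart from the factor $1/f$. The product formula shows $\omega_f$ has simple poles at every $\theta^{q^k}$, $k\ge 0$.

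A cleaner route is to compare residues at every $\theta^{q^n}$:
\[
\Res_{\ft=\theta^{q^n}} G \,d\tfrac{1}{\ft-\eta}
\qquad\text{versus}\qquad
\Res_{\ft=\theta^{q^n}} F \,d\tfrac{1}{\ft-\eta}.
\]
For $G$ the residue is read off directly from $Q_n$: it equals $-U^{q^n}/D_n$ times the factor $\bigl((\theta^{q^n}-\eta)/(\theta^{q^n}-\eta^{(n)})\bigr)^{-2}$. For $F$, the residue at $\theta^{q^n}$ comes from all terms of index $m\ge n$. This sum is an expression of the shape of the identity \eqref{Eq:bizr}: evaluating the tail gives $\log_\Phi$-type coefficients composed with $\exp$, and with $\xi=\exp_\Phi(U)$ it collapses to the matching term. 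Once all residues agree, $\Delta=c\,\omega_f$ has no poles at any $\theta^{q^n}$. Since $\omega_f$ genuinely has those poles, with nonzero residues, the rational function $c$ would have to vanish at infinitely many points, so $c=0$.

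\textbf{Alternative for Step 3.} Evaluate at $\ft=\theta$ after multiplying by $f$. We have $-fG\big|_{\ft=\theta}$ equal to the $n=0$ residue contribution, while $\Log(\xi;\theta)=\log_\Phi(\xi)=U$ (up to the lattice ambiguity, handled by taking $U$ small). This gives $c(\theta)=0$ only, so the residue comparison at all $\theta^{q^n}$ seems necessary.

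\textbf{Main obstacle.} The hardest step is the residue comparison at $\theta^{q^n}$ for $n\ge1$, i.e.\ the identity between the tail sum of $\Log$ and $U^{q^n}/D_n$. I would attack it through the Frobenius equation. Twisting $\Delta=c\,\omega_f$ gives $\Delta^{(1)} = f\Delta$. Residues of $\Delta$ at $\theta^{q^{n+1}}$ equal twisted residues at $\theta^{q^n}$ multiplied by $f(\theta^{q^{n+1}})$, which is nonzero. So vanishing of the residue at $\theta$ alone propagates to all $n$ by induction. It therefore suffices to check the $\ft=\theta$ residue. That residue is $-U$ for $G$, matching Theorem \ref{thm:resomw0}'s computation, and for $F$ it is $-\Log(\xi;\theta)/(\text{derivative factor}) = -\log_\Phi(\xi) = -U$, using \eqref{Eq:logphi}.
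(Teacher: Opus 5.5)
Your final argument is correct and is essentially the paper's proof. You show that $\Delta=-\Log(\xi;\ft)/f-G(U;\ft)$ solves $\nabla^f$, hence $\Delta=c\,\omega_f$ with $c\in\K$, and that $\Delta$ is regular at $\ft=\theta$ because both terms have polar part $-U/f(\ft)$ there once $\log_\Phi(\xi)=U$ (i.e.\ $U$ small, a caveat the paper leaves implicit). The Frobenius propagation to all $\theta^{q^n}$ and the unproved tail-sum residue comparison are superfluous: since $c\in\mathbb{F}_q(\ft)$ and $\theta$ is transcendental over $\mathbb{F}_q$, $c(\theta)=0$ already forces $c=0$, so the ``Alternative for Step 3'' you dismissed was in fact sufficient.
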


\begin{proof}
  From Proposition \ref{prop:G(1)fG}, we know that $ G(U;\ft)$ satisfies
\begin{equation}\label{eq:nablafG}
\nabla^{f} G(U;\ft)=G^{(1)}(U;\ft)-f(\ft)G(U;\ft)= \exp_{\Phi}(U) = \xi .
\end{equation}
Thus,
    \[
    \nabla^{f} \left( -\frac{\Log(\xi;\ft )}{f(\ft)} \right)-\nabla^{f} G(U;\ft)=0.
    \]
     By Theorem \ref{Thm:omeg0},
    \[
  \frac{\Log(\xi;\ft )}{f(\ft)} +G(U;\ft)\in \K \cdot\omega_{f}(\ft).
    \]
  This function is zero since it is regular at $\ft=\theta$, while $\omega_f(\ft)$ has a pole at $\ft=\theta$.
  Identity~\eqref{eq:ProLfG} follows.
\end{proof}

\begin{example}
    In this example, we directly verify that $\Log (\xi; \ft ) $ verifies the extended version of the logarithmic arithmetic for $N=2$. More precisely,  
\begin{align}\label{Eq:Log2}
\begin{cases}
    \Log(\Phi_{T_0(\ft)}(\xi);\ft) 
    = T_0(\ft) \Log(\xi;\ft)-f(\ft) \left( \Theta^{\sigma}  \xi + \frac{\Theta^{\sigma-1} }{\ft - \eta^{(1)}} \xi+\Theta^{q(\sigma-1)}  \xi^q \right)\\
    \Log(\Phi_{T_1(\ft)}(\xi);\ft)=T_1(\ft) \Log(\xi;\ft)-f(\ft)\left(\theta\Theta^{\sigma}\xi+\eta^{(1)}\frac{\Theta^{\sigma-1}}{\ft-\eta^{(1)}}\xi+\eta\Theta^{q(\sigma-1)}\xi^q\right).
    \end{cases}
\end{align} 


Notice that $f(\theta)=0$. So the equalities yield that $\Log(\xi;\theta)$ equals the logarithm function of $\Phi$. 
\begin{enumerate}
    \item Let us prove the first equality of Equation \eqref{Eq:Log2}. Substituting the expression \eqref{Eq:Phi} of $ \Phi $ into Equation \eqref{Eq:Log}, it follows that
\begin{align*}
    \Log(\Phi_{T_0(\ft)}(\xi);\ft)={}&(\Theta^{\sigma-1}\xi)^{q^2} +(\Theta+\Theta^q)(\Theta^{\sigma-1}\xi)^q +\Theta^{1 + \sigma}\xi\\
    &+\sum_{n=1}^{\infty} \frac{\left( (\Theta^{\sigma-1}\xi)^{q^2} +(\Theta+\Theta^q)(\Theta^{\sigma-1}\xi)^q +\Theta^{1 + \sigma}\xi\right)^{q^{n}}}{f^{(1)}(\ft) \cdots f^{(n)}(\ft)} \\
    ={}&\Theta^{1 + \sigma}\xi-f(\ft)\cdot\Theta^{q(\sigma-1)} \xi^{q}+\sum_{n=1}^{\infty} \frac{ \alpha_{n}(\ft) \xi ^{q^{n}}}{f^{(1)}(\ft) \cdots f^{(n)}(\ft)},
\end{align*}
where $ \alpha_n(\ft)  $ is given by 
\begin{align*}
   &\alpha_{n}(\ft)=\Theta^{(\sigma-1)q^n}\cdot f^{(n)}(\ft) \left(f^{(n-1)}(\ft) +(\Theta+\Theta^q)^{q^{n-1}}\right)+\Theta^{(1 + \sigma)q^n}\\
   &= \Theta^{(\sigma-1)q^n}\cdot \left(\frac{1}{\ft-\eta^{(n)}}-\frac{1}{\theta^{q^n}-\eta^{(n)}}\right)\left(\frac{1}{\ft-\eta^{(n-1)}}+\frac{1}{\theta^{q^n}-\eta^{(n)}}\right)+\Theta^{(1 + \sigma)q^n}\\
   &=T_0(\ft).
\end{align*} 
Therefore, we have
\begin{align*}
    \Log(\Phi_{T_0(\ft)}(\xi);\ft)
    ={}&\Theta^{1 + \sigma}\xi-T_0(\ft)\xi-f(\ft)\Theta^{q(\sigma-1)}\xi^q +T_0(\ft)\Log(\xi;\ft)\\
   ={}& -f(\ft)\left(\Theta^{\sigma}+\frac{\Theta^{\sigma-1}}{\ft-\eta^{(1)}}\right)\xi-f(\ft)\Theta^{q(\sigma-1)}\xi^q+T_0(\ft)\Log(\xi;\ft).
\end{align*}
\item Similarly, the second formula of Equation \eqref{Eq:Log2} can be written as 
\begin{align*}
  \Log(\Phi_{T_1(\ft)}(\xi);\ft)
    ={}&\eta^{(1)}(\Theta^{\sigma-1}\xi)^{q^2} +(\theta\Theta+\eta^{(1)}\Theta^q)(\Theta^{\sigma-1}\xi)^q +\theta\Theta^{1 + \sigma}\xi\\
    &+\sum_{n=1}^{\infty} \frac{\left( \eta^{(1)}(\Theta^{\sigma-1}\xi)^{q^2} +(\theta\Theta+\eta^{(1)}\Theta^q)(\Theta^{\sigma-1}\xi)^q +\theta\Theta^{1 + \sigma}\xi\right)^{q^{n}}}{f^{(1)}(\ft) \cdots f^{(n-2)}(\ft)f^{(n-1)}(\ft)f^{(n)}(\ft)}\\
    ={}&\theta\Theta^{1 + \sigma}\xi-f(\ft)\cdot\eta\cdot\Theta^{q(\sigma-1)}\xi^q+\sum_{n=1}^{\infty} \frac{\beta_n(\ft) \xi ^{q^{n}}}{f^{(1)}(\ft) \cdots f^{(n-2)}(\ft)f^{(n-1)}(\ft)f^{(n)}(\ft)},
\end{align*}
where $ \beta_n(\ft) $ is given by 
\begin{align*}
 \beta_n(\ft) =  &\Theta^{(\sigma-1)q^n}\cdot f^{(n)}(\ft) \left(\eta^{(n-1)}f^{(n-1)}(\ft) +(\theta\Theta+\eta^{(1)}\Theta^q)^{q^{n-1}}\right)+\theta^{q^n}\Theta^{(\sigma+1) q^n}\\
    ={}&  \Theta^{(\sigma-1)q^n}\cdot \left(\frac{1}{\ft-\eta^{(n)}}-\frac{1}{\theta^{q^n}-\eta^{(n)}}\right)\cdot\left(\frac{\eta^{(n-1)}}{\ft-\eta^{(n-1)}}+\frac{\theta^{q^n}}{\theta^{q^n}-\eta^{(n)}}\right)+\theta^{q^n}\Theta^{(\sigma+1) q^n} .
\end{align*} 
After careful calculation, we obtain $\beta_n(\ft) =T_1(\ft) $.
Therefore, we have
\begin{align*}
    \Log(\Phi_{T_1(\ft)}(\xi);\ft)
    = -f(\ft)\left(\theta\Theta^{\sigma}+\eta^{(1)}\frac{\Theta^{\sigma-1}}{\ft-\eta^{(1)}}\right)\xi-f(\ft)\cdot\eta\cdot\Theta^{q(\sigma-1)}\xi^q+T_1(\ft)\Log(\xi;\ft).
\end{align*}
\end{enumerate}
\end{example}

 \bibliographystyle{amsplain}
 \bibliography{paper}
\end{document}

%% file: graph.tex
\begin{tikzpicture}[
    dot/.style={circle,fill,inner sep=1.5pt},
    >=stealth,
    scale=1
]
\def\yK{0}    
\def\yH{2}    
\def\yHp{4}   

\def\xP0{3}  
\def\xPinf{9}

\def\xQ0{\xP0}
\def\xQinf{\xPinf}

\def\xRzero {\xQ0}
\def\xRone{2}
\def\xRtwo{4}

\def\xRinfmuzero{8.5}
\def\xRinfmu{9.5}
\def\xRinfmuExA{7.5}
\def\xRinfmuExB{10.5}

\def\xleftHp{0}
\def\xmidleftHp{1}
\def\xmidrightHp{3}
\def\xrightHp1{11}
\def\xrightHpEdge{12}

\draw[dashed] (\xleftHp,\yK) -- (\xrightHpEdge,\yK) node[right] {$K$};
\draw[dashed] (\xleftHp,\yH) -- (\xrightHpEdge,\yH) node[right] {$H$};
\draw[dashed] (\xleftHp,\yHp) -- (\xrightHpEdge,\yHp) node[right] {$H^+$};

\node[dot,label=below:$P_0$] (P0) at (\xP0,\yK) {};
\node[dot,label=below:$P_\infty$] (Pinf) at (\xPinf,\yK) {};

\node[dot,label=below left:$Q_0$] (Q0) at (\xQ0,\yH) {};
\node[dot,label=below left:$Q_\infty$] (Qinf) at (\xQinf,\yH) {};

\node[dot,label=above:$R_0$] (R0) at (\xRzero, \yHp) {};
\node[dot] (R1) at (\xRone ,\yHp) {};
\node[dot] (R2) at (\xRtwo ,\yHp) {};


\node[dot,label=above:$ R_\infty^{(\mu_0)} $] (Rinfmu0) at (\xRinfmuzero,\yHp) {};
\node[dot,label=above:$R_\infty^{(\mu)}$] (Rinfmu) at (\xRinfmu,\yHp) {};
\node[dot] (RinfmuExA) at (\xRinfmuExA,\yHp) {};
\node[dot] (RinfmuExB) at (\xRinfmuExB,\yHp) {};

\draw (P0) -- (Q0) -- (R0);
\draw (R1) -- (Q0);
\draw (R2) -- (Q0);
\draw (Pinf) -- (Qinf);


\draw (Rinfmu0) -- (Qinf);
\draw (Rinfmu) -- (Qinf);
\draw (RinfmuExA) -- (Qinf);
\draw (RinfmuExB) -- (Qinf);

\end{tikzpicture}
 